\newcommand{\rmnum}[1]{\romannumeral #1}
\newcommand{\Rmnum}[1]{\expandafter\@slowromancap\romannumeral #1@}
\newcommand{\vf}{\varphi}
\newcommand{\Ext}{\operatorname{Ext}}
\newcommand{\Hom}{\operatorname{Hom}}
\newcommand{\rank}{\mathrm{rank}}
\newcommand{\sgn}{\mathrm{sgn}}
\newcommand{\htt}{\mathrm{ht}}
\newcommand{\Ree}{\mathrm{Re}}
\newcommand{\ra}{\rightarrow}
\newcommand{\frb}{\mathfrak{b}}
\newcommand{\frg}{\mathfrak{g}}
\newcommand{\frh}{\mathfrak{h}}
\newcommand{\frl}{\mathfrak{l}}
\newcommand{\frp}{\mathfrak{p}}
\newcommand{\fru}{\mathfrak{u}}
\newcommand{\bbC}{\mathbb{C}}
\newcommand{\bbQ}{\mathbb{Q}}
\newcommand{\bbR}{\mathbb{R}}
\newcommand{\bbZ}{\mathbb{Z}}
\newcommand{\caA}{\mathcal{A}}
\newcommand{\caO}{\mathcal{O}}
\newtheorem{theorem}[equation]{Theorem}%[section]
\newtheorem{cor}[equation]{Corollary}
\newtheorem{prop}[equation]{Proposition}
\newtheorem{lemma}[equation]{Lemma}
\theoremstyle{remark}
\newtheorem{remark}[equation]{Remark}
\theoremstyle{definition}
\newtheorem{definition}[equation]{Definition}
\newtheorem{example}[equation]{Example}
\numberwithin{equation}{section}
\begin{document}

\title[Jantzen coefficients]{Jantzen coefficients and simplicity of generalized Verma modules}

\author{Wei Xiao}
\address[Xiao]{College of Mathematics and statistics, Shenzhen Key Laboratory of Advanced Machine Learning and Applications, Shenzhen University,
Shenzhen, 518060, Guangdong, P. R. China}
\email{xiaow@szu.edu.cn}

\author{Ailin Zhang*}
\address[Zhang]{College of Mathematics and statistics, Shenzhen Key Laboratory of Advanced Machine Learning and Applications, Shenzhen University,
Shenzhen, 518060, Guangdong, P. R. China}
\email{az304@szu.edu.cn}

\thanks{The first author is supported by the National Science Foundation of China (Grant No. 11701381) and Guangdong Natural Science Foundation (Grant No. 2017A030310138). The second author is supported by the National Science Foundation of China (Grant No. 11504246).}

\thanks{*Corresponding author}
%\thanks{*Corresponding author}
\subjclass[2010]{17B10, 22E47}

\keywords{category $\caO^\frp$, Jantzen's simplicity criterion, singular reduction, Jantzen coefficient, basic generalized Verma module}

%\keywords{leading weight vector; generalized Verma
%module; BGG category $\caO$; invariant differential operator; homogeneous vector bundle}

\bigskip

\begin{abstract}
 The main purpose of this paper is to establish new tools in the study of $\caO^\frp$. We introduce the Jantzen coefficients of generalized Verma modules. They come from the Jantzen's simplicity criteria for generalized Verma modules and have a deep relation with the structure of $\caO^\frp$. We develop a reduction process to compute those coefficients by considering basic generalized Verma modules. These modules are induced from maximal parabolic subalgebras and have maximal nontrivial singularity. The classification of such modules is also obtained in this paper. As the first application of our results, we give a refinement of Jantzen's simplicity criteria for generalized Verma modules.
\end{abstract}

%\subjclass[2000]{Primary 22E46}

\maketitle

%
%%%%%%%%%%%%%%%%%%%%%%%%%%%%%%%%%%%%%%%%%%%%%%%%%%%%%%%%%%%%%%%%%%%%
%
\section{Introduction}
%
%%%%%%%%%%%%%%%%%%%%%%%%%%%%%%%%%%%%%%%%%%%%%%%%%%%%%%%%%%%%%%%%%%%%
%

Many interesting representations of Lie groups and Lie algebras can be studied through the category $\caO$ introduced by Bernstein-Gelfand-Gelfand \cite{BGG}. Rocha \cite{R} initiated the study of the category $\caO^\frp$ as a natural generalization of $\caO$. The Koszul duality \cite{So, BGS, B}, which reveals a hidden parabolic-singular duality of blocks in $\caO^\frp$, makes $\caO^\frp$ an interesting object of study in its own right.

%The Koszulity gives invariant property of related Extension groups, which was known as the parabolic-singular duality \cite{BN}.

The Jantzen coefficient comes from the Jantzen's simplicity criteria for generalized Verma modules of $\caO^\frp$. It is closely related to Jantzen filtration \cite{HX} and leading coefficients (the $\mu$-functions \cite{KL}) of Kazhdan-Lusztig polynomials \cite{X2}. One goal of this paper is to establish necessary results about it for studying related topics, such as simplicity criteria of generalized Verma modules \cite{J, He, HKZ, BX1, BX2}, homomorphism between generalized Verma modules \cite{Bo, BC, BEJ, L1, L2, M1, M2, M3, X1} and representation types of the blocks of $\caO^\frp$ \cite{BN, P2} and so on.

The Jantzen coefficient turns out to be quite a useful tool in the study of $\caO^\frp$, with which we get a refinement of Jantzen's simplicity criteria in this paper. In \cite{HXZ, X2}, it is used to completely solve the open problem about blocks of $\caO^\frp$ (see \cite{ES, BN, Br, P1} or \S9.15 in \cite{H3}). Combined it with a generalized Jantzen sum formula \cite{HX}, we are able to efficiently determine radical filtrations of many generalized Verma modules. We will also use it to give representation types of blocks in $\caO^\frp$\cite{XZ}.

%have several invariant properties, including the parabolic-singular duality, which makes the coefficients can be calculated through a reduction process. They

Now we describe our main idea and related results of this paper. Let $\frg$ be a complex semisimple Lie algebra. Suppose that $\frp\supset\frb\supset\frh$ is a standard parabolic subalgebra of $\frg$ containing a fixed Borel subalgebra $\frb$ and a fixed Cartan subalgebra $\frh$. Let $\Phi$ be the root system of $(\frg, \frh)$ with the positive system $\Phi^+$ and the simple system $\Delta$ corresponding to $\frb$. Let $W$ be the Weyl group of $\Phi$. Note that the parabolic subalgebra $\frp=\frl\oplus\fru$ is determined by a subset $I\subset\Delta$, where $\frl$ is the Levi subalgebra and $\fru$ is the nilpotent radical. In particular, the subsystem $\Phi_I$ generated by $I$ is the root system of $(\frl, \frh)$. Set
\[
\Lambda_I^+:=\{\lambda\in\frh^*\ |\ \langle\lambda, \alpha^\vee\rangle\in\bbZ^{>0}\ \mbox{for all}\ \alpha\in I\},
\]
where $\langle-,-\rangle$ is the bilinear form on $\frh^*$ induced from the Killing form and $\alpha^\vee$ is the coroot of $\alpha$. Let $\rho$ be the half sum of positive roots. For $\lambda\in\Lambda_I^+$, the generalized Verma module is defined by
\[
M_I(\lambda):=U(\frg)\otimes_{U(\frp)}F(\lambda-\rho),
\]
where $F(\lambda-\rho)$ is a finite dimensional simple $\frp$-modules of highest weight $\lambda-\rho$. For convenience, we also use the notation $M(\lambda, \Phi_I, \Phi)$ for $M_I(\lambda)$ when we need to deal with generalized Verma modules associated with different root systems. In this paper, $\caO^\frp$ is the category of all finitely generated $\frg$-modules $M$ that are semisimple as $\frl$-modules and locally $\frp$-finite. If $I=\emptyset$, then $\caO^\frp=\caO^\frb$ is the usual BGG category $\caO$ and $M(\lambda):=M_\emptyset(\lambda)$ is the Verma module with highest weight $\lambda-\rho$. Let $K(\caO^\frp)$ be the Grothendieck group of the category $\caO^\frp$ with $[M]\in K(\caO^\frp)$ for $M\in\caO^\frp$. For $\lambda\in\frh^*$, denote
\[
\theta(\lambda)=\sum_{w\in W_I}(-1)^{\ell(w)}[M(w\lambda)],
\]
where $W_I$ is the Weyl group of $\Phi_I$. Then $\theta(\lambda)=M_I(\lambda)$ for $\lambda\in\Lambda_I^+$ (see for example Proposition 9.6 in \cite{H3}, keeping in mind the notation difference).

We start with the Jantzen's simplicity criterion \cite{J} which was widely used in representation theory (e.g., \cite{EHW, M1}). For $\lambda\in\frh^*$, set
\[
\Psi_\lambda^+:=\{\beta\in\Phi^+\backslash\Phi_I\ |\ \langle\lambda, \beta^\vee\rangle\in\bbZ^{>0}\}.
\]

\smallskip
\noindent{\bf Jantzen's simplicity criterion} Let $\lambda\in\Lambda_I^+$. The $\frg$-module $M_I(\lambda)$ is simple if and only if
\begin{equation}\label{inteq1}
\sum_{\beta\in\Psi_\lambda^+}\theta(s_\beta\lambda)=0.
\end{equation}

Since the set $\Psi_\lambda^+$ might be too large and the above sum formula might be too complicated, in practice there are a lot of simplifications of the criterion for special cases \cite{J, Ku, He, HKZ, BX2}, while a general approach is lacking.

To overcome this, we need another result of Jantzen. If $\Phi'$ is a subsystem of $\Phi$, there exists a unique weight $\lambda|_{\Phi'}$ in the subspace $\bbC\Phi'$ so that $\langle\lambda|_{\Phi'}, \alpha^\vee\rangle=\langle\lambda, \alpha^\vee\rangle$ for all $\alpha\in\Phi'$. Set
\[
\Phi_{\beta,1}:=(\bbQ\Phi_I+\bbQ\beta)\cap\Phi.
\]
The following reduction result can be found in \cite{J}.

\smallskip
\noindent{\bf Parabolic reduction (Jantzen)} The $\frg$-module $M_I(\lambda)$ is simple if and only if $M(\lambda|_{\Phi_{\beta, 1}}, \Phi_I, \Phi_{\beta, 1})$ is simple for all $\beta\in\Psi_\lambda^+$.
\smallskip

In other words, the simplicity of $M_I(\lambda)$ can be determined by generalized Verma modules associated with smaller simple root systems. In fact, these modules are induced from maximal parabolic subalgebras (since $\rank \Phi_{\beta, 1}=\rank\Phi_I+1$) of corresponding simple Lie subalgebras. But there are still too many of such modules to investigate.

Although the parabolic reduction is seldom used in practice, it provides deep reduction idea. Inspired by the Koszul duality and related parabolic-singular duality, we believe that there should be a corresponding singular reduction. In fact, when $\lambda\in\frh^*$ is fixed, set
\[
\Phi_{\beta,2}:=(\bbQ\Phi_\lambda+\bbQ\beta)\cap\Phi,
\]
where $\Phi_\lambda=\{\alpha\in\Phi\mid\langle\lambda, \alpha\rangle=0\}$. We formulate and prove the following result.

\smallskip
\noindent{\bf Singular reduction} (Lemma \ref{redllem4}) The $\frg$-module $M_I(\lambda)$ is simple if and only if $M(\lambda|_{\Phi_{\beta, 2}}, \Phi_I\cap\Phi_{\beta, 2}, \Phi_{\beta, 2})$ is simple for all $\beta\in\Psi_\lambda^+$.
\smallskip

The proof of the singular reduction is a little laborious. For $\lambda\in\Lambda_I^+$, set
\[
\Psi_\lambda^{++}:=\{\beta\in\Psi_\lambda^+\mid\langle s_\beta\lambda, \alpha\rangle\neq0\ \mbox{for all}\ \alpha\in\Phi_I\}.
\]
Then $\theta(s_\beta\lambda)$ is nonzero if and only if $\beta\in\Psi_\lambda^{++}$ (Proposition \ref{jansprop1}). In this case, it is easy to see that the isotropic group of $s_\beta\lambda$ under the action of $W_I$ is trivial. There exists $w\in W_I$ such that $\mu=w s_\beta\lambda\in\Lambda_I^+$. Therefore $\theta(s_\beta\lambda)=(-1)^{\ell(w)}[M_I(\mu)]$, where $\ell(-)$ is the length function on $W$. Note that $[M_I(\mu)]$ ($\mu\in\Lambda_I^+$) form a basis of $K(\caO^\frp)$.

\smallskip
\noindent{\bf Definition} (Definition \ref{sjdef1})
Write
\begin{equation}\label{inteq2}
\sum_{\beta\in\Psi_\lambda^+}\theta(s_\beta\lambda)=\sum_{\lambda>\mu\in\Lambda_I^+}c(\lambda, \mu)[M_I(\mu)].
\end{equation}
The coefficient $c(\lambda, \mu)$ is called the \emph{Jantzen coefficient} associated with $(\lambda, \mu)$. Here $\lambda>\mu$ means $\Hom(M(\mu), M(\lambda))\neq0$.
\smallskip

Fix $\lambda\in\Lambda_I^+$. The coefficients $c(\lambda, \mu)$ are nonzero for only finitely many $\mu\in\Lambda_I^+$. Jantzen's simplicity criterion implies that $M_I(\lambda)$ is simple if and only if all the Jantzen coefficients $c(\lambda, \mu)=0$.

The key feature of Jantzen coefficients is that they possess several invariant properties. These invariant properties are formulated and proved in section 4 (Lemma \ref{invlem1}-\ref{invlem4} and Lemma \ref{invlem5}-\ref{invlem6}). They form the framework of our theory of Jantzen coefficient and make it a useful tool. In particular, the singular reduction can be proved by the singular invariance lemma (see Lemma \ref{invlem4}) of Jantzen coefficients.

The situation have been greatly improved after the singular reduction is settled. For $\lambda\in\frh^*$, define
\[
\Phi_{[\lambda]}:=\{\alpha\in\Phi\mid\langle\lambda, \alpha^\vee\rangle\in\bbZ\}.
\]
Let $\Phi_{\beta, 0}$ be the irreducible component of $\Phi$ with $\beta\in\Phi_{\beta, 0}$. We can construct the following sequences of subsystems for $\beta\in\Psi_\lambda^+$:
\[
\Phi_{[\lambda]}=\Phi_0(\beta)\supset\Phi_1(\beta)\supset\Phi_2(\beta)\supset\cdots\supset\Phi_k(\beta)\supset\cdots
\]
such that for $i\in\bbZ^{\geq0}$,
\[
\Phi_{i+1}(\beta)
=\left\{\begin{aligned}
&(\Phi_{i}(\beta))_{\beta,0}\qquad\qquad\qquad\quad\ \mbox{if}\ i\equiv0\ (\mathrm{mod}4);\\
&(\Phi_{i}(\beta))_{\beta,1}\qquad\qquad\qquad\quad\ \mbox{if}\ i\equiv1\ (\mathrm{mod}4);\\
&(\Phi_{i}(\beta))_{\beta,0}\qquad\qquad\qquad\quad\ \mbox{if}\ i\equiv2\ (\mathrm{mod}4);\\
&(\Phi_{i}(\beta))_{\beta,2}\qquad\qquad\qquad\quad\ \mbox{if}\ i\equiv3\ (\mathrm{mod}4).
\end{aligned}
\right.
\]
The sequence is stationary, that is, there exists $k\geq0$ such that $\Phi_k(\beta)=\Phi_{k+1}(\beta)=\cdots$. Denote $\Phi(\beta)=\Phi_k(\beta)$. Then $\rank\Phi_I\cap\Phi(\beta)=\rank\Phi_\lambda\cap\Phi(\beta)=\rank\Phi(\beta)-1$. We get a generalized Verma module $M(\lambda|_{\Phi(\beta)}, \Phi_I\cap\Phi(\beta), \Phi(\beta))$ of irreducible subsystem $\Phi(\beta)$ with integral highest weight. It is induced from a maximal parabolic subalgebra of $\Phi(\beta)$ and has maximal nontrivial singularity. Such a module is called a \emph{basic generalized Verma module}. The following result is an immediate consequence of this reduction process.

\smallskip
\noindent{\bf Theorem A} (Theorem \ref{redsthm1}) The $\frg$-module $M_I(\lambda)$ is simple if and only if the basic generalized Verma module $M(\lambda|_{\Phi(\beta)}, \Phi_I\cap \Phi(\beta), \Phi(\beta))$ is simple for all $\beta\in\Psi_\lambda^+$.
\smallskip

This makes the classification of basic generalized Verma modules necessary for our reasoning. Suppose that $M_I(\lambda)$ is a basic generalized Verma module. Since $\lambda$ is integral, there exists a unique dominant weight $\overline\lambda\in W\lambda$ with $\Phi_{\overline\lambda}=\Phi_J$ for some $J\subset\Delta$. Moreover, $\rank\Phi_J=\rank\Phi_\lambda=\rank\Phi-1$. We can write $\lambda=w\overline\lambda$, where $w$ is contained in
\[
{}^IW^J=\{w\in W\mid \ell(xwy)=\ell(x)+\ell(w)+\ell(y)\ \mbox{for any}\ x\in W_I, y\in W_J\}.
\]
We can assume that $I=\Delta\backslash\{\alpha_i\}$ and $J=\Delta\backslash\{\alpha_j\}$ for some $i, j\in\{1, \cdots, n\}$, where $\Delta=\{\alpha_1, \cdots, \alpha_n\}$ are standard simple roots of $\Phi$ (\cite{H1}, \S11.4). The triple $(\Phi, i, j)$ is called a \emph{basic system}. The classification of basic systems are given as follows.

\smallskip
\noindent{\bf Theorem B} (Theorem \ref{bthm1}) The basic system $(\Phi, i, j)$ must be one of the following cases.
\begin{itemize}
\item [(1)] $(A_1, 1, 1)$, $(A_2, 1, 1)$, $(A_2, 1, 2)$, $(A_2, 2, 1)$, $(A_2, 2, 2)$, $(A_3, 2, 2)$;

\item [(2)] $(B_2, 1, 1)$, $(B_2, 1, 2)$, $(B_2, 2, 1)$, $(B_2, 2, 2)$, $(B_3, 2, 2)$, $(B_3, 2, 3)$, $(B_3, 3, 2)$, $(B_4, 3, 3)$;

\item [(3)] $(C_2, 1, 1)$, $(C_2, 1, 2)$, $(C_2, 2, 1)$, $(C_2, 2, 2)$, $(C_3, 2, 2)$,  $(C_3, 2, 3)$, $(C_3, 3, 2)$, $(C_4, 3, 3)$;

\item [(4)] $(D_4, 2, 2)$, $(D_5, 3, 3)$;

\item [(5)] $(E_6, 4, 4)$, $(E_7, 4, 4)$, $(E_7, 4, 5)$, $(E_7, 5, 4)$, $(E_8, 3, 4)$, $(E_8, 4, 3)$, $(E_8, 4, 4)$, $(E_8, 4, 5)$, $(E_8, 5, 4)$, $(E_8, 5, 5)$;

\item [(6)] $(F_4, 2, 2)$, $(F_4, 2, 3)$, $(F_4, 3, 2)$, $(F_4, 3, 3)$;

\item [(7)] $(G_2, 1, 1)$, $(G_2, 1, 2)$, $(G_2, 2, 1)$, $(G_2, 2, 2)$.
\end{itemize}
\smallskip

Based on this classification, we can find all the basic generalized Verma modules of each basic system (see \S5.2). With a generalized Jantzen sum formula, the radical filtration of all the basic generalized Verma modules are given in \cite{HX}. The basic systems and basic generalized Verma modules turn out to be quite important in the study of $\caO^\frp$. For example, one might wonder what makes modules of the other classical types behave so different from those of type $A$. Part of this are affected by the basic systems $(B_2, 1, 2)$, $(B_2, 2, 1)$, $(C_2, 1, 2)$, $(C_2, 2, 1)$ and $(D_4, 2, 2)$ which are semisimple systems contain more than one simple module. In fact, whenever one has a speculation about $\caO^\frp$, it seems natural to check these extreme cases first.

The reduction process and the classification of basic generalized Verma modules provide us a new simplicity criterion (Theorem A) for generalized Verma modules. The calculation of Jantzen coefficients could bring further refinement. All the nonzero Jantzen coefficients of basic generalized Verma modules are obtained in this paper (Theorem \ref{nzj}). These coefficients show that most basic systems are semisimple. In view of the invariant properties (Lemma \ref{invlem1}-\ref{invlem4}), we have the following result.

\smallskip
\noindent{\bf Theorem C} (Theorem \ref{njthm1}) Jantzen coefficient $|c(\lambda, \mu)|\leq1$ unless $\Phi=E_7, E_8$. In these exceptional cases, $|c(\lambda, \mu)|\leq2$.
\smallskip

Let $\Phi=B_n$, $C_n$ or $D_n$. Suppose that $\Delta\backslash I=\{\alpha_{q_1}, \dots, \alpha_{q_{m-1}}\}$ with
\[
0<q_1<\cdots< q_{m-1}\leq n.
\]
Write $\lambda=(\lambda_1, \cdots, \lambda_n)$ for any $\lambda\in\frh^*$. The following theorem shows when Jantzen coefficients for classical Lie algebras are vanished.

\smallskip
\noindent{\bf Theorem D} (Theorem \ref{rcthm1}) Let $\Phi$ be a classical root system and $\lambda, \mu\in\Lambda_I^+$. Suppose that $\mu=ws_\beta\lambda$ for some $w\in W_I$ and $\beta\in\Psi_\lambda^{++}$. Then $c(\lambda, \mu)=0$ if and only if one of the following conditions is satisfied.
\begin{itemize}
  \item [(1)] $\Phi=B_n$ $($resp. $C_n)$, $\beta=e_i$ $($resp. $2e_i)$ or $e_i+e_j$ for $q_{s-1}<i\leq q_s\leq q_{m-1}<j\leq n$ and $1\leq s<m$. Moreover, $\lambda_i=\lambda_j\in\frac{1}{2}\bbZ^{>0}$ $($resp. $\bbZ^{>0}$$)$ and $\lambda_k\neq 0$, $-\lambda_i$ for $q_{s-1}<k\leq q_s$.
  \item [(2)] $\Phi=B_n$ $($resp. $C_n)$, $\beta=e_i$ $($resp. $2e_i)$ or $e_i+e_j$ for $q_{s-1}<i<j\leq q_s$ and $1\leq s<m$. Moreover, $\lambda_i\in\bbZ^{>0}$, $\lambda_j=0$, $\lambda_k\neq-\lambda_i$ for $q_{s-1}<k\leq q_s$ and $\lambda_l\neq\lambda_i$ for $q_{m-1}<l\leq n$.
  \item [(3)] $\Phi=D_n$, $\beta=e_i+e_j$ or $e_i+e_k$ for $q_{s-1}<i<j\leq q_s\leq q_{m-1}<k<n$ and $1\leq s<m$. Moreover, $\lambda_i=\lambda_k\in\bbZ^{>0}$, $\lambda_j=\lambda_n=0$ and $\lambda_l\neq-\lambda_i$ for $q_{s-1}<l\leq q_s$.
\end{itemize}
\smallskip

Combined with Theorem C, all the Jantzen coefficients are determined for classical root systems up to a sign. Theorem D plays essential role in the problem of blocks for $\caO^\frp$ \cite{X2}. Recall that Jantzen's simplicity criteria implies that a generalized Verma modules is simple if and only if all its Jantzen coefficients are vanished. In the relatively easy case of type $A$, Theorem D recovers a result of Jantzen (\cite[Satz 4]{J}, Theorem \ref{jansthm3}). Explicit simplicity criteria for classical root systems $B_n$, $C_n$ and $D_n$ are described in the last section (Theorem \ref{simcthm1}-\ref{simcthm3}).

\begin{example}\label{simcex1}
Let $\Phi=B_8$ and $\Delta\backslash I=\{e_2-e_3, e_5-e_6\}$ (so $q_1=2$, $q_2=5$ and $m=3$). Choose
\[
\lambda=(2, 1\ |\ 2, -1, -3\ |\ 4, 2, 1\ )\in\Lambda_I^+,
\]
where $I$ separates the weight into three segments (different segments are divided by the vertical lines). If we apply original criterion (\ref{inteq1}), it will be quite time consuming to determine the set $\Psi_\lambda^+$ and calculate the corresponding sum formula.

On the other hand, it is easy to check that $\beta=e_5+e_6\in\Psi_\lambda^{++}$. Theorem D implies that $c(\lambda, \mu)\neq0$, where $\mu=ws_\beta\lambda\in\Lambda_I^+$ for some $w\in W_I$ . Therefore $M_I(\lambda)$ is not simple in view of (\ref{inteq2}).
\end{example}
%We can also use the reduction process in this paper to give a full classification of second-order differential operators. This paper is also very helpful to decide the representation type of blocks. Our theory also shed some light to some open problem such as homomorphism between generalized Verma modules.

This paper is organized as follows. In section 2, we provide necessary notations and definitions; The reduction process is built in section 3, while several invariant properties of Jantzen coefficients and the singular reduction are proved in section 4. The classification of basic systems and basic generalized Verma modules are obtained in section 5. We give the Jantzen coefficients for basic generalized Verma modules and posets for basic systems in section 6. In the last section, we calculate the Jantzen coefficients and get a refinement of Jantzen's simplicity criteria for classical Lie algebras.

%
%%%%%%%%%%%%%%%%%%%%%%%%%%%%%%%%%%%%%%%%%%%%%%%%%%%%%%%%%%%%%%%%%%%%
%
\section{Notations and definitions}
%
%%%%%%%%%%%%%%%%%%%%%%%%%%%%%%%%%%%%%%%%%%%%%%%%%%%%%%%%%%%%%%%%%%%%
%

\subsection{General notations}

We adopt several notations in \cite{H3}. Let $\frg$ be a complex semisimple Lie algebra with a fixed Cartan subalgebra $\frh$ contained in a Borel subalgebra $\frb$. Let $\Phi\subset\frh^*$ be the root system of $(\frg, \frh)$ with a positive system $\Phi^+$ and a simple system $\Delta\subset\Phi^+$ corresponding to $\frb$. Denote by $\frg_\alpha$ the root subspace of $\frg$ associated with $\alpha\in\Phi$. Note that every subset $I\subset\Delta$ generates a subsystem $\Phi_I\subset\Phi$ with a positive root system $\Phi^+_I:=\Phi_I\cap\Phi^+$. Denote by $W$ (resp. $W_I$) the Weyl group of $\Phi$ (resp. $\Phi_I$) with the longest element $w_0$ (resp. $w_I$). Let $\ell(-)$ be the length function on $W$. It can be viewed as the length function on $W_I$ via restriction. The action of $W$ on $\frh^*$ is given by $s_\alpha\lambda=\lambda-\langle\lambda, \alpha^\vee\rangle\alpha$ for $\alpha\in\Phi$ and $\lambda\in\frh^*$. Here $\langle-, -\rangle$ is the bilinear form on $\frh^*$ induced from the Killing form and $\alpha^\vee:=2\alpha/\langle\alpha, \alpha\rangle$ is the coroot of $\alpha$.

We say $\lambda\in\frh^*$ is \emph{regular} (resp. $\Phi_I$-\emph{regular}) if $\langle\lambda, \alpha^\vee\rangle\neq0$ for all roots $\alpha\in\Phi$ (resp. $\alpha\in\Phi_I$). Otherwise we say $\lambda$ is \emph{singular} (resp. $\Phi_I$-\emph{singular}). We say $\lambda$ is \emph{integral} (resp. $\Phi_I$-\emph{integral}) if $\langle\lambda, \alpha^\vee\rangle\in\bbZ$ for all $\alpha\in\Phi$ (resp. $\alpha\in \Phi_I$). An integral weight $\lambda\in\frh^*$ is \emph{dominant} (resp. \emph{anti-dominant}) if $\langle\lambda, \alpha^\vee\rangle\in\bbZ^{\geq0}$ (resp. $\langle\lambda, \alpha^\vee\rangle\in\bbZ^{\leq0}$) for all $\alpha\in\Delta$. When $\lambda$ is integral, there exists a unique dominant integral weight $\overline\lambda$ in the orbit $W\lambda$ such that $\lambda=w\overline\lambda$ for some $w\in W$. Then $\underline\lambda=w_0\overline\lambda$ is the unique anti-dominant weight in $W\lambda$.

\subsection{Category $\caO^\frp$}

Let $\frl_I:=\frh\oplus\sum_{\alpha\in\Phi_I}\frg_\alpha$ be a Levi subalgebra and $\fru_I:=\bigoplus_{\alpha\in\Phi^+\backslash\Phi_I^+}\frg_\alpha$ be a nilpotent radical of $\frg$. We obtain a standard parabolic subalgebra $\frp_I:=\frl_I\oplus\fru_I$ of $\frg$. For simplicity, we frequently drop the subscript when $I$ is fixed. For $\frp=\frp_I$, the category $\caO^\frp$ is the category of all finitely generated $\frg$-modules $M$ that are semisimple as $\frl_I$-modules and locally $\frp_I$-finite. In particular, $\caO^\frb$ is the usual Bernstein-Gelfand-Gelfand category $\caO$. Put
\[
\Lambda_I^+:=\{\lambda\in\frh^*\ |\ \langle\lambda, \alpha^\vee\rangle\in\bbZ^{>0}\ \mbox{for all}\ \alpha\in I\}.
\]
Set $\rho:=\frac{1}{2}\sum_{\alpha\in\Phi^+}\alpha$. For $\lambda\in\Lambda_I^+$, the \emph{generalized Verma module} is defined by
\[
M_I(\lambda):=U(\frg)\otimes_{U(\frp_I)}F(\lambda-\rho),
\]
where $F(\lambda-\rho)$ is a finite dimensional simple $\frl_I$-modules of highest weight $\lambda-\rho$, and has trivial $\fru_I$-actions viewed as a $\frp_I$-module. Thus $M_I(\lambda)\in\caO^\frp$ and $M(\lambda):=M_\emptyset(\lambda)$ is the Verma module with highest weight $\lambda-\rho$.  Let $L(\lambda)$ be the simple quotient of $M(\lambda)$.  These highest weight modules have the same infinitesimal character $\chi_\lambda$, where $\chi_\lambda$ is an homomorphism from the center $Z(\frg)$ of $U(\frg)$ to $\bbC$ such that $z\cdot v=\chi_\lambda(z)v$ for all $z\in Z(\frg)$ and $v\in M(\lambda)$. Moreover, $\chi_\lambda=\chi_\mu$ when $\mu\in W\lambda$. Denote by $\caO^\frp_\lambda$ the full subcategory of $\caO^\frp$ containing modules $M$ on which $z-\chi_\lambda(z)$ acts as locally nilpotent operator for all $z\in Z(\frg)$.

%Let $\caO^\frp_\chi$ be the full subcategory of $\caO^\frp$ containing modules $M$ on which $z-\chi(z)$ acts as locally nilpotent operator for all $z\in Z(\frg)$. Then we have a decomposition
%\[
%\caO^\frp=\bigoplus_{\chi}\caO^\frp_\chi,
%\]
%where $\chi=\chi_\lambda$ for some $\lambda\in\frh^*$. We often write $\caO^\frp_\lambda$ instead of $\caO^\frp_\chi$ if $\chi=\chi_\lambda$. In view of the Harish-Chandra isomorphism, $\caO^\frp_\lambda=\caO^\frp_\mu$ for $\mu\in W\lambda$ since $\chi_\lambda=\chi_\mu$ in this case.

%$Denote by $\ch M$ the formal character for $\frg$-module $M$ of $\caO$. The module $M$ has a composition series with simple quotients isomorphic to some $L(\lambda)$. Denote by $[M : L(\lambda)]$ the multiplicity of $L(\lambda)$.

\subsection{The posets ${}^IW^J$.} For $\lambda\in\frh^*$, set
\[
\Phi_\lambda:=\{\beta\in\Phi\mid\langle\lambda,\beta\rangle=0\}.
\]
It is obvious that $\Phi_\lambda$ is a subsystem of $\Phi$. Define
\[
{}^IW=:\{w\in W\mid \ell(s_\alpha w)=\ell(w)+1\ \mbox{for all}\ \alpha\in I\}.
\]
When $\lambda$ is integral, recall that $\overline\lambda$ is the unique dominant weight in $W\lambda$. Set
\[
J=\{\alpha\in\Delta\mid\langle\overline\lambda, \alpha\rangle=0\}.
\]
Then $\Phi_\lambda=w\Phi_J\simeq\Phi_J=\Phi_{\overline\lambda}$ and $W_J=\{w\in W\mid w\overline\lambda=\overline\lambda\}$. Put
\[
{}^IW^J=\{w\in{}^IW\mid \ell(w)+1=\ell(ws_\alpha)\ \mbox{and}\ ws_\alpha\in {}^IW\ \mbox{for all}\ \alpha\in J\}.
\]
Every integral weight $\lambda\in\Lambda_I^+$ can be uniquely written in the form $\lambda=w\overline\lambda$ for some $w\in{}^IW^J$. Another parametrization is also widely used: denote $J'=-w_0J$ and $w'=w_Iww_{J}w_0$. Then $J'\subset\Delta$ and $w'\in{}^IW^{J'}$. So
\[
\lambda=w\overline\lambda=ww_J\overline\lambda=w_Iw_Iww_Jw_0w_0\overline\lambda
=w_Iw'\underline\lambda.
\]
Although the first parametrization is more convenient in this paper, we will always be aware of such differences in the cited results.

%In particular, we say $\lambda$ is {\it a weight associated with $(I, J)$}.

\subsection{Other conventions}
For any subsystem $\Phi'\subset\Phi$, denote by $W(\Phi')$ the subgroup of $W$ generated by reflections $s_\alpha$ with $\alpha\in\Phi'$. Then $W(\Phi)=W$ and $W(\Phi_I)=W_I$. We will frequently use the notation $M(\lambda, \Phi_I, \Phi)=M_I(\lambda)$ when we need to deal with generalized Verma modules associated with different root systems at the same time. We use similar convention for other notations, for example, $\Lambda^+(\Phi_I, \Phi)=\Lambda_I^+$, $\caO(\lambda, \Phi_I, \Phi)=\caO_{\lambda}^{\frp_I}$.

%
%%%%%%%%%%%%%%%%%%%%%%%%%%%%%%%%%%%%%%%%%%%%%%%%%%%%%%%%%%%%%%%%%%%%
%
\section{Jantzen's simplicity criteria and reduction lemmas}
%
%%%%%%%%%%%%%%%%%%%%%%%%%%%%%%%%%%%%%%%%%%%%%%%%%%%%%%%%%%%%%%%%%%%%
%
In this section, we will first recall Jantzen's simplicity criteria of generalized Verma modules and then give four reduction lemmas. Three of these reduction results are already known. The last reduction lemma, which we called the singular reduction, combined with the others, builds a reduction process on related root systems. With this, the simplicity problem of generalized Verma modules can be reduced to similar problem of some very special modules (so called the basic generalized Verma modules).

\subsection{Jantzen's simplicity criteria}
\begin{definition}\label{jansdef1}
For $\lambda\in\frh^*$, define
\[
\theta(\lambda)=\sum_{w\in W_I}(-1)^{\ell(w)}[M(w\lambda)].
\]
\end{definition}
One has $\theta(\lambda)=[M_I(\lambda)]$ for $\lambda\in\Lambda_I^+$ (see Proposition 9.6 in \cite{H3}).

\begin{prop}[\cite{J, M1, Ku}]\label{jansprop1}
Let $\lambda\in\frh^*$.
\begin{itemize}
\item [(1)] $\theta(w\lambda)=(-1)^{\ell(w)}\theta(\lambda)$ for $w\in W_I$.
\item [(2)] If $\langle \lambda,\alpha\rangle=0$ for some $\alpha\in\Phi_I$, then $\theta(\lambda)=0$.
\item [(3)] If $\langle \lambda,\alpha\rangle\in\bbZ\backslash\{0\}$ for all $\alpha\in\Phi_I$, there exists $w\in W_I$ so that $w\lambda\in\Lambda_I^+$ and $\theta(\lambda)=(-1)^{\ell(w)}[M_I(w\lambda)]$.
\end{itemize}
\end{prop}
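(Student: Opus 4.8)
The plan is to prove Proposition \ref{jansprop1} by working directly from the definition of $\theta(\lambda)$ and exploiting the combinatorics of the action of $W_I$ on $\frh^*$ together with known properties of Verma modules in category $\caO$.

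For part (1), I would first reduce to the case where $w = s_\alpha$ is a simple reflection in $W_I$ (i.e. $\alpha \in I$), since a general $w \in W_I$ is a product of such reflections and $(-1)^{\ell(w)}$ is multiplicative while both sides are compatible with composition (one must check the sign bookkeeping is consistent, which follows from the fact that left multiplication by $s_\alpha$ changes length by exactly $\pm 1$). For $w = s_\alpha$, the substitution $x \mapsto s_\alpha x$ is a bijection of $W_I$, and $\ell(s_\alpha x) = \ell(x) \pm 1$, so $(-1)^{\ell(s_\alpha x)} = -(-1)^{\ell(x)}$; reindexing the sum $\theta(s_\alpha\lambda) = \sum_{x \in W_I}(-1)^{\ell(x)}[M(x s_\alpha \lambda)]$ via $x \mapsto x s_\alpha$ — wait, more carefully: $\theta(s_\alpha\lambda) = \sum_{x\in W_I}(-1)^{\ell(x)}[M(xs_\alpha\lambda)] = \sum_{y\in W_I}(-1)^{\ell(ys_\alpha)}[M(y\lambda)] = -\sum_{y\in W_I}(-1)^{\ell(y)}[M(y\lambda)] = -\theta(\lambda)$, using that right multiplication by $s_\alpha$ also flips length parity. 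This gives the result for simple reflections and hence, by induction on $\ell(w)$, for all $w\in W_I$.

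For part (2), suppose $\langle\lambda,\alpha\rangle = 0$ for some $\alpha\in\Phi_I$. After conjugating by an element of $W_I$ (which only changes $\theta$ by a sign by part (1), so vanishing is preserved) I may assume $\alpha \in I$ is simple. Then $s_\alpha\lambda = \lambda$. Now partition $W_I$ into the left cosets $\{x, xs_\alpha\}$ of $\langle s_\alpha\rangle$; for each such pair, one element has even length-difference behavior and the two terms $[M(x\lambda)]$ and $[M(xs_\alpha\lambda)] = [M(x\lambda)]$ are equal while the signs $(-1)^{\ell(x)}$ and $(-1)^{\ell(xs_\alpha)}$ are opposite. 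Hence the terms cancel in pairs and $\theta(\lambda) = 0$. For part (3), when $\langle\lambda,\alpha\rangle \in \bbZ\setminus\{0\}$ for all $\alpha\in\Phi_I$, the weight $\lambda$ is $\Phi_I$-regular and $\Phi_I$-integral; standard theory of the (integral) action of $W_I$ gives a unique $w\in W_I$ with $w\lambda \in \Lambda_I^+$ (dominance with respect to $I$ in the appropriate shifted/dot sense, matching the convention where $\Lambda_I^+$ uses strict positivity), and then part (1) yields $\theta(\lambda) = (-1)^{\ell(w)}\theta(w\lambda) = (-1)^{\ell(w)}[M_I(w\lambda)]$, the last equality being the already-cited identity $\theta(\mu) = [M_I(\mu)]$ for $\mu\in\Lambda_I^+$.

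The main obstacle, though it is a mild one, is being careful about conventions: the interplay between the $W_I$-action used in the definition of $\theta$, the ``$\rho$-shifted'' versus unshifted dominance implicit in $\Lambda_I^+$, and the precise relationship between $\ell$ as a function on $W$ restricted to $W_I$ versus the intrinsic length function of $W_I$ (these agree, but this should be invoked rather than assumed silently). One should also make sure that in part (1) the reduction to simple reflections uses the correct notion of length in $W_I$ and that conjugating a root in $\Phi_I$ into $I$ in parts (2) and (3) is legitimate — this is a standard fact about Weyl groups acting transitively enough on roots of a fixed length within an irreducible component, or more simply that every root is $W_I$-conjugate to a simple root. All of this is routine given the machinery of category $\caO$ and Coxeter groups, so I expect the proof to be short; indeed this proposition is essentially a citation-level result, attributed to \cite{J, M1, Ku}, and the argument above is the standard one.
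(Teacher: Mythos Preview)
Your argument is correct and is the standard proof of this fact. The paper does not actually prove this proposition; it merely cites it from \cite{J, M1, Ku}, so there is no alternative approach to compare against. One small point worth tightening: in part~(1) you hesitate between left and right multiplication by $s_\alpha$---the clean version is exactly the second computation you wrote, reindexing via $x\mapsto xs_\alpha$ and using that right multiplication by a simple reflection of $W_I$ flips length parity (the paper explicitly notes that the length function on $W$ restricts to that on $W_I$, so this is legitimate).
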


Define the following sets of roots for $\lambda\in\frh^*$:
\[
\begin{aligned}
\Psi_\lambda^+:&=\{\beta\in\Phi^+\backslash\Phi_I\ |\ \langle\lambda, \beta^\vee\rangle\in\bbZ^{>0}\};\\
\Psi_\lambda^{++}:&=\{\beta\in\Psi_\lambda^+\ |\ \langle s_\beta\lambda,\alpha\rangle\neq0\ \mbox{for all}\ \alpha\in\Phi_I\}.
\end{aligned}
\]

\begin{theorem}[{\cite[Corollar 1]{J}}] \label{jansthm1}
Let $\lambda\in \Lambda_{I}^+$. Then $M_I(\lambda)$ is simple if and only if
\begin{equation}\label{janseq1}
\sum_{\beta\in\Psi_\lambda^+}\theta(s_\beta\lambda)=0.
\end{equation}
\end{theorem}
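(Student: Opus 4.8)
\textbf{Proof proposal for Theorem \ref{jansthm1}.}

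The plan is to derive Jantzen's simplicity criterion from the Jantzen sum formula for generalized Verma modules. First I would recall the Jantzen filtration $M_I(\lambda)=M_I(\lambda)^0\supseteq M_I(\lambda)^1\supseteq\cdots$, a decreasing filtration by submodules in $\caO^\frp$ whose key properties are: (i) $M_I(\lambda)^1$ is the (unique) maximal proper submodule, so $M_I(\lambda)$ is simple if and only if $M_I(\lambda)^1=0$; (ii) the filtration is eventually zero; and (iii) the sum formula
\[
\sum_{i>0}\operatorname{ch} M_I(\lambda)^i=\sum_{\beta\in\Psi_\lambda^+}\operatorname{ch} M_I'(s_\beta\lambda),
\]
where on the right one uses the convention that $\operatorname{ch}$ of the ``formal'' induced object attached to $s_\beta\lambda$ is expressed through the $\theta$-notation, i.e. the right-hand side equals $\sum_{\beta\in\Psi_\lambda^+}\theta(s_\beta\lambda)$ after passing to the Grothendieck group $K(\caO^\frp)$. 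This is the parabolic analogue of Jantzen's sum formula for ordinary Verma modules; it follows from Jantzen's original work and is exactly the input recorded in the excerpt's discussion of the Jantzen coefficients.

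Next I would pass everything to $K(\caO^\frp)$ and observe that characters are linearly independent there, so the equality of formal characters is equivalent to the identity $\sum_{i>0}[M_I(\lambda)^i]=\sum_{\beta\in\Psi_\lambda^+}\theta(s_\beta\lambda)$ in $K(\caO^\frp)$. For the forward direction, if $M_I(\lambda)$ is simple then $M_I(\lambda)^1=0$, hence every $M_I(\lambda)^i=0$ for $i>0$, and the left side vanishes, giving \eqref{janseq1}. For the converse, suppose $\sum_{\beta\in\Psi_\lambda^+}\theta(s_\beta\lambda)=0$. Each $[M_I(\lambda)^i]$ is a nonnegative integer combination of the classes $[L(\mu)]$ of simple modules, so $\sum_{i>0}[M_I(\lambda)^i]=0$ forces each $[M_I(\lambda)^i]=0$, and since a module in $\caO^\frp$ with zero class in the Grothendieck group is zero, we get $M_I(\lambda)^1=0$; by property (i) this means $M_I(\lambda)$ has no proper submodule, i.e. it is simple. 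The hypothesis $\lambda\in\Lambda_I^+$ is used here to guarantee that $M_I(\lambda)$ is a well-defined nonzero object of $\caO^\frp$ with the Jantzen filtration available and with a unique maximal submodule.

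The main obstacle is establishing property (iii), the parabolic Jantzen sum formula, in precisely the form $\sum_{i>0}\operatorname{ch}M_I(\lambda)^i=\sum_{\beta\in\Psi_\lambda^+}\theta(s_\beta\lambda)$: one must construct a contravariant form on $M_I(\lambda)$ depending on a deformation parameter, compute the order of vanishing of its determinant on each weight space via the corresponding computation for ordinary Verma modules restricted along $U(\frg)\otimes_{U(\frp)}F(\lambda-\rho)$, and then reorganize the resulting sum over $\Phi^+\setminus\Phi_I$ (the reflections through $\Phi_I$ cancel against the finite-dimensional factor $F(\lambda-\rho)$, which is why only $\Psi_\lambda^+$ survives and why the contribution is packaged as $\theta(s_\beta\lambda)$ rather than $[M(s_\beta\lambda)]$). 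Since this is Jantzen's theorem and is cited as \cite[Corollar 1]{J}, in the paper itself one would simply invoke it; the argument above then reduces the simplicity statement to the elementary Grothendieck-group manipulation, using Proposition \ref{jansprop1} only implicitly through the definition of $\theta$. I would also note the minor point that $\theta(s_\beta\lambda)=0$ exactly when $\beta\notin\Psi_\lambda^{++}$, so \eqref{janseq1} is really a statement about the finite sum over $\Psi_\lambda^{++}$, but this refinement is not needed for the equivalence itself.
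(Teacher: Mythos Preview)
The paper does not give its own proof of Theorem~\ref{jansthm1}; it is stated as a citation of \cite[Corollar~1]{J} and used as a black box. Your outline---derive the criterion from the parabolic Jantzen filtration and sum formula, then use the nonnegativity of $\sum_{i>0}[M_I(\lambda)^i]$ in the simple basis to conclude that the sum vanishes iff $M_I(\lambda)^1=0$---is correct and is essentially Jantzen's original argument, so there is nothing to compare against beyond noting that you have reconstructed the cited result rather than something new the paper does.
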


With Proposition \ref{jansprop1}, we can rewrite Theorem \ref{jansthm1}.

\begin{cor}\label{janscor1}
Let $\lambda\in \Lambda_{I}^+$. Then $M_I(\lambda)$ is simple if and only if
\begin{equation*}
\sum_{\beta\in\Psi_\lambda^{++}}\theta(s_\beta\lambda)=0.
\end{equation*}
\end{cor}

\subsection{Reduction lemmas} We will present four reduction lemmas in this subsection. Three of them can be found in \cite{J}. The last one will be proved in the next section. As mentioned in 2.4, we write $M_I(\lambda)=M(\lambda, \Phi_I, \Phi)$ when we need to deal with generalized Verma modules associated with different pairs $(\Phi_I, \Phi)$ in this paper. In a similar spirit, we write $W=W(\Phi)$ and $\caO_\lambda^{\frp_I}=\caO(\lambda, \Phi_I, \Phi)$ if needed. For $\lambda\in\frh^*$, define
\[
\begin{aligned}
\Phi_{[\lambda]}:=&\{\alpha\in\Phi\mid\langle\lambda, \alpha^\vee\rangle\in\bbZ\}.\\
W_{[\lambda]}:=&\{w\in W\mid w\lambda-\lambda\in\sum_{\alpha\in\Delta}\bbZ\alpha\}.
\end{aligned}
\]

\begin{theorem}[{\cite[Theorem 3.4]{H3}}]\label{redlthm1}
Let $\lambda\in\frh^*$. Then
\begin{itemize}
  \item [(1)] $\Phi_{[\lambda]}$ is a subsystem of $\Phi$.
  \item [(2)] $W_{[\lambda]}$ is the Weyl group of $\Phi_{[\lambda]}$, that is, $W_{[\lambda]}=W(\Phi_{[\lambda]})$.
\end{itemize}
\end{theorem}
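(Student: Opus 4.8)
The plan is to verify the root-system axioms for $\Phi_{[\lambda]}$ inside $V_{[\lambda]}:=\bbR\Phi_{[\lambda]}$. Finiteness, $0\notin\Phi_{[\lambda]}$, and reducedness are inherited from $\Phi$ (note $-\alpha\in\Phi_{[\lambda]}$ whenever $\alpha\in\Phi_{[\lambda]}$), and $\langle\beta,\alpha^\vee\rangle\in\bbZ$ for $\alpha,\beta\in\Phi_{[\lambda]}$ because $\alpha,\beta\in\Phi$. The only real point is stability under reflections: for $\alpha,\beta\in\Phi_{[\lambda]}$, using $(s_\alpha\beta)^\vee=s_\alpha(\beta^\vee)$ together with the self-adjointness of $s_\alpha$ (both standard),
\[
\langle\lambda,(s_\alpha\beta)^\vee\rangle=\langle s_\alpha\lambda,\beta^\vee\rangle=\langle\lambda,\beta^\vee\rangle-\langle\lambda,\alpha^\vee\rangle\langle\alpha,\beta^\vee\rangle\in\bbZ ,
\]
so $s_\alpha\beta\in\Phi_{[\lambda]}$. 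Hence $\Phi_{[\lambda]}$ is a root system and $W(\Phi_{[\lambda]})=\langle s_\alpha\mid\alpha\in\Phi_{[\lambda]}\rangle$ is its Weyl group.

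\textbf{Part (2), easy inclusion and preliminaries.} Write $Q:=\sum_{\alpha\in\Delta}\bbZ\alpha$, a $W$-stable lattice. Then $W_{[\lambda]}$ is a subgroup of $W$: for $u,v\in W_{[\lambda]}$, $uv\lambda-\lambda=u(v\lambda-\lambda)+(u\lambda-\lambda)\in uQ+Q=Q$, and likewise $u^{-1}\in W_{[\lambda]}$. Each $s_\alpha$ with $\alpha\in\Phi_{[\lambda]}$ lies in $W_{[\lambda]}$, since $s_\alpha\lambda-\lambda=-\langle\lambda,\alpha^\vee\rangle\alpha\in\bbZ\alpha\subseteq Q$, so $W(\Phi_{[\lambda]})\subseteq W_{[\lambda]}$. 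Also $W_{[\lambda]}$ stabilizes $\Phi_{[\lambda]}$: for $w\in W_{[\lambda]}$ and $\alpha\in\Phi_{[\lambda]}$,
\[
\langle\lambda,(w\alpha)^\vee\rangle=\langle w^{-1}\lambda,\alpha^\vee\rangle=\langle\lambda,\alpha^\vee\rangle+\langle w^{-1}\lambda-\lambda,\alpha^\vee\rangle\in\bbZ
\]
because $w^{-1}\lambda-\lambda\in Q$ and $\langle Q,\alpha^\vee\rangle\subseteq\bbZ$. Finally, every root is primitive in $Q$ (each is $W$-conjugate to a simple root, and the simple roots form a $\bbZ$-basis of $Q$), so for $\alpha\in\Phi$ one has $s_\alpha\in W_{[\lambda]}\Longleftrightarrow\langle\lambda,\alpha^\vee\rangle\alpha\in Q\Longleftrightarrow\langle\lambda,\alpha^\vee\rangle\in\bbZ\Longleftrightarrow\alpha\in\Phi_{[\lambda]}$ (the middle step by primitivity). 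Thus the reflections contained in $W_{[\lambda]}$ are exactly those of $W(\Phi_{[\lambda]})$.

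\textbf{Part (2), hard inclusion.} It remains to show $W_{[\lambda]}\subseteq W(\Phi_{[\lambda]})$, equivalently that $W_{[\lambda]}$ is generated by the reflections it contains; I would prove this by descent on $\ell(w)$. Fix the positive system $\Phi_{[\lambda]}^+:=\Phi_{[\lambda]}\cap\Phi^+$ of $\Phi_{[\lambda]}$ (it is additively closed in $\Phi_{[\lambda]}$, with complement $-\Phi_{[\lambda]}^+$). For $e\ne w\in W_{[\lambda]}$ it is enough to exhibit $\alpha\in\Phi_{[\lambda]}^+$ with $w^{-1}\alpha\in-\Phi^+$: then $\ell(s_\alpha w)<\ell(w)$ and $s_\alpha w\in W_{[\lambda]}$ (as $s_\alpha\in W(\Phi_{[\lambda]})\subseteq W_{[\lambda]}$), whence $s_\alpha w\in W(\Phi_{[\lambda]})$ by induction and $w\in W(\Phi_{[\lambda]})$. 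If no such $\alpha$ exists, then $w^{-1}\Phi_{[\lambda]}^+\subseteq\Phi^+$, and since $w^{-1}$ permutes $\Phi_{[\lambda]}$ this forces $w^{-1}\Phi_{[\lambda]}^+=\Phi_{[\lambda]}^+$. So the whole matter comes down to the statement: every $w\in W_{[\lambda]}$ with $w\Phi_{[\lambda]}^+=\Phi_{[\lambda]}^+$ is the identity.

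\emph{This last step is the main obstacle.} Such a $w$ permutes the simple roots of $\Phi_{[\lambda]}$ and acts on $\bbR\Phi_{[\lambda]}$ by a Dynkin-diagram automorphism; excluding a nontrivial one is precisely where the hypothesis $w\lambda-\lambda\in Q$ (as opposed to merely ``$w$ permutes $\Phi_{[\lambda]}$'') is essential, since any lift to $W$ of a nontrivial diagram automorphism must move some root of $\Phi^+\setminus\Phi_{[\lambda]}$ --- on which $\langle\lambda,-\rangle$ is non-integral --- incompatibly with $w\lambda-\lambda\in Q$. Once one knows such a $w$ fixes $\lambda$, it follows that $w\in\mathrm{Stab}_W(\lambda)=W(\Phi_\lambda)\subseteq W(\Phi_{[\lambda]})$, and then $w=e$ because $w$ fixes $\Phi_{[\lambda]}^+$, contradicting $\ell(w)>0$. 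Carrying out this combinatorial core is the substance of \cite[Theorem 3.4]{H3} (cf.\ \cite{J}); a convenient preliminary move is to replace $\lambda$ by $u\lambda$ for a suitable $u\in W(\Phi_{[\lambda]})$ --- which alters neither $\Phi_{[\lambda]}$ nor $W_{[\lambda]}$ (one checks $\Phi_{[u\lambda]}=\Phi_{[\lambda]}$ and $W_{[u\lambda]}=W_{[\lambda]}$) --- so as to make $\lambda$ dominant for $\Phi_{[\lambda]}^+$.
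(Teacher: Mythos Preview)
The paper does not prove this theorem; it is quoted from \cite[Theorem~3.4]{H3} without argument. So there is nothing in the paper to compare against, and I assess your proposal on its own terms.

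Part~(1) and the easy inclusion $W(\Phi_{[\lambda]})\subseteq W_{[\lambda]}$ are correct and cleanly done. For the hard inclusion your descent strategy is the standard one, and you correctly isolate the crux: an element $w\in W_{[\lambda]}$ with $w\Phi_{[\lambda]}^+=\Phi_{[\lambda]}^+$ must be trivial. But this is exactly where the proposal stops being a proof. The sentence ``any lift to $W$ of a nontrivial diagram automorphism must move some root of $\Phi^+\setminus\Phi_{[\lambda]}$ \dots\ incompatibly with $w\lambda-\lambda\in Q$'' is a heuristic, not an argument: merely moving such a root does not by itself contradict $w\lambda-\lambda\in Q$, and you have not deduced that $w$ fixes $\lambda$. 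You then explicitly defer the ``combinatorial core'' to \cite{H3}, so what you have is an outline that reduces the theorem to its hardest step and then cites that step away.

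One further caution on internal logic: you invoke $\mathrm{Stab}_W(\lambda)=W(\Phi_\lambda)$ for a general (possibly non-integral) $\lambda$. In this paper that identity appears as Lemma~\ref{weyllem1}, whose proof \emph{uses} the present theorem. So within the paper's order of dependencies your appeal to it is circular. The identity can of course be proved independently (decompose $\lambda$ into real and imaginary parts and apply \cite[Theorem~1.12(c)]{H2} twice), but if you rely on it here you must say so and supply that independent justification.
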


If $\lambda\in\Lambda_I^+$, then $\Phi_I\subset\Phi_{[\lambda]}$ and $I\subset\Delta_{[\lambda]}$, where $\Delta_{[\lambda]}$ is the simple system corresponding to $\Phi_{[\lambda]}^+=\Phi_{[\lambda]}\cap\Phi^+$. If $\Phi'$ is a subsystem of $\Phi$, there exists a unique weight $\lambda|_{\Phi'}$ in the subspace $\bbC\Phi'$ so that
\begin{equation}\label{reseq1}
\langle\lambda|_{\Phi'}, \alpha^\vee\rangle=\langle\lambda, \alpha^\vee\rangle
\end{equation}
for all $\alpha\in\Phi'$.

\begin{lemma}[Integral reduction]\label{redllem1}
Let $\lambda\in\Lambda_I^+$. Then $M_I(\lambda)$ is simple if and only if  $M(\lambda|_{\Phi_{[\lambda]}}, \Phi_I, \Phi_{[\lambda]})$ is simple $($in the category $\caO(\lambda|_{\Phi_{[\lambda]}}, \Phi_I, \Phi_{[\lambda]}))$.
\end{lemma}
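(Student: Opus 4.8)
The plan is to reduce the simplicity of $M_I(\lambda)$ over the root system $\Phi$ to the simplicity of a generalized Verma module over the integral root system $\Phi_{[\lambda]}$, using Jantzen's simplicity criterion (Theorem \ref{jansthm1}) on both sides and matching the two sum formulas term by term. The starting observation is that both $\Psi_\lambda^+$ as computed inside $\Phi$ and the analogous set computed inside $\Phi_{[\lambda]}$ consist of exactly the same roots: by definition $\Psi_\lambda^+ = \{\beta\in\Phi^+\setminus\Phi_I \mid \langle\lambda,\beta^\vee\rangle\in\bbZ^{>0}\}$, and any such $\beta$ automatically lies in $\Phi_{[\lambda]}$, while conversely $\Phi_{[\lambda]}^+\setminus\Phi_I = (\Phi^+\setminus\Phi_I)\cap\Phi_{[\lambda]}$. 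So the index sets of the two sums in \eqref{janseq1} agree, and the real content is that the corresponding terms $\theta(s_\beta\lambda)$ (an element of $K(\caO^{\frp_I})$ over $\Phi$) and $\theta(s_\beta(\lambda|_{\Phi_{[\lambda]}}))$ (over $\Phi_{[\lambda]}$) carry the same information about vanishing.

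First I would pin down the dictionary between $K(\caO^{\frp_I})$ for $\Phi$ and for $\Phi_{[\lambda]}$ on the relevant blocks. The key point is that $\caO_\lambda^{\frp_I}$ depends only on the integral root system: a Verma module $M(w\lambda)$ for $w\in W_I$ (or more generally $w\in W_{[\lambda]}$) lies in a block governed by $\Phi_{[\lambda]}$, and the map $M(\nu)\mapsto M(\nu|_{\Phi_{[\lambda]}})$, respectively $M_I(\nu)\mapsto M(\nu|_{\Phi_{[\lambda]}},\Phi_I,\Phi_{[\lambda]})$, induces an isomorphism of Grothendieck groups of the corresponding blocks; this is essentially the standard fact that category $\caO$ (and $\caO^\frp$) decomposes according to integral Weyl groups, together with Proposition \ref{jansprop1}. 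Next I would check that $\theta$ is compatible with restriction: for $\nu\in\frh^*$ with $\Phi_I\subset\Phi_{[\nu]}\subset\Phi_{[\lambda]}$, one has $\theta_\Phi(\nu)\mapsto \theta_{\Phi_{[\lambda]}}(\nu|_{\Phi_{[\lambda]}})$ under this isomorphism, because $W_I$ and the length function on $W_I$ are intrinsic to $\Phi_I$ and unchanged by passing to $\Phi_{[\lambda]}$, and because $s_\beta\lambda$ differs from $\lambda$ by an integer multiple of $\beta\in\Phi_{[\lambda]}$ so that $(s_\beta\lambda)|_{\Phi_{[\lambda]}} = s_\beta(\lambda|_{\Phi_{[\lambda]}})$. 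Applying this with $\nu=s_\beta\lambda$ for each $\beta\in\Psi_\lambda^+$, the sum $\sum_{\beta\in\Psi_\lambda^+}\theta(s_\beta\lambda)$ over $\Phi$ maps to the sum $\sum_{\beta\in\Psi_\lambda^+}\theta(s_\beta(\lambda|_{\Phi_{[\lambda]}}))$ over $\Phi_{[\lambda]}$ under a group isomorphism, hence one vanishes iff the other does. Theorem \ref{jansthm1} applied on each side then finishes the argument; one should also note $\lambda|_{\Phi_{[\lambda]}}\in\Lambda^+(\Phi_I,\Phi_{[\lambda]})$ since $\langle\lambda|_{\Phi_{[\lambda]}},\alpha^\vee\rangle = \langle\lambda,\alpha^\vee\rangle\in\bbZ^{>0}$ for $\alpha\in I$, so the criterion is applicable there.

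The main obstacle I anticipate is making the identification of Grothendieck groups (and the compatibility of $\theta$) rigorous rather than hand-wavy — in particular being careful that $\theta(s_\beta\lambda)$ might involve Verma modules $M(ws_\beta\lambda)$ whose highest weights are only $\Phi_{[\lambda]}$-integral, not genuinely integral, so one is really working with the block decomposition of $\caO$ indexed by $W_{[\lambda]}$-orbits, and one must know that the change of root system $\Phi\rightsquigarrow\Phi_{[\lambda]}$ identifies the relevant blocks of $\caO^{\frp_I}$ compatibly with the classes $[M_I(\mu)]$. This is standard (it underlies the whole philosophy that $\caO_\lambda$ only sees $\Phi_{[\lambda]}$), and since the lemma is attributed to Jantzen, I would either cite the relevant statement in \cite{J} or in \cite{H3} directly, or else spell out the Grothendieck-group isomorphism via the bijection on simple/Verma objects within a fixed central character and verify it respects the $\theta$'s. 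A minor secondary point to handle cleanly is the phrase "in the category $\caO(\lambda|_{\Phi_{[\lambda]}},\Phi_I,\Phi_{[\lambda]})$": since $\lambda|_{\Phi_{[\lambda]}}$ is integral for $\Phi_{[\lambda]}$, simplicity of $M(\lambda|_{\Phi_{[\lambda]}},\Phi_I,\Phi_{[\lambda]})$ as a $\frg(\Phi_{[\lambda]})$-module is the same whether tested in that block or in the ambient category, so no genuine subtlety arises there — but I would remark on it to keep the statement self-consistent.
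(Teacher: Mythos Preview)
Your proposal is correct and matches the paper's indicated approach: the paper does not give a detailed proof but simply remarks that the lemma is a consequence of Theorem~\ref{jansthm1} (Jantzen's criterion), citing \cite{J}, and notes it can alternatively be deduced from Soergel's category equivalence \cite{So}. Your argument spells out exactly the first of these routes---matching the index sets $\Psi_\lambda^+$ on both sides and transporting the sum $\sum_\beta \theta(s_\beta\lambda)$ through the restriction $\nu\mapsto\nu|_{\Phi_{[\lambda]}}$---which is precisely what ``a consequence of Theorem~\ref{jansthm1}'' means here.
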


As a consequence of Theorem \ref{jansthm1}, this result was given in \cite{J} (see the remark after Corollar 4). It also can be deduced from the category equivalence obtained in \cite{So}. If $\beta\in\Phi$, let $\Phi_{\beta, 0}$ be the irreducible component of $\Phi$ with $\beta\in\Phi_{\beta, 0}$. Theorem \ref{jansthm1} also implies the following result (see the remark after Corollar 4 in \cite{J}).

\begin{lemma}[Irreducible reduction]\label{redllem2}
Let $\lambda\in\Lambda_I^+$. Then $M_I(\lambda)$ is simple if and only if  $M(\lambda|_{\Phi_{\beta,0}}, \Phi_I\cap\Phi_{\beta,0}, \Phi_{\beta,0})$ is simple $($in $\caO(\lambda|_{\Phi_{\beta,0}}, \Phi_I\cap\Phi_{\beta,0}, \Phi_{\beta,0})$$)$ for all $\beta\in\Psi_\lambda^+$.
\end{lemma}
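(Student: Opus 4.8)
\textbf{Proof plan for the Irreducible reduction (Lemma \ref{redllem2}).}

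The plan is to derive this from Jantzen's simplicity criterion (Theorem \ref{jansthm1}) by splitting the sum $\sum_{\beta\in\Psi_\lambda^+}\theta(s_\beta\lambda)$ according to which irreducible component of $\Phi$ the root $\beta$ lies in. First I would decompose $\Phi=\Phi^{(1)}\sqcup\cdots\sqcup\Phi^{(r)}$ into irreducible components, with $\Delta=\Delta^{(1)}\sqcup\cdots\sqcup\Delta^{(r)}$, $I=I^{(1)}\sqcup\cdots\sqcup I^{(r)}$ where $I^{(t)}=I\cap\Delta^{(t)}$, and $W=W^{(1)}\times\cdots\times W^{(r)}$, $W_I=W_I^{(1)}\times\cdots\times W_I^{(r)}$. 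Correspondingly, for $\beta\in\Psi_\lambda^+$ we have $\beta\in\Phi^{(t)}$ for exactly one $t$, so $\Psi_\lambda^+$ is the disjoint union of the sets $\Psi_\lambda^+\cap\Phi^{(t)}$; I would observe that $\Psi_\lambda^+\cap\Phi^{(t)}=\Psi^+(\lambda|_{\Phi^{(t)}}; \Phi_I\cap\Phi^{(t)}, \Phi^{(t)})$ since the coroot pairings $\langle\lambda,\beta^\vee\rangle$ depend only on the component containing $\beta$, and $\beta\notin\Phi_I$ iff $\beta\notin\Phi_I\cap\Phi^{(t)}$.

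Next I would show the sum $\sum_{\beta\in\Psi_\lambda^+}\theta(s_\beta\lambda)$ in $K(\caO^\frp)$ splits compatibly with the tensor-product (or ``external'') structure of the category coming from $\frg=\frg^{(1)}\oplus\cdots\oplus\frg^{(r)}$. Concretely, write $\lambda=\sum_t\lambda^{(t)}$ with $\lambda^{(t)}$ supported on $\Phi^{(t)}$; then a Verma module $M(w\lambda)$ factors as an outer tensor product $M(w^{(1)}\lambda^{(1)})\boxtimes\cdots\boxtimes M(w^{(r)}\lambda^{(r)})$, and since $W_I=\prod W_I^{(t)}$ and $\ell(w)=\sum\ell(w^{(t)})$, the element $\theta(\mu)$ of Definition \ref{jansdef1} factors as $\theta(\mu)=\theta^{(1)}(\mu^{(1)})\otimes\cdots\otimes\theta^{(r)}(\mu^{(r)})$, where $\theta^{(t)}$ is the analogous element for $(\Phi_I\cap\Phi^{(t)},\Phi^{(t)})$. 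For $\beta\in\Phi^{(t)}$ one has $s_\beta$ acting trivially on $\lambda^{(u)}$ for $u\ne t$, so $\theta(s_\beta\lambda)=\theta^{(1)}(\lambda^{(1)})\otimes\cdots\otimes\theta^{(t)}(s_\beta\lambda^{(t)})\otimes\cdots\otimes\theta^{(r)}(\lambda^{(r)})$. Since each $\lambda^{(u)}\in\Lambda^+(\Phi_I\cap\Phi^{(u)},\Phi^{(u)})$, the factor $\theta^{(u)}(\lambda^{(u)})=[M(\lambda|_{\Phi^{(u)}},\Phi_I\cap\Phi^{(u)},\Phi^{(u)})]$ is a nonzero basis element. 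Summing over $\beta$ first within each component and then over components, and using that $K(\caO^\frp)$ is the tensor product of the $K(\caO^{\frp^{(t)}})$ with the products of basis elements forming a basis, I would conclude that $\sum_{\beta\in\Psi_\lambda^+}\theta(s_\beta\lambda)=0$ if and only if $\sum_{\beta\in\Psi_\lambda^+\cap\Phi^{(t)}}\theta^{(t)}(s_\beta\lambda^{(t)})=0$ for every $t$. By Theorem \ref{jansthm1} applied in each component, the left side vanishes iff $M_I(\lambda)$ is simple and the $t$-th vanishing holds iff $M(\lambda|_{\Phi^{(t)}},\Phi_I\cap\Phi^{(t)},\Phi^{(t)})$ is simple; finally $\lambda|_{\Phi_{\beta,0}}=\lambda|_{\Phi^{(t)}}$ when $\Phi_{\beta,0}=\Phi^{(t)}$, which reindexes the conclusion over $\beta\in\Psi_\lambda^+$ as stated.

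I expect the only real subtlety to be the bookkeeping of the external tensor factorization of $K(\caO^\frp)$ and the verification that a nonzero element of a tensor product of free abelian groups, written in the product basis, vanishes iff each ``slice'' vanishes; this is elementary linear algebra over $\bbZ$ but needs to be phrased carefully because the sum runs over $\beta$ in different components simultaneously. A secondary point to check is that the category $\caO(\lambda|_{\Phi_{\beta,0}},\Phi_I\cap\Phi_{\beta,0},\Phi_{\beta,0})$ is indeed the right home for the reduced module, i.e.\ that $\lambda|_{\Phi_{\beta,0}}$ lies in $\Lambda^+(\Phi_I\cap\Phi_{\beta,0},\Phi_{\beta,0})$ and the finite-dimensionality of $F(\lambda-\rho)$ descends to each component; both follow directly from \eqref{reseq1} and the definition of $\Lambda_I^+$. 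Alternatively, as the remark after Corollar 4 in \cite{J} indicates, one may simply quote that simplicity of a module over $\frg=\bigoplus\frg^{(t)}$ is equivalent to simplicity of each tensor factor over $\frg^{(t)}$, together with the fact that $M_I(\lambda)\cong\bigboxtimes_t M(\lambda|_{\Phi^{(t)}},\Phi_I\cap\Phi^{(t)},\Phi^{(t)})$, which bypasses the Grothendieck-group argument entirely; I would present the short module-theoretic argument as the main line and mention the sum-formula derivation as an alternative.
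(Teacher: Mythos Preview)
Your proposal is correct and aligns with the paper's treatment: the paper does not give a detailed proof but simply states that the lemma follows from Theorem~\ref{jansthm1} (Jantzen's simplicity criterion), citing the remark after Corollar~4 in \cite{J}. Your Grothendieck-group argument spells out exactly this derivation, and your alternative module-theoretic route via $M_I(\lambda)\cong\bigotimes_t M(\lambda|_{\Phi^{(t)}},\Phi_I\cap\Phi^{(t)},\Phi^{(t)})$ is an equally valid (and arguably cleaner) justification; both are more detailed than what the paper itself provides.

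One minor point worth making explicit in your write-up: the equivalence
\[
\sum_t A_1\otimes\cdots\otimes B_t\otimes\cdots\otimes A_r=0\ \Longleftrightarrow\ B_t=0\ \text{for all }t
\]
relies on the fact that each $B_t=\sum_{\beta}\theta^{(t)}(s_\beta\lambda^{(t)})$ is supported on classes $[M_{I^{(t)}}(\mu^{(t)})]$ with $\mu^{(t)}<\lambda^{(t)}$, hence contains no $A_t=[M_{I^{(t)}}(\lambda^{(t)})]$ component; this is what makes the different $t$-summands live in disjoint parts of the product basis. You allude to this, but it is the crux of the linear-algebra step and deserves one clear sentence.
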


Define the following two subsystems of $\Phi$:
\[
\begin{aligned}
\Phi_{\beta,1}:=&(\bbQ\Phi_I+\bbQ\beta)\cap\Phi;\\
\Phi_{\beta,2}:=&(\bbQ \Phi_\lambda+\bbQ\beta)\cap\Phi.
\end{aligned}
\]

The following result is a consequence of Satz 3 in \cite{J}.

\begin{lemma}[Parabolic reduction]\label{redllem3}
Let $\lambda\in\Lambda_I^+$. Then $M_I(\lambda)$ is simple if and only if $M(\lambda|_{\Phi_{\beta, 1}}, \Phi_I, \Phi_{\beta, 1})$ is simple $($in $\caO(\lambda|_{\Phi_{\beta, 1}}, \Phi_I, \Phi_{\beta, 1})$$)$ for all $\beta\in\Psi_\lambda^+$.
\end{lemma}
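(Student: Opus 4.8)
The plan is to deduce this from Satz~3 of~\cite{J}, which provides a criterion for the simplicity of $M_I(\lambda)$ in terms of the roots $\beta\in\Psi_\lambda^+$. The essential point is that Jantzen's simplicity criterion (Theorem~\ref{jansthm1}) expresses $[M_I(\lambda)]$ as simple precisely when $\sum_{\beta\in\Psi_\lambda^+}\theta(s_\beta\lambda)=0$ in $K(\caO^\frp)$, and the terms $\theta(s_\beta\lambda)$ depend, for each individual $\beta$, only on data supported in the subsystem $\bbQ\Phi_I+\bbQ\beta$. So the strategy is: first show that $M_I(\lambda)$ simple implies each $M(\lambda|_{\Phi_{\beta,1}},\Phi_I,\Phi_{\beta,1})$ simple by a compatibility/functoriality argument, and then show the converse by assembling the local vanishing statements into the global one.

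First I would verify that the construction is well-posed: given $\lambda\in\Lambda_I^+$ and $\beta\in\Psi_\lambda^+$, the set $\Phi_{\beta,1}=(\bbQ\Phi_I+\bbQ\beta)\cap\Phi$ is a subsystem of $\Phi$ containing $\Phi_I$ and $\beta$, with $\rank\Phi_{\beta,1}=\rank\Phi_I+1$; the restricted weight $\lambda|_{\Phi_{\beta,1}}$ lies in $\Lambda^+(\Phi_I,\Phi_{\beta,1})$ because the pairing against coroots in $\Phi_I$ is unchanged by~\eqref{reseq1}; and $\beta$ lies in $\Psi^+(\lambda|_{\Phi_{\beta,1}},\Phi_I,\Phi_{\beta,1})$ for the same reason. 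Next I would set up the key dictionary: for a root $\gamma\in\Phi^+\setminus\Phi_I$, the element $\theta(s_\gamma\lambda)\in K(\caO^\frp)$ is nonzero only when $s_\gamma\lambda$ is $\Phi_I$-regular, in which case it equals $\pm[M_I(\mu)]$ for the unique $\mu\in\Lambda_I^+\cap W_I s_\gamma\lambda$ (Proposition~\ref{jansprop1}(3)); and such $\mu$ is again a highest weight whose $\Phi_I$-structure is governed entirely by $\Phi_I+\bbQ\gamma\subseteq\Phi_{\gamma,1}$. The content of Satz~3 of~\cite{J} is that $\gamma$ contributes to the failure of simplicity of $M_I(\lambda)$ if and only if it contributes to the failure of simplicity of $M(\lambda|_{\Phi_{\gamma,1}},\Phi_I,\Phi_{\gamma,1})$, and moreover that distinct $\beta$'s can be separated — one reduces to a single $\beta$ at a time because the basis elements $[M_I(\mu)]$ of $K(\caO^\frp)$ appearing with $\mu\le\lambda$ are linearly independent, so cancellation among the $\theta(s_\beta\lambda)$ can only occur within a fixed $\Phi_{\beta,1}$-class.

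For the forward direction, I would argue contrapositively: if some $M(\lambda|_{\Phi_{\beta,1}},\Phi_I,\Phi_{\beta,1})$ is not simple, then by Theorem~\ref{jansthm1} applied inside $\Phi_{\beta,1}$ there is a nonzero $\theta$-sum there; translating back via the dictionary above, this produces a nonzero coefficient $c(\lambda,\mu)$ in~\eqref{inteq2} — because the $\mu$'s arising from $\beta\in\Phi_{\beta,1}$ are precisely those with $\Phi_I\mu$-structure inside $\Phi_{\beta,1}$ and cannot be cancelled by contributions from roots outside $\Phi_{\beta,1}$ — hence $M_I(\lambda)$ is not simple. The converse is the easier inclusion of $\Psi^+$-data: any $\beta$ that witnesses non-simplicity of $M_I(\lambda)$ already witnesses non-simplicity of $M(\lambda|_{\Phi_{\beta,1}},\Phi_I,\Phi_{\beta,1})$ by restricting the sum formula. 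The main obstacle I anticipate is the separation step: verifying rigorously that contributions $\theta(s_\beta\lambda)$ and $\theta(s_\gamma\lambda)$ coming from roots $\beta,\gamma$ in different $\Phi_{\bullet,1}$-classes cannot cancel each other in $K(\caO^\frp)$. This requires tracking which basis vectors $[M_I(\mu)]$ each term hits and using that $\mu\mapsto\Phi_{\mu}$-type data is determined by the rank-one extension $\Phi_I+\bbQ\beta$; once this bookkeeping is in place the lemma follows directly from Satz~3 of~\cite{J} together with Theorem~\ref{jansthm1}.
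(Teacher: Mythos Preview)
Your proposal is essentially correct and matches the paper's approach: the paper simply states that the lemma is a consequence of Satz~3 in~\cite{J} and gives no further argument. Your elaboration---reducing via Theorem~\ref{jansthm1} to showing that contributions $\theta(s_\beta\lambda)$ from roots in different $\Phi_{\bullet,1}$-classes cannot cancel---is exactly the strategy the paper later implements in detail for the singular reduction (Lemma~\ref{redllem4}), and the separation step you flag as the main obstacle is precisely Lemma~\ref{reslem3} (case $i=1$) together with Lemma~\ref{reslem2}, which the paper remarks ``can be used to prove Lemma~\ref{invlem3}.''
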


The last reduction lemma about singularity of $\lambda$ will be proved in the next section.

\begin{lemma}[Singular reduction]\label{redllem4}
Let $\lambda\in\Lambda_I^+$. Then $M_I(\lambda)$ is simple if and only if $M(\lambda|_{\Phi_{\beta, 2}}, \Phi_I\cap \Phi_{\beta, 2}, \Phi_{\beta, 2})$ is simple $($in $\caO(\lambda|_{\Phi_{\beta, 2}}, \Phi_I\cap \Phi_{\beta, 2}, \Phi_{\beta, 2})$$)$ for all $\beta\in\Psi_\lambda^+$.
\end{lemma}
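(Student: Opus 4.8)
The plan is to deduce the singular reduction (Lemma~\ref{redllem4}) from the Jantzen sum formula (Corollary~\ref{janscor1}) together with a "singular invariance" property of the Jantzen coefficients announced in the introduction (Lemma~\ref{invlem4}). The key observation is that the simplicity of $M_I(\lambda)$ is governed by the vanishing of $\sum_{\beta\in\Psi_\lambda^{++}}\theta(s_\beta\lambda)$, and each summand $\theta(s_\beta\lambda)=(-1)^{\ell(w_\beta)}[M_I(w_\beta s_\beta\lambda)]$ is (up to sign) a basis element of $K(\caO^\frp)$ indexed by a distinct $\mu\in\Lambda_I^+$ with $\mu<\lambda$. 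Hence the big sum vanishes if and only if the Jantzen coefficients $c(\lambda,\mu)$ all vanish, and the task is to show that these coefficients can be computed one $\beta$ at a time inside the smaller root system $\Phi_{\beta,2}=(\bbQ\Phi_\lambda+\bbQ\beta)\cap\Phi$.

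First I would set up the dictionary between the two sides. Fix $\lambda\in\Lambda_I^+$ and $\beta\in\Psi_\lambda^{++}$. The point is that $\langle s_\beta\lambda,\alpha\rangle$ depends only on the "singular part" of $\lambda$ relative to $\beta$: more precisely, one checks that $\beta\in\Psi_\lambda^{++}$ forces the relevant reflections and inner products $\langle s_\beta\lambda,\alpha^\vee\rangle$ to be detected inside $\Phi_{\beta,2}$, because $\Phi_\lambda\subset\Phi_{\beta,2}$ and $s_\beta\lambda$ differs from $\lambda$ only along $\beta$. I would verify: (i) $\Phi_{\beta,2}$ is indeed a subsystem containing $\beta$ and $\Phi_\lambda$, and $\Phi_I\cap\Phi_{\beta,2}$ is the corresponding Levi part; (ii) the restriction $\lambda|_{\Phi_{\beta,2}}$ lies in $\Lambda^+(\Phi_I\cap\Phi_{\beta,2},\Phi_{\beta,2})$; (iii) $\beta$ belongs to $\Psi^{++}$ of $\lambda|_{\Phi_{\beta,2}}$ in the smaller system; and (iv) the correspondence $\beta\mapsto\beta$ gives a bijection, or at least a "coefficient-preserving" map, between the nonzero terms on each side. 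Steps (ii)--(iv) are where the combinatorics of $\Phi_\lambda$, $\Phi_I$ and integral/singular root subsystems must be handled carefully, using Theorem~\ref{redlthm1} and the structure of ${}^IW^J$.

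The cleanest route is then to invoke the singular invariance of Jantzen coefficients: for each $\beta\in\Psi_\lambda^{++}$ the coefficient $c(\lambda,w_\beta s_\beta\lambda)$ computed in $(\Phi_I,\Phi)$ equals the analogous Jantzen coefficient computed in $(\Phi_I\cap\Phi_{\beta,2},\Phi_{\beta,2})$ for the pair $(\lambda|_{\Phi_{\beta,2}},\,\cdot\,)$. Granting Lemma~\ref{invlem4}, the chain of equivalences is immediate: $M_I(\lambda)$ simple $\iff$ all $c(\lambda,\mu)=0$ $\iff$ for every $\beta\in\Psi_\lambda^{++}$ the corresponding coefficient in $\Phi_{\beta,2}$ vanishes $\iff$ for every $\beta\in\Psi_\lambda^{+}$ (the extra terms with $\beta\notin\Psi_\lambda^{++}$ contribute $\theta(s_\beta\lambda)=0$ on both sides by Proposition~\ref{jansprop1}(2), hence are harmless) the module $M(\lambda|_{\Phi_{\beta,2}},\Phi_I\cap\Phi_{\beta,2},\Phi_{\beta,2})$ is simple, the last step again by Corollary~\ref{janscor1} applied inside $\Phi_{\beta,2}$. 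One subtlety worth flagging: distinct $\beta,\beta'$ may produce the same $\mu$, so one must argue that the per-$\beta$ contributions do not cancel across different $\beta$; this is handled by the fact that $\theta(s_\beta\lambda)$ and $\theta(s_{\beta'}\lambda)$ land in the same basis vector $[M_I(\mu)]$ only in controlled ways, and the sum-formula bookkeeping in the definition of $c(\lambda,\mu)$ already accounts for it — the invariance lemma is stated precisely so that this matching is respected.

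The main obstacle I anticipate is proving the compatibility in step (iii)--(iv): showing that passing to $\Phi_{\beta,2}$ does not lose or create spurious terms, i.e. that $\beta\in\Psi_\lambda^{++}(\Phi_I,\Phi)$ if and only if $\beta\in\Psi^{++}$ of the restricted weight in the smaller system, and that the Weyl-group element $w_\beta\in W_I$ bringing $s_\beta\lambda$ to $\Lambda_I^+$ can be chosen inside $W(\Phi_I\cap\Phi_{\beta,2})$ with the same length parity. This is exactly the content that makes the singular invariance lemma nontrivial, and it is why the introduction flags the proof as "a little laborious." Everything else — the subsystem structure of $\Phi_{\beta,2}$, the behaviour of $\theta$, the translation into Jantzen coefficients — is formal once that compatibility is in hand. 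So in the write-up I would first prove (or cite) Lemma~\ref{invlem4}, then spend the bulk of the argument on the root-system bookkeeping identifying the $\Psi^{++}$ sets and the reduction of $\theta(s_\beta\lambda)$ across the two systems, and finally assemble the equivalences as above.
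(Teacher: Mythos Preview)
Your proposal is correct and follows essentially the same route as the paper: reduce simplicity to the (non)vanishing of Jantzen coefficients via Corollary~\ref{janscor1}, then transfer each nonzero coefficient between $(\Phi_I,\Phi)$ and $(\Phi_I\cap\Phi_{\beta,2},\Phi_{\beta,2})$ using the singular invariance Lemma~\ref{invlem4}, with the auxiliary compatibility of $\Psi^{++}$-sets supplied by Lemmas~\ref{redslem4} and~\ref{redslem5}. One imprecision to fix: your parenthetical that $\beta\notin\Psi_\lambda^{++}$ is ``harmless because $\theta(s_\beta\lambda)=0$ on both sides'' is not the right reason---even for such $\beta$ the small module $M(\lambda|_{\Phi_{\beta,2}},\ldots)$ can fail to be simple via some other $\gamma\in\Psi_\lambda^+\cap\Phi_{\beta,2}$; the paper closes this gap by invoking Lemma~\ref{redslem2} to get $\Phi_{\gamma,2}=\Phi_{\beta,2}$ and then applying Lemma~\ref{invlem4} with $\gamma$ in place of $\beta$.
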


\begin{remark}\label{redrmk1}
If $\beta\in\Phi\backslash\Phi_I$, it was pointed out in \cite{J} that there exists exactly one $\gamma\in\Phi_{\beta, 1}^+:=\Phi_{\beta, 1}\cap\Phi^+$, so that $I\cup\{\gamma\}$ is a simple system of $\Phi_{\beta, 1}$. Thus $\rank\Phi_{\beta,1}=\rank\Phi_I+1$. If $\lambda\in\Lambda_I^+$ is integral and $\beta\in\Phi\backslash\Phi_\lambda$, there exists $J\subset\Delta$ and $w\in{}^IW^J$ so that $\Phi_\lambda=w\Phi_J$. We can also find $\nu\in w^{-1}\Phi_{\beta, 2}\cap\Phi^+=(\bbQ \Phi_J+\bbQ w^{-1}\beta)\cap\Phi^+$ such that $J\cup\{\nu\}$ is simple system of $w^{-1}\Phi_{\beta, 2}$. Thus $\rank\Phi_{\beta,2}=\rank\Phi_J+1=\rank \Phi_\lambda+1$.
\end{remark}

\subsection{Reduction process} With the above reduction lemmas, we can build a reduction process to verify the simplicity of generalized Verma modules. Fix $\lambda\in\Lambda_I^+$. For $\beta\in\Psi_\lambda^+$, choose a chain of subsystems
\begin{equation*}
\Phi_{[\lambda]}=\Phi_0(\beta)\supset\Phi_1(\beta)\supset\Phi_2(\beta)\supset\cdots\supset\Phi_m(\beta)\supset\cdots
\end{equation*}
such that for $i\in\bbZ^{>0}$,
\begin{equation}\label{redseq1}
\Phi_{i+1}(\beta)
=\left\{\begin{aligned}
&(\Phi_{i}(\beta))_{\beta,0}\qquad\qquad\qquad\quad\ \mbox{if}\ i\equiv0\ (\mathrm{mod}4);\\
&(\Phi_{i}(\beta))_{\beta,1}\qquad\qquad\qquad\quad\ \mbox{if}\ i\equiv1\ (\mathrm{mod}4);\\
&(\Phi_{i}(\beta))_{\beta,0}\qquad\qquad\qquad\quad\ \mbox{if}\ i\equiv2\ (\mathrm{mod}4);\\
&(\Phi_{i}(\beta))_{\beta,2}\qquad\qquad\qquad\quad\ \mbox{if}\ i\equiv3\ (\mathrm{mod}4).
\end{aligned}
\right.
\end{equation}
If $\Phi_i(\beta)$ is not irreducible, then $\rank\ \Phi_{i+2}(\beta)<\rank\ \Phi_{i}(\beta)$. If $\rank(\Phi_I\cap\Phi_i(\beta))<\rank\ \Phi_i(\beta)-1$ or $\rank(\Phi_\lambda\cap\Phi_i(\beta))<\rank\ \Phi_i(\beta)-1$, then $\rank\ \Phi_{i+4}(\beta)<\rank\ \Phi_{i}(\beta)$, we can eventually get $\Phi_k(\beta)=\Phi_{k+1}(\beta)=\cdots$ for some $k\in\bbZ^{>0}$. Denote $\Phi(\beta)=\Phi_k(\beta)$. The following result is evident.

\begin{lemma}\label{redslem1}
Let $\lambda\in\Lambda_I^+$ and $\beta\in\Psi_\lambda^+$. Then $\Phi(\beta)$ is irreducible and $\lambda|_{\Phi(\beta)}$ is an integral weight on $\Phi(\beta)$. Moreover,
\[
\rank(\Phi_I\cap\Phi(\beta))=\rank(\Phi_\lambda\cap\Phi(\beta))=\rank\ \Phi(\beta)-1.
\]
\end{lemma}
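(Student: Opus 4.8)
The plan is to track three quantities along the chain $\Phi_0(\beta)\supset\Phi_1(\beta)\supset\cdots$: whether $\Phi_i(\beta)$ is irreducible, and the two ``corank'' defects $d_I(i):=\rank\Phi_i(\beta)-\rank(\Phi_I\cap\Phi_i(\beta))$ and $d_\lambda(i):=\rank\Phi_i(\beta)-\rank(\Phi_\lambda\cap\Phi_i(\beta))$. First I would observe that $\beta\in\Phi_i(\beta)$ for every $i$ by construction (each operation $(\cdot)_{\beta,0},(\cdot)_{\beta,1},(\cdot)_{\beta,2}$ retains $\beta$), and that $\Phi_\lambda\subset\Phi_{[\lambda]}$, $\Phi_I\subset\Phi_{[\lambda]}$, so both intersections are nonempty and the coranks are well-defined nonnegative integers. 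One also needs $\langle\lambda,\beta^\vee\rangle\neq 0$, i.e.\ $\beta\notin\Phi_\lambda$, which holds since $\beta\in\Psi_\lambda^+$ forces $\langle\lambda,\beta^\vee\rangle\in\bbZ^{>0}$; hence $\rank(\Phi_\lambda\cap\Phi_i(\beta))<\rank\Phi_i(\beta)$, i.e.\ $d_\lambda(i)\geq 1$ always. Similarly $\beta\notin\Phi_I$ gives $d_I(i)\geq 1$ always. So the content of the lemma is that the chain stabilizes and that at the stable value all three defects are exactly minimal: irreducible, $d_I=d_\lambda=1$.

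Next I would analyze one period of four steps. For $i\equiv 1\pmod 4$ we pass to $\Phi_{i+1}=(\Phi_i)_{\beta,1}=(\bbQ(\Phi_I\cap\Phi_i)+\bbQ\beta)\cap\Phi_i$; by Remark \ref{redrmk1} (applied inside the ambient system $\Phi_i(\beta)$ with parabolic data $\Phi_I\cap\Phi_i(\beta)$ and root $\beta$) this has rank exactly $\rank(\Phi_I\cap\Phi_i(\beta))+1$, so after this step $d_I(i+1)=1$; moreover $\Phi_I\cap\Phi_{i+1}(\beta)=\Phi_I\cap\Phi_i(\beta)$ since the former is cut out already. For $i\equiv 3\pmod 4$ the same remark, in its second half (using $\Phi_\lambda$ in place of $\Phi_I$ — note $\lambda|_{\Phi_i(\beta)}$ is integral so the hypotheses apply), gives $d_\lambda(i+1)=1$. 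The even steps $i\equiv 0,2\pmod 4$ replace $\Phi_i$ by its irreducible component $\Phi_{i+1}=(\Phi_i)_{\beta,0}$ containing $\beta$; this makes $\Phi_{i+1}$ irreducible, and it can only decrease (weakly) both $\rank(\Phi_I\cap\cdot)$, $\rank(\Phi_\lambda\cap\cdot)$ and $\rank(\cdot)$, but — and this is the point for the stabilization argument — the key monotonicity facts are: (a) if $\Phi_i(\beta)$ is reducible then $\rank\Phi_{i+2}(\beta)<\rank\Phi_i(\beta)$ (dropping a nonzero irreducible component other than the one containing $\beta$), and (b) if $d_I(i)\geq 2$ or $d_\lambda(i)\geq 2$ then one of the two odd steps in the next period strictly drops the rank, so $\rank\Phi_{i+4}(\beta)<\rank\Phi_i(\beta)$. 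Since ranks are bounded below by $1$ (because $\beta\neq 0$), the chain must stabilize: at the stable system $\Phi(\beta)=\Phi_k(\beta)$ we cannot have a reducible term (else a later rank drop) nor a defect $\geq 2$ in either slot, forcing $\Phi(\beta)$ irreducible and $d_I=d_\lambda=1$, which is exactly $\rank(\Phi_I\cap\Phi(\beta))=\rank(\Phi_\lambda\cap\Phi(\beta))=\rank\Phi(\beta)-1$. Integrality of $\lambda|_{\Phi(\beta)}$ is inherited at every step: $\Phi(\beta)\subset\Phi_{[\lambda]}$, and $\langle\lambda|_{\Phi(\beta)},\alpha^\vee\rangle=\langle\lambda,\alpha^\vee\rangle\in\bbZ$ for $\alpha\in\Phi(\beta)$ by \eqref{reseq1}.

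There is one subtlety I would be careful about, and I expect it to be the main obstacle: verifying that after the stabilization point a term cannot be reducible with its $\beta$-component having strictly smaller $I$- or $\lambda$-corank than $1$ in a way that ``hides'' until much later — in other words, that the combined four-step operation really is eventually idempotent rather than merely weakly rank-nonincreasing. Concretely one must check that once $\Phi_i(\beta)$ is irreducible \emph{and} $d_I(i)=d_\lambda(i)=1$, then all four operations $(\cdot)_{\beta,0},(\cdot)_{\beta,1},(\cdot)_{\beta,0},(\cdot)_{\beta,2}$ act as the identity on it: $(\cdot)_{\beta,0}$ is identity by irreducibility; $(\cdot)_{\beta,1}=(\bbQ(\Phi_I\cap\Phi_i(\beta))+\bbQ\beta)\cap\Phi_i(\beta)$ has rank $\rank(\Phi_I\cap\Phi_i(\beta))+1=\rank\Phi_i(\beta)$, hence (being a rank-full subsystem obtained by intersecting with a rational subspace that is all of $\bbQ\Phi_i(\beta)$) equals $\Phi_i(\beta)$ — here I'd want to confirm that a full-rank subsystem produced this way is the whole system, which follows because $\bbQ(\Phi_I\cap\Phi_i(\beta))+\bbQ\beta$ has dimension $\rank\Phi_i(\beta)$ and therefore equals $\bbQ\Phi_i(\beta)$; likewise $(\cdot)_{\beta,2}$ is the identity by the same dimension count with $\Phi_\lambda$ in place of $\Phi_I$. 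Granting these idempotency checks and the strict-drop statements (a),(b), the lemma follows; the ``evident'' in the paper refers precisely to assembling these elementary rank and dimension bookkeeping facts, and the proof I'd write would just make that bookkeeping explicit.
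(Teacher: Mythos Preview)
Your proposal is correct and follows essentially the same approach as the paper, which does not give a proof but merely states before the lemma the two rank-drop facts (your (a) and (b)) and then declares the result ``evident.'' You have made explicit exactly the bookkeeping the paper leaves implicit: the lower bounds $d_I,d_\lambda\geq 1$ from $\beta\notin\Phi_I,\Phi_\lambda$, the use of Remark~\ref{redrmk1} to pin the coranks to $1$ after the odd steps, the idempotency check at the stable term, and the integrality via $\Phi(\beta)\subset\Phi_{[\lambda]}$ together with \eqref{reseq1}.
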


\begin{remark}
One might ask whether we can get a different $\Phi(\beta)$ if we choose another process of reduction. We leave it to the reader since we do not need this result in the present paper.
\end{remark}

\begin{definition}\label{basdef}
Let $\Phi$ be an irreducible root system and $\lambda\in\Lambda_I^+$ be integral. We say $M_I(\lambda)$ is a \emph{basic generalized Verma module} if $\rank\ \Phi_I=\rank\ \Phi_\lambda=\rank\ \Phi-1$. The weight $\lambda$ is called a \emph{basic weight} associated with $(\Phi_I, \Phi)$.
\end{definition}

Lemma \ref{redslem1} shows that $M(\lambda|_{\Phi(\beta)}, \Phi_I\cap \Phi(\beta), \Phi(\beta))$ is a basic generalized Verma module and $\lambda|_{\Phi(\beta)}$ is a basic weight associated with $(\Phi_I\cap \Phi(\beta), \Phi(\beta))$. Moreover, we have the following simplicity criteria of generalized Verma modules.

\begin{theorem}\label{redsthm1}
Let $\lambda\in\Lambda_I^+$. The following three conditions are equivalent:
\begin{itemize}
\item [(\rmnum{1})] $M_I(\lambda)$ is simple;
\item [(\rmnum{2})] $M(\lambda|_{\Phi(\beta)}, \Phi_I\cap \Phi(\beta), \Phi(\beta))$ is simple for all $\beta\in\Psi_\lambda^+$;
\item [(\rmnum{3})] $M(\lambda|_{\Phi(\beta)}, \Phi_I\cap \Phi(\beta), \Phi(\beta))$ is simple for all $\beta\in\Psi_\lambda^{++}$.
\end{itemize}
\end{theorem}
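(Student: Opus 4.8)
\textbf{Proof plan for Theorem \ref{redsthm1}.} The plan is to derive the equivalence of (\rmnum{1}), (\rmnum{2}), (\rmnum{3}) by iterating the four reduction lemmas along the chain $\Phi_0(\beta)\supset\Phi_1(\beta)\supset\cdots$ constructed in \eqref{redseq1}, using the fact that the chain stabilizes at $\Phi(\beta)$ after finitely many steps. First I would observe that $(\rmnum{3})\Rightarrow(\rmnum{2})$ is a formal consequence of the irreducible reduction (Lemma \ref{redllem2}): for $\beta\in\Psi_\lambda^+\setminus\Psi_\lambda^{++}$ the relevant module has $\Phi_I\cap\Phi_{\beta,0}$ of the same rank as $\Phi_{\beta,0}$ forcing triviality, so these $\beta$ contribute nothing new; more carefully, I would argue that the set of $\Phi(\beta)$ obtained as $\beta$ ranges over $\Psi_\lambda^{++}$ already detects all failures of simplicity, matching Corollary \ref{janscor1}. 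Then $(\rmnum{2})\Rightarrow(\rmnum{3})$ is immediate since $\Psi_\lambda^{++}\subseteq\Psi_\lambda^+$.

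For the core equivalence $(\rmnum{1})\Leftrightarrow(\rmnum{2})$, I would proceed by induction on $\rank\Phi$ (or on $\rank\Phi-\rank\Phi_I$, whichever makes the bookkeeping cleaner). Applying the integral reduction (Lemma \ref{redllem1}) first replaces $\Phi$ by $\Phi_{[\lambda]}=\Phi_0(\beta)$ without changing simplicity. Then I would apply the reductions in the cyclic order dictated by \eqref{redseq1}: irreducible reduction (Lemma \ref{redllem2}) to pass to $\Phi_{\beta,0}$ when $i\equiv0,2\pmod 4$, parabolic reduction (Lemma \ref{redllem3}) to pass to $\Phi_{\beta,1}$ when $i\equiv1\pmod 4$, and singular reduction (Lemma \ref{redllem4}) to pass to $\Phi_{\beta,2}$ when $i\equiv3\pmod 4$. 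Each application preserves the "simple if and only if simple for all relevant $\beta$" statement, and Remark \ref{redrmk1} together with the rank-drop observations in the paragraph before Lemma \ref{redslem1} guarantees termination: once $\Phi_i(\beta)$ is irreducible and both $\Phi_I$ and $\Phi_\lambda$ have corank one inside it, all four operations fix it, so $\Phi_i(\beta)=\Phi(\beta)$. At the stable stage, $M(\lambda|_{\Phi(\beta)},\Phi_I\cap\Phi(\beta),\Phi(\beta))$ is by Lemma \ref{redslem1} a basic generalized Verma module, and the chain of equivalences yields exactly (\rmnum{2}).

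The one subtlety I would be careful about is that the four lemmas reduce simplicity of a \emph{single} module to simplicity of a \emph{family} indexed by $\beta$, so iterating them produces a family indexed by sequences of roots; I would need to check that at each stage the new index running over $\Psi^+$ of the smaller system, when composed with the previous indices, can be absorbed into a single run over the original $\Psi_\lambda^+$ (because $\Psi^+$ of a subsystem $\Phi'\ni\beta$ containing $\beta$ always contains $\beta|_{\Phi'}$, and one may keep tracking the same $\beta$). This is the step where a clean statement of how the $\beta$'s propagate through the chain is needed; it is essentially formal once one notes that $\beta\in\Phi_i(\beta)\setminus(\Phi_I\cap\Phi_i(\beta))$ at every stage by construction (each of $\Phi_{\beta,0},\Phi_{\beta,1},\Phi_{\beta,2}$ contains $\beta$), so the "same $\beta$" can be carried all the way down. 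I expect this propagation bookkeeping — rather than any single lemma application — to be the main (though routine) obstacle; the heavy lifting has already been done in establishing the singular reduction (Lemma \ref{redllem4}) and the termination of \eqref{redseq1}.
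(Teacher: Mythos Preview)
Your overall architecture for $(\rmnum{1})\Leftrightarrow(\rmnum{2})$ matches the paper's: iterate the four reduction lemmas along the chain \eqref{redseq1} and use termination. You also correctly identify the one real subtlety, namely that iterating produces a family indexed by sequences of roots and you must collapse this back to a single pass over $\Psi_\lambda^+$. However, your proposed fix---``keep tracking the same $\beta$''---only handles one inclusion. It shows that for each $\beta\in\Psi_\lambda^+$ the module attached to $\Phi(\beta)$ eventually appears in the family, but it does \emph{not} show the converse: when you apply a reduction lemma to $M(\lambda|_{\Phi_k(\beta)},\Phi_I\cap\Phi_k(\beta),\Phi_k(\beta))$ you generate modules indexed by all $\gamma\in\Psi_\lambda^+\cap\Phi_k(\beta)$ (this identification is Lemma~\ref{redslem5}), and the subsystem you land in is $(\Phi_k(\beta))_{\gamma,i}$, not a priori $\Phi_{k+1}(\gamma)$. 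The missing ingredient is Lemma~\ref{redslem3}: if $\gamma\in\Psi_\lambda^+\cap\Phi_k(\beta)$ then $\Phi_k(\gamma)=\Phi_k(\beta)$, whence $(\Phi_k(\beta))_{\gamma,i}=(\Phi_k(\gamma))_{\gamma,i}=\Phi_{k+1}(\gamma)$. This is what collapses the doubly-indexed family to a single index $\gamma\in\Psi_\lambda^+$ at stage $k+1$, and it is not a consequence of ``$\beta$ stays in $\Phi_i(\beta)$.''

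Your argument for $(\rmnum{3})\Rightarrow(\rmnum{2})$ is incorrect as written. The claim that for $\beta\in\Psi_\lambda^+\setminus\Psi_\lambda^{++}$ one has $\rank(\Phi_I\cap\Phi_{\beta,0})=\rank\Phi_{\beta,0}$ is false: membership in $\Psi_\lambda^+\setminus\Psi_\lambda^{++}$ only says $s_\beta\lambda$ is $\Phi_I$-singular, which has nothing to do with ranks of components. The paper's argument instead goes: for such $\beta$, look at $\Psi^{++}(\lambda|_{\Phi(\beta)},\Phi_I\cap\Phi(\beta),\Phi(\beta))$. If it is empty, Corollary~\ref{janscor1} gives simplicity directly. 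If it contains some $\gamma$, then by Lemma~\ref{redslem5} one has $\gamma\in\Psi_\lambda^{++}$, and by Lemma~\ref{redslem3} one has $\Phi(\gamma)=\Phi(\beta)$, so simplicity follows from hypothesis $(\rmnum{3})$. Again Lemmas~\ref{redslem3} and~\ref{redslem5} are doing the essential work, and neither is cited in your proposal.
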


\begin{remark}
The above theorem makes the classification of basic generalized Verma modules necessary for our argument. This will be done in section 5.
\end{remark}

We need several lemmas to prove Theorem \ref{redsthm1}.

\begin{lemma}\label{redslem2}
Let $\lambda\in\Lambda_I^+$ and $\beta, \gamma\in\Psi_\lambda^+$. if $\gamma\in\Phi_{\beta, i}$ for some $i\in\{0, 1, 2\}$, then $\Phi_{\beta, i}=\Phi_{\gamma, i}$.
\end{lemma}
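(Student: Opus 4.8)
The plan is to dispose of the case $i=0$ at once and then handle $i=1$ and $i=2$ by one uniform argument, exploiting the fact that $\Phi_{\beta,1}=(\bbQ\Phi_I+\bbQ\beta)\cap\Phi$ and $\Phi_{\beta,2}=(\bbQ\Phi_\lambda+\bbQ\beta)\cap\Phi$ have the same shape, with $\Phi_I$ (resp. $\Phi_\lambda$) as the ambient subsystem. When $i=0$, $\Phi_{\beta,0}$ and $\Phi_{\gamma,0}$ are by definition the irreducible components of $\Phi$ through $\beta$ and through $\gamma$; since distinct irreducible components of a root system are disjoint, $\gamma\in\Phi_{\beta,0}$ immediately forces $\Phi_{\gamma,0}=\Phi_{\beta,0}$, so there is nothing more to do in that case.

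For $i\in\{1,2\}$, I would set $\Phi':=\Phi_I$ if $i=1$ and $\Phi':=\Phi_\lambda$ if $i=2$, so that $\Phi_{\beta,i}=(\bbQ\Phi'+\bbQ\beta)\cap\Phi$ and $\Phi_{\gamma,i}=(\bbQ\Phi'+\bbQ\gamma)\cap\Phi$. The hypothesis $\gamma\in\Phi_{\beta,i}$ means $\gamma=v+c\beta$ for some $v\in\bbQ\Phi'$ and $c\in\bbQ$; this already yields $\bbQ\Phi'+\bbQ\gamma\subseteq\bbQ\Phi'+\bbQ\beta$, hence $\Phi_{\gamma,i}\subseteq\Phi_{\beta,i}$. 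The crux is to show $c\neq0$, because then $\beta=c^{-1}(\gamma-v)\in\bbQ\Phi'+\bbQ\gamma$, the two spans coincide, and $\Phi_{\beta,i}=\Phi_{\gamma,i}$ follows. To see $c\neq0$, I would argue by contradiction: if $c=0$ then $\gamma\in\bbQ\Phi'\cap\Phi$, and one checks $\bbQ\Phi_I\cap\Phi=\Phi_I$ (a root in the $\bbQ$-span of a set of simple roots has vanishing $\Delta$-coordinates outside $I$, so it lies in $\Phi_I$) and $\bbQ\Phi_\lambda\cap\Phi=\Phi_\lambda$ (immediate from $\Phi_\lambda=\{\alpha\in\Phi\mid\langle\lambda,\alpha\rangle=0\}$); thus $\gamma\in\Phi'$. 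But $\gamma\in\Psi_\lambda^+$ forbids this: $\Psi_\lambda^+\subseteq\Phi^+\backslash\Phi_I$ rules out $\gamma\in\Phi_I$, and $\langle\lambda,\gamma^\vee\rangle\in\bbZ^{>0}$ gives $\langle\lambda,\gamma\rangle\neq0$, so $\gamma\notin\Phi_\lambda$ either. This contradiction finishes the argument.

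I do not expect any genuine obstacle here — the statement is essentially a linear-algebra observation. The only points needing a little care are the elementary fact $\bbQ\Phi_I\cap\Phi=\Phi_I$ and the remark that membership of $\gamma$ in $\Psi_\lambda^+$ is precisely what excludes the degenerate possibility $c=0$; note in particular that only $\gamma\in\Psi_\lambda^+$ is actually used, the hypothesis $\beta\in\Psi_\lambda^+$ serving merely to make $\Phi_{\beta,i}$ relevant in the first place.
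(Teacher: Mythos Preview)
Your proof is correct and follows essentially the same approach as the paper: both handle $i=0$ via disjointness of irreducible components, and for $i=1,2$ both write $\gamma=v+c\beta$ with $v\in\bbQ\Phi'$ and use $\gamma\in\Psi_\lambda^+$ to force $c\neq0$, whence the two $\bbQ$-spans coincide. Your version is simply more explicit about why $c=0$ would put $\gamma$ into $\Phi_I$ (resp.\ $\Phi_\lambda$), whereas the paper asserts this step without elaboration.
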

\begin{proof}
If $i=0$, $\Phi_{\beta, 0}$ is the irreducible component of $\Phi$ containing $\beta$. So $\gamma\in\Phi_{\beta, 0}$ implies $\beta$ and $\gamma$ are in the same irreducible component. One has $\Phi_{\gamma, 0}=\Phi_{\beta, 0}$. Now consider $i=1$ and $\gamma\in\Phi_{\beta, 1}=(\bbQ\beta+\bbQ\Phi_I)\cap\Phi$. With $\gamma\in\Psi_\lambda^+$, we can find $0\neq c\in\bbQ$ so that $\gamma\in c\beta+\bbQ\Phi_I$. Thus
\[
\Phi_{\gamma, 1}=(\bbQ\gamma+\bbQ\Phi_I)\cap\Phi=(\bbQ\beta+\bbQ\Phi_I)\cap\Phi=\Phi_{\beta, 1}.
\]
The proof is similar for $i=2$.
\end{proof}

\begin{lemma}\label{redslem3}
Let $\lambda\in\Lambda_I^+$ and $\beta, \gamma\in\Psi_\lambda^+$. If $\gamma\in\Phi_k(\beta)$ for some $k\geq0$, then $\Phi_k(\gamma)=\Phi_k(\beta)$. In particular, if $\gamma\in\Phi(\beta)$, then $\Phi(\gamma)=\Phi(\beta)$.
\end{lemma}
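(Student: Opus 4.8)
The plan is to prove Lemma \ref{redslem3} by induction on $k$, using Lemma \ref{redslem2} as the engine for the inductive step. The base case $k=0$ is immediate: $\Phi_0(\beta)=\Phi_{[\lambda]}=\Phi_0(\gamma)$ regardless of $\beta,\gamma$, since $\lambda\in\Lambda_I^+$ is fixed and $\Phi_{[\lambda]}$ depends only on $\lambda$. For the inductive step, suppose $\gamma\in\Phi_k(\beta)$ and, as inductive hypothesis, that $\Phi_j(\gamma)=\Phi_j(\beta)$ for all $j\le k$ whenever $\gamma$ lies in $\Phi_k(\beta)$; note $\gamma\in\Phi_k(\beta)\subset\Phi_j(\beta)$ for every $j\le k$, so the hypothesis does apply at each earlier stage. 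It then suffices to show $\Phi_{k+1}(\gamma)=\Phi_{k+1}(\beta)$.

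The key observation is that each of the four operations in \eqref{redseq1} producing $\Phi_{i+1}(\beta)$ from $\Phi_i(\beta)$ is one of the three constructions $(\,\cdot\,)_{\beta,0}$, $(\,\cdot\,)_{\beta,1}$, $(\,\cdot\,)_{\beta,2}$ appearing in Lemma \ref{redslem2}, applied inside the ambient system $\Phi_i(\beta)$ rather than $\Phi$ itself. So I would first record that Lemma \ref{redslem2} holds verbatim with $\Phi$ replaced by any subsystem $\Phi'\subset\Phi$ containing $\beta$ and $\gamma$, provided $\gamma\in\Phi'_{\beta,i}$: the proof given is purely about $\bbQ$-spans intersected with a root system and the existence of a nonzero rational scalar $c$ with $\gamma\in c\beta+\bbQ\Phi_I$ (for $i=1$, resp. $\bbQ\Phi_\lambda$ for $i=2$), and none of that uses that the ambient system is all of $\Phi$. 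Here one must be slightly careful about what ``$\Phi_I$'' means inside the reduction: in \eqref{redseq1} the relevant Levi part at stage $i$ is $\Phi_I\cap\Phi_i(\beta)$ and the relevant singular part is $\Phi_\lambda\cap\Phi_i(\beta)$, so the correct reading of Lemma \ref{redslem2} in this context uses these intersected subsystems; again the proof is unaffected.

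Granting that, the inductive step runs as follows. By the inductive hypothesis applied at stage $k$ we have $\Phi_k(\gamma)=\Phi_k(\beta)=:\Phi'$, and $\gamma\in\Phi_k(\beta)=\Phi'$ by assumption, while $\beta\in\Phi'$ as well (it lies in every $\Phi_j(\beta)$). Now $k$ falls into one of the four residue classes mod $4$; in each case $\Phi_{k+1}(\beta)=\Phi'_{\beta,i}$ for the appropriate $i\in\{0,1,2\}$ (with $i=0$ when $k\equiv 0,2$, $i=1$ when $k\equiv1$, $i=2$ when $k\equiv3$), and likewise $\Phi_{k+1}(\gamma)=\Phi'_{\gamma,i}$ since $\Phi_k(\gamma)=\Phi'$ too. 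Because $\gamma\in\Phi'$ and, crucially, $\gamma\in\Psi^+_\lambda$ lies in the component/span that defines $\Phi'_{\beta,i}$ — indeed $\gamma\in\Phi_k(\beta)$ already forces $\gamma\in\Phi_j(\beta)$ for all $j$, and in particular one checks $\gamma\in(\Phi')_{\beta,i}$ directly: for $i=1,2$ the chain of inclusions $\gamma\in\Phi_{k}(\beta)=\Phi'$ together with $\gamma\in\Phi_{k-\text{(last }i\text{-step)}}$ gives the needed span condition, and for $i=0$ one uses that $\beta$ and $\gamma$ lie in the same irreducible component of $\Phi'$ because $\gamma\in\Phi_k(\beta)$ and each previous $(\,\cdot\,)_{\beta,0}$ step already isolated $\beta$'s component. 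Hence the version of Lemma \ref{redslem2} inside $\Phi'$ applies and yields $\Phi'_{\beta,i}=\Phi'_{\gamma,i}$, i.e. $\Phi_{k+1}(\beta)=\Phi_{k+1}(\gamma)$, completing the induction. The final assertion about $\Phi(\beta)$ follows by taking $k$ large enough that the chain for $\beta$ (equivalently, by what we just proved, for $\gamma$) has stabilized.

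The main obstacle I anticipate is the bookkeeping in the inductive step verifying that $\gamma$ genuinely lies in whichever sub-span or irreducible component defines $\Phi_{k+1}(\beta)$ inside $\Phi'$ — that is, upgrading ``$\gamma\in\Phi_k(\beta)$'' to ``$\gamma\in(\Phi_k(\beta))_{\beta,i}$''. For the $i=1,2$ cases this should follow cleanly because between consecutive applications of the same operation the rank-one span condition is preserved along the chain (one needs to track that $\gamma$, being in $\Psi_\lambda^+$, always contributes a nonzero pairing and so cannot fall into the Levi/singular part), and for $i=0$ it is just the statement that passing to irreducible components is idempotent and monotone. I expect each of these to be a short argument, but stating them carefully — and making explicit the ``Lemma \ref{redslem2} holds in any sub-root-system'' principle — is where the real content of the write-up lies; everything else is formal induction.
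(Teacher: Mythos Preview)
Your overall strategy (induction on $k$, using Lemma~\ref{redslem2} at each step) is exactly the paper's, but your induction is mis-indexed, and this creates the phantom ``main obstacle'' you describe at the end. You assume $\gamma\in\Phi_k(\beta)$ and then try to prove $\Phi_{k+1}(\gamma)=\Phi_{k+1}(\beta)$; for that you would need $\gamma\in(\Phi_k(\beta))_{\beta,i}=\Phi_{k+1}(\beta)$, which is \emph{not} implied by $\gamma\in\Phi_k(\beta)$ (the chain genuinely shrinks), and your sketched workaround (``the chain of inclusions \dots\ gives the needed span condition'') cannot succeed in general.

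The fix is to shift the index: to establish the statement at level $k$, assume $\gamma\in\Phi_k(\beta)$. Then automatically $\gamma\in\Phi_{k-1}(\beta)$, so the inductive hypothesis at level $k-1$ gives $\Phi_{k-1}(\gamma)=\Phi_{k-1}(\beta)$. Now the hypothesis $\gamma\in\Phi_k(\beta)$ \emph{is literally} the statement $\gamma\in(\Phi_{k-1}(\beta))_{\beta,i}$, so Lemma~\ref{redslem2} (applied inside $\Phi_{k-1}(\beta)$) gives $(\Phi_{k-1}(\beta))_{\gamma,i}=(\Phi_{k-1}(\beta))_{\beta,i}$ directly, and hence
\[
\Phi_k(\gamma)=(\Phi_{k-1}(\gamma))_{\gamma,i}=(\Phi_{k-1}(\beta))_{\gamma,i}=(\Phi_{k-1}(\beta))_{\beta,i}=\Phi_k(\beta).
\]
There is no obstacle: the containment you were worried about proving is part of the hypothesis once the induction is set up correctly. (Your remark that Lemma~\ref{redslem2} must be read inside the subsystem $\Phi_{k-1}(\beta)$, with $\Phi_I$ replaced by $\Phi_I\cap\Phi_{k-1}(\beta)$ and $\Phi_\lambda$ by $\Phi_\lambda\cap\Phi_{k-1}(\beta)$, is correct and worth saying; the nonvanishing of the coefficient $c$ in that proof still follows because $\gamma\in\Psi_\lambda^+$ forces $\gamma\notin\bbQ\Phi_I\cap\Phi=\Phi_I$, resp.\ $\gamma\notin\Phi_\lambda$.)
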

\begin{proof}
We prove the lemma by induction on $k$. The case $k=0$ is evident. Assume that $\Phi_{k-1}(\gamma)=\Phi_{k-1}(\beta)$ for any $\gamma\in\Psi_\lambda^+\cap\Phi_{k-1}(\beta)$. Now suppose $\gamma\in\Psi_\lambda^+\cap\Phi_{k}(\beta)$. Evidently, $\gamma\in\Psi_\lambda^+\cap\Phi_{k-1}(\beta)$. The induction hypothesis implies $\Phi_{k-1}(\gamma)=\Phi_{k-1}(\beta)$. With $\gamma\in\Phi_k(\beta)=(\Phi_{k-1}(\beta))_{\beta,i}$ for some $i\in\{0, 1, 2\}$. It follows from Lemma \ref{redslem2} that $(\Phi_{k-1}(\beta))_{\gamma,i}=(\Phi_{k-1}(\beta))_{\beta,i}$. Hence
\[
\Phi_k(\gamma)=(\Phi_{k-1}(\gamma))_{\gamma,i}=(\Phi_{k-1}(\beta))_{\gamma,i}=(\Phi_{k-1}(\beta))_{\beta,i}
=\Phi_k(\beta).
\]

The second assertion is an immediate consequence of the first one.
\end{proof}

\begin{lemma}\label{redslem4}
Let $\lambda\in\Lambda_I^+$, $\beta\in\Phi\backslash\Phi_\lambda$ and $\gamma\in \Phi_{\beta, 2}$. Then $s_\gamma\lambda$ is $\Phi_I$-regular if and only if $s_\gamma(\lambda|_{\Phi_{\beta, 2}})$ is $\Phi_I\cap \Phi_{\beta, 2}$-regular.
\end{lemma}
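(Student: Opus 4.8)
The plan is to prove the equivalence by unwinding the definitions of the two regularity conditions and relating them through the restriction $\lambda\mapsto\lambda|_{\Phi_{\beta,2}}$. First I would note the elementary fact that for any root $\gamma\in\Phi_{\beta,2}$ and any $\alpha\in\Phi_I\cap\Phi_{\beta,2}$ we have $\langle s_\gamma(\lambda|_{\Phi_{\beta,2}}),\alpha^\vee\rangle=\langle\lambda|_{\Phi_{\beta,2}},s_\gamma\alpha^\vee\rangle=\langle\lambda,s_\gamma\alpha^\vee\rangle=\langle s_\gamma\lambda,\alpha^\vee\rangle$, using that $s_\gamma$ preserves $\Phi_{\beta,2}$, that $s_\gamma\alpha$ again lies in $\Phi_{\beta,2}$, and the defining property \eqref{reseq1} of the restriction. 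Hence the condition ``$\langle s_\gamma(\lambda|_{\Phi_{\beta,2}}),\alpha^\vee\rangle\neq0$ for all $\alpha\in\Phi_I\cap\Phi_{\beta,2}$'' is literally the same as ``$\langle s_\gamma\lambda,\alpha^\vee\rangle\neq0$ for all $\alpha\in\Phi_I\cap\Phi_{\beta,2}$''. So $s_\gamma(\lambda|_{\Phi_{\beta,2}})$ is $\Phi_I\cap\Phi_{\beta,2}$-regular iff $s_\gamma\lambda$ is nonzero on every root of $\Phi_I\cap\Phi_{\beta,2}$.

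The content of the lemma is therefore that $s_\gamma\lambda$ is nonzero on every root of $\Phi_I$ iff it is nonzero on every root of the smaller set $\Phi_I\cap\Phi_{\beta,2}$. The forward direction is trivial. For the converse, suppose there is $\alpha\in\Phi_I$ with $\langle s_\gamma\lambda,\alpha^\vee\rangle=0$, i.e.\ $\langle\lambda,s_\gamma\alpha^\vee\rangle=0$, so $s_\gamma\alpha\in\Phi_\lambda$. The key point is that $\gamma,\alpha$ both lie in $\Phi_{\beta,2}=(\bbQ\Phi_\lambda+\bbQ\beta)\cap\Phi$: for $\gamma$ this is hypothesis, and for $\alpha$ it follows since $\alpha=s_\gamma(s_\gamma\alpha)=s_\gamma\alpha-\langle s_\gamma\alpha,\gamma^\vee\rangle\gamma$ lies in $\bbQ\Phi_\lambda+\bbQ\gamma\subset\bbQ\Phi_\lambda+\bbQ\beta$ (because $s_\gamma\alpha\in\Phi_\lambda$ and $\gamma\in\bbQ\Phi_\lambda+\bbQ\beta$), hence $\alpha\in\Phi_{\beta,2}$. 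Thus $\alpha\in\Phi_I\cap\Phi_{\beta,2}$, and $\langle s_\gamma\lambda,\alpha^\vee\rangle=0$ already witnesses non-regularity with respect to the smaller system. This establishes the contrapositive of the converse.

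Assembling these two observations gives the claimed equivalence. I expect the only subtle point to be the verification that $\gamma\in\bbQ\Phi_\lambda+\bbQ\beta$ genuinely propagates to $\alpha\in\bbQ\Phi_\lambda+\bbQ\beta$ when $s_\gamma\alpha\in\Phi_\lambda$; this is where one must be careful to use the precise definition $\Phi_{\beta,2}=(\bbQ\Phi_\lambda+\bbQ\beta)\cap\Phi$ rather than some proper subspace, and to remember that $\Phi_{\beta,2}$ is $s_\gamma$-stable (being the intersection of the $s_\gamma$-stable subspace $\bbQ\Phi_\lambda+\bbQ\beta$ with $\Phi$, using $\gamma\in\Phi_{\beta,2}$). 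Everything else is a direct manipulation of the bilinear form and the restriction identity \eqref{reseq1}, so no serious obstacle is anticipated.
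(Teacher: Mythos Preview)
Your proof is correct and follows essentially the same approach as the paper's: both first establish the identity $\langle s_\gamma(\lambda|_{\Phi_{\beta,2}}),\alpha\rangle=\langle s_\gamma\lambda,\alpha\rangle$ for $\alpha\in\Phi_I\cap\Phi_{\beta,2}$ via the restriction property, then handle the nontrivial direction by showing that any $\alpha\in\Phi_I$ with $\langle s_\gamma\lambda,\alpha\rangle=0$ must already lie in $\Phi_{\beta,2}$, using $s_\gamma\alpha\in\Phi_\lambda$ together with $\gamma\in\bbQ\Phi_\lambda+\bbQ\beta$. The only cosmetic difference is that the paper phrases the converse as a direct contradiction (assuming $\alpha\in\Phi_I\setminus\Phi_{\beta,2}$) whereas you phrase it as a contrapositive.
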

\begin{proof}
Since $s_\gamma\alpha\in \Phi_{\beta, 2}$ for $\alpha\in\Phi_{\beta, 2}$, one has
\begin{equation}\label{redsl4eq1}
\langle s_\gamma(\lambda|_{\Phi_{\beta, 2}}), \alpha\rangle=\langle\lambda|_{\Phi_{\beta, 2}}, s_\gamma\alpha\rangle=\langle\lambda, s_\gamma\alpha\rangle=\langle s_\gamma\lambda,\alpha\rangle.
\end{equation}
If $s_\gamma\lambda$ is $\Phi_I$-regular, then (\ref{redsl4eq1}) implies $s_\gamma(\lambda|_{\Phi_{\beta, 2}})$ is $\Phi_I\cap \Phi_{\beta, 2}$-regular. Conversely, assume that $s_\gamma(\lambda|_{\Phi_{\beta, 2}})$ is $\Phi_I\cap \Phi_{\beta, 2}$-regular. Then $\langle s_\gamma\lambda, \alpha\rangle\neq0$ for all $\alpha\in\Phi_I\cap \Phi_{\beta, 2}$ by (\ref{redsl4eq1}). If $s_\gamma\lambda$ is not $\Phi_I$-regular, there is $\alpha\in\Phi_I\backslash \Phi_{\beta, 2}$ such that $\langle s_\gamma\lambda,\alpha\rangle=0$. One has $\langle\lambda, s_\gamma\alpha\rangle=0$ and thus $s_\gamma\alpha\in \Phi_\lambda$. Therefore $\alpha\in\bbQ \Phi_\lambda+\bbQ\gamma$. With $\gamma\in \Phi_{\beta, 2}=(\bbQ\Phi_\lambda+\bbQ\beta)\cap\Phi$, we obtain $\alpha\in \Phi_{\beta, 2}$, a contradiction.
\end{proof}

Write $\Psi^{+}(\lambda, \Phi_I, \Phi)=\Psi_\lambda^{+}$ and $\Psi^{++}(\lambda, \Phi_I, \Phi)=\Psi_\lambda^{++}$ as in 2.4.

\begin{lemma}\label{redslem5}
Let $\lambda\in\Lambda_I^+$.
\begin{itemize}
  \item [(1)] If $\Phi'$ is any subsystem of $\Phi$, then
  \[
  \Psi_\lambda^{+}\cap\Phi'=\Psi^{+}(\lambda|_{\Phi'}, \Phi_I\cap\Phi', \Phi').
  \]
  \item [(2)] If $\beta\in\Phi\backslash\Phi_I$, then
  \[
  \Psi_\lambda^{++}\cap\Phi_{\beta, 1}=\Psi^{++}(\lambda|_{\Phi_{\beta, 1}}, \Phi_I, \Phi_{\beta, 1}).
  \]
  \item [(3)] If $\beta\in\Phi\backslash\Phi_\lambda$, then
  \[
  \Psi_\lambda^{++}\cap\Phi_{\beta, 2}=\Psi^{++}(\lambda|_{\Phi_{\beta, 2}}, \Phi_I\cap\Phi_{\beta, 2}, \Phi_{\beta, 2}).
  \]
\end{itemize}
\end{lemma}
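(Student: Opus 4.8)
The plan is to verify each of the three identities by unwinding the definitions of $\Psi_\lambda^+$ and $\Psi_\lambda^{++}$ and using the defining property \eqref{reseq1} of the restricted weight $\lambda|_{\Phi'}$, namely $\langle\lambda|_{\Phi'},\alpha^\vee\rangle=\langle\lambda,\alpha^\vee\rangle$ for all $\alpha\in\Phi'$. For part (1): an element $\beta$ lies in $\Psi_\lambda^+\cap\Phi'$ precisely when $\beta\in(\Phi^+\setminus\Phi_I)\cap\Phi'$ and $\langle\lambda,\beta^\vee\rangle\in\bbZ^{>0}$. The first condition is equivalent to $\beta\in\Phi'^+\setminus(\Phi_I\cap\Phi')$ since $\Phi'^+=\Phi'\cap\Phi^+$, and the second is equivalent to $\langle\lambda|_{\Phi'},\beta^\vee\rangle\in\bbZ^{>0}$ by \eqref{reseq1} because $\beta\in\Phi'$. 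That is exactly membership in $\Psi^+(\lambda|_{\Phi'},\Phi_I\cap\Phi',\Phi')$, so (1) follows directly; I would also note in passing that $\lambda|_{\Phi'}\in\Lambda^+(\Phi_I\cap\Phi',\Phi')$, which is needed for the right-hand side to make sense (this again uses \eqref{reseq1} together with $\lambda\in\Lambda_I^+$).

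For parts (2) and (3) the extra input is the condition defining the double-plus sets, $\langle s_\beta\lambda,\alpha\rangle\neq0$ for all $\alpha\in\Phi_I$ (resp.\ all $\alpha\in\Phi_I\cap\Phi_{\beta,2}$ on the right-hand side of (3)). The easy inclusion ``$\supseteq$'' is immediate from (1) together with the observation that $s_\gamma$ preserves the ambient subsystem and \eqref{reseq1} transports inner products, as in the computation \eqref{redsl4eq1}. The content is the reverse inclusion, where one must show that if $\gamma\in\Psi^{++}(\lambda|_{\Phi_{\beta,j}},\dots)$ then in fact $\langle s_\gamma\lambda,\alpha\rangle\neq0$ for \emph{every} $\alpha\in\Phi_I$, not merely for $\alpha$ in the smaller system $\Phi_I\cap\Phi_{\beta,j}$. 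For (3) this is precisely the regularity statement already proved in Lemma \ref{redslem4} (applied with $\gamma$ in place of its generic root and noting $\Phi_{\gamma,2}=\Phi_{\beta,2}$ by Lemma \ref{redslem2} when $\gamma\in\Phi_{\beta,2}\cap\Psi_\lambda^+$): $s_\gamma\lambda$ is $\Phi_I$-regular iff $s_\gamma(\lambda|_{\Phi_{\beta,2}})$ is $\Phi_I\cap\Phi_{\beta,2}$-regular. For (2) I would argue analogously: suppose $\alpha\in\Phi_I\setminus\Phi_{\beta,1}$ with $\langle s_\gamma\lambda,\alpha\rangle=0$; then $\langle\lambda,s_\gamma\alpha\rangle=0$, but $s_\gamma\alpha\in\bbQ\Phi_I+\bbQ\gamma\subseteq\bbQ\Phi_I+\bbQ\beta$ (using $\gamma\in\Phi_{\beta,1}$), so $s_\gamma\alpha\in\Phi_{\beta,1}$, hence $\alpha=s_\gamma(s_\gamma\alpha)\in\Phi_{\beta,1}$, contradicting the choice of $\alpha$. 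Thus every $\alpha\in\Phi_I$ with $\langle s_\gamma\lambda,\alpha\rangle=0$ already lies in $\Phi_{\beta,1}$, and the hypothesis rules such $\alpha$ out.

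The only mild subtlety — and the step I expect to need the most care — is bookkeeping about which ``$\gamma$'' the double-plus condition is being tested against: on the left it is tested against $\Phi_I$ inside the big system, and on the right against $\Phi_I\cap\Phi_{\beta,j}$ inside the subsystem, so one must be sure that $s_\gamma$ really does map $\Phi_{\beta,j}$ to itself (true because $\gamma\in\Phi_{\beta,j}$ and $\Phi_{\beta,j}$ is a subsystem) and that the relevant $\Phi_{\gamma,j}$ coincides with $\Phi_{\beta,j}$ (Lemma \ref{redslem2}). Once these identifications are in place, parts (2) and (3) reduce cleanly to Lemmas \ref{redslem2} and \ref{redslem4} plus part (1), with no further computation.
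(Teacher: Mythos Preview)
Your approach is correct and matches the paper's: (1) is unwound from the definitions, (3) is reduced to Lemma~\ref{redslem4} together with (1), and (2) is the easier special case. Two small points are worth cleaning up. First, your inclusion labels are swapped: the argument you call ``the easy inclusion $\supseteq$'' (using that $s_\gamma$ preserves $\Phi_{\beta,j}$ and \eqref{reseq1} transports pairings) actually proves $\subseteq$, while the regularity upgrade via Lemma~\ref{redslem4} gives $\supseteq$; the mathematics is right, only the symbols are reversed. Second, for (2) you work harder than necessary: since $\Phi_{\beta,1}=(\bbQ\Phi_I+\bbQ\beta)\cap\Phi$ we have $\Phi_I\subset\Phi_{\beta,1}$, so $\Phi_I\cap\Phi_{\beta,1}=\Phi_I$ and the $\Phi_I$-regularity conditions on the two sides literally coincide---your contradiction argument for a hypothetical $\alpha\in\Phi_I\setminus\Phi_{\beta,1}$ is vacuously true. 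This is exactly the paper's one-line justification ``(2) follows from $\Phi_I\subset\Phi_{\beta,1}$ and (1).''
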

\begin{proof}
The assertion (1) is an easy consequence of the definition, while (2) follows from $\Phi_I\subset\Phi_{\beta, 1}$ and (1). With Lemma \ref{redslem4} and (1), we can get (3).
\end{proof}

\textbf{Proof of Theorem \ref{redsthm1}}
First we prove the equivalence between (\rmnum{1}) and (\rmnum{2}). It suffices to show that the simplicity of $M_I(\lambda)$ is equivalent to the simplicity of $M(\lambda|_{\Phi_k(\beta)}, \Phi_I\cap\Phi_k(\beta), \Phi_k(\beta))$ for $k\geq0$ and $\beta\in\Psi_\lambda^+$. We use induction on $k$. The case $k=0$ follows from Lemma \ref{redllem1} (the integral reduction lemma). Suppose this is true for $k\geq0$. The induction hypothesis implies that $M_I(\lambda)$ is simple if and only if $M(\lambda|_{\Phi_{k}(\beta)}, \Phi_I\cap\Phi_{k}(\beta), \Phi_{k}(\beta))$ is simple for all $\beta\in\Psi_\lambda^+$. If $k\equiv0\ (\mathrm{mod}4)$, Lemma \ref{redllem2} implies that $M(\lambda|_{\Phi_{k}(\beta)}, \Phi_I\cap\Phi_{k}(\beta), \Phi_{k}(\beta))$ is simple if and only if $M(\lambda|_{(\Phi_{k}(\beta))_{\gamma, 0}}, \Phi_I\cap(\Phi_{k}(\beta))_{\gamma, 0}, (\Phi_{k}(\beta))_{\gamma, 0})$ is simple for any $\gamma\in\Psi_\lambda^+\cap\Phi_{k}(\beta)$. Here
\[
\Psi_\lambda^+\cap\Phi_{k}(\beta)=\Psi^+(\lambda|_{\Phi_k(\beta)}, \Phi_I\cap\Phi_k(\beta), \Phi_k(\beta))
\]
as a consequence of Lemma \ref{redslem5}. In view of Lemma \ref{redslem3},
\[
(\Phi_{k}(\beta))_{\gamma, 0}=(\Phi_{k}(\gamma))_{\gamma, 0}=\Phi_{k+1}(\gamma).
\]
We can prove similar results for $k\equiv1, 2, 3\ (\mathrm{mod}4)$. Therefore $M_I(\lambda)$ is simple if and only if $M(\lambda|_{\Phi_{k+1}(\gamma)}, \Phi_I\cap\Phi_{k+1}(\gamma), \Phi_{k+1}(\gamma))$ is simple for all $\gamma\in\Psi_\lambda^+$.

Obviously (\rmnum{2}) implies (\rmnum{3}). For the converse, consider the root $\beta\in\Psi_\lambda^+\backslash\Psi_\lambda^{++}$. If $\Psi^{++}(\lambda|_{\Phi(\beta)}, \Phi_I\cap\Phi(\beta), \Phi(\beta))=\emptyset$, then $M(\lambda|_{\Phi(\beta)}, \Phi_I\cap \Phi(\beta), \Phi(\beta))$ is simple by Corollary \ref{janscor1}. Otherwise we can choose $\gamma\in\Psi^{++}(\lambda|_{\Phi(\beta)}, \Phi_I\cap\Phi(\beta), \Phi(\beta))\subset\Phi(\beta)$. Lemma \ref{redslem5} implies $\gamma\in\Psi_\lambda^{++}\subset\Psi_\lambda^+$, while Lemma \ref{redslem3} yields $\Phi(\gamma)=\Phi(\beta)$. This forces $M(\lambda|_{\Phi(\beta)}, \Phi_I\cap \Phi(\beta), \Phi(\beta))$ to be simple since $M(\lambda|_{\Phi(\gamma)}, \Phi_I\cap \Phi(\gamma), \Phi(\gamma))$ is simple.

%
%%%%%%%%%%%%%%%%%%%%%%%%%%%%%%%%%%%%%%%%%%%%%%%%%%%%%%%%%%%%%%%%%%%%
%

%
%%%%%%%%%%%%%%%%%%%%%%%%%%%%%%%%%%%%%%%%%%%%%%%%%%%%%%%%%%%%%%%%%%%%
%
\section{Jantzen coefficients}
%
%%%%%%%%%%%%%%%%%%%%%%%%%%%%%%%%%%%%%%%%%%%%%%%%%%%%%%%%%%%%%%%%%%%%
%

In this section, we first introduce the Jantzen coefficients for generalized Verma mdoules. Then we give several invariant properties of Jantzen coefficients. One of them can be used to prove the singular reduction lemma in the previous section.

\subsection{Jantzen coefficient and Sign function} For $\lambda, \mu\in\frh^*$, we write $\mu\leq\lambda$ if $\Hom_\caO(M(\mu), M(\lambda))\neq0$. This gives a partial ordering on $\frh^*$ which can be viewed as the Bruhat ordering on weights (\cite[\S2]{ES}). If $\lambda,\mu\in\Lambda_I^+$ and $\mu=ws_\beta\lambda$ with $w\in W_I$ and $\beta\in\Psi_\lambda^+$, then $w\beta\in\Psi_{w\lambda}^+$. The BGG Theorem \cite{BGG} implies $\mu=s_{w\beta}w\lambda<w\lambda<\lambda$.

\begin{definition}\label{sjdef1}
Let $\lambda\in\Lambda_I^+$. Write (see Proposition \ref{jansprop1})
\begin{equation}\label{sjeq1}
\begin{aligned}
\sum_{\beta\in\Psi_\lambda^+}\theta(s_\beta\lambda)=\sum_{\beta\in\Psi_\lambda^{++}}\theta(s_\beta\lambda)=\sum_{\lambda>\mu\in\Lambda_I^+}c(\lambda, \mu)[M_I(\mu)],
\end{aligned}
\end{equation}
where $c(\lambda, \mu)\in\bbZ$ is called the \emph{Jantzen coefficient} associated with $(\lambda, \mu)$. For convenience, we set $c(\lambda, \mu)=c(\mu, \lambda)$ when $\lambda<\mu$ and $c(\lambda, \mu)=0$ when $\lambda\nless\mu$ and $\lambda\ngtr\mu$.
\end{definition}

\begin{example}\label{sjex1}
Let $\Phi=A_2$ and $I=\{e_1-e_2\}$. Set $\lambda=(1, 0, -1)$, $\mu=(1, -1, 0)$ and $\nu=(0, -1, 1)$. Then $\Psi_\lambda^+=\{e_2-e_3, e_1-e_3\}$. So
\[
\sum_{\beta\in\Psi_\lambda^+}\theta(s_\beta\lambda)=\theta(\mu)+\theta(s_{e_1-e_2}\nu)=[M_I(\mu)]-[M_I(\nu)].
\]
Therefore $c(\lambda, \mu)=1$ and $c(\lambda, \nu)=-1$. Similarly, $c(\mu, \nu)=1$, while the other Jantzen coefficients are vanished.
\end{example}

\begin{remark}\label{sjrmk1}
Since $\{[M_I(\mu)]\mid \mu\in\Lambda_I^+\}$ is a basis of the Grothendieck group $K(\caO^\frp)$ of $\caO^\frp$, (\ref{sjeq1}) shows that Jantzen coefficients actually determine the simplicity of generalized Verma modules. In other words, a generalized Verma module is simple if and only if all its Jantzen coefficients are vanished.
\end{remark}

Define a symmetric binary \emph{sign function} on the set of $\Phi_I$-integral weights as follows:
\begin{definition}\label{sjdef2}
Let $\lambda, \mu$ be $\Phi_I$-integral weights. Then
\[
\sgn(\lambda, \mu)
=\left\{\begin{aligned}
&(-1)^{\ell(w)}\qquad \mbox{if}\ \lambda, \mu\ \mbox{are}\ \Phi_I\mbox{-regular and}\ \mu=w\lambda;\\
&0\qquad\qquad\quad\ \mbox{otherwise}.
\end{aligned}
\right.
\]
\end{definition}
The definition implies that if $\sgn(\lambda, \mu)\neq0$, then
\begin{equation}\label{sjeq2}
\sgn(\lambda, \nu)=\sgn(\lambda, \mu)\sgn(\mu, \nu)
\end{equation}
for any $\Phi_I$-integral weight $\nu$. When $\lambda, \mu\in\Lambda_I^+$, set
\begin{equation*}\label{sjeq3}
\Psi_{\lambda, \mu}^+:=\{\beta\in\Psi_\lambda^+\mid \mu=w_\beta s_\beta\lambda\ \mbox{for some}\ w_\beta\in W_I\}.
\end{equation*}

\begin{lemma}\label{sjlem1}
Let $\lambda, \mu\in\Lambda_I^+$ with $\lambda>\mu$. Then
\[
c(\lambda, \mu)=\sum_{\beta\in \Psi_{\lambda, \mu}^+}(-1)^{\ell(w_\beta)}=\sum_{\beta\in\Psi_\lambda^{++}}\sgn(s_\beta\lambda, \mu).
\]
\end{lemma}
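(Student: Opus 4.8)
The plan is to unwind Definition \ref{sjdef1} by means of Proposition \ref{jansprop1}(3), using that $\{[M_I(\mu)]\mid\mu\in\Lambda_I^+\}$ is a basis of $K(\caO^\frp)$. First I would record the following consequence of Proposition \ref{jansprop1}(3): for each $\beta\in\Psi_\lambda^{++}$ the weight $s_\beta\lambda$ is $\Phi_I$-regular, and there is a $w_\beta\in W_I$ with $w_\beta s_\beta\lambda\in\Lambda_I^+$ and $\theta(s_\beta\lambda)=(-1)^{\ell(w_\beta)}[M_I(w_\beta s_\beta\lambda)]$. Since $s_\beta\lambda$ is $\Phi_I$-regular, its isotropy group in $W_I$ is trivial, so $w_\beta$ is uniquely determined; in particular $(-1)^{\ell(w_\beta)}$ is unambiguous, $\ell|_{W_I}$ being the intrinsic length function of the Coxeter group $W_I$.

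Next I would substitute these expressions into (\ref{sjeq1}) and compare coefficients of the basis vector $[M_I(\mu)]$. Linear independence of the $[M_I(\mu)]$ then yields
\[
c(\lambda,\mu)=\sum_{\substack{\beta\in\Psi_\lambda^{++}\\ w_\beta s_\beta\lambda=\mu}}(-1)^{\ell(w_\beta)}.
\]
The key point is that the index set here is exactly $\Psi_{\lambda,\mu}^+$. One inclusion is immediate from the definition of $\Psi_{\lambda,\mu}^+$. Conversely, if $\beta\in\Psi_{\lambda,\mu}^+$, say $\mu=ws_\beta\lambda$ with $w\in W_I$, then $s_\beta\lambda=w^{-1}\mu$ lies in the $W_I$-orbit of the $\Phi_I$-regular weight $\mu\in\Lambda_I^+$, hence is itself $\Phi_I$-regular, so $\beta\in\Psi_\lambda^{++}$; and uniqueness of $w_\beta$ forces $w=w_\beta$. (In particular $\Psi_{\lambda,\mu}^+\subseteq\Psi_\lambda^{++}$, so it is harmless that the outer sum in (\ref{sjeq1}) runs over $\Psi_\lambda^{++}$ rather than $\Psi_\lambda^+$.) This establishes the first equality.

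For the second equality I would match $\sum_{\beta\in\Psi_\lambda^{++}}\sgn(s_\beta\lambda,\mu)$ term by term against the middle expression. For any $\beta\in\Psi_\lambda^{++}$, both $s_\beta\lambda$ and $\mu$ are $\Phi_I$-regular (the latter because $\mu\in\Lambda_I^+$), so by Definition \ref{sjdef2} one has $\sgn(s_\beta\lambda,\mu)=(-1)^{\ell(w)}$ when $\mu=ws_\beta\lambda$ for the (unique, by regularity) $w\in W_I$, and $\sgn(s_\beta\lambda,\mu)=0$ otherwise. Hence the nonzero terms are precisely those with $\beta\in\Psi_{\lambda,\mu}^+$, each equal to $(-1)^{\ell(w_\beta)}$, and summing recovers $\sum_{\beta\in\Psi_{\lambda,\mu}^+}(-1)^{\ell(w_\beta)}$. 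I would also observe that this treats the degenerate conventions of Definition \ref{sjdef1} automatically: if $\lambda\nless\mu$ and $\lambda\ngtr\mu$, then no $\beta$ contributes, since by the BGG ordering remark preceding Definition \ref{sjdef1} one always has $w_\beta s_\beta\lambda<\lambda$.

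There is no serious obstacle here; the statement is essentially a bookkeeping identity. The only points that require a little care are the well-definedness of the signs $(-1)^{\ell(w_\beta)}$ (which rests on the triviality of the $W_I$-isotropy of $\Phi_I$-regular weights) and the identification of the contributing roots with $\Psi_{\lambda,\mu}^+$, including the automatic inclusion $\Psi_{\lambda,\mu}^+\subseteq\Psi_\lambda^{++}$.
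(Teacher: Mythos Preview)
Your proposal is correct and follows essentially the same approach as the paper: both arguments unwind the defining identity (\ref{sjeq1}) via Proposition~\ref{jansprop1}(3) and the linear independence of the $[M_I(\mu)]$, then interpret the resulting signs through Definition~\ref{sjdef2}. You are simply more explicit than the paper, which dispatches the first equality as ``an immediate consequence of Proposition~\ref{jansprop1}'' and proves the second by rewriting each $\theta(s_\beta\lambda)$ as $\sum_{\mu}\sgn(s_\beta\lambda,\mu)[M_I(\mu)]$ and swapping the order of summation; your careful verification that $\Psi_{\lambda,\mu}^+\subseteq\Psi_\lambda^{++}$ and that $w_\beta$ is well defined fills in details the paper leaves implicit.
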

\begin{proof}
The first equation is an immediate consequence of Proposition \ref{jansprop1}, while the second one follows from
\begin{equation*}
\begin{aligned}
\sum_{\beta\in\Psi_\lambda^{++}}\theta(s_\beta\lambda)=&\sum_{\beta\in\Psi_\lambda^{++}}\sum_{\lambda>\mu\in\Lambda_I^+}
\sgn(s_\beta\lambda, \mu)[M_I(\mu)]\\
=&\sum_{\lambda>\mu\in\Lambda_I^+}\sum_{\beta\in\Psi_\lambda^{++}}\sgn(s_\beta\lambda, \mu)[M_I(\mu)]\\
=&\sum_{\lambda>\mu\in\Lambda_I^+}c(\lambda, \mu)[M_I(\mu)].
\end{aligned}
\end{equation*}
\end{proof}

The following lemma is evident.

\begin{lemma}\label{sjlem2}
Let $\lambda, \mu\in\Lambda_I^+$. Then
\begin{itemize}
  \item [(1)] $c(\lambda, \mu)=0$ unless $\mu\in W\lambda$.
  \item [(2)] $c(k\lambda, k\mu)=c(\lambda, \mu)$ for $k\in\bbZ^{>0}$.
\end{itemize}
\end{lemma}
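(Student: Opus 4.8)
The plan is to unwind Definition \ref{sjdef1} together with Proposition \ref{jansprop1} for part (1), and for part (2) to combine this with the compatibility of all the relevant data with multiplication by $k$, using Lemma \ref{sjlem1}. For (1), I would expand the left-hand side of (\ref{sjeq1}): by Proposition \ref{jansprop1}(2) the term $\theta(s_\beta\lambda)$ drops out for $\beta\in\Psi_\lambda^+\setminus\Psi_\lambda^{++}$, while Proposition \ref{jansprop1}(3) rewrites each remaining term as $\theta(s_\beta\lambda)=(-1)^{\ell(w_\beta)}[M_I(w_\beta s_\beta\lambda)]$ with $w_\beta\in W_I$ and $w_\beta s_\beta\lambda\in\Lambda_I^+$. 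Every class $[M_I(\nu)]$ occurring then has $\nu=w_\beta s_\beta\lambda\in W\lambda$, since $w_\beta s_\beta\in W$. As $\{[M_I(\nu)]\mid\nu\in\Lambda_I^+\}$ is a basis of $K(\caO^\frp)$, matching against the right-hand side of (\ref{sjeq1}) forces $c(\lambda,\mu)=0$ whenever $\lambda>\mu$ and $\mu\notin W\lambda$, and the case $\lambda\not>\mu$, $\lambda\not<\mu$ is covered by the convention in Definition \ref{sjdef1}. (This is essentially the first equality of Lemma \ref{sjlem1}.)

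For (2), fix $k\in\bbZ^{>0}$. I would first record the routine facts about multiplication by $k$: it carries $\Lambda_I^+$ into $\Lambda_I^+$ (so $k\lambda,k\mu\in\Lambda_I^+$ and $c(k\lambda,k\mu)$ is defined), commutes with the $W$-action, preserves $\Phi_I$-integrality and $\Phi_I$-regularity, satisfies $s_\beta(k\lambda)=k\,s_\beta\lambda$, and hence --- straight from Definition \ref{sjdef2} --- satisfies $\sgn(k\nu_1,k\nu_2)=\sgn(\nu_1,\nu_2)$ for $\Phi_I$-integral $\nu_1,\nu_2$. One also checks $\Psi_{k\lambda}^+=\Psi_\lambda^+$, and therefore $\Psi_{k\lambda}^{++}=\Psi_\lambda^{++}$ by the defining condition of $\Psi^{++}$. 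Now write $c(\lambda,\mu)=\sum_{\beta\in\Psi_\lambda^{++}}\sgn(s_\beta\lambda,\mu)$ via Lemma \ref{sjlem1}; this identity in fact holds for all $\mu\in\Lambda_I^+$, because by the BGG theorem $\sgn(s_\beta\lambda,\mu)\ne 0$ with $\beta\in\Psi_\lambda^{++}$ and $\mu\in\Lambda_I^+$ already forces $\lambda>\mu$. Replacing $\lambda$ by $k\lambda$ and using the facts just recorded gives $c(k\lambda,k\mu)=\sum_{\beta\in\Psi_\lambda^{++}}\sgn(k\,s_\beta\lambda,k\mu)=\sum_{\beta\in\Psi_\lambda^{++}}\sgn(s_\beta\lambda,\mu)=c(\lambda,\mu)$, as wanted.

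The only step I expect to require genuine care is the identity $\Psi_{k\lambda}^+=\Psi_\lambda^+$, i.e.\ that $\langle\lambda,\beta^\vee\rangle\in\bbZ^{>0}\Leftrightarrow k\langle\lambda,\beta^\vee\rangle\in\bbZ^{>0}$ for $\beta\in\Phi^+\setminus\Phi_I$; the forward implication is immediate, and the reverse one holds as soon as $\langle\lambda,\beta^\vee\rangle$ is an integer (in particular whenever $\lambda$ is integral). Everything else in part (2) is formal bookkeeping.
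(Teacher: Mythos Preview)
The paper offers no proof of this lemma (it is declared ``evident''), so there is nothing to compare against; your unpacking of Definition~\ref{sjdef1} and Proposition~\ref{jansprop1} for part~(1) is exactly the natural argument and is correct. You cover $\lambda>\mu$ and the case $\lambda\not>\mu,\ \mu\not>\lambda$; the remaining case $\mu>\lambda$ follows by the symmetry $c(\lambda,\mu)=c(\mu,\lambda)$ together with $\mu\in W\lambda\Leftrightarrow\lambda\in W\mu$.

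For part~(2) you correctly isolate $\Psi_{k\lambda}^+=\Psi_\lambda^+$ as the only nontrivial step and note that the reverse inclusion needs $\langle\lambda,\beta^\vee\rangle\in\bbZ$. This is not just a point requiring care but a genuine obstruction: without an integrality hypothesis the inclusion can be strict, and then the statement itself fails. Take $\Phi=A_1$ with simple root $\alpha$, $I=\emptyset$ (so $\Lambda_I^+=\frh^*$), any $\lambda$ with $\langle\lambda,\alpha^\vee\rangle=\tfrac12$, $\mu=s_\alpha\lambda$, and $k=2$. Since $\langle\lambda,\alpha^\vee\rangle\notin\bbZ$ one has $\lambda\not>\mu$ and $\mu\not>\lambda$, hence $c(\lambda,\mu)=0$ by the convention in Definition~\ref{sjdef1}; but $\langle k\lambda,\alpha^\vee\rangle=1$ gives $\Psi_{k\lambda}^+=\{\alpha\}$ and $c(k\lambda,k\mu)=1$. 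Thus part~(2) is false for arbitrary $\lambda\in\Lambda_I^+$. In the paper the lemma is only applied to basic weights, which are integral by Definition~\ref{basdef}, and your argument is complete in that case; you should simply record the extra hypothesis (e.g.\ $\lambda$ integral, or more generally $\Phi_{[k\lambda]}=\Phi_{[\lambda]}$) rather than leave the step as a loose end.
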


\subsection{Invariant properties of Jantzen coefficients} In this subsection, we will give four invariant properties of Jantzen coefficients. Each reduction lemma in the previous section corresponds to an invariant property of Jantzen coefficients. The singular reduction (Lemma \ref{redllem4}) will be proved at the end of this subsection. As in 2.4, we write $\Lambda_I^+=\Lambda^+(\Phi_I, \Phi)$, $\sgn(\lambda, \mu)=\sgn(\lambda, \mu, \Phi_I, \Phi)$ and $c(\lambda, \mu)=c(\lambda, \mu, \Phi_I, \Phi)$.

\begin{lemma}[Integral invariance]\label{invlem1}
Let $\lambda\in\Lambda_I^+$ and $\beta\in\Psi_\lambda^{+}$. Then $\sgn(s_\beta\lambda, \mu)\neq0$ for some $\mu\in\Lambda_I^+$ if and only if $\sgn(s_\beta\lambda|_{\Phi_{[\lambda]}}, \mu', \Phi_I, \Phi_{[\lambda]})\neq0$ for some $\mu'\in\Lambda^+(\Phi_I, \Phi_{[\lambda]})$. In particular, if $\sgn(s_\beta\lambda, \mu)\neq0$, then $\mu'=\mu|_{\Phi_{[\lambda]}}$ and
\[
c(\lambda, \mu)=c(\lambda|_{\Phi_{[\lambda]}}, \mu|_{\Phi_{[\lambda]}}, \Phi_I, \Phi_{[\lambda]}).
\]
\end{lemma}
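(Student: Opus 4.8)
The plan is to reduce everything to the single observation that restriction to $\Phi_{[\lambda]}$ sets up an isomorphism of the relevant combinatorial data. First I would note that since $\lambda\in\Lambda_I^+$ we have $\Phi_I\subset\Phi_{[\lambda]}$ and $I\subset\Delta_{[\lambda]}$ (as recalled after Theorem \ref{redlthm1}), so $\lambda|_{\Phi_{[\lambda]}}\in\Lambda^+(\Phi_I,\Phi_{[\lambda]})$ makes sense and $W_I$ is a common parabolic subgroup of both $W(\Phi)$ and $W(\Phi_{[\lambda]})=W_{[\lambda]}$ with the \emph{same} length function (the length function on $W_{[\lambda]}$ relative to $\Delta_{[\lambda]}$ restricted to $W_I$ agrees with $\ell$ on $W_I$). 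The key point, to be used repeatedly, is the identity $\langle\nu|_{\Phi_{[\lambda]}},\alpha^\vee\rangle=\langle\nu,\alpha^\vee\rangle$ for all $\alpha\in\Phi_{[\lambda]}$ (equation (\ref{reseq1})), which for any $\Phi_I$-integral $\nu$ gives $w(\nu|_{\Phi_{[\lambda]}})=(w\nu)|_{\Phi_{[\lambda]}}$ for $w\in W_I$, and shows $\nu$ is $\Phi_I$-regular iff $\nu|_{\Phi_{[\lambda]}}$ is $\Phi_I$-regular.

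Next I would unwind the sign function. Suppose $\sgn(s_\beta\lambda,\mu)\neq0$. By Definition \ref{sjdef2}, $s_\beta\lambda$ and $\mu$ are $\Phi_I$-regular and $\mu=ws_\beta\lambda$ for some $w\in W_I$, with $\sgn(s_\beta\lambda,\mu)=(-1)^{\ell(w)}$. Restricting to $\Phi_{[\lambda]}$: since $\beta\in\Psi_\lambda^+\subset\Phi_{[\lambda]}$, we have $s_\beta\lambda-\lambda\in\bbZ\beta$, hence $(s_\beta\lambda)|_{\Phi_{[\lambda]}}=s_\beta(\lambda|_{\Phi_{[\lambda]}})$; applying $w\in W_I$ gives $\mu|_{\Phi_{[\lambda]}}=w s_\beta(\lambda|_{\Phi_{[\lambda]}})$, and both of these weights are $\Phi_I$-regular by the previous paragraph. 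Moreover, because $\mu\in\Lambda_I^+$ is $\Phi_I$-integral and $\Phi_I\subset\Phi_{[\lambda]}$, one checks $\mu|_{\Phi_{[\lambda]}}\in\Lambda^+(\Phi_I,\Phi_{[\lambda]})$, so we may take $\mu'=\mu|_{\Phi_{[\lambda]}}$ and conclude $\sgn(s_\beta\lambda|_{\Phi_{[\lambda]}},\mu',\Phi_I,\Phi_{[\lambda]})=(-1)^{\ell(w)}\neq0$. For the converse direction I would argue that given $\mu'\in\Lambda^+(\Phi_I,\Phi_{[\lambda]})$ with $\mu'=ws_\beta(\lambda|_{\Phi_{[\lambda]}})$, $w\in W_I$, the weight $\mu:=ws_\beta\lambda\in\frh^*$ satisfies $\mu|_{\Phi_{[\lambda]}}=\mu'$ and, using Lemma \ref{redslem5}(1) or a direct argument, that $\mu\in\Lambda_I^+$ and is $\Phi_I$-regular; the uniqueness of $\mu$ with prescribed restriction among $W_I$-translates of $s_\beta\lambda$ then gives the claimed bijection and the equality $\mu'=\mu|_{\Phi_{[\lambda]}}$ whenever the sign is nonzero.

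Finally, for the coefficient identity I would invoke Lemma \ref{sjlem1}, which expresses $c(\lambda,\mu)=\sum_{\beta\in\Psi_\lambda^{++}}\sgn(s_\beta\lambda,\mu)$, and likewise $c(\lambda|_{\Phi_{[\lambda]}},\mu|_{\Phi_{[\lambda]}},\Phi_I,\Phi_{[\lambda]})=\sum_{\beta\in\Psi^{++}(\lambda|_{\Phi_{[\lambda]}},\Phi_I,\Phi_{[\lambda]})}\sgn(s_\beta\lambda|_{\Phi_{[\lambda]}},\mu|_{\Phi_{[\lambda]}},\Phi_I,\Phi_{[\lambda]})$. By Lemma \ref{redslem5}(1) the index set $\Psi^{+}(\lambda|_{\Phi_{[\lambda]}},\Phi_I,\Phi_{[\lambda]})$ equals $\Psi_\lambda^+\cap\Phi_{[\lambda]}=\Psi_\lambda^+$ (the last equality because $\langle\lambda,\beta^\vee\rangle\in\bbZ^{>0}$ forces $\beta\in\Phi_{[\lambda]}$), and then intersecting with the $\Phi_I$-regularity condition on $s_\beta\lambda$ — which is invariant under restriction — identifies the two $\Psi^{++}$ sets; summing the term-by-term sign equality established above yields the result. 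The main obstacle I anticipate is not any single hard step but the careful bookkeeping in the converse direction: verifying that a weight defined on all of $\frh^*$ and agreeing with a given dominant-on-$I$ weight of $\Phi_{[\lambda]}$ after restriction genuinely lies in $\Lambda_I^+$ and is $\Phi_I$-regular, i.e. that no information is lost passing between $\Phi$ and $\Phi_{[\lambda]}$; this is where one must use that $\Psi_\lambda^+\subset\Phi_{[\lambda]}$ and that $W_I\subset W_{[\lambda]}$ so that $s_\beta\lambda$ and all its $W_I$-translates already live in the $\Phi_{[\lambda]}$-world up to the orthogonal complement, which $\lambda|_{\Phi_{[\lambda]}}$ ignores.
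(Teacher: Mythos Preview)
Your proposal is correct and follows exactly the template the paper indicates: the paper does not write out a proof of Lemma~\ref{invlem1} but states that it is ``similar and easier'' than the proof of Lemma~\ref{invlem4}, and your argument is precisely that simplified analogue. The simplifications you exploit---that $I\subset\Delta_{[\lambda]}$ so the length functions on $W_I$ genuinely agree (not merely modulo $2$), that $\Phi_I\subset\Phi_{[\lambda]}$ so no intersection is needed, and that $\Psi_\lambda^+\subset\Phi_{[\lambda]}$ so no analogue of Lemma~\ref{reslem3} is required---are exactly what makes the integral case easier than the singular one.
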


\begin{lemma}[Irreducible invariance]\label{invlem2}
Let $\lambda\in\Lambda_I^+$ and $\beta\in\Psi_\lambda^{+}$. Then $\sgn(s_\beta\lambda, \mu)\neq0$ for some $\mu\in\Lambda_I^+$ if and only if $\sgn(s_\beta\lambda|_{\Phi_{\beta, 0}}, \mu', \Phi_I\cap\Phi_{\beta, 0}, \Phi_{\beta, 0})\neq0$ for some $\mu'\in\Lambda^+(\Phi_I\cap\Phi_{\beta, 0}, \Phi_{\beta, 0})$. In particular, if $\sgn(s_\beta\lambda, \mu)\neq0$, then $\mu'=\mu|_{\Phi_{\beta, 0}}$ and
\[
c(\lambda, \mu)=c(\lambda|_{\Phi_{\beta, 0}}, \mu|_{\Phi_{\beta, 0}}, \Phi_I\cap\Phi_{\beta, 0}, \Phi_{\beta, 0}).
\]
\end{lemma}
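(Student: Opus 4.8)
\textbf{Proof proposal for Lemma \ref{invlem2} (Irreducible invariance).}

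The plan is to reduce everything to the combinatorics of the sign function and to exploit the fact that passing to an irreducible component of $\Phi$ interacts cleanly with the $W$-action. First I would unwind the definitions: by Definition \ref{sjdef2}, $\sgn(s_\beta\lambda,\mu)\neq0$ for some $\mu\in\Lambda_I^+$ means precisely that $s_\beta\lambda$ is $\Phi_I$-regular and that the (unique, by triviality of the isotropy group) element $w\in W_I$ with $w s_\beta\lambda\in\Lambda_I^+$ produces $\mu=w s_\beta\lambda$. So the statement to prove is essentially: $s_\beta\lambda$ is $\Phi_I$-regular if and only if its restriction $s_\beta(\lambda|_{\Phi_{\beta,0}})$ is $(\Phi_I\cap\Phi_{\beta,0})$-regular, together with the matching of the resulting $\mu$ and the equality of Jantzen coefficients. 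This is the analogue, for the irreducible reduction, of Lemma \ref{redslem4} (which handled the $\Phi_{\beta,2}$ case).

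The key observations I would assemble are the following. Since $\Phi_{\beta,0}$ is an irreducible component of $\Phi$, it is $W$-stable and in fact $W\cong W(\Phi_{\beta,0})\times W(\Phi\setminus\Phi_{\beta,0})$, with $W(\Phi_{\beta,0})$ acting trivially on the orthogonal complement $\bbC(\Phi\setminus\Phi_{\beta,0})$ and vice versa. Because $\beta\in\Phi_{\beta,0}$, we have $s_\beta\in W(\Phi_{\beta,0})$; hence $s_\beta\lambda$ and $\lambda$ agree on the complementary subspace, and $s_\beta(\lambda|_{\Phi_{\beta,0}}) = (s_\beta\lambda)|_{\Phi_{\beta,0}}$. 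Next, $\Phi_I = (\Phi_I\cap\Phi_{\beta,0}) \sqcup (\Phi_I\setminus\Phi_{\beta,0})$ as an orthogonal direct sum of subsystems; for $\alpha\in\Phi_I\setminus\Phi_{\beta,0}$ we get $\langle s_\beta\lambda,\alpha\rangle = \langle\lambda,\alpha\rangle \in\bbZ^{>0}$ since $\lambda\in\Lambda_I^+$, so those pairings are automatically nonzero. Therefore $\Phi_I$-regularity of $s_\beta\lambda$ is equivalent to nonvanishing of $\langle s_\beta\lambda,\alpha\rangle$ for $\alpha\in\Phi_I\cap\Phi_{\beta,0}$ only, which by the restriction identity $\langle (s_\beta\lambda)|_{\Phi_{\beta,0}},\alpha\rangle=\langle s_\beta\lambda,\alpha\rangle$ (cf. (\ref{reseq1}) and the computation in (\ref{redsl4eq1})) is exactly $(\Phi_I\cap\Phi_{\beta,0})$-regularity of $s_\beta(\lambda|_{\Phi_{\beta,0}})$.

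Granting the regularity equivalence, the rest is bookkeeping. Write $\lambda = \lambda_0 + \lambda_1$ with $\lambda_0 = \lambda|_{\Phi_{\beta,0}}$ and $\lambda_1$ the complementary part; the element $w\in W_I$ straightening $s_\beta\lambda$ into $\Lambda_I^+$ factors as $w = w_0 w_1$ with $w_0\in W(\Phi_I\cap\Phi_{\beta,0})$ and $w_1\in W(\Phi_I\setminus\Phi_{\beta,0})$, and $w_1$ may be taken trivial because $\lambda_1$ is already dominant on $\Phi_I\setminus\Phi_{\beta,0}$; moreover $\ell(w)=\ell(w_0)$ since lengths add across an orthogonal decomposition of $W_I$. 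Thus $\mu = w s_\beta\lambda$ restricts on $\Phi_{\beta,0}$ to $w_0 s_\beta\lambda_0 = w_0 s_\beta(\lambda|_{\Phi_{\beta,0}})=\mu'$, giving $\mu'=\mu|_{\Phi_{\beta,0}}$ and $\sgn(s_\beta\lambda,\mu)=(-1)^{\ell(w)}=(-1)^{\ell(w_0)}=\sgn(s_\beta(\lambda|_{\Phi_{\beta,0}}),\mu')$. Finally, for the coefficient equality I would use Lemma \ref{sjlem1}, which expresses $c(\lambda,\mu)=\sum_{\beta\in\Psi_\lambda^{++}}\sgn(s_\beta\lambda,\mu)$: by Lemma \ref{redslem5}(1) the roots $\beta\in\Psi_\lambda^{++}$ contributing a nonzero term $\sgn(s_\beta\lambda,\mu)$ (which forces $\mu\in W\lambda$, hence $\mu$ and $\beta$ lie in the component $\Phi_{\beta,0}$ once $\mu|_{\Phi_{\beta,0}}\neq\lambda|_{\Phi_{\beta,0}}$) are exactly those in $\Psi^{++}(\lambda|_{\Phi_{\beta,0}},\Phi_I\cap\Phi_{\beta,0},\Phi_{\beta,0})$, and the summands match term by term; here one must note $c(\lambda,\mu)=0$ automatically if $\mu|_{\Phi_{\beta,0}}=\lambda|_{\Phi_{\beta,0}}$, consistent with the right-hand side. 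I expect the main obstacle to be a clean treatment of the reducible case $\Phi_I\setminus\Phi_{\beta,0}\neq\emptyset$ — specifically, making rigorous the claim that the straightening element and its length restrict compatibly to the component, and confirming that no root $\beta'$ outside $\Phi_{\beta,0}$ can sneak a contribution into $c(\lambda,\mu)$ once $\mu\neq\lambda$ on $\Phi_{\beta,0}$; everything else is the orthogonal-decomposition argument plus the restriction identity already used in Lemma \ref{redslem4}.
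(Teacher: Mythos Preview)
Your proposal is correct and follows essentially the approach the paper indicates: the paper omits the proof of Lemma~\ref{invlem2}, stating only that it is ``similar and easier'' than the proof of Lemma~\ref{invlem4}, and your argument is precisely the natural adaptation of that template---regularity equivalence, matching of the straightening element and its length, and term-by-term identification in the sum of Lemma~\ref{sjlem1}---with the orthogonal decomposition $\Phi=\Phi_{\beta,0}\sqcup(\Phi\setminus\Phi_{\beta,0})$ and $W_I=W(\Phi_I\cap\Phi_{\beta,0})\times W(\Phi_I\setminus\Phi_{\beta,0})$ supplying the expected simplifications over the $\Phi_{\beta,2}$ case. The one point you flag as an obstacle (ruling out contributions from $\gamma\notin\Phi_{\beta,0}$) is exactly the analogue of Lemma~\ref{reslem3} and is handled by your observation that $\mu|_{\Phi_{\beta,0}}\neq\lambda|_{\Phi_{\beta,0}}$ together with uniqueness of the dominant representative in a regular $W(\Phi_I\cap\Phi_{\beta,0})$-orbit.
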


\begin{lemma}[Parabolic invariance]\label{invlem3}
Let $\lambda, \mu\in\Lambda_I^+$ and $\beta\in\Psi_\lambda^{+}$.  Then $\sgn(s_\beta\lambda, \mu)\neq0$ for some $\mu\in\Lambda_I^+$ if and only if $\sgn(s_\beta\lambda|_{\Phi_{\beta, 1}}, \mu', \Phi_I, \Phi_{\beta, 1})\neq0$ for some $\mu'\in\Lambda^+(\Phi_I, \Phi_{\beta, 1})$. In particular, if $\sgn(s_\beta\lambda, \mu)\neq0$, then $\mu'=\mu|_{\Phi_{\beta, 1}}$ and
\[
c(\lambda, \mu)=c(\lambda|_{\Phi_{\beta, 1}}, \mu|_{\Phi_{\beta, 1}}, \Phi_I, \Phi_{\beta, 1}).
\]
\end{lemma}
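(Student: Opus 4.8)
The plan is to mirror, at the level of the sign function, the parabolic reduction of Lemma \ref{redllem3}, whose underlying mechanism is Satz 3 of \cite{J}: the key point is that $\Phi_I \subset \Phi_{\beta,1}$ and that $\Phi_{\beta,1}$ captures exactly the part of $\Phi$ that is ``relevant'' to $\beta$ modulo $\Phi_I$. First I would fix $\beta \in \Psi_\lambda^+$ and analyze the weight $s_\beta\lambda$. Since $\beta \notin \Phi_I$, Remark \ref{redrmk1} gives $\rank \Phi_{\beta,1} = \rank \Phi_I + 1$, and $I \cup \{\gamma\}$ is a simple system of $\Phi_{\beta,1}$ for a unique $\gamma \in \Phi_{\beta,1}^+$. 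I would first observe that $s_\beta\lambda$ is $\Phi_I$-regular if and only if $s_\beta(\lambda|_{\Phi_{\beta,1}})$ is $\Phi_I$-regular; this is immediate because for $\alpha \in \Phi_I \subset \Phi_{\beta,1}$ we have $s_\beta\alpha \in \Phi_{\beta,1}$, hence $\langle s_\beta(\lambda|_{\Phi_{\beta,1}}), \alpha\rangle = \langle \lambda|_{\Phi_{\beta,1}}, s_\beta\alpha\rangle = \langle \lambda, s_\beta\alpha\rangle = \langle s_\beta\lambda, \alpha\rangle$ (the same style of computation as \eqref{redsl4eq1} in Lemma \ref{redslem4}, but now trivially, since $\Phi_I$ itself lies in $\Phi_{\beta,1}$). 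So the ``$\Phi_I$-regular'' hypothesis in the definition of $\sgn$ transfers in both directions.

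Next, assuming $s_\beta\lambda$ is $\Phi_I$-regular, I would produce, via Proposition \ref{jansprop1}(3), the unique $w \in W_I$ with $w s_\beta\lambda =: \mu \in \Lambda_I^+$, and show that the \emph{same} $w$ carries $s_\beta(\lambda|_{\Phi_{\beta,1}})$ into $\Lambda^+(\Phi_I, \Phi_{\beta,1})$, with image exactly $\mu|_{\Phi_{\beta,1}}$. The point is that $W_I$ acts on $\Phi_{\beta,1}$ (since $\Phi_I \subset \Phi_{\beta,1}$), and restriction to $\Phi_{\beta,1}$ commutes with the $W_I$-action on the pairings $\langle -, \alpha^\vee\rangle$ for $\alpha \in \Phi_{\beta,1}$: explicitly $\langle w(\lambda|_{\Phi_{\beta,1}}), \alpha^\vee\rangle = \langle \lambda|_{\Phi_{\beta,1}}, w^{-1}\alpha^\vee\rangle = \langle \lambda, w^{-1}\alpha^\vee\rangle = \langle w\lambda, \alpha^\vee\rangle$ for all $\alpha \in \Phi_{\beta,1}$, so $w(\lambda|_{\Phi_{\beta,1}}) = (w\lambda)|_{\Phi_{\beta,1}}$ by uniqueness of the restriction. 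Applying this with $w\lambda$ replaced by $w s_\beta \lambda = \mu$ and noting $(s_\beta\lambda)|_{\Phi_{\beta,1}} = s_\beta(\lambda|_{\Phi_{\beta,1}})$ (again by uniqueness, since $s_\beta$ preserves $\Phi_{\beta,1}$) shows $w s_\beta(\lambda|_{\Phi_{\beta,1}}) = \mu|_{\Phi_{\beta,1}}$, and that this is $\Phi_I$-dominant because dominance is tested only on roots of $I$, which lie in $\Phi_{\beta,1}$. This simultaneously gives the equivalence of non-vanishing of the two sign functions, the identification $\mu' = \mu|_{\Phi_{\beta,1}}$, and the equality $\sgn(s_\beta\lambda, \mu) = (-1)^{\ell(w)} = \sgn(s_\beta(\lambda|_{\Phi_{\beta,1}}), \mu|_{\Phi_{\beta,1}}, \Phi_I, \Phi_{\beta,1})$.

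Finally, to deduce the displayed identity for $c(\lambda,\mu)$, I would use the formula of Lemma \ref{sjlem1}, $c(\lambda,\mu) = \sum_{\beta \in \Psi_\lambda^{++}} \sgn(s_\beta\lambda, \mu)$, together with the bijection $\Psi_\lambda^{++} \cap \Phi_{\beta,1} = \Psi^{++}(\lambda|_{\Phi_{\beta,1}}, \Phi_I, \Phi_{\beta,1})$ from Lemma \ref{redslem5}(2), and the observation that a root $\beta' \in \Psi_\lambda^{++}$ contributes a nonzero term $\sgn(s_{\beta'}\lambda, \mu)$ only if $\mu \in W\beta'$-related to $\lambda$ forces $\beta' \in \Phi_{\beta,1}$ — concretely, if $\sgn(s_{\beta'}\lambda,\mu) \ne 0$ then $s_{\beta'}\lambda \in W_I\mu$, so $\beta' \in \mathbb{Q}\Phi_I + \mathbb{Q}(\text{the reflection hyperplane data of }\mu\text{ vs }\lambda)$; one checks this pins $\beta'$ into the rank-one-over-$\Phi_I$ extension determined by $\mu$, which is $\Phi_{\beta,1}$ by Lemma \ref{redslem2}. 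Summing the per-$\beta'$ sign equalities from the previous paragraph over this common index set yields $c(\lambda,\mu) = c(\lambda|_{\Phi_{\beta,1}}, \mu|_{\Phi_{\beta,1}}, \Phi_I, \Phi_{\beta,1})$.

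I expect the main obstacle to be the bookkeeping in this last paragraph: cleanly matching the index sets over which the two sums run, i.e.\ arguing that every $\beta' \in \Psi_\lambda^{++}$ with $\sgn(s_{\beta'}\lambda,\mu)\ne 0$ automatically lies in $\Phi_{\beta,1}$ (so no terms are lost on the larger side) and conversely that every contributing $\beta'$ on the $\Phi_{\beta,1}$-side is seen on the $\Phi$-side with the same sign. The regularity transfer and the $W_I$-equivariance of restriction are routine; the subtlety is entirely in the combinatorial identification of which roots can possibly contribute, which is where Lemma \ref{redslem2} and Lemma \ref{redslem5}(2) do the real work.
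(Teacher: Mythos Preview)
Your proposal is correct and follows essentially the same route the paper intends. The paper does not write out a proof of Lemma~\ref{invlem3}; it says the argument is similar to (and easier than) that of Lemma~\ref{invlem4}, and it flags Lemma~\ref{reslem2} as the key ingredient (``The following result can be used to prove Lemma~\ref{invlem3}''). Your second paragraph is an inline derivation of exactly what Lemma~\ref{reslem2} packages: that for $\gamma,\nu\in\Phi_{\beta,1}$ one has $\sgn(s_\gamma\lambda,s_\nu\lambda)=\sgn(s_\gamma(\lambda|_{\Phi_{\beta,1}}),s_\nu(\lambda|_{\Phi_{\beta,1}}),\Phi_I,\Phi_{\beta,1})$, together with the identification $\mu'=\mu|_{\Phi_{\beta,1}}$.

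One small correction in your third paragraph: the lemma you want to cite for ``any $\beta'\in\Psi_\lambda^{++}$ with $\sgn(s_{\beta'}\lambda,\mu)\neq0$ lies in $\Phi_{\beta,1}$'' is Lemma~\ref{reslem3} (the $i=1$ case), not Lemma~\ref{redslem2}. Your sketch of the argument (``$s_{\beta'}\lambda\in W_I\mu$ forces $\beta'\in\bbQ\Phi_I+\bbQ\beta$'') is precisely the proof of the $i=1$ case of Lemma~\ref{reslem3}, so the content is right; only the citation is off. With that fixed, your index-set matching is exactly the paper's: the fifth equality in the displayed chain at the end of \S4.3 (for the singular case) is the analogue, using Lemma~\ref{reslem3} to pass from $\sum_{\gamma\in\Psi_\lambda^+\cap\Phi_{\beta,1}}$ to $\sum_{\gamma\in\Psi_\lambda^+}$. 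The cleaner write-up would simply pivot through $s_\beta\lambda$ via \eqref{sjeq2}, apply Lemma~\ref{reslem2} termwise, and extend the sum by Lemma~\ref{reslem3}, exactly parallel to the proof of Lemma~\ref{invlem4} but without the $(-1)^{\ell(w)+\ell(w')}$ discrepancy (here $w'=w$ since $W_I\subset W(\Phi_{\beta,1})$).
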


\begin{lemma}[Singular invariance]\label{invlem4}
Let $\lambda\in\Lambda_I^+$ and $\beta\in\Psi_\lambda^{+}$. Then $\sgn(s_\beta\lambda, \mu)\neq0$ for some $\mu\in\Lambda_I^+$, if and only if $\sgn((s_\beta\lambda)|_{\Phi_{\beta, 2}}, \mu', \Phi_I\cap\Phi_{\beta, 2}, \Phi_{\beta, 2})\neq0$ for some $\mu'\in\Lambda^+(\Phi_I\cap\Phi_{\beta, 2}, \Phi_{\beta, 2})$. In particular, if $\sgn(s_\beta\lambda, \mu)\neq0$, then
\[
c(\lambda, \mu)=(-1)^{\ell(w)+\ell(w')}c(\lambda|_{\Phi_{\beta, 2}}, \mu', \Phi_I\cap\Phi_{\beta, 2}, \Phi_{\beta, 2}),
\]
where $w\in W_I$ with $\mu=ws_\beta\lambda$ and $w'\in W(\Phi_I\cap\Phi_{\beta, 2})$ with $\mu'=w's_\beta(\lambda|_{\Phi_{\beta, 2}})$.
\end{lemma}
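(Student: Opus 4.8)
The plan is to mimic the strategy that presumably underlies the parabolic invariance lemma (Lemma \ref{invlem3}), but now tracking the $\Phi_I$-action and its length contributions carefully, since $\Phi_I$ is no longer contained in the target subsystem $\Phi_{\beta,2}$. Write $\Phi' = \Phi_{\beta,2} = (\bbQ\Phi_\lambda + \bbQ\beta)\cap\Phi$, $\Phi'_I = \Phi_I\cap\Phi'$, and abbreviate $\nu = \lambda|_{\Phi'}$. The first step is the key structural observation, already recorded in Remark \ref{redrmk1}: choosing $J\subset\Delta$ and $u\in{}^IW^J$ with $\Phi_\lambda = u\Phi_J$, one has $\rank\Phi' = \rank\Phi_\lambda + 1$, and $u^{-1}\Phi'$ has simple system $J\cup\{\nu_0\}$ for a suitable $\nu_0$. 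I would use this to show that $W(\Phi')$ and $W_I$ interact nicely: every $\Phi_I$-regular, $\Phi'$-integral weight lying in the orbit under consideration has a well-defined "restriction" to $\Phi'$, and—crucially—the set $\Psi_\lambda^{++}\cap\Phi'$ equals $\Psi^{++}(\nu, \Phi'_I, \Phi')$ by Lemma \ref{redslem5}(3), whose proof already invoked Lemma \ref{redslem4} relating $\Phi_I$-regularity to $\Phi'_I$-regularity.

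Next I would establish the biconditional. Suppose $\sgn(s_\beta\lambda,\mu)\neq 0$ for some $\mu\in\Lambda_I^+$, so $s_\beta\lambda$ is $\Phi_I$-regular and $\mu = w s_\beta\lambda$ with $w\in W_I$. By Lemma \ref{redslem4}, $s_\beta(\lambda|_{\Phi'}) = s_\beta\nu$ is $\Phi'_I$-regular (note $\beta\in\Phi'$, so $s_\beta$ preserves $\Phi'$ and $s_\beta\nu = (s_\beta\lambda)|_{\Phi'}$ by the computation in \eqref{redsl4eq1}). Hence there is $w'\in W(\Phi'_I)$ with $\mu' := w' s_\beta\nu \in \Lambda^+(\Phi'_I,\Phi')$, giving $\sgn(s_\beta\nu,\mu')\neq 0$. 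Conversely, from $\sgn(s_\beta\nu,\mu')\neq0$ we read off $\Phi'_I$-regularity of $s_\beta\nu$, promote it to $\Phi_I$-regularity of $s_\beta\lambda$ via Lemma \ref{redslem4} again, and build the required $\mu$. To pin down $\mu' = \mu|_{\Phi'}$ when both signs are nonzero: both $\mu$ and $\mu|_{\Phi'}$ are $\Phi'$-integral, $\mu|_{\Phi'}$ is $\Phi'_I$-regular since $\mu$ is $\Phi_I$-regular, and $\mu|_{\Phi'}$ lies in the $W(\Phi'_I)$-orbit of $s_\beta\nu$ because $\mu = w s_\beta\lambda$ with $w\in W_I$ and restriction is $W(\Phi'_I)$-equivariant; the subtlety is that $w\in W_I$ need not lie in $W(\Phi'_I)$, so I must argue that the $W_I$-orbit of a weight, intersected with $\Lambda^+(\Phi'_I,\Phi')$ after restriction, picks out a unique element—this follows because $\mu\in\Lambda_I^+$ is the unique dominant representative and restriction respects the dominance conditions coming from $\Phi'_I\subset\Phi_I$.

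For the coefficient identity, I would combine Lemma \ref{sjlem1} with the bijection just established. We have $c(\lambda,\mu) = \sum_{\beta'\in\Psi_\lambda^{++}} \sgn(s_{\beta'}\lambda,\mu)$. I claim each nonzero term on the right corresponds, via restriction, to a nonzero term of $c(\nu,\mu',\Phi'_I,\Phi')= \sum_{\gamma\in\Psi^{++}(\nu,\Phi'_I,\Phi')}\sgn(s_\gamma\nu,\mu')$, and conversely: if $\sgn(s_{\beta'}\lambda,\mu)\neq 0$ then $s_{\beta'}\lambda\in W\mu\subset W\lambda$ forces $\beta'\in\Phi_{[\lambda]}$, and using that $\mu\in\Phi'$-translate data one shows $\beta'\in\Phi' $ (here $\Phi' = \Phi_{\beta,2}$ and one invokes Lemma \ref{redslem2} with $i=2$, namely $\Phi_{\beta',2}=\Phi_{\beta,2}$ whenever $\beta'\in\Psi_\lambda^{++}$ contributes to the same $\mu$). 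Then $\beta'\in\Psi_\lambda^{++}\cap\Phi' = \Psi^{++}(\nu,\Phi'_I,\Phi')$. The sign bookkeeping is the delicate point: using \eqref{sjeq2} and fixing the specific $w,w'$ in the statement, $\sgn(s_{\beta'}\lambda,\mu) = \sgn(s_{\beta'}\lambda, s_\beta\lambda)\,\sgn(s_\beta\lambda,\mu)$ while $\sgn(s_{\beta'}\nu,\mu') = \sgn(s_{\beta'}\nu,s_\beta\nu)\,\sgn(s_\beta\nu,\mu')$, and one checks $\sgn(s_{\beta'}\lambda,s_\beta\lambda) = \sgn(s_{\beta'}\nu,s_\beta\nu)$ because both $s_{\beta'}\lambda,s_\beta\lambda\in\bbC\Phi'$-data differ by an element of $W(\Phi')$ whose length in $W$ versus in $W(\Phi')$ agree (as $\Phi'$ is a root subsystem and the relevant Weyl group element is a product of reflections in $\Phi'$, whose $W$-length equals its $W(\Phi')$-length — this uses that $\Phi'\cap\Phi^+$ is the positive system making the parabolic-type length additivity work). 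The mismatch between $\sgn(s_\beta\lambda,\mu) = (-1)^{\ell(w)}$ and $\sgn(s_\beta\nu,\mu')=(-1)^{\ell(w')}$ is exactly the factor $(-1)^{\ell(w)+\ell(w')}$ in the statement. Summing over the matched terms yields $c(\lambda,\mu) = (-1)^{\ell(w)+\ell(w')} c(\nu,\mu',\Phi'_I,\Phi')$.

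The main obstacle I anticipate is the length comparison: controlling $\ell(w)$ for $w\in W_I\setminus W(\Phi'_I)$ relative to lengths computed inside the subsystem $\Phi'$, and verifying that $\sgn(\cdot,\cdot)$ computed in $W$ agrees with $\sgn(\cdot,\cdot,\Phi'_I,\Phi')$ computed in $W(\Phi')$ on the relevant pairs. I expect this to rest on the fact that $\Phi' = (\bbQ\Phi_\lambda+\bbQ\beta)\cap\Phi$ is itself a root subsystem whose Weyl group embeds in $W$ with matching length function on products of reflections from $\Phi'$, together with the observation from Remark \ref{redrmk1} that $\rank\Phi' = \rank\Phi_\lambda + 1$ keeps everything one-dimensional away from the singular directions — so the "extra" length in $w$ beyond $w'$ is precisely accounted for and cancels against it in the displayed formula.
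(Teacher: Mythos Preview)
Your overall architecture matches the paper's: establish the biconditional via Lemma~\ref{redslem4}, then compare the two Jantzen-coefficient sums term by term using the multiplicativity \eqref{sjeq2} with $s_\beta\lambda$ as pivot, after first arguing that every $\beta'$ contributing to $c(\lambda,\mu)$ already lies in $\Phi'=\Phi_{\beta,2}$. That is exactly what the paper does (it runs the computation from the $c(\lambda',\mu')$ side and invokes Lemmas~\ref{reslem3} and~\ref{reslem4}), so the strategy is correct.

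There are, however, two genuine gaps in your execution. First, your length claim is false: for an element $w\in W(\Phi')$ the $W$-length and the $W(\Phi')$-length are \emph{not} equal in general, because $\Phi'$ is not a standard parabolic subsystem (e.g.\ a reflection in a non-simple root has $W$-length $>1$). What you actually need, and what the paper uses, is only that the \emph{parities} agree; this follows since every reflection has odd length in $W$. But even before that, you must know that the $w\in W_I$ with $ws_{\beta'}\lambda=s_\beta\lambda$ lies in $W(\Phi'_I)$ at all---this is not automatic and is the content of Lemma~\ref{reslem4}, whose proof goes through the identity $W_I\cap W(\Phi')=W(\Phi_I\cap\Phi')$ (Lemma~\ref{weyllem2}) and the observation $s_\beta w^{-1}s_{\beta'}\in W(\Phi_\lambda)\subset W(\Phi')$. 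Your ``parabolic-type length additivity'' is the wrong mechanism here.

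Second, your argument that a contributing $\beta'$ lies in $\Phi'$ invokes Lemma~\ref{redslem2}, which is circular: that lemma assumes $\beta'\in\Phi_{\beta,2}$ as a hypothesis. The correct tool is Lemma~\ref{reslem3}: from $\sgn(s_{\beta'}\lambda,\mu)\neq0$ and $\sgn(s_\beta\lambda,\mu)\neq0$ you get $\sgn(s_\beta\lambda,s_{\beta'}\lambda)\neq0$, and then Lemma~\ref{reslem3} directly yields $\beta'\in\Phi_{\beta,2}$. Finally, your attempt to show $\mu'=\mu|_{\Phi'}$ is unnecessary for the proof and the justification you sketch does not go through as stated (restriction is not $W_I$-equivariant, only $W(\Phi'_I)$-equivariant); the paper never asserts this identification and simply defines $\mu'$ as the $\Phi'_I$-dominant representative of $s_\beta\lambda'$.
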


Given $\beta\in\Psi_\lambda^{++}$, note that we can always find $w\in W_I$ such that $\mu=ws_\beta\lambda\in\Lambda_I^+$ and $\sgn(s_\beta\lambda, \mu)=(-1)^{\ell(w)}\neq0$. The following result is an immediate consequence of the reduction process and the above lemmas.

\begin{lemma}\label{invjlem}
Let $\lambda\in\Lambda_I^+$ and $\beta\in\Psi_\lambda^{++}$. Denote $\lambda'=\lambda|_{\Phi(\beta)}$. Suppose that $\mu=ws_\beta\lambda\in\Lambda_I^+$ for $w\in W_I$. Then there exists $\mu'\in\Lambda^+(\Phi_I\cap\Phi(\beta), \Phi(\beta))$ with $\mu'=w's_\beta\lambda'$ for $w'\in W(\Phi_I\cap\Phi(\beta))$ such that
\[
|c(\lambda, \mu)|=|c(\lambda', \mu', \Phi_I\cap\Phi(\beta), \Phi(\beta))|.
\]
\end{lemma}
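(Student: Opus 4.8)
The plan is to obtain Lemma~\ref{invjlem} by iterating the four invariance lemmas (Lemma~\ref{invlem1}--Lemma~\ref{invlem4}) along the chain of subsystems $\Phi_{[\lambda]}=\Phi_0(\beta)\supset\Phi_1(\beta)\supset\cdots\supset\Phi(\beta)$ defined by (\ref{redseq1}), exactly as Theorem~\ref{redsthm1} was obtained by iterating the reduction lemmas. First I would set up the inductive statement: for each $k\geq 0$ and each $\beta\in\Psi_\lambda^{++}$, writing $\lambda^{(k)}=\lambda|_{\Phi_k(\beta)}$ and $\Phi_I^{(k)}=\Phi_I\cap\Phi_k(\beta)$, the claim is that $\sgn(s_\beta\lambda,\mu)\neq0$ for some $\mu\in\Lambda_I^+$ if and only if $\sgn(s_\beta\lambda^{(k)},\mu^{(k)},\Phi_I^{(k)},\Phi_k(\beta))\neq0$ for some $\mu^{(k)}\in\Lambda^+(\Phi_I^{(k)},\Phi_k(\beta))$, and in that case
\[
|c(\lambda,\mu)|=|c(\lambda^{(k)},\mu^{(k)},\Phi_I^{(k)},\Phi_k(\beta))|.
\]
The base case $k=0$ is Lemma~\ref{invlem1} (integral invariance), since $\Phi_0(\beta)=\Phi_{[\lambda]}$.

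For the inductive step I would split according to $k\bmod 4$, applying the matching invariance lemma to the pair $(\lambda^{(k)},\mu^{(k)})$ over the root system $\Phi_k(\beta)$: Lemma~\ref{invlem2} when $k\equiv0,2\ (\mathrm{mod}\ 4)$ (irreducible invariance, with the relevant root $\gamma\in\Psi^{++}(\lambda^{(k)},\Phi_I^{(k)},\Phi_k(\beta))$), Lemma~\ref{invlem3} when $k\equiv1\ (\mathrm{mod}\ 4)$ (parabolic invariance), and Lemma~\ref{invlem4} when $k\equiv3\ (\mathrm{mod}\ 4)$ (singular invariance). Here one must be a little careful: the invariance lemmas are phrased in terms of the root $\beta$ defining $\Phi_{\beta,i}$, whereas the chain was built using $\Phi_{i}(\beta)_{\beta,i}$ with the \emph{fixed} $\beta$; since $\beta\in\Phi_k(\beta)$ for all $k$ (because the sequence only removes roots not in the irreducible/parabolic/singular span of $\beta$, and stabilizes with $\beta$ inside), the same $\beta$ can be used at every stage, so the lemmas apply verbatim with $\gamma$ replaced by $\beta$. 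At each step the equality of absolute values of Jantzen coefficients is preserved because Lemma~\ref{invlem1}, Lemma~\ref{invlem2}, Lemma~\ref{invlem3} give equality on the nose and Lemma~\ref{invlem4} gives equality up to the sign $(-1)^{\ell(w)+\ell(w')}$; composing these over finitely many steps still yields equality up to sign, hence the absolute values agree. Since the sequence $\Phi_k(\beta)$ is eventually stationary equal to $\Phi(\beta)$, taking $k$ large enough gives the asserted statement, with $\mu'=\mu^{(k)}=w's_\beta\lambda'$ for the appropriate $w'\in W(\Phi_I\cap\Phi(\beta))$ (the existence of such $w'$ with $\mu'\in\Lambda^+(\Phi_I\cap\Phi(\beta),\Phi(\beta))$ follows from $\beta\in\Psi^{++}$ passing down the chain via Lemma~\ref{redslem5}, exactly as noted before Lemma~\ref{invjlem}).

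The main obstacle I anticipate is purely bookkeeping rather than conceptual: one has to verify that at each stage the root $\beta$ (or a root $\gamma$ with $\Phi_{\gamma,i}=\Phi_{\beta,i}$, supplied by Lemma~\ref{redslem2}) still lies in $\Psi^{++}$ relative to the \emph{current} pair $(\Phi_I^{(k)},\Phi_k(\beta))$ so that the invariance lemma is genuinely applicable, and that the restriction weights behave consistently, i.e.\ $(\lambda^{(k)})|_{\Phi_{k+1}(\beta)}=\lambda^{(k+1)}$ and likewise for $\mu$. Both facts are routine: the first follows from Lemma~\ref{redslem5}(2)--(3) together with Lemma~\ref{redslem3}, and the second from the defining property (\ref{reseq1}) of restriction, since $\Phi_{k+1}(\beta)\subset\Phi_k(\beta)$ forces $(\lambda|_{\Phi_k(\beta)})|_{\Phi_{k+1}(\beta)}=\lambda|_{\Phi_{k+1}(\beta)}$. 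Once these compatibilities are recorded, the proof is a clean finite induction and no further calculation is needed.
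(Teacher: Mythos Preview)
Your proposal is correct and follows precisely the approach the paper intends: the paper states that Lemma~\ref{invjlem} is ``an immediate consequence of the reduction process and the above lemmas,'' and your argument spells out exactly that consequence by inducting along the chain $\Phi_k(\beta)$ and applying the matching invariance lemma (Lemma~\ref{invlem1}--\ref{invlem4}) at each step. The bookkeeping points you flag (that $\beta$ remains in $\Psi^+$ at every stage via Lemma~\ref{redslem5}, and that restrictions compose) are the only things to check, and you have identified the right tools for them.
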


A considerable amount of effort is needed to prove Lemma \ref{invlem4}. This will be done in the next subsection. The proofs of the other three lemmas (Lemma \ref{invlem1}-\ref{invlem3}), which we leave to the reader, are similar and easier.

\begin{remark}\label{invrmk1}
With the above four lemmas and the classification of basic generalized Verma modules in the next section, One can deduce that Jantzen coefficients must be $\pm 1$ or $0$ except a few very special cases. This will be exhibited later.
\end{remark}

\textbf{Proof of Lemma \ref{redllem4} (Singular reduction)} If $M_I(\lambda)$ is not simple, Theorem \ref{jansthm1} and (\ref{sjeq1}) implies $c(\lambda, \mu)\neq0$ for some $\mu\in\Lambda_I^+$. Lemma \ref{sjlem1} yields $\beta\in\Psi_\lambda^{++}$ so that $\sgn(s_\beta\lambda, \mu)\neq0$. In view of Lemma \ref{invlem4}, $c(\lambda|_{\Phi_{\beta, 2}}, \mu', \Phi_I\cap\Phi_{\beta, 2}, \Phi_{\beta, 2})\neq0$ for some $\mu'\in\Lambda^+(\Phi_I\cap\Phi_{\beta, 2}, \Phi_{\beta, 2})$ and thus $M(\lambda|_{\Phi_{\beta, 2}}, \Phi_I\cap\Phi_{\beta, 2}, \Phi_{\beta, 2})$ is not simple.
Conversely, if $M(\lambda|_{\Phi_{\beta, 2}}, \Phi_I\cap\Phi_{\beta, 2}, \Phi_{\beta, 2})$ is not simple for some $\beta\in\Psi_\lambda^+$, there exists $\mu'\in\Lambda^+(\Phi_I\cap\Phi_{\beta, 2}, \Phi_{\beta, 2})$ such that $c(\lambda|_{\Phi_{\beta, 2}}, \mu', \Phi_I\cap\Phi_{\beta, 2}, \Phi_{\beta, 2})\neq0$. Therefore $\sgn(s_\gamma(\lambda|_{\Phi_{\beta, 2}}), \mu', \Phi_I\cap\Phi_{\beta, 2}, \Phi_{\beta, 2})\neq0$ for some $\gamma\in\Psi^+(\lambda|_{\Phi_{\beta, 2}}, \Phi_I\cap\Phi_{\beta, 2}, \Phi_{\beta, 2})=\Psi_\lambda^+\cap\Phi_{\beta, 2}$, keeping in mind Lemma \ref{redslem5}. We get $\Phi_{\beta, 2}=\Phi_{\gamma, 2}$ in view of Lemma \ref{redslem2}. Now Lemma \ref{invlem4} implies $\sgn(s_\gamma\lambda, \mu)\neq0$ and $c(\lambda, \mu)\neq0$ for some $\mu\in\Lambda_I^+$. Hence $M_I(\lambda)$ is not simple.

\subsection{Restriction of weights and singular invariance} Note that the reflection $s_\nu$ can be defined for any $\nu\in\frh^*$. In fact,
\[
s_\nu\lambda=\lambda-\frac{2\langle\lambda, \nu\rangle}{\langle\nu, \nu\rangle}\nu
\]
for $\lambda\in\frh^*$. Let $\Phi'$ be a subsystem of $\Phi$. For convenience, write $\lambda\perp\Phi'$ if $\lambda|_{\Phi'}=0$. Thus one obtains $(\lambda-\lambda|_{\Phi'})\perp\Phi'$ for any $\lambda\in\frh^*$.

\begin{lemma}\label{reslem1}
Let $\Phi'$ be a subsystem of $\Phi$. Choose $\nu\in\frh^*$ $($not necessarily a root$)$.
\begin{itemize}
  \item [(1)] If $\nu\perp\Phi'$, then $(s_\nu\lambda)|_{\Phi'}=\lambda|_{\Phi'}$ for any $\lambda\in\frh^*$.
  \item [(2)] If $\nu\in\bbC\Phi'$, then $(s_\nu\lambda)|_{\Phi'}=s_\nu(\lambda|_{\Phi'})$ for any $\lambda\in\frh^*$.
  \item [(3)] Denote $\lambda'=\lambda|_{\Phi'}$. Then $\Phi'_{\lambda'}=\Phi_\lambda\cap\Phi'$.
%  \item [(3)] The group $W(\Phi')$ can be naturally viewed as a subgroup of $W(\Phi)=W$.
\end{itemize}
\end{lemma}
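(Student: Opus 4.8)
The plan is to prove the three assertions about restriction of weights essentially by unwinding the defining property \eqref{reseq1} of $\lambda|_{\Phi'}$ and using the invariance of the bilinear form $\langle-,-\rangle$ under $W$ and, more generally, under any reflection $s_\nu$.

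For part (1): assume $\nu\perp\Phi'$, i.e. $\lambda|_{\Phi'}=0$ wait---$\nu\perp\Phi'$ means $\nu|_{\Phi'}=0$, equivalently $\langle\nu,\alpha^\vee\rangle=0$ for all $\alpha\in\Phi'$. Then for every $\alpha\in\Phi'$ we compute
\[
\langle (s_\nu\lambda)|_{\Phi'},\alpha^\vee\rangle=\langle s_\nu\lambda,\alpha^\vee\rangle=\langle\lambda,\alpha^\vee\rangle-\frac{2\langle\lambda,\nu\rangle}{\langle\nu,\nu\rangle}\langle\nu,\alpha^\vee\rangle=\langle\lambda,\alpha^\vee\rangle=\langle\lambda|_{\Phi'},\alpha^\vee\rangle.
\]
Since $\lambda|_{\Phi'}$ is characterized among elements of $\bbC\Phi'$ by these pairings, and $(s_\nu\lambda)|_{\Phi'}\in\bbC\Phi'$ by construction, uniqueness forces $(s_\nu\lambda)|_{\Phi'}=\lambda|_{\Phi'}$. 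One should note the degenerate case $\langle\nu,\nu\rangle=0$ separately, but then $s_\nu=\id$ on the relevant weights (or rather $s_\nu$ is only defined when $\langle\nu,\nu\rangle\neq0$), so there is nothing to check.

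For part (2): assume $\nu\in\bbC\Phi'$. Again test against $\alpha^\vee$ for $\alpha\in\Phi'$. On the one hand $\langle(s_\nu\lambda)|_{\Phi'},\alpha^\vee\rangle=\langle s_\nu\lambda,\alpha^\vee\rangle$. On the other hand $s_\nu(\lambda|_{\Phi'})=\lambda|_{\Phi'}-\tfrac{2\langle\lambda|_{\Phi'},\nu\rangle}{\langle\nu,\nu\rangle}\nu$ lies in $\bbC\Phi'$ (since $\nu\in\bbC\Phi'$), and $\langle s_\nu(\lambda|_{\Phi'}),\alpha^\vee\rangle=\langle\lambda|_{\Phi'},\alpha^\vee\rangle-\tfrac{2\langle\lambda|_{\Phi'},\nu\rangle}{\langle\nu,\nu\rangle}\langle\nu,\alpha^\vee\rangle$. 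So it suffices to check $\langle\lambda|_{\Phi'},\nu\rangle=\langle\lambda,\nu\rangle$; this holds because $\nu\in\bbC\Phi'$ can be written $\nu=\sum c_i\alpha_i$ with $\alpha_i\in\Phi'$, and $\langle\lambda-\lambda|_{\Phi'},\alpha_i\rangle=0$ for each $i$ by \eqref{reseq1} (rewriting the coroot pairing as an ordinary pairing using $\alpha^\vee=2\alpha/\langle\alpha,\alpha\rangle$). Both sides therefore agree against every $\alpha^\vee$, $\alpha\in\Phi'$, and both lie in $\bbC\Phi'$, so uniqueness gives the equality.

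For part (3): with $\lambda'=\lambda|_{\Phi'}$, recall $\Phi'_{\lambda'}=\{\beta\in\Phi'\mid\langle\lambda',\beta\rangle=0\}$ while $\Phi_\lambda\cap\Phi'=\{\beta\in\Phi'\mid\langle\lambda,\beta\rangle=0\}$. For $\beta\in\Phi'$ one has $\langle\lambda',\beta\rangle=\langle\lambda,\beta\rangle$: indeed $\langle\lambda-\lambda',\beta\rangle=0$ since $(\lambda-\lambda|_{\Phi'})\perp\Phi'$, as noted just before the lemma. Hence the two sets coincide. The only mild subtlety throughout is keeping track of coroots versus roots in \eqref{reseq1}, but since each $\beta\in\Phi'$ has $\langle\lambda,\beta\rangle=0\iff\langle\lambda,\beta^\vee\rangle=0$, this causes no difficulty. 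I do not anticipate a genuine obstacle here; the ``hard part'' is merely bookkeeping---making sure to invoke that $(\lambda-\lambda|_{\Phi'})\perp\Phi'$ at the right moments and that $s_\nu$ preserves $\bbC\Phi'$ whenever $\nu\in\bbC\Phi'$, so that the uniqueness clause in the definition of restriction applies.
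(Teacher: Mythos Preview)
Your proof is correct and takes essentially the same approach as the paper, which simply states that all three parts are easy consequences of the definition. You have supplied the detailed verification that the paper omits, correctly exploiting the uniqueness of $\lambda|_{\Phi'}$ in $\bbC\Phi'$ and the fact that $(\lambda-\lambda|_{\Phi'})\perp\Phi'$.
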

\begin{proof}
They are easy consequences of the definition.
\end{proof}

The following result can be used to prove Lemma \ref{invlem3}.
\begin{lemma}\label{reslem2}
Let $\lambda\in\Lambda_I^+$ and $\beta\in\Phi\backslash\Phi_I$. If $\gamma, \nu\in \Phi_{\beta, 1}$, then
\begin{equation}\label{resl2eq1}
\sgn(s_\gamma\lambda, s_\nu\lambda)=\sgn(s_\gamma(\lambda|_{\Phi_{\beta, 1}}), s_\nu(\lambda|_{\Phi_{\beta, 1}}), \Phi_I, \Phi_{\beta, 1}).
\end{equation}
\end{lemma}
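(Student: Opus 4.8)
The plan is to reduce the identity \eqref{resl2eq1} to a purely combinatorial statement inside the rank-$(\rank\Phi_I+1)$ subsystem $\Phi_{\beta,1}$, using the fact (Remark \ref{redrmk1}) that $I\cup\{\gamma_0\}$ is a simple system of $\Phi_{\beta,1}$ for a unique $\gamma_0\in\Phi_{\beta,1}^+$. First I would dispose of the degenerate case: both sides are $0$ unless $s_\gamma\lambda$ and $s_\nu\lambda$ lie in the same $W_I$-orbit and are $\Phi_I$-regular. Since $\gamma,\nu\in\Phi_{\beta,1}$, the weights $s_\gamma\lambda$ and $s_\nu\lambda$ differ from $\lambda$ by elements of $\bbQ\Phi_{\beta,1}$, and by Lemma \ref{reslem1}(2) their restrictions to $\Phi_{\beta,1}$ are exactly $s_\gamma(\lambda|_{\Phi_{\beta,1}})$ and $s_\nu(\lambda|_{\Phi_{\beta,1}})$; moreover $\Phi_I\subset\Phi_{\beta,1}$, so $\Phi_I$-regularity of $s_\gamma\lambda$ is equivalent to $\Phi_I$-regularity of $s_\gamma(\lambda|_{\Phi_{\beta,1}})$ by \eqref{reseq1}. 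Hence the left side is nonzero exactly when the right side is, and it remains to match the signs when both are nonzero.

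For the sign comparison, suppose $s_\nu\lambda = w\, s_\gamma\lambda$ for some $w\in W_I$; I want to show $(-1)^{\ell(w)}$ computed in $W=W(\Phi)$ agrees with the length of the element of $W_I$ realizing $s_\nu(\lambda|_{\Phi_{\beta,1}}) = w'\,s_\gamma(\lambda|_{\Phi_{\beta,1}})$ inside $W(\Phi_{\beta,1})$. The key observation is that $W_I$ sits inside $W(\Phi_{\beta,1})$ as a parabolic subgroup, and the length function $\ell$ restricted to $W_I$ is the same whether computed in $W$ or in $W(\Phi_{\beta,1})$ (both count inversions among $\Phi_I^+$, since $\Phi_I$ is a "lower" part of both root systems); this is the standard fact about length functions of reflection subgroups. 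So I must check that the element $w$ appearing on the $\Phi$-side can be taken to lie in $W_I$ and to be the \emph{same} group element that works on the $\Phi_{\beta,1}$-side. This follows from $\Phi_I$-regularity: if $s_\nu\lambda$ and $s_\gamma\lambda$ are $W_I$-conjugate and $\Phi_I$-regular, the conjugating element of $W_I$ is unique, and applying $w$ to $s_\gamma\lambda$ versus applying it to $s_\gamma(\lambda|_{\Phi_{\beta,1}})$ commutes with restriction to $\Phi_{\beta,1}$ again by Lemma \ref{reslem1}(2) (since $w\in W_I$ acts through $W(\Phi_{\beta,1})$ which preserves $\bbC\Phi_{\beta,1}$), so the same $w$ witnesses both conjugacies. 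Thus both signs equal $(-1)^{\ell(w)}$.

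The main obstacle I anticipate is the bookkeeping around \emph{which} weights are being compared: $\sgn$ is only nonzero on $\Phi_I$-regular weights, and one has to be careful that restriction to $\Phi_{\beta,1}$ neither creates nor destroys $\Phi_I$-regularity and that it intertwines the $W_I$-action correctly — i.e.\ verifying that $(s_\nu\lambda)|_{\Phi_{\beta,1}} = s_\nu(\lambda|_{\Phi_{\beta,1}})$ for $\nu\in\Phi_{\beta,1}$, and more generally $(w\mu)|_{\Phi_{\beta,1}} = w(\mu|_{\Phi_{\beta,1}})$ for $w\in W_I$. These are exactly the content of Lemma \ref{reslem1}, so once that lemma is in hand the argument is essentially a diagram-chase; the only genuinely non-formal input is the invariance of the length function under passage to the reflection subgroup $W_I\subset W(\Phi_{\beta,1})\subset W$, which I would cite from the standard theory (or from the conventions in \cite{H1,H3}).
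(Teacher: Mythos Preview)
Your plan is correct and follows essentially the same route as the paper's proof: reduce everything to Lemma \ref{reslem1}, use that $\Phi_I\subset\Phi_{\beta,1}$ so $\Phi_I$-regularity and the $W_I$-action pass cleanly to the restriction, and observe that the same $w\in W_I$ witnesses the conjugacy on both sides with the same length parity. The only step you leave slightly implicit is the converse direction: from $ws_\gamma(\lambda|_{\Phi_{\beta,1}})=s_\nu(\lambda|_{\Phi_{\beta,1}})$ one recovers $ws_\gamma\lambda=s_\nu\lambda$ by writing $\lambda=(\lambda-\lambda|_{\Phi_{\beta,1}})+\lambda|_{\Phi_{\beta,1}}$ and noting that $w,s_\gamma,s_\nu\in W(\Phi_{\beta,1})$ all fix the orthogonal piece $\lambda-\lambda|_{\Phi_{\beta,1}}$; this is exactly how the paper closes the argument, and it is what your appeal to Lemma \ref{reslem1} would unpack to.
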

\begin{proof}
If $\sgn(s_\gamma\lambda, s_\nu\lambda)\neq0$, then $s_\gamma\lambda$ and $s_\nu\lambda$ are $\Phi_I$-regular. Moreover, there exists $w\in W(\Phi_I)=W_I$ such that $ws_\gamma\lambda=s_\nu\lambda$. Let $\lambda'=\lambda|_{\Phi_{\beta, 1}}$. Lemma \ref{reslem1} yields
\[
ws_\gamma\lambda'=(ws_\gamma\lambda)|_{\Phi_{\beta, 1}}=(s_\nu\lambda)|_{\Phi_{\beta, 1}}=s_\nu\lambda'.
\]
Since $s_\gamma\lambda', s_\nu\lambda'$ are also $\Phi_I$-regular, it follows that
\[
\sgn(s_\gamma\lambda, s_\nu\lambda)=(-1)^{\ell(w)}=\sgn(s_\gamma\lambda', s_\nu\lambda', \Phi_I, \Phi_{\beta, 1}).
\]
Conversely, if $\sgn(s_\gamma\lambda', s_\nu\lambda', \Phi_I, \Phi_{\beta, 1})\neq0$, there exists $w\in W_I$ such that $ws_\gamma\lambda'=s_\nu\lambda'$. Lemma \ref{reslem1} yields
\[
ws_\gamma\lambda=ws_\gamma(\lambda-\lambda'+\lambda')=\lambda-\lambda'+ws_\gamma\lambda'=\lambda-\lambda'+s_\nu\lambda'=s_\nu\lambda.
\]
Obviously $s_\gamma\lambda, s_\nu\lambda$ are $\Phi_I$-regular since $s_\gamma\lambda', s_\nu\lambda'$ are $\Phi_I$-regular in this case. Then both sides of (\ref{resl2eq1}) are equal to $(-1)^{\ell(w)}$.
\end{proof}

There are similar results for $\Phi_{[\lambda]}$ and $\Phi_{\beta, 0}$, with easier proofs. Our plan is to prove a similar lemma for $\Phi_{\beta, 2}$. This turns out to be much more complicated than its counterparts. We need several related results.

\begin{lemma}\label{weyllem1}
Let $\lambda\in\frh^*$. Then $W(\Phi_\lambda)$ is the isotropy group of $\lambda$.
\end{lemma}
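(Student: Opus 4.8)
Write $W_\lambda:=\{w\in W\mid w\lambda=\lambda\}$ for the isotropy group; we must show $W_\lambda=W(\Phi_\lambda)$. One inclusion is immediate: for $\beta\in\Phi_\lambda$ we have $\langle\lambda,\beta\rangle=0$, so $s_\beta\lambda=\lambda-\tfrac{2\langle\lambda,\beta\rangle}{\langle\beta,\beta\rangle}\beta=\lambda$; hence every generator of $W(\Phi_\lambda)$ fixes $\lambda$ and $W(\Phi_\lambda)\subseteq W_\lambda$. It remains to prove $W_\lambda\subseteq W(\Phi_\lambda)$.

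\textbf{Reduction to a real, dominant weight.} Let $\frh^*_\bbR:=\sum_{\alpha\in\Delta}\bbR\alpha$, on which $\langle-,-\rangle$ is positive definite and which contains $\Phi$, and decompose $\lambda=\mu+\sqrt{-1}\nu$ with $\mu,\nu\in\frh^*_\bbR$. For $\beta\in\Phi$ one has $\langle\lambda,\beta\rangle=0$ if and only if $\langle\mu,\beta\rangle=\langle\nu,\beta\rangle=0$. Hence, putting $\lambda_t:=\mu+t\nu$ for $t\in\bbR$, we get $\Phi_{\lambda_t}=\Phi_\lambda$ for all but finitely many $t$ (a root $\beta\notin\Phi_\lambda$ satisfies $\langle\lambda_t,\beta\rangle=0$ for at most one value of $t$); moreover $w\lambda=\lambda$ forces $w\mu=\mu$ and $w\nu=\nu$ by comparing real and imaginary parts, so $W_\lambda\subseteq W_{\lambda_{t}}$ for every $t$. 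Fix such a generic $t_0$. Granting the lemma for real weights and applying it to $\lambda_{t_0}$, we obtain $W_{\lambda_{t_0}}=W(\Phi_{\lambda_{t_0}})=W(\Phi_\lambda)$, whence $W(\Phi_\lambda)\subseteq W_\lambda\subseteq W_{\lambda_{t_0}}=W(\Phi_\lambda)$ and equality holds. Thus we may assume $\lambda\in\frh^*_\bbR$. Finally, replacing $\lambda$ by $v\lambda$ for a suitable $v\in W$ replaces $(\Phi_\lambda,W_\lambda,W(\Phi_\lambda))$ by $(v\Phi_\lambda,vW_\lambda v^{-1},vW(\Phi_\lambda)v^{-1})$, so we may further assume $\lambda$ is dominant, i.e. $\langle\lambda,\alpha^\vee\rangle\geq0$ for all $\alpha\in\Delta$.

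\textbf{The length induction.} I claim $W_\lambda\subseteq\langle s_\alpha\mid\alpha\in\Delta\cap\Phi_\lambda\rangle$, by induction on $\ell(w)$ for $w\in W_\lambda$. The case $\ell(w)=0$ is trivial. If $\ell(w)>0$, choose $\alpha\in\Delta$ with $\ell(s_\alpha w)<\ell(w)$, equivalently $w^{-1}\alpha\in\Phi^-$. Since $w^{-1}\lambda=\lambda$ and $W$ acts by isometries,
\[
0\leq\langle\lambda,\alpha^\vee\rangle=\langle\lambda,(w^{-1}\alpha)^\vee\rangle\leq0,
\]
the last inequality because $\lambda$ is dominant and $w^{-1}\alpha$ is a negative root; hence $\langle\lambda,\alpha^\vee\rangle=0$, i.e. $\alpha\in\Delta\cap\Phi_\lambda$. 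Then $s_\alpha w\in W_\lambda$ with $\ell(s_\alpha w)<\ell(w)$, so by induction $s_\alpha w$, and therefore $w=s_\alpha(s_\alpha w)$, lies in $\langle s_\alpha\mid\alpha\in\Delta\cap\Phi_\lambda\rangle$. Combining with the trivial inclusions $\langle s_\alpha\mid\alpha\in\Delta\cap\Phi_\lambda\rangle\subseteq W(\Phi_\lambda)\subseteq W_\lambda$, all three groups coincide, which proves the lemma in the real dominant case and hence in general.

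\textbf{Main obstacle.} The only substantive point is the length-induction step, which is the classical fact that the pointwise stabilizer of a dominant weight is generated by the simple reflections fixing it; everything else is bookkeeping, the mild subtlety being that $\lambda$ is a priori an arbitrary element of $\frh^*$, handled above by the generic perturbation $\lambda\rightsquigarrow\lambda_{t_0}$ which preserves $\Phi_\lambda$ and does not shrink the isotropy group.
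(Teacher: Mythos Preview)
Your proof is correct. The paper takes a different route for the reduction from complex $\lambda$ to the known real case: instead of your generic real perturbation $\lambda_t=\mu+t\nu$, it restricts to the integral root subsystem $\Phi':=\Phi_{[\lambda]}$, notes that $\lambda':=\lambda|_{\Phi'}$ is integral (hence lies in $\bbR\Phi'$) with $\Phi'_{\lambda'}=\Phi_\lambda$, and then invokes \cite[Theorem~1.12(c)]{H2} for $\lambda'$ inside $W':=W(\Phi')$; the needed containment $W_\lambda\subseteq W'$ comes for free from the definition of $W_{[\lambda]}$ together with Theorem~\ref{redlthm1}. Your argument is more self-contained, since you actually supply the length-induction proof of the real dominant case rather than citing it, and your perturbation device is logically independent of the paper's restriction machinery. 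The paper's version, by contrast, reuses Lemma~\ref{reslem1} and Theorem~\ref{redlthm1}, which are already in place for other purposes, so it integrates more smoothly with the surrounding development.
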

\begin{proof}
If $\lambda\in\bbR\Phi$, this is Theorem 1.12(c) in \cite{H2}. The general case for $\lambda\in\frh^*=\bbC\Phi$ is an easy consequence. In fact, denote $\Phi'=\Phi_{[\lambda]}$ and $\lambda':=\lambda|_{\Phi'}$. Evidently $\Phi_\lambda\subset\Phi'$. Lemma \ref{reslem1} gives $\Phi'_{\lambda'}=\Phi_\lambda\cap\Phi'=\Phi_\lambda$. Since $\lambda'$ is integral on $\Phi'$, \cite[Theorem 1.12]{H2} shows that the isotropy group of $\lambda'$ under the action of $W'=W_{[\lambda]}\subset W$ is $W'(\Phi'_{\lambda'})=W'(\Phi_\lambda)=W(\Phi_\lambda)$.

If $w\in W(\Phi_\lambda)$, obviously we have $w\lambda=\lambda$. Conversely, if $w\lambda=\lambda$, Theorem \ref{redlthm1} yields $w\in W(\Phi')=W'$. Applying Lemma \ref{reslem1}, one obtains $w\lambda'=\lambda'$. In other words, $w\in W'(\Phi'_{\lambda'})=W(\Phi_\lambda)$.
\end{proof}

\begin{lemma}\label{weyllem2}
Choose $I\subset\Delta$. Let $\Phi'$ be a subsystem of $\Phi$. Then $W_I\cap W(\Phi')=W(\Phi_I\cap\Phi')$.
\end{lemma}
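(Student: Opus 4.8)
\textbf{Proof plan for Lemma \ref{weyllem2}.}
The inclusion $W(\Phi_I\cap\Phi')\subset W_I\cap W(\Phi')$ is immediate: each generator $s_\alpha$ with $\alpha\in\Phi_I\cap\Phi'$ lies in both $W_I=W(\Phi_I)$ and $W(\Phi')$, so the subgroup it generates is contained in the intersection. All the work is in the reverse inclusion. The plan is to take $w\in W_I\cap W(\Phi')$ and show it can be written as a product of reflections coming from $\Phi_I\cap\Phi'$. The natural tool is to argue by induction on $\ell(w)$, the length of $w$ as an element of $W$ (equivalently of $W_I$, since the length function on $W_I$ is the restriction of that on $W$). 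If $\ell(w)=0$ there is nothing to prove. Otherwise, since $w\in W_I$ is nontrivial, there is a simple root $\alpha\in I$ with $\ell(s_\alpha w)<\ell(w)$, which forces $w^{-1}\alpha\in-\Phi_I^+$, i.e. $w$ sends $\alpha$ to a negative root of $\Phi_I$.

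The key point I would establish is that one may choose such an $\alpha$ lying in $\Phi_I\cap\Phi'$. Here is where $w\in W(\Phi')$ enters. Set $\Phi'' := \Phi_I\cap\Phi'$, a subsystem of $\Phi'$, and let $(\Phi'')^+ := \Phi''\cap\Phi^+$ with its simple system. The subtlety is that a simple root of $\Phi''$ need not be simple in $\Phi_I$, so I cannot directly read off a length-decreasing simple reflection for $w$ in $\Phi''$; I must instead work with the geometry of positive roots. The cleanest route: since $w\in W(\Phi')$ preserves $\bbR\Phi'$ and permutes $\Phi'$, and $w\in W_I$ preserves $\bbR\Phi_I$ and permutes $\Phi_I$, the element $w$ permutes $\Phi''=\Phi_I\cap\Phi'$. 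Now consider the set $N(w) := \Phi_I^+\cap w(-\Phi_I^+)$ of positive roots of $\Phi_I$ sent to negatives; it is nonempty as $w\neq 1$, and $\ell(w)=\#N(w)$. I claim $N(w)\subset\Phi''$: if $\gamma\in\Phi_I^+$ with $w^{-1}\gamma\in-\Phi_I^+$, I want $\gamma\in\Phi'$. Actually this needs justification — it is not automatic that $N(w)\subset\Phi'$. The correct statement to aim at is that $w$, viewed inside $W(\Phi')$, has an inversion set contained in $\Phi''$; one then intersects with $\Phi_I$.

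So the main obstacle, and the step I would spend the most care on, is this compatibility of inversion sets. I would handle it as follows. Because $w\in W(\Phi')$, write $w$ as a reduced word in reflections of $\Phi'$; its inversion set $N'(w)=(\Phi')^+\cap w(-(\Phi')^+)$ has size equal to the length of $w$ in $W(\Phi')$, and $w(-(\Phi')^+) \cap \Phi_I^+ \subset \Phi''$ since $w$ permutes $\Phi'$ and $\Phi_I$ separately, hence permutes $\Phi''$, so any positive root of $\Phi_I$ that $w$ makes negative and that also lies in $\Phi'$ lies in $\Phi''$. The remaining issue is whether a positive root of $\Phi_I$ that $w$ makes negative must lie in $\Phi'$ at all. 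Suppose $\gamma\in\Phi_I^+$, $w\gamma\in -\Phi_I^+$, but $\gamma\notin\Phi'$. Decompose $\gamma=\gamma'+\gamma''$ with $\gamma'\in\bbR\Phi'$ (the orthogonal projection) — since $w$ acts trivially on $(\bbR\Phi')^\perp$ and preserves $\bbR\Phi'$, $w\gamma = w\gamma' + \gamma''$, and the component $\gamma''\neq 0$ in $(\bbR\Phi')^\perp$ is fixed; comparing with the decomposition of $w\gamma\in\Phi_I$ against $\bbR\Phi'\oplus(\bbR\Phi')^\perp$ forces a contradiction with $\gamma$ and $w\gamma$ having opposite-sign structure, or more directly: pick a functional positive on $\Phi_I^+$; the statement $w\gamma\in-\Phi_I^+$ together with $w$ fixing the $(\bbR\Phi')^\perp$-part pins down enough. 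I would then pick $\alpha\in N(w)\subset\Phi''$ that is simple in $\Phi''$ among the elements of $N(w)$ — standard theory (e.g. \cite[Theorem 3.4]{H3} applied to $\Phi'$, or the basic lemma that an inversion set always contains a simple root of the ambient system, here $\Phi''$) gives such an $\alpha$ with $\ell'(s_\alpha w)<\ell'(w)$ in $W(\Phi')$, and simultaneously $\ell(s_\alpha w)<\ell(w)$ in $W$ since $s_\alpha w$ removes $\alpha$ from the $W$-inversion set. Then $s_\alpha w\in W_I\cap W(\Phi')$ with strictly smaller length, $s_\alpha\in W(\Phi'')$, and the induction hypothesis finishes the argument, giving $w=s_\alpha\cdot(s_\alpha w)\in W(\Phi'')=W(\Phi_I\cap\Phi')$.
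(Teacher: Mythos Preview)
Your induction argument has a genuine gap at its heart: the claim that the inversion set $N(w)=\Phi_I^+\cap w(-\Phi_I^+)$ is contained in $\Phi''=\Phi_I\cap\Phi'$ is simply false. Take $\Phi=A_2$, $I=\Delta$ (so $\Phi_I=\Phi$), and $\Phi'=\{\pm(\alpha_1+\alpha_2)\}$. Then $w=s_{\alpha_1+\alpha_2}\in W_I\cap W(\Phi')$, and its inversion set in $\Phi^+$ is all of $\Phi^+=\{\alpha_1,\alpha_2,\alpha_1+\alpha_2\}$, which is not contained in $\Phi'$. Your orthogonal-projection sketch cannot produce a contradiction here: with $\gamma=\alpha_1$, the perpendicular component $\gamma''$ is fixed by $w$ while the $\bbR\Phi'$-component flips sign, and indeed $w\alpha_1=-\alpha_2$ is negative without $\alpha_1$ lying in $\Phi'$. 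This also breaks the induction step itself: you need a simple root $\alpha\in I$ with $\ell(s_\alpha w)<\ell(w)$ and $\alpha\in\Phi'$, but in this example neither simple root lies in $\Phi'$.

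The paper avoids this combinatorics entirely. It chooses an integral weight $\lambda$ with $\langle\lambda,\alpha\rangle=0$ for $\alpha\in I$ and $\langle\lambda,\alpha\rangle>0$ for $\alpha\in\Delta\setminus I$, so that $\Phi_\lambda=\Phi_I$ and $W_I$ is exactly the stabilizer of $\lambda$. Given $w\in W_I\cap W(\Phi')$ one has $w\lambda=\lambda$; restricting to $\lambda'=\lambda|_{\Phi'}$, Lemma~\ref{reslem1} gives $w\lambda'=\lambda'$ and $\Phi'_{\lambda'}=\Phi_\lambda\cap\Phi'=\Phi_I\cap\Phi'$. Then Lemma~\ref{weyllem1} (the isotropy-group characterization) applied inside $W(\Phi')$ yields $w\in W(\Phi'_{\lambda'})=W(\Phi_I\cap\Phi')$. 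This argument is short, requires no inversion-set analysis, and is what you should use.
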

\begin{proof}
One inclusion $W(\Phi_I\cap\Phi')\subset W_I\cap W(\Phi')$ is obvious.

Choose an integral weight $\lambda\in\frh^*$ so that $\langle\lambda, \alpha\rangle=0$ for $\alpha\in I$ and $\langle\lambda, \alpha\rangle>0$ for $\alpha\in\Delta\backslash I$. Then $\Phi_\lambda=\Phi_I$. Given $w\in W_I\cap W(\Phi')$, one has $w\lambda=\lambda$. Denote $\lambda'=\lambda|_{\Phi'}$. It follows from Lemma \ref{reslem1} that $\lambda'=w\lambda'$ and $\Phi'_{\lambda'}=\Phi_I\cap\Phi'$. \cite[Theorem 1.12]{H2} implies $W'(\Phi'_{\lambda'})$ is the isotropic group of $\lambda'$ under the action of $W'=W(\Phi')$. This means $w\in W'(\Phi'_{\lambda'})=W'(\Phi_I\cap\Phi')=W(\Phi_I\cap\Phi')$.
\end{proof}

\begin{lemma}\label{reslem3}
Let $\lambda\in\Lambda_I^+$. Give $\beta, \gamma\in \Psi_\lambda^+$ so that $\sgn(s_\beta\lambda, s_\gamma\lambda)\neq0$. Then $\gamma\in \Phi_{\beta, i}$ for $i=1, 2$.
\end{lemma}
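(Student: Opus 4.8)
My plan is to prove both statements ($i=1$ and $i=2$) through a single mechanism, the case $i=2$ being the real content. First I would record that $\sgn(s_\beta\lambda,s_\gamma\lambda)\neq 0$ means precisely that $s_\beta\lambda$ and $s_\gamma\lambda$ are $\Phi_I$-regular and lie in a common $W_I$-orbit; so fix $w\in W_I$ with $ws_\beta\lambda=s_\gamma\lambda$. For any subsystem $\Phi'\subseteq\Phi$ one has $(\bbQ\Phi'+\bbQ\beta)\cap\Phi=(\bbC\Phi'+\bbC\beta)\cap\Phi$, since $\bbC\Phi'+\bbC\beta$ is the complexification of the $\bbQ$-subspace $\bbQ\Phi'+\bbQ\beta$ and every root is a $\bbQ$-combination of $\Delta$; and because $\langle-,-\rangle$ is nondegenerate, the inclusion $\bbC\gamma\subseteq\bbC\Phi'+\bbC\beta$ is equivalent, after taking orthogonal complements, to $(\bbC\Phi')^\perp\cap(\bbC\beta)^\perp\subseteq(\bbC\gamma)^\perp$. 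Hence it is enough to prove this last inclusion with $\Phi'=\Phi_I$ (which gives $\gamma\in\Phi_{\beta, 1}$) and with $\Phi'=\Phi_\lambda$ (which gives $\gamma\in\Phi_{\beta, 2}$), since $\gamma$ is already a root.

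To establish the inclusion I would take $y$ with $\langle y,\beta\rangle=0$ and with $y$ orthogonal to $\Phi_I$ (case $i=1$) or to $\Phi_\lambda$ (case $i=2$); then $s_\beta y=y$ in both cases. For $i=1$, $w$ also fixes $y$ because $w\in W_I$ and $y\perp\Phi_I$, so $\langle s_\gamma\lambda,y\rangle=\langle ws_\beta\lambda,y\rangle=\langle s_\beta\lambda,w^{-1}y\rangle=\langle s_\beta\lambda,y\rangle=\langle\lambda,y\rangle$, while also $\langle s_\gamma\lambda,y\rangle=\langle\lambda,y\rangle-\langle\lambda,\gamma^\vee\rangle\langle\gamma,y\rangle$; as $\langle\lambda,\gamma^\vee\rangle\neq 0$ this forces $\langle\gamma,y\rangle=0$. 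For $i=2$, applying $s_\gamma$ to $ws_\beta\lambda=s_\gamma\lambda$ shows that $\sigma:=s_\gamma w s_\beta\in W$ fixes $\lambda$, so by Lemma \ref{weyllem1} we have $\sigma\in W(\Phi_\lambda)$; since $W(\Phi_\lambda)$ is generated by reflections in roots of $\Phi_\lambda$, it fixes $(\bbC\Phi_\lambda)^\perp$ pointwise, hence $\sigma y=y$. Combined with $s_\beta y=y$ this yields $wy=s_\gamma y=y-\langle y,\gamma^\vee\rangle\gamma$, so $\langle y,\gamma^\vee\rangle\gamma=y-wy\in\bbQ\Phi_I$; but $\gamma\in\Psi_\lambda^+$ gives $\gamma\notin\Phi_I=\Phi\cap\bbQ\Phi_I$, so again $\langle\gamma,y\rangle=0$. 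This proves both inclusions and hence the lemma.

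I expect the main obstacle to be the case $i=2$, and specifically the observation that $w$ itself is not a symmetry of $\lambda$ while the conjugated product $\sigma=s_\gamma w s_\beta$ is — this is exactly what lets Lemma \ref{weyllem1} (isotropy group $=W(\Phi_\lambda)$) pin $\gamma$ down inside $\bbC\Phi_\lambda+\bbC\beta$. The remaining ingredients are routine: the identity $\Phi\cap\bbQ\Phi_I=\Phi_I$ (valid as $I\subseteq\Delta$), the triviality of the $W(\Phi_\lambda)$-action on $(\bbC\Phi_\lambda)^\perp$, and the elementary rationality remark identifying $(\bbQ\Phi_\lambda+\bbQ\beta)\cap\Phi$ with $(\bbC\Phi_\lambda+\bbC\beta)\cap\Phi$. (For $i=1$ one may alternatively argue directly from $\langle\lambda,\gamma^\vee\rangle\gamma=(\lambda-w\lambda)+\langle\lambda,\beta^\vee\rangle\,w\beta$ together with $\lambda-w\lambda,\ w\beta-\beta\in\bbQ\Phi_I$.)
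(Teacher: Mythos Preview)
Your proof is correct and follows essentially the same approach as the paper. In both cases the key step for $i=2$ is observing that $\sigma=s_\gamma w s_\beta$ fixes $\lambda$ and hence lies in $W(\Phi_\lambda)$ by Lemma~\ref{weyllem1}, then testing against a vector $y$ (the paper's $\mu$) orthogonal to $\Phi_\lambda$ and $\beta$ to obtain $y-wy=\langle y,\gamma^\vee\rangle\gamma$, which lies in the span of $\Phi_I$ and so forces $\langle y,\gamma\rangle=0$; the paper phrases this as a contradiction with a single normalized $\mu$, while you phrase it as the orthogonal-complement inclusion for all $y$, but the computation is identical. (One cosmetic point: your $y$ is a priori complex, so $y-wy\in\bbC\Phi_I$ rather than $\bbQ\Phi_I$; this is harmless since $\Phi\cap\bbC\Phi_I=\Phi_I$ still holds, as you can see by expanding in the basis $\Delta$.)
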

\begin{proof}
We can assume that $ws_\beta\lambda=s_\gamma\lambda$ for some $w\in W_I$. Then $ws_\beta\lambda=\lambda-\langle\lambda, \gamma^\vee\rangle\gamma$, where $\langle\lambda, \gamma^\vee\rangle\in\bbZ^{>0}$ and $s_\gamma ws_\beta\lambda=\lambda$.

For $i=1$, since $\lambda-ws_\beta\lambda\in\bbQ\Phi_I+\bbQ\beta$, one has $\gamma\in\Phi_{\beta, 1}$. For $i=2$, Lemma \ref{weyllem1} implies $s_\gamma ws_\beta\in W(\Phi_\lambda)\subset W(\Phi_{\beta, 2})$. Denote $w_1=s_\gamma ws_\beta$. Choose a basis $\{\beta_1, \cdots, \beta_k\}\subset \Phi_{\beta, 2}$ of $\bbR\Phi_{\beta, 2}$, where $k=\rank\ \Phi_{\beta, 2}$. Then extend to a basis $\{\beta_1, \cdots, \beta_n\}\subset \Phi$ of $\bbR\Phi$. We can write $\gamma=\sum_{i=1}^nc_i\beta_i$ with $c_i\in\bbQ$. If $\gamma\not\in \Phi_{\beta, 2}$, there is $k+1\leq i\leq n$ such that $c_i\neq0$. Thus $\gamma\not\in\bbR\Phi_{\beta, 2}$. There exists $\mu\in\bbR\Phi$ so that $\langle\mu, \alpha\rangle=0$ for any $\alpha\in \Phi_{\beta, 2}$ and $\langle\mu, \gamma^\vee\rangle=1$. We get
\[
w\mu=s_\gamma w_1 s_\beta\mu=s_\gamma w_1\mu=s_\gamma \mu=\mu-\gamma,
\]
that is, $\gamma=\mu-w\mu\in\bbR\Phi_I$. This forces $\gamma\not\in\Psi_\lambda^+$, a contradiction.
\end{proof}

\begin{lemma}\label{reslem4}
Let $\lambda\in\Lambda_I^+$ and $\beta\in\Phi\backslash\Phi_\lambda$. If $\gamma, \nu\in \Phi_{\beta, 2}$, then
\begin{equation}\label{resl5eq1}
\sgn(s_\gamma\lambda, s_\nu\lambda)=\sgn(s_\gamma(\lambda|_{\Phi_{\beta, 2}}), s_\nu(\lambda|_{\Phi_{\beta, 2}}), \Phi_I\cap\Phi_{\beta, 2}, \Phi_{\beta, 2}).
\end{equation}
\end{lemma}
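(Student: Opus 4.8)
The plan is to follow the same two-direction strategy used for Lemma \ref{reslem2}, but to pay careful attention to the fact that $\Phi_{\beta,2}$ is built from $\Phi_\lambda$ (not from $\Phi_I$), so the relevant Weyl subgroup is $W(\Phi_I\cap\Phi_{\beta,2})$ rather than $W_I$, and the decomposition $\lambda=(\lambda-\lambda')+\lambda'$ with $\lambda'=\lambda|_{\Phi_{\beta,2}}$ no longer has the summand $\lambda-\lambda'$ automatically orthogonal to everything in $\Phi_I$. First I would set $\lambda'=\lambda|_{\Phi_{\beta,2}}$ and write $\Phi''=\Phi_I\cap\Phi_{\beta,2}$, $W''=W(\Phi'')$. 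Assume the left side of \eqref{resl5eq1} is nonzero: then $s_\gamma\lambda$ and $s_\nu\lambda$ are $\Phi_I$-regular and there is $w\in W_I$ with $ws_\gamma\lambda=s_\nu\lambda$. The key point is to show $w\in W''$. For this I would use Lemma \ref{reslem3}: from $\sgn(s_\gamma\lambda,s_\nu\lambda)\neq 0$ one gets (applying that lemma with the roles of the two roots, using that $\sgn$ is symmetric and transitive in the appropriate sense) that $\gamma,\nu$ lie in a common $\Phi_{\cdot,2}$; more directly, $s_\nu w s_\gamma$ fixes $\lambda$, hence lies in $W(\Phi_\lambda)\subseteq W(\Phi_{\beta,2})$ by Lemma \ref{weyllem1}, and since $s_\gamma,s_\nu\in W(\Phi_{\beta,2})$ we get $w\in W_I\cap W(\Phi_{\beta,2})=W(\Phi_I\cap\Phi_{\beta,2})=W''$ by Lemma \ref{weyllem2}. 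Now $w\in W''\subseteq W(\Phi_{\beta,2})$ acts trivially on the orthogonal complement of $\bbC\Phi_{\beta,2}$, so applying Lemma \ref{reslem1}(2) to $\lambda\mapsto\lambda'$ (and using that $\gamma,\nu\in\Phi_{\beta,2}$) we obtain $ws_\gamma\lambda'=(ws_\gamma\lambda)|_{\Phi_{\beta,2}}=(s_\nu\lambda)|_{\Phi_{\beta,2}}=s_\nu\lambda'$; together with the regularity transfer from Lemma \ref{redslem4} this gives that $s_\gamma\lambda'$ and $s_\nu\lambda'$ are $\Phi''$-regular, whence $\sgn(s_\gamma\lambda',s_\nu\lambda',\Phi'',\Phi_{\beta,2})=(-1)^{\ell(w)}=\sgn(s_\gamma\lambda,s_\nu\lambda)$.

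For the converse, suppose $\sgn(s_\gamma\lambda',s_\nu\lambda',\Phi'',\Phi_{\beta,2})\neq 0$, so there is $w\in W''$ with $ws_\gamma\lambda'=s_\nu\lambda'$. Write $\lambda=(\lambda-\lambda')+\lambda'$; here $\lambda-\lambda'\perp\Phi_{\beta,2}$ and in particular $\lambda-\lambda'\perp\Phi''$, and $w\in W''$ together with $\gamma,\nu\in\Phi_{\beta,2}$ means both $w$ and $s_\gamma,s_\nu$ fix $\lambda-\lambda'$ (by Lemma \ref{reslem1}(1) applied to the reflections, and because $W''$ is generated by reflections in roots orthogonal to $\lambda-\lambda'$). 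Hence
\[
ws_\gamma\lambda=ws_\gamma\bigl((\lambda-\lambda')+\lambda'\bigr)=(\lambda-\lambda')+ws_\gamma\lambda'=(\lambda-\lambda')+s_\nu\lambda'=s_\nu\lambda.
\]
It remains to check that $s_\gamma\lambda$ and $s_\nu\lambda$ are $\Phi_I$-regular; this is exactly Lemma \ref{redslem4} (applied to $\gamma$ and to $\nu$), which says $s_\gamma\lambda$ is $\Phi_I$-regular iff $s_\gamma\lambda'$ is $\Phi''$-regular, and likewise for $\nu$, both of which hold by hypothesis. Therefore both sides of \eqref{resl5eq1} equal $(-1)^{\ell(w)}$.

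The main obstacle is the step showing $w\in W(\Phi_I\cap\Phi_{\beta,2})$: unlike the parabolic case, where $\Phi_I\subseteq\Phi_{\beta,1}$ makes everything automatic, here one genuinely needs the isotropy-group description (Lemma \ref{weyllem1}) to locate $s_\nu w s_\gamma$ inside $W(\Phi_\lambda)$ and then the intersection identity (Lemma \ref{weyllem2}) to descend to $W(\Phi_I\cap\Phi_{\beta,2})$; one must also be careful that $\ell(w)$ computed in $W$ agrees with its length as an element of $W''$, which holds because $W''=W(\Phi_I\cap\Phi_{\beta,2})$ is a reflection subgroup and the sign $(-1)^{\ell(w)}$ is the determinant of $w$ on $\frh^*$, independent of the ambient group. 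A secondary subtlety is justifying that reflections $s_\gamma$ with $\gamma\in\Phi_{\beta,2}$ and elements of $W''$ act trivially on $\lambda-\lambda'$; this follows since $\lambda-\lambda'\perp\Phi_{\beta,2}$ and both kinds of transformations are products of reflections in roots of $\Phi_{\beta,2}$.
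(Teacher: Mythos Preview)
Your proposal is correct and follows essentially the same approach as the paper: both use Lemma~\ref{weyllem1} to place $s_\nu w s_\gamma$ in $W(\Phi_\lambda)\subset W(\Phi_{\beta,2})$, then Lemma~\ref{weyllem2} to conclude $w\in W(\Phi_I\cap\Phi_{\beta,2})$, and finally Lemma~\ref{reslem1} together with Lemma~\ref{redslem4} to transfer the relation and the regularity to $\lambda'$. Your determinant argument for the parity $(-1)^{\ell(w)}=(-1)^{\ell'(w)}$ is slightly cleaner than the paper's phrasing (which argues that each reflection in $W(\Phi_I\cap\Phi_{\beta,2})$ has odd length in $W$), but it amounts to the same observation.
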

\begin{proof}
If $\sgn(s_\gamma\lambda, s_\nu\lambda)\neq0$, then $s_\gamma\lambda, s_\nu\lambda$ are $\Phi_I$-regular. There is $w\in W_I$ such that $ws_\gamma\lambda=s_\nu\lambda$. Lemma \ref{weyllem1} gives $w_1\in W(\Phi_\lambda)\subset W(\Phi_{\beta, 2})$ with $w_1=s_\nu ws_\gamma$. Therefore $w=s_\nu w_1s_\gamma\in W(\Phi_{\beta, 2})\cap W_I=W(\Phi_I\cap\Phi_{\beta, 2})$ in view of Lemma \ref{weyllem2}. Thus
\[
ws_\gamma\lambda'=(ws_\gamma\lambda)|_{\Phi_{\beta, 2}}=(s_\nu w_1\lambda)|_{\Phi_{\beta, 2}}=(s_\nu\lambda)|_{\Phi_{\beta, 2}}=s_\nu\lambda',
\]
where $\lambda'=\lambda|_{\Phi_{\beta, 2}}$. In view of Lemma \ref{redslem4}, both $s_\gamma\lambda'$ and $s_\nu\lambda'$ are $\Phi_I\cap \Phi_{\beta, 2}$-regular. Let $\ell'(-)$ be the length function on $W(\Phi_{\beta, 2})$. Then
\[
\sgn(s_\gamma(\lambda|_{\Phi_{\beta, 2}}), s_\nu(\lambda|_{\Phi_{\beta, 2}}), \Phi_I\cap\Phi_{\beta, 2}, \Phi_{\beta, 2})=(-1)^{\ell'(w)}.
\]
Since any reflection in $W(\Phi_I\cap\Phi_{\beta, 2})$ is a product of odd number of simple reflections in $W$, One has $\ell(w)\equiv \ell'(w)\ (\mathrm{mod} 2)$ and $(-1)^{\ell'(w)}=(-1)^{\ell(w)}$. This gives (\ref{resl5eq1}).

Now assume that $\sgn(s_\gamma\lambda, s_\nu\lambda)=0$. If $\sgn(s_\gamma\lambda', s_\nu\lambda', \Phi_I\cap\Phi_{\beta, 2}, \Phi_{\beta, 2})\neq0$, there exists $w\in W(\Phi_I\cap\Phi_{\beta, 2})\subset W(\Phi_I)$ such that $ws_\gamma\lambda'=s_\nu\lambda'$. Lemma \ref{reslem1} yields
\[
ws_\gamma\lambda=ws_\gamma(\lambda-\lambda'+\lambda')=\lambda-\lambda'+ws_\gamma\lambda'=\lambda-\lambda'+s_\nu\lambda'=s_\nu\lambda.
\]
On the other hand, Lemma \ref{redslem4} implies that both $s_\gamma\lambda$ and $s_\nu\lambda$ are $\Phi_I$-regular. Thus $\sgn(s_\gamma\lambda, s_\nu\lambda)=(-1)^{\ell(w)}\neq0$, a contradiction.
\end{proof}

\textbf{Proof of Lemma \ref{invlem4} (Singular invariance)} Denote $\lambda'=\lambda|_{\Phi_{\beta, 2}}$. First suppose that $\sgn(s_\beta\lambda, \mu)\neq0$. Then $s_\beta\lambda$ is $\Phi_I$-regular. With Lemma \ref{redslem4}. The weight $s_\beta\lambda$ is $\Phi_I$-regular if and only if $s_\beta\lambda'$ is $\Phi_I\cap \Phi_{\beta, 2}$-regular. There exists $\mu'\in\Lambda^+(\Phi_I\cap\Phi_{\beta, 2}, \Phi_{\beta, 2})$ with $\sgn(s_\beta\lambda', \mu', \Phi_I\cap\Phi_{\beta, 2}, \Phi_{\beta, 2})\neq0$. Obviously the converse also holds.

Now consider the Jantzen coefficients. Still assume that $\sgn(s_\beta\lambda, \mu)\neq0$. In this case, $\mu=ws_\beta\lambda$ for $w\in W_I$ and $\mu'=w's_\beta \lambda'$ for $w'\in W(\Phi_I\cap\Phi_{\beta, 2})$. Let $\ell'(-)$ be the length function on $W(\Phi_{\beta, 2})$. One has $\ell(w')\equiv \ell'(w')\ (\mathrm{mod} 2)$. With (\ref{sjeq2}), one has
\begin{equation*}\label{invl2eq1}
\begin{aligned}
&c(\lambda', \mu', \Phi_I\cap\Phi_{\beta, 2}, \Phi_{\beta, 2})\\
=&\sum_{\gamma\in\Psi_\lambda^+\cap \Phi_{\beta, 2}}\sgn(s_\gamma\lambda', \mu', \Phi_I\cap\Phi_{\beta, 2}, \Phi_{\beta, 2})\\
=&\sum_{\gamma\in\Psi_\lambda^+\cap \Phi_{\beta, 2}}\sgn(s_\beta\lambda', \mu', \Phi_I\cap\Phi_{\beta, 2}, \Phi_{\beta, 2})\sgn(s_\gamma\lambda', s_\beta\lambda', \Phi_I\cap\Phi_{\beta, 2}, \Phi_{\beta, 2})\\
=&(-1)^{\ell'(w')}\sum_{\gamma\in\Psi_\lambda^+\cap \Phi_{\beta, 2}}\sgn(s_\gamma\lambda', s_\beta\lambda', \Phi_I\cap\Phi_{\beta, 2}, \Phi_{\beta, 2})\\
=&(-1)^{\ell(w)+\ell'(w')}\sum_{\gamma\in\Psi_\lambda^+\cap \Phi_{\beta, 2}}\sgn(s_\beta\lambda, \mu)\sgn(s_\gamma\lambda, s_\beta\lambda)\\
=&(-1)^{\ell(w)+\ell(w')}\sum_{\gamma\in\Psi_\lambda^+}\sgn(s_\beta\lambda, \mu)\sgn(s_\gamma\lambda, s_\beta\lambda)\\
=&(-1)^{\ell(w)+\ell(w')}c(\lambda, \mu),\\
\end{aligned}
\end{equation*}
where the first equality follows from Lemma \ref{redslem5} and Lemma \ref{sjlem1}, the fourth from Lemma \ref{reslem4}, and the fifth from Lemma \ref{reslem3}.

\subsection{More invariant properties of Jantzen coefficients}

The following invariant property of Jantzen coefficients exhibit a kind of parabolic-singular duality. It can be used to determine the blocks of category $\caO^\frp$ (\cite{HXZ, X2}).

\begin{lemma}[Dual invariance]\label{invlem5}
Let $\lambda$ $($resp. $\mu)$ be a dominant integral weight with $\Phi_\lambda=\Phi_J$ $($resp. $\Phi_\mu=\Phi_I)$. Choose $x, y\in{}^IW^J$. Then
\[
c(x\lambda, y\lambda, \Phi_I, \Phi)=(-1)^{\ell(x)+\ell(y)+1}c(x^{-1}\mu, y^{-1}\mu, \Phi_J, \Phi).
\]

\end{lemma}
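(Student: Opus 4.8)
The plan is to expand both Jantzen coefficients via the sign-function formula of Lemma~\ref{sjlem1}, conjugate everything into the ambient root system $\Phi$, and build an explicit bijection between the two resulting index sets under which the sign function is multiplied by the global factor $(-1)^{\ell(x)+\ell(y)+1}$. The first step is a reduction to the case $x\lambda>y\lambda$. By Lemma~\ref{sjlem1}, $c(x\lambda,y\lambda,\Phi_I,\Phi)=\sum_{\beta\in\Psi^{++}_{x\lambda}}\sgn(s_\beta x\lambda,y\lambda)$; since $y\lambda\in\Lambda_I^+$ is $\Phi_I$-regular, the nonzero terms are exactly those with $s_\beta x\lambda\in W_I\cdot y\lambda$, and by the BGG theorem (the discussion preceding Definition~\ref{sjdef1}) the existence of such a $\beta$ already forces $x\lambda>y\lambda$. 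Granting the identity whenever the hypotheses of the lemma hold and $x\lambda>y\lambda$, the case $y\lambda>x\lambda$ follows by applying it to the data $(\Phi_I,\Phi_J,y,x,\lambda,\mu)$ together with $c(\lambda,\mu)=c(\mu,\lambda)$, and the incomparable case follows because then $c(x\lambda,y\lambda)=0$ while applying the identity to the dual data $(\Phi_J,\Phi_I,x^{-1},y^{-1},\mu,\lambda)$ — legitimate since $x^{-1},y^{-1}\in{}^JW^I$, $\mu$ is dominant with $\Phi_\mu=\Phi_I$, and $\lambda$ is dominant with $\Phi_\lambda=\Phi_J$ — forces $c(x^{-1}\mu,y^{-1}\mu,\Phi_J,\Phi)=0$ as well.

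So assume $x\lambda>y\lambda$ and set
\[
\caA:=\{\beta\in\Phi^+\setminus\Phi_I\mid\langle x\lambda,\beta^\vee\rangle>0,\ s_\beta x\lambda\in W_I\cdot y\lambda\},
\]
so that $c(x\lambda,y\lambda,\Phi_I,\Phi)=\sum_{\beta\in\caA}\sgn(s_\beta x\lambda,y\lambda)$. For $\beta\in\caA$ let $w\in W_I$ be the unique element with $ws_\beta x\lambda=y\lambda$, and set $g:=ws_\beta x\in W$, so that $g\lambda=y\lambda$. Two facts will drive the argument. Since $\Phi_\mu=\Phi_I$, Lemma~\ref{weyllem1} tells us that $W_I=W(\Phi_I)$ is the isotropy group of $\mu$, so $w^{-1}\mu=\mu$ and hence
\[
g^{-1}\mu=x^{-1}s_\beta w^{-1}\mu=x^{-1}s_\beta\mu=(x^{-1}s_\beta x)(x^{-1}\mu)=s_{x^{-1}\beta}(x^{-1}\mu).
\]
Since $\Phi_\lambda=\Phi_J$, Lemma~\ref{weyllem1} tells us that $W_J$ is the isotropy group of $\lambda$, so $\tilde w:=g^{-1}y\in W_J$, and therefore $\tilde w^{-1}\,s_{x^{-1}\beta}(x^{-1}\mu)=\tilde w^{-1}g^{-1}\mu=y^{-1}\mu$. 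I claim that $\beta\mapsto\delta:=x^{-1}\beta$ is a bijection from $\caA$ onto
\[
\caB:=\{\delta\in\Phi^+\setminus\Phi_J\mid\langle x^{-1}\mu,\delta^\vee\rangle>0,\ s_\delta(x^{-1}\mu)\in W_J\cdot y^{-1}\mu\},
\]
the set indexing $c(x^{-1}\mu,y^{-1}\mu,\Phi_J,\Phi)$.

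To check $\delta\in\caB$: from $\langle x\lambda,\beta^\vee\rangle=\langle\lambda,(x^{-1}\beta)^\vee\rangle>0$ and dominance of $\lambda$ we get $x^{-1}\beta\in\Phi^+$, so $\delta=x^{-1}\beta$; if $\delta\in\Phi_J$ then $s_\delta$ fixes $\lambda$, contradicting $s_\beta x\lambda\ne x\lambda$; moreover $\langle x^{-1}\mu,\delta^\vee\rangle=\langle\mu,\beta^\vee\rangle>0$ because $\mu$ is dominant and $\beta\in\Phi^+\setminus\Phi_I=\Phi^+\setminus\Phi_\mu$; and $s_\delta(x^{-1}\mu)=g^{-1}\mu\in W_J\cdot y^{-1}\mu$ by the second displayed identity, where $y^{-1}\mu\in\Lambda^+(\Phi_J,\Phi)$ is $\Phi_J$-regular so that $s_\delta(x^{-1}\mu)$ is too. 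The inverse map $\delta\mapsto x\delta$ comes from running the same argument on the dual data. For the signs, $\sgn(s_\beta x\lambda,y\lambda,\Phi_I,\Phi)=(-1)^{\ell(w)}$ and $\sgn(s_\delta(x^{-1}\mu),y^{-1}\mu,\Phi_J,\Phi)=(-1)^{\ell(\tilde w)}$ by Definition~\ref{sjdef2}; since $\tilde w=g^{-1}y=x^{-1}s_\beta w^{-1}y$, the map $u\mapsto(-1)^{\ell(u)}$ is a group homomorphism, and $(-1)^{\ell(s_\beta)}=-1$, we get $\ell(\tilde w)\equiv\ell(x)+\ell(y)+\ell(w)+1\pmod 2$, hence
\[
\sgn(s_\delta(x^{-1}\mu),y^{-1}\mu,\Phi_J,\Phi)=(-1)^{\ell(x)+\ell(y)+1}\,\sgn(s_\beta x\lambda,y\lambda,\Phi_I,\Phi).
\]
Summing over $\beta\in\caA$ and reindexing by $\delta\in\caB$ yields $c(x\lambda,y\lambda,\Phi_I,\Phi)=(-1)^{\ell(x)+\ell(y)+1}c(x^{-1}\mu,y^{-1}\mu,\Phi_J,\Phi)$.

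The main obstacle is bookkeeping rather than a new idea: one must verify that $\beta\mapsto x^{-1}\beta$ respects every defining condition of $\caA$ and $\caB$ in both directions — this is exactly where dominance of $\lambda$ and $\mu$ and the two isotropy-group identities are used — and must track the length parities carefully through the conjugation and the inversion $g\mapsto g^{-1}$; I expect the two-sided verification of the bijection and the parity computation to require the most care, along with the preliminary reductions, which must be arranged so that the convention $c(\lambda,\mu)=c(\mu,\lambda)$ and the vanishing of $c$ off comparable pairs are applied consistently.
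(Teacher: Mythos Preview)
Your proof is correct and follows essentially the same approach as the paper's: reduce to $x\lambda>y\lambda$, use the bijection $\beta\mapsto x^{-1}\beta$ between the relevant index sets (verified via dominance of $\lambda,\mu$ and the isotropy identities $W(\Phi_\lambda)=W_J$, $W(\Phi_\mu)=W_I$), and compute the length parity from $s_\beta x\tilde w^{-1}=w^{-1}y$. Your preliminary reductions are spelled out more carefully than in the paper, but the core argument is identical.
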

\begin{proof}
It suffices to consider the case $c(x\lambda, y\lambda, \Phi_I, \Phi)\neq0$ with $x\lambda>y\lambda$. Choose $\beta\in\Psi^+(x\lambda, \Phi_I, \Phi)$ so that $\sgn(s_\beta x\lambda, y\lambda, \Phi_I, \Phi)\neq0$, that is, $s_\beta x\lambda=wy\lambda$ for some $w\in W_I$. In view of Lemma \ref{weyllem1}, there exists $w'\in W_J$ so that $s_\beta xw'=wy$. We claim that $\gamma\in\Psi^+(x\lambda, \Phi_I, \Phi)$ if and only if $x^{-1}\gamma\in\Psi^+(x^{-1}\mu, \Phi_J, \Phi)$. In fact, if $\gamma\in\Psi^+(x\lambda, \Phi_I, \Phi)$, then $\gamma\in\Phi^+\backslash\Phi_I$ and $\langle x\lambda, \gamma\rangle=\langle\lambda, x^{-1}\gamma\rangle>0$. We obtain $\langle x^{-1}\mu, x^{-1}\gamma\rangle=\langle\mu, \gamma\rangle>0$ and $x^{-1}\gamma\in\Phi^+\backslash\Phi_J$. In other words, $x^{-1}\gamma\in\Psi^+(x^{-1}\mu, \Phi_J, \Phi)$. The claim then follows by symmetry. It yields $x^{-1}\beta\in\Psi^+(x^{-1}\mu, \Phi_J, \Phi)$. We get
\[
s_{x^{-1}\beta}(x^{-1}\mu)=x^{-1}s_{\beta}\mu=w'y^{-1}w^{-1}\mu=w'(y^{-1}\mu)
\]
and thus $\sgn(s_{x^{-1}\beta}x^{-1}\mu, y^{-1}\mu, \Phi_J, \Phi)=(-1)^{\ell(w')}$. On the other hand, $s_\beta xw'=wy$ implies $1+\ell(x)+\ell(w')\equiv \ell(w)+\ell(y)(\mathrm{mod}\ 2)$. Thus whenever $\sgn(s_\beta x\lambda, y\lambda, \Phi_I, \Phi)$ contributes $(-1)^{\ell(w)}$ to $c(x\lambda, y\lambda, \Phi_I, \Phi)$, the formula $\sgn(s_{x^{-1}\beta}x^{-1}\mu, y^{-1}\mu, \Phi_J, \Phi)$ will also contribute $(-1)^{\ell(w')}=(-1)^{\ell(y)+\ell(x)+1}(-1)^{\ell(w)}$ to $c(x^{-1}\mu, y^{-1}\mu, \Phi_J, \Phi)$. This yields the lemma.
\end{proof}

The last lemma expresses invariance of Jantzen coefficients under conjugation. For this the following notation will be useful: Note that any weight $\mu\in\frh^*$ can be uniquely written as $\sum_{\alpha\in\Delta}c_\alpha\alpha$ with $c_\alpha\in\bbC$. For any subset $I$ of $\Delta$, the $I$-\emph{height} of $\mu$ is defined by
\[
\htt_I\mu=\sum_{\alpha\in\Delta\backslash I}c_\alpha.
\]
In particular, if $I=\emptyset$, then $\htt\mu=\htt_I\mu$ is the (ordinary) \emph{height} of $\mu$.

\begin{lemma}[Conjugate invariance]\label{invlem6}
Let $I, J\subset\Delta$. Suppose that $\Phi_I$ and $\Phi_{J}$ are $W$-conjugate. Choose $w\in W$ with $\Phi_{J}^+=w\Phi_I^+$. Let $\lambda, \mu\in\Lambda_I^+$. Then
\[
c(\lambda, \mu, \Phi_I, \Phi)=c(w\lambda, w\mu, \Phi_{J}, \Phi).
\]
\end{lemma}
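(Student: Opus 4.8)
\textbf{Proof proposal for Lemma \ref{invlem6} (Conjugate invariance).}
The plan is to track, term by term, how the sum formula \eqref{sjeq1} defining the Jantzen coefficients transforms under the action of $w$. Since $w$ is an isometry of $\frh^*$ carrying $\Phi$ to $\Phi$ and $\Phi_I^+$ to $\Phi_{J}^+$, it carries $W_I$ to $W_{J}$ by $v\mapsto wvw^{-1}$, preserves lengths of elements of $W_I$ relative to the respective simple systems (because $w\Phi_I^+=\Phi_J^+$, so $w$ sends positive roots of $\Phi_I$ to positive roots of $\Phi_J$, hence preserves the number of positive roots made negative by a Weyl group element), and maps $\Lambda_I^+$ bijectively onto $\Lambda_J^+$. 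First I would record these elementary equivariance facts, and in particular the identity $w s_\gamma w^{-1} = s_{w\gamma}$ for $\gamma\in\Phi$.

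The core step is to verify that $w$ carries $\Psi_\lambda^+$ bijectively onto $\Psi_{w\lambda}^+$ for $\lambda\in\Lambda_I^+$: indeed $\beta\in\Phi^+\setminus\Phi_I$ with $\langle\lambda,\beta^\vee\rangle\in\bbZ^{>0}$ if and only if $w\beta\in\Phi^+\setminus\Phi_J$ (using $w\Phi_I=\Phi_J$ and $w\Phi^+\cap\Phi_I^c$-positivity, more carefully: $w\beta\in\Phi^+$ needs an argument, see below) with $\langle w\lambda,(w\beta)^\vee\rangle=\langle\lambda,\beta^\vee\rangle\in\bbZ^{>0}$. Then, applying $w$ to \eqref{sjeq1} for $\lambda$ and using $\theta(w\nu,\Phi_J,\Phi)=w\cdot\theta(\nu,\Phi_I,\Phi)$ under the identification of Grothendieck groups induced by $M(\nu)\mapsto M(w\nu)$ — which holds because $[M(w v\nu)] = w\cdot[M(v\nu)]$ and $\ell_{\Phi_J}(wvw^{-1})=\ell_{\Phi_I}(v)$ — one gets
\[
\sum_{\beta'\in\Psi_{w\lambda}^+}\theta(s_{\beta'}w\lambda,\Phi_J,\Phi)
=\sum_{\lambda>\mu\in\Lambda_I^+}c(\lambda,\mu,\Phi_I,\Phi)\,[M_J(w\mu)].
\]
Comparing with the definition \eqref{sjeq1} of $c(w\lambda,-,\Phi_J,\Phi)$ and using that $\{[M_J(\mu')]\mid\mu'\in\Lambda_J^+\}$ is a basis of $K(\caO^{\frp_J})$, together with the fact that $\lambda>\mu\iff w\lambda>w\mu$ (both expressing $\Hom_\caO(M(w\mu),M(w\lambda))\neq0$, which is symmetric under the automorphism of $\caO$ given by twisting by $w$ — equivalently, by the BGG theorem both sides are characterized by a chain of root reflections, which $w$ conjugates), we conclude $c(\lambda,\mu,\Phi_I,\Phi)=c(w\lambda,w\mu,\Phi_J,\Phi)$.

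The main obstacle I anticipate is the verification that $w$ maps $\Psi_\lambda^+$ into $\Phi^+$, i.e. that $w\beta$ is again a \emph{positive} root for $\beta\in\Psi_\lambda^+$ — this is not automatic from $w\Phi_I^+=\Phi_J^+$ alone, since a $W$-element conjugating $\Phi_I^+$ to $\Phi_J^+$ need not be positive on roots outside $\Phi_I$. The resolution is to note that we are free to choose $w$ to be the \emph{minimal-length} element in its coset, equivalently $w\in{}^JW$ with $w^{-1}\in{}^IW$ adapted so that $w$ is the canonical element sending the simple system of $\Phi_I$ to that of $\Phi_J$; for such $w$ one has $w(\Phi^+\setminus\Phi_I^+)\subset\Phi^+$ by a standard length/positivity argument (cf. the parametrizations in \S2.3 and \cite{H1}, \S11.4). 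Alternatively — and this is cleaner — one can reduce to the case where $\Phi$ is replaced by $\Phi_{[\lambda]}$ via Lemma \ref{invlem1} and observe that the statement only depends on the combinatorics internal to $\Phi_{[\lambda]}$, where the minimal coset representative genuinely is positive on $\Psi_\lambda^+$. Once positivity of $w$ on $\Psi_\lambda^+$ is secured, every remaining step is a routine transport-of-structure along the isometry $w$, and the $I$-height function mentioned before the lemma can be invoked if one prefers to make the bijection $\Psi_\lambda^+\xrightarrow{\sim}\Psi_{w\lambda}^+$ and the length bookkeeping completely explicit.
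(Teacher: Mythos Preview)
Your identification of the obstacle is exactly right, but your proposed resolution is incorrect. The claim that a minimal-length $w$ with $w\Phi_I^+=\Phi_J^+$ satisfies $w(\Phi^+\setminus\Phi_I^+)\subset\Phi^+$ is false: combined with $w\Phi_I^+=\Phi_J^+\subset\Phi^+$ it would force $w\Phi^+=\Phi^+$, i.e.\ $w=1$. Concretely, take $\Phi=A_2$, $I=\{\alpha_1\}$, $J=\{\alpha_2\}$. The unique $w$ with $w\alpha_1=\alpha_2$ is $w=s_1s_2$, and $w\alpha_2=-(\alpha_1+\alpha_2)<0$; with $\lambda=\rho$ one has $\alpha_2\in\Psi_\lambda^+$, so $w$ is \emph{not} positive on $\Psi_\lambda^+$. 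Passing to $\Phi_{[\lambda]}$ does not help, since for integral $\lambda$ this changes nothing. Hence the bijection $\Psi_\lambda^+\xrightarrow{\sim}\Psi_{w\lambda}^+$, $\beta\mapsto w\beta$, that your argument relies on simply does not exist in general.

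The paper avoids this entirely by working not with the global sum \eqref{sjeq1} but with the formula of Lemma~\ref{sjlem1}, $c(\lambda,\mu)=\sum_i(-1)^{\ell(x_i)}$ where the $\gamma_i$ run over $\Psi_{\lambda,\mu}^+$ and $\mu=x_is_{\gamma_i}\lambda$. Taking $J$-heights in $w\mu=(wx_iw^{-1})s_{w\gamma_i}w\lambda$ gives $\langle\lambda,\gamma_i^\vee\rangle\,\htt_Jw\gamma_i=\langle\lambda,\gamma_1^\vee\rangle\,\htt_Jw\gamma_1$, so \emph{all} the $w\gamma_i$ have the same sign. If they are positive, then $\{w\gamma_i\}=\Psi_{w\lambda,w\mu}^+$ and the identity follows directly. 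If they are negative, one checks instead that $\{-wx_i\gamma_i\}=\Psi_{w\mu,w\lambda}^+$ (using $wx_iw^{-1}\in W_J$), computes $c(w\mu,w\lambda)=\sum_i(-1)^{\ell(x_i)}$, and invokes the built-in symmetry $c(w\lambda,w\mu)=c(w\mu,w\lambda)$ from Definition~\ref{sjdef1}. The $I$-height you allude to is indeed the key tool, but its role is to show uniformity of sign among the relevant $w\gamma_i$, not to force positivity.
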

\begin{proof}
It suffices to consider the case $\lambda>\mu$. Note that
\begin{equation}\label{invl6eq1}
\langle w\nu, (w\gamma)^\vee\rangle=\langle w\nu, w\gamma^\vee\rangle=\langle \nu, \gamma^\vee\rangle
\end{equation}
for any $\nu\in\frh^*$ and $\gamma\in\Phi$. With $\Phi_{J}^+=w\Phi_I^+$, we have $J=wI$ and $W_{J}=wW_Iw^{-1}$. Since $\lambda, \mu\in\Lambda_I^+$, one gets $w\lambda, w\mu\in\Lambda_{J}^+$ by (\ref{invl6eq1}).

Let $\gamma_1, \gamma_2, \cdots, \gamma_k$ be all the positive roots contained in $\Psi^+(\lambda, \Phi_I, \Phi)$ so that $\sgn(s_{\gamma_i}\lambda, \mu)\neq0$ ($1\leq i\leq k$). There exist $x_1, \cdots, x_k\in W_I$ so that $\mu=x_1s_{\gamma_1}\lambda=\cdots=x_ks_{\gamma_k}\lambda$. Thus Lemma \ref{sjlem1} gives
\[
c(\lambda, \mu, \Phi_I, \Phi)=\sum_{\gamma\in\Psi^+(\lambda, \Phi_I, \Phi)}\sgn(s_\gamma\lambda, \mu, \Phi_I, \Phi)=\sum_{i=1}^k(-1)^{\ell(x_i)}.
\]
Note that $w\mu=wx_is_{\gamma_i}\lambda=wx_iw^{-1}s_{w\gamma_i}w\lambda$. Take the $J$-heights (keeping in mind that $wx_iw^{-1}\in W_{J}$), we obtain
\[
\htt_{J}w\mu=\htt_{J}w\lambda-\langle w\lambda, (w\gamma_i)^\vee\rangle \htt_{J}w\gamma_i=\htt_{J}w\lambda-\langle \lambda, \gamma_i^\vee\rangle \htt_{J}w\gamma_i.
\]
Choose $\beta=\gamma_1$. It follows that
\begin{equation}\label{invl6eq2}
\langle \lambda, \gamma_i^\vee\rangle \htt_{J}w\gamma_i=\langle \lambda, \beta^\vee\rangle \htt_{J}w\beta
\end{equation}
for $1\leq i\leq k$. With $\Phi_{J}=w\Phi_I$, we get $\Phi\backslash\Phi_{J}=w\Phi\backslash w\Phi_{I}=w(\Phi\backslash \Phi_{I})$. Thus $w\gamma_i\in\Phi\backslash\Phi_{J}$ in view of $\gamma_i\in\Phi\backslash \Phi_{I}$.

First assume that $w\beta>0$. We claim that $\Psi^+(w\lambda, \Phi_J, \Phi)=\{w\gamma_1, \cdots, w\gamma_k\}$. Indeed, with (\ref{invl6eq2}), one has $\htt_{J}w\beta>0$ and thus $\htt_{J}w\gamma_i>0$. So $w\gamma_i\in\Phi^+\backslash\Phi_{J}$ and $\langle w\lambda, (w\gamma_i)^\vee\rangle=\langle \lambda, \gamma_i^\vee\rangle\in\bbZ^{>0}$, that is, $w\gamma_i\in\Psi^+(w\lambda, \Phi_J, \Phi)$. This implies that $w\Psi^+(\lambda, \Phi_I, \Phi)\subset\Psi^+(w\lambda, \Phi_J, \Phi)$. The converse also holds by symmetry. Therefore
\[
c(w\lambda, w\mu, \Phi_J, \Phi)=\sum_{i=1}^k\sgn(s_{w\gamma_i}w\lambda, w\mu, \Phi_J, \Phi)=\sum_{i=1}^k(-1)^{\ell(wx_iw^{-1})}=\sum_{i=1}^k(-1)^{\ell(x_i)}.
\]

Now assume that $w\beta<0$. We show that $\Psi^+(w\mu, \Phi_J, \Phi)=\{-wx_1\gamma_1, \cdots, -wx_k\gamma_k\}$. In fact, one has $-w\gamma_i\in\Phi^+\backslash\Phi_J$ by (\ref{invl6eq2}). Thus $-wx_i\gamma_i=wx_iw^{-1}(-w\gamma_i)\in\Phi^+\backslash\Phi_J$. On the other hand,
\[
\langle w\mu, (-wx_i\gamma_i)^\vee\rangle=\langle\mu, (-x_i\gamma_i)^\vee\rangle=\langle x_is_{\gamma_i}\lambda, (-x_i\gamma_i)^\vee\rangle=\langle\lambda, \gamma_i^\vee\rangle\in\bbZ^{>0}.
\]
By symmetry, one must have $\Psi^+(w\mu, \Phi_J, \Phi)=\{-wx_1\gamma_1, \cdots, -wx_k\gamma_k\}$. With $w\lambda=ws_{\gamma_i}x_i^{-1}\mu=wx_i^{-1}w^{-1}s_{-wx_i\gamma_i}w\mu$, we obtain
\[
c(w\mu, w\lambda, \Phi_J, \Phi)=\sum_{i=1}^k\sgn(s_{-wx_i\gamma_i}w\mu, w\lambda, \Phi_J, \Phi)=\sum_{i=1}^k(-1)^{\ell(wx_i^{-1}w^{-1})}=\sum_{i=1}^k(-1)^{\ell(x_i)}.
\]
Hence $c(w\lambda, w\mu, \Phi_J, \Phi)=c(w\mu, w\lambda, \Phi_J, \Phi)=c(\lambda, \mu, \Phi_I, \Phi)$.

\end{proof}

\section{Basic weights and basic generalized Verma modules}
%
%%%%%%%%%%%%%%%%%%%%%%%%%%%%%%%%%%%%%%%%%%%%%%%%%%%%%%%%%%%%%%%%%%%%
%

Since the basic generalized Verma modules play significant role in the reduction process described in section 3. We will give a full classification of basic generalized Verma modules in this section.

\subsection{Basic systems} First we want to find all the full subcategories $\caO_\lambda^\frp$ that contain at least one basic generalized Verma module. Let $\Delta=\{\alpha_1, \cdots, \alpha_n\}$ be the simple roots corresponding to the standard numbering of vertices in the Dynkin diagram of $\Phi$ (\cite{H1}, \S 11.4). Let $\varpi_1, \cdots, \varpi_n$ be the \emph{fundamental weights} which satisfy $\langle\varpi_i, \alpha^\vee_j\rangle=\delta_{ij}$. Fix a basic weight $\lambda$ of $(\Phi_I, \Phi)$. With $\rank \Phi_I=\rank \Phi_\lambda=\rank\Phi_J=\rank\Phi-1$, we can assume that $I=\Delta\backslash\{\alpha_i\}$ and $J=\Delta\backslash\{\alpha_j\}$ for some $i, j\in\{1, \cdots, n\}$. Recall that $\lambda=w\overline\lambda$ for some $w\in {}^IW^J$. Since $\Phi_{\overline\lambda}=\Phi_J$, we must have $\overline\lambda=k\varpi_j$ for some $k\in\bbZ^{>0}$. Hence $\lambda$ is determined by the triple $(\Phi, i, j)$ and $w\in {}^IW^J$ and $k\in\bbZ^{>0}$. In this situation, we say $(\Phi, i, j)$ or $(\Phi, \Phi_I, \Phi_J)$ is a \emph{basic system}. In view of Lemma \ref{redslem1}, we can eventually get a basic system by applying the reduction process.

\begin{lemma}\label{blem1}
Let $(\Phi, i, j)$ be a basic system. Then
\[
\{kw\varpi_j\mid k\in\bbZ^{>0}, w\in{}^IW^J\}
\]
is the set of all the basic weights of $(\Phi, i, j)$.
\end{lemma}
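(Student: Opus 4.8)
The plan is to unwind the definitions until the statement reduces to a single positivity check, which I would then settle with a short height estimate. Write $I=\Delta\setminus\{\alpha_i\}$ and $J=\Delta\setminus\{\alpha_j\}$, as fixed by the basic system $(\Phi,i,j)$. First I would record two facts. (a) By Definition \ref{basdef} and the discussion in \S5.1, a basic weight of $(\Phi,i,j)$ is exactly an integral weight $\lambda\in\Lambda_I^+$ with $\rank\Phi_\lambda=\rank\Phi-1$ for which the unique index $j_0$ determined by $\Phi_{\overline\lambda}=\Phi_{\Delta\setminus\{\alpha_{j_0}\}}$ equals $j$. (b) The dominant integral weights $\mu$ with $\Phi_\mu=\Phi_J$ are precisely the $k\varpi_j$ with $k\in\bbZ^{>0}$; indeed $\Phi_{\varpi_j}=\Phi_J$, because for $\delta\in\Phi^+$ the number $\langle\varpi_j,\delta^\vee\rangle$ is the coefficient of $\alpha_j^\vee$ in the expansion of $\delta^\vee$ in simple coroots and vanishes iff $\delta\in\Phi_J$, while $\Phi_\mu$ is generated by $\Phi_\mu\cap\Delta$ for dominant $\mu$. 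Granting (a) and (b), the fact that every basic weight of $(\Phi,i,j)$ has the form $kw\varpi_j$ ($k\in\bbZ^{>0}$, $w\in{}^IW^J$) is precisely the computation carried out just before the lemma: $\overline\lambda=k\varpi_j$ and, by the parametrization recalled in \S2.3, $\lambda=w\overline\lambda=kw\varpi_j$ with $w\in{}^IW^J$.

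So the remaining content of the lemma is the converse: every $\lambda=kw\varpi_j$ with $k\in\bbZ^{>0}$ and $w\in{}^IW^J$ is a basic weight of $(\Phi,i,j)$. Here $\lambda=w(k\varpi_j)$ and $k\varpi_j$ is dominant integral with $k\varpi_j\in W\lambda$, so $\lambda$ is integral, $\overline\lambda=k\varpi_j$, hence $\Phi_\lambda$ is $W$-conjugate to $\Phi_J$ — in particular of rank $\rank\Phi-1$ — and the index $j_0$ attached to $\lambda$ is $j$. Thus the only point left to verify is $\lambda\in\Lambda_I^+$, i.e.\ $\langle w\varpi_j,\alpha^\vee\rangle\in\bbZ^{>0}$ for every $\alpha\in I$. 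Since $w\in{}^IW$ we have $w^{-1}\alpha\in\Phi^+$, so $\langle w\varpi_j,\alpha^\vee\rangle=\langle\varpi_j,(w^{-1}\alpha)^\vee\rangle$ is the coefficient of $\alpha_j^\vee$ in $(w^{-1}\alpha)^\vee$, a non-negative integer that is positive unless $w^{-1}\alpha\in\Phi_J$.

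It therefore remains to rule out $w^{-1}\alpha\in\Phi_J$ for $\alpha\in I$, which is the only genuinely non-formal step. I would unpack $w\in{}^IW^J$ as follows: for $\beta\in J$ the condition $\ell(ws_\beta)=\ell(w)+1$ gives $w(J)\subseteq\Phi^+$, and the condition $ws_\beta\in{}^IW$ gives $w^{-1}(I)\cap J=\emptyset$, because $w^{-1}(I)\subseteq\Phi^+$ already (as $w\in{}^IW$) and $s_\beta$ negates only $\beta$ among positive roots. Now suppose $w^{-1}\alpha\in\Phi_J$ for some $\alpha\in I$. Then $w^{-1}\alpha$ is a positive root of $\Phi_J$ not lying in $J$, so $\htt(w^{-1}\alpha)\ge2$. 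On the other hand, $w(J)\subseteq\Phi^+$ forces $w$ to carry $\Phi_J^+$ into $\Phi^+$ without decreasing height: writing $\gamma=\sum_{\beta\in J}c_\beta\beta\in\Phi_J^+$ we get $w\gamma=\sum_{\beta\in J}c_\beta\,w\beta$, a non-negative combination of positive roots, with $\htt(w\gamma)=\sum_\beta c_\beta\htt(w\beta)\ge\sum_\beta c_\beta=\htt(\gamma)$. Applying this to $\gamma=w^{-1}\alpha$ gives $\htt(\alpha)\ge2$, contradicting $\alpha\in\Delta$. Hence $\langle w\varpi_j,\alpha^\vee\rangle>0$ for all $\alpha\in I$, so $\lambda\in\Lambda_I^+$ and $\lambda$ is a basic weight of $(\Phi,i,j)$. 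I expect this height estimate to be the crux; everything else is bookkeeping with the definitions, the \S2.3 parametrization, and standard facts about root systems.
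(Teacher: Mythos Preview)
Your proposal is correct. The paper gives no explicit proof of this lemma; it treats the statement as immediate from the discussion just preceding it together with the parametrization recalled in \S2.3 (``Every integral weight $\lambda\in\Lambda_I^+$ can be uniquely written in the form $\lambda=w\overline\lambda$ for some $w\in{}^IW^J$''). That sentence, read as a bijection between ${}^IW^J$ and $\Lambda_I^+\cap W\overline\lambda$, already contains the converse inclusion you carefully verify. What you have done is unpack that bijection: the height estimate showing $w^{-1}\alpha\notin\Phi_J$ for $\alpha\in I$ is exactly the computation underlying the claim that $w\in{}^IW^J$ forces $w\overline\lambda\in\Lambda_I^+$, which the paper quotes as a known fact. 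So your argument is not a different route but rather a from-scratch justification of the standard parametrization the paper invokes; it is more detailed than what the paper provides, and sound.

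One cosmetic remark: in part~(b) you identify $\langle\varpi_j,\delta^\vee\rangle$ with the coefficient of $\alpha_j^\vee$ in $\delta^\vee$. That is correct, but the inference ``vanishes iff $\delta\in\Phi_J$'' is perhaps cleanest via $\langle\varpi_j,\delta^\vee\rangle=d_j\,\langle\alpha_j,\alpha_j\rangle/\langle\delta,\delta\rangle$ where $d_j$ is the $\alpha_j$-coefficient of $\delta$ itself, since $\Phi_J$ is defined by $d_j=0$. Either way the conclusion stands.
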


By sending $w\in {}^IW^J$ to $w^{-1}\in {}^JW^I$, we have the following result.

\begin{lemma}[Corollay 2.4.1, \cite{BN}]\label{bcor1}
${}^IW^J=({}^JW^I)^{-1}$.
\end{lemma}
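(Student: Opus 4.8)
The final statement is Lemma~\ref{bcor1}: ${}^IW^J = ({}^JW^I)^{-1}$. The plan is to show that the inversion map $w \mapsto w^{-1}$ on $W$ carries the set ${}^IW^J$ bijectively onto ${}^JW^I$; since inversion is an involution, it suffices to establish the single inclusion $({}^IW^J)^{-1} \subseteq {}^JW^I$, and then the reverse inclusion follows by swapping the roles of $I$ and $J$.

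First I would recall the defining conditions. By the definition in \S2.3, $w \in {}^IW$ means $\ell(s_\alpha w) = \ell(w) + 1$ for all $\alpha \in I$, which is the standard characterization of minimal-length representatives for the cosets $W_I\backslash W$; equivalently $\ell(xw) = \ell(x) + \ell(w)$ for all $x \in W_I$. Taking inverses and using $\ell(u) = \ell(u^{-1})$, this is equivalent to $\ell(w^{-1}x) = \ell(w^{-1}) + \ell(x)$ for all $x \in W_I$, i.e. $w^{-1} \in W^I$ (minimal in $wW_I$ on the other side). So the "${}^IW$" half of the condition dualizes cleanly under inversion: $w \in {}^IW \iff w^{-1} \in W^I$. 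Then the extra conditions defining ${}^IW^J$ inside ${}^IW$, namely $\ell(w) + 1 = \ell(ws_\alpha)$ and $ws_\alpha \in {}^IW$ for all $\alpha \in J$, say precisely that $w$ is also minimal-length in its coset $wW_J$ and that right-multiplication by the generators $s_\alpha$, $\alpha\in J$, preserves minimality on the left — together these amount to the familiar fact that ${}^IW^J$ is the set of minimal-length double coset representatives for $W_I \backslash W / W_J$ (see \cite{H1}, \S1). I would cite or briefly reprove this identification: $w \in {}^IW^J$ iff $w$ has minimal length in $W_I w W_J$.

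Given that characterization, the lemma is immediate: the double coset $W_I w W_J$ has the same underlying set as $(W_J w^{-1} W_I)^{-1}$, and since inversion preserves length, $w$ is the minimal-length element of $W_I w W_J$ if and only if $w^{-1}$ is the minimal-length element of $W_J w^{-1} W_I$. Hence $w \in {}^IW^J \iff w^{-1} \in {}^JW^I$, giving the stated equality of sets. The one point requiring a little care — and the step I expect to be the main (minor) obstacle — is verifying that the somewhat asymmetric-looking definition of ${}^IW^J$ given in the excerpt (with the ${}^IW$ condition imposed as a prefix condition and the $J$-condition imposed via $ws_\alpha \in {}^IW$) really does coincide with "minimal length in the double coset." This is a standard combinatorial fact about Coxeter groups, proved using the deletion/exchange condition, and one would check that each of the listed conditions ($\ell(s_\alpha w) > \ell(w)$ for $\alpha \in I$; $\ell(ws_\alpha) > \ell(w)$ for $\alpha \in J$; compatibility of the two) is equivalent to the corresponding minimality statement; alternatively one can simply invoke the cited \cite{BN}, Corollary 2.4.1, from which the statement is quoted. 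No new estimates or representation-theoretic input is needed — the entire argument lives inside the Weyl group.
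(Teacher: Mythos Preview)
Your proposal is correct and follows the natural line: identify ${}^IW^J$ with the set of minimal-length double coset representatives for $W_I\backslash W/W_J$, then observe that inversion preserves length and sends $W_IwW_J$ to $W_Jw^{-1}W_I$. The paper itself does not supply an argument at all---it simply quotes the statement from \cite{BN}, Corollary~2.4.1, prefaced by the one-line remark that the map $w\mapsto w^{-1}$ does the job---so your write-up is in fact more detailed than what appears in the text.
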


This yields the dual relation for basic systems.

\begin{lemma}\label{blem2}
Let $(\Phi, i, j)$ be a basic system. Then $(\Phi, j, i)$ is also a basic system.
\end{lemma}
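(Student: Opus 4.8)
The plan is to recast the property that $(\Phi,i,j)$ is a basic system as a statement about the sets ${}^IW^J$ and then read off the desired symmetry from Lemma \ref{bcor1}. Throughout, write $I=\Delta\backslash\{\alpha_i\}$ and $J=\Delta\backslash\{\alpha_j\}$, so that swapping $i$ and $j$ swaps $I$ and $J$. The first step is to note that a basic system $(\Phi,i,j)$ has at least one basic weight by definition, and by Lemma \ref{blem1} every basic weight has the form $kw\varpi_j$ with $k\in\bbZ^{>0}$ and $w\in{}^IW^J$; hence ${}^IW^J\neq\emptyset$. Fixing such a $w$, Lemma \ref{bcor1} gives $w^{-1}\in{}^JW^I$, so ${}^JW^I\neq\emptyset$ as well, and I would then use $w^{-1}$ to build an explicit basic weight for the triple $(\Phi,j,i)$.

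The candidate I would take is $\mu:=w^{-1}\varpi_i$, with $\varpi_i$ the $i$-th fundamental weight, and I would verify via Definition \ref{basdef} that $M_J(\mu)$ is a basic generalized Verma module, equivalently that $\mu$ is a basic weight of $(\Phi_J,\Phi)$. The checks are: $\varpi_i$ is dominant and integral with $\Phi_{\varpi_i}=\Phi_I$ --- a positive root is orthogonal to $\varpi_i$ exactly when its $\alpha_i$-coefficient vanishes, that is, exactly when it lies in $\Phi_I$ --- so $\varpi_i$ is the dominant representative of $W\mu$ and the subset of $\Delta$ attached to $\mu$ is $\Delta\backslash\{\alpha_i\}$, confirming that the associated basic system is indeed $(\Phi,j,i)$; $\mu$ is integral; since $w^{-1}\in{}^JW^I$, the parametrization of \S2.3, with the roles of $I$ and $J$ interchanged, gives $\mu\in\Lambda^+(\Phi_J,\Phi)$; and $\Phi_\mu=w^{-1}\Phi_{\varpi_i}=w^{-1}\Phi_I$, whence $\rank\Phi_\mu=\rank\Phi_I=|I|=n-1=\rank\Phi-1=\rank\Phi_J$. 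All the hypotheses of Definition \ref{basdef} then hold.

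I do not expect a genuine obstacle here. The content of the argument is simply that $(\Phi,i,j)$ being a basic system is equivalent to ${}^IW^J\neq\emptyset$, the forward direction being Lemma \ref{blem1} and the reverse being the verification above applied to $w\varpi_j$, and the lemma then amounts to the manifestly symmetric statement that ${}^IW^J\neq\emptyset$ if and only if ${}^JW^I\neq\emptyset$, which is exactly Lemma \ref{bcor1}. The only thing requiring attention is the conventional bookkeeping --- matching the subsets $I,J\subset\Delta$ with the indices $i,j$, and confirming that $\Phi_{\varpi_i}=\Phi_I$ and $w^{-1}\varpi_i\in\Lambda^+(\Phi_J,\Phi)$ --- rather than any real difficulty.
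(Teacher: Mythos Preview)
Your proposal is correct and follows precisely the route the paper intends: the paper states Lemma~\ref{blem2} immediately after Lemma~\ref{bcor1} with the one-line justification ``This yields the dual relation for basic systems,'' and your argument simply unpacks that sentence---use Lemma~\ref{blem1} to see ${}^IW^J\neq\emptyset$, apply Lemma~\ref{bcor1} to get $w^{-1}\in{}^JW^I$, then check $w^{-1}\varpi_i$ is a basic weight for $(\Phi,j,i)$. Your verification of the conditions in Definition~\ref{basdef} is more explicit than the paper's, but the idea is identical.
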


Recall the notation $\htt_I(-)$ of $I$-height defined in the previous section. For simplicity, we denote $\htt_j(-)=\htt_{\Delta\backslash\{\alpha_j\}}(-)$.

\begin{lemma}\label{blem3}
Let $(\Phi, i, j)$ be a basic system. Assume that $\lambda=w\varpi_j$ for some $w\in{}^IW^J$. Then
\begin{equation}\label{bl3eq1}
\begin{aligned}
\frac{\langle\alpha_j, \alpha_j\rangle}{2}\htt_j\beta_0&\geq\max\{\langle\lambda, \beta\rangle\mid\beta\in\Phi\}\\
&\geq\max\{0, \langle\rho, \beta\rangle\mid\beta\in\Phi_I^+\},
\end{aligned}
\end{equation}
where $\beta_0$ is the highest root of $\Phi$ $($see for example \cite{H1}, \S 12.2$)$.
If $\Phi$ is not simply laced $($i.e., with two root lengths$)$, we also have
\begin{equation}\label{bl3eq2}
\begin{aligned}
\frac{\langle\alpha_j, \alpha_j\rangle}{2}\htt_j\beta_0^s&\geq\max\{\langle\lambda, \beta\rangle\mid\beta\in\Phi\ \mbox{is short}\}\\
&\geq\max\{0, \langle\rho, \beta\rangle\mid\beta\in\Phi_I^+\ \mbox{is short}\}, \end{aligned}
\end{equation}
where $\beta_0^s$ is the highest short root of $\Phi$.
\end{lemma}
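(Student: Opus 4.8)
The plan is to extract both inequalities from the single fact that $\lambda = w\varpi_j$ with $w \in {}^IW^J$, so that $\lambda$ is a basic weight: it is $\Phi_I$-dominant (indeed $\lambda \in \Lambda_I^+$) and its integral values on roots are controlled by $\varpi_j$ via the Weyl group action. I would first establish the upper bound. Since $w$ is an element of $W$, for any $\beta \in \Phi$ we have $\langle \lambda, \beta \rangle = \langle w\varpi_j, \beta\rangle = \langle \varpi_j, w^{-1}\beta\rangle$, and $w^{-1}\beta \in \Phi$. Writing $w^{-1}\beta = \sum_k c_k \alpha_k$ in the simple root basis, the definition of the fundamental weight gives $\langle \varpi_j, w^{-1}\beta\rangle = c_j \langle \varpi_j, \alpha_j^\vee\rangle \langle \alpha_j,\alpha_j\rangle/2 = c_j \langle\alpha_j,\alpha_j\rangle/2$. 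Now $c_j$ is exactly the $\alpha_j$-coordinate of a root, i.e. $c_j = \htt_j(w^{-1}\beta)$ in the notation $\htt_j(-) = \htt_{\Delta\setminus\{\alpha_j\}}(-)$ (note: the $j$-height here picks out the coefficient of $\alpha_j$, since it is the $(\Delta\setminus\{\alpha_j\})$-height, which by the definition $\htt_I\mu = \sum_{\alpha\in\Delta\setminus I}c_\alpha$ sums only over $\alpha_j$). Among all roots, the coefficient of $\alpha_j$ is maximized by the highest root $\beta_0$; hence $c_j \le \htt_j\beta_0$, giving $\langle\lambda,\beta\rangle \le \frac{\langle\alpha_j,\alpha_j\rangle}{2}\htt_j\beta_0$. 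For \eqref{bl3eq2} the same computation applies but $\beta$ is restricted to be short, so $w^{-1}\beta$ is short, and the coefficient of $\alpha_j$ among short roots is maximized by the highest short root $\beta_0^s$.

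Next the lower bound. Because $\lambda \in \Lambda_I^+$, we have $\langle\lambda,\alpha^\vee\rangle \ge 1 = \langle\rho,\alpha^\vee\rangle$ for every $\alpha \in I$, and more generally I would argue $\langle\lambda,\alpha^\vee\rangle \ge \langle\rho,\alpha^\vee\rangle$ for every $\alpha\in\Phi_I^+$ — this follows since $\lambda-\rho$ restricted to $\Phi_I$ is $\Phi_I$-dominant (each simple coroot value is $\ge 0$), so its pairing with any positive coroot of $\Phi_I$ is $\ge 0$. Rewriting in terms of the bilinear form, $\langle\lambda,\beta\rangle = \frac{\langle\beta,\beta\rangle}{2}\langle\lambda,\beta^\vee\rangle \ge \frac{\langle\beta,\beta\rangle}{2}\langle\rho,\beta^\vee\rangle = \langle\rho,\beta\rangle$ for $\beta\in\Phi_I^+$. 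Since also $\langle\lambda,\beta\rangle>0$ for such $\beta$ (as $\langle\lambda,\beta^\vee\rangle\in\bbZ^{>0}$), we get $\max\{\langle\lambda,\beta\rangle \mid \beta\in\Phi\} \ge \max\{0,\langle\rho,\beta\rangle \mid \beta\in\Phi_I^+\}$, and likewise with "short" inserted on both sides for \eqref{bl3eq2}, provided $\Phi_I^+$ contains a short root of $\Phi$ — if it does not, the short-root maximum on the right is $\max\{0\}=0$ and the inequality is trivial.

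The main obstacle I anticipate is bookkeeping with the normalization conventions: keeping straight the relation $\langle\varpi_j,\alpha_k^\vee\rangle = \delta_{jk}$ versus $\langle\varpi_j,\alpha_k\rangle$, and correctly identifying that the $j$-height $\htt_j$ isolates precisely the $\alpha_j$-coefficient (so that the "$\htt_j\beta_0$" appearing in the statement is literally the $\alpha_j$-coordinate of the highest root, a tabulated quantity). I would also need to double-check the claim that the coefficient of $\alpha_j$ in an arbitrary root is bounded by its value in $\beta_0$; this is standard (the highest root dominates every root coordinatewise in the partial order on roots), and the short-root version follows because $\beta_0^s$ is the unique highest element among short roots, dominating every short root coordinatewise within the short-root subsystem. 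The rest is a routine translation between the $\langle-,-\rangle$ form and coroot pairings using $\langle\beta,\beta\rangle/2$ scaling factors, which I would not grind through in detail.
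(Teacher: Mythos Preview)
Your proposal is correct and follows essentially the same approach as the paper's proof: both obtain the upper bound via $\langle\lambda,\beta\rangle = \langle\varpi_j, w^{-1}\beta\rangle = \frac{\langle\alpha_j,\alpha_j\rangle}{2}\htt_j(w^{-1}\beta)$ and bound the $\alpha_j$-coefficient by that of $\beta_0$ (resp.\ $\beta_0^s$), and both obtain the lower bound from $\langle\lambda,\alpha^\vee\rangle\ge 1=\langle\rho,\alpha^\vee\rangle$ for $\alpha\in I$. You spell out a bit more detail than the paper (the passage from simple roots to all of $\Phi_I^+$, and the empty-short-root edge case), but the argument is the same.
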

\begin{proof}
Lemma \ref{blem1} implies $\lambda$ is a basic weight of $(\Phi, i, j)$. With $\lambda\in\Lambda_I^+$, the second inequality of (\ref{bl3eq1}) follows from the facts that $\langle\lambda, \alpha^\vee\rangle\geq1=\langle\rho, \alpha^\vee\rangle$ for any $\alpha\in I=\Delta\backslash\{\alpha_i\}$ and (when $\Phi_I^+$ is empty)
\[
\max\{\langle\lambda, \beta\rangle\mid\beta\in\Phi\}=\max\{\langle\lambda, \beta\rangle, \langle\lambda, -\beta\rangle\mid\beta\in\Phi^+\}\geq0.
\]
A similar argument proves the second inequality of (\ref{bl3eq2}). With $J=\Delta\backslash\{\alpha_j\}$, one obtains $\langle\varpi_j, \beta\rangle=\langle\varpi_j, \alpha_j\rangle\htt_j{\beta}$ for any $\beta\in\Phi$. So we get
\[
\langle\lambda, \beta\rangle=\langle w^{-1}\lambda, w^{-1}\beta\rangle=\langle\varpi_j, w^{-1}\beta\rangle=\langle\varpi_j, \alpha_j\rangle\htt_j({w^{-1}\beta})\leq \langle\varpi_j, \alpha_j\rangle\htt_j\beta_0.
\]
Then the first inequality of (\ref{bl3eq1}) follows from $\langle\varpi_j, \alpha_j\rangle={\langle\alpha_j, \alpha_j\rangle}/{2}$. The proof of the first inequality of (\ref{bl3eq2}) is similar.
\end{proof}

Now we can present all the basic systems.

\begin{theorem}\label{bthm1}
Using the above notation, a basic system $(\Phi, i, j)$ must be one of the following cases.
\begin{itemize}
\item [(1)] $(A_1, 1, 1)$, $(A_2, 1, 1)$, $(A_2, 1, 2)$, $(A_2, 2, 1)$, $(A_2, 2, 2)$, $(A_3, 2, 2)$;

\item [(2)] $(B_2, 1, 1)$, $(B_2, 1, 2)$, $(B_2, 2, 1)$, $(B_2, 2, 2)$, $(B_3, 2, 2)$, $(B_3, 2, 3)$, $(B_3, 3, 2)$, $(B_4, 3, 3)$;

\item [(3)] $(C_2, 1, 1)$, $(C_2, 1, 2)$, $(C_2, 2, 1)$, $(C_2, 2, 2)$, $(C_3, 2, 2)$,  $(C_3, 2, 3)$, $(C_3, 3, 2)$, $(C_4, 3, 3)$;

\item [(4)] $(D_4, 2, 2)$, $(D_5, 3, 3)$;

\item [(5)] $(E_6, 4, 4)$, $(E_7, 4, 4)$, $(E_7, 4, 5)$, $(E_7, 5, 4)$, $(E_8, 3, 4)$, $(E_8, 4, 3)$, $(E_8, 4, 4)$, $(E_8, 4, 5)$, $(E_8, 5, 4)$, $(E_8, 5, 5)$;

\item [(6)] $(F_4, 2, 2)$, $(F_4, 2, 3)$, $(F_4, 3, 2)$, $(F_4, 3, 3)$;

\item [(7)] $(G_2, 1, 1)$, $(G_2, 1, 2)$, $(G_2, 2, 1)$, $(G_2, 2, 2)$.
\end{itemize}
\end{theorem}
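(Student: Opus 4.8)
The plan is to exploit the two inequalities in Lemma \ref{blem3} as sharp numerical constraints that a basic system $(\Phi,i,j)$ must satisfy, and then to carry out a case-by-case check over all irreducible root systems $\Phi$ and all admissible pairs $(i,j)$. The basic observation is that for a basic weight $\lambda = w\varpi_j$ with $w\in{}^IW^J$, inequality (\ref{bl3eq1}) forces
\[
\tfrac{\langle\alpha_j,\alpha_j\rangle}{2}\,\htt_j\beta_0 \;\geq\; \max\{0,\langle\rho,\beta\rangle\mid\beta\in\Phi_I^+\},
\]
and when $\Phi$ is not simply laced, the analogous inequality (\ref{bl3eq2}) for short roots gives a second constraint. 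Both sides of these inequalities are explicit combinatorial quantities: $\htt_j\beta_0$ is the coefficient-sum over $\Delta\setminus\{\alpha_j\}$ of the highest root (equivalently, $\langle\varpi_j,\beta_0^\vee\rangle$ up to normalization), and $\max\{\langle\rho,\beta\rangle\mid\beta\in\Phi_I^+\}$ is determined by $I=\Delta\setminus\{\alpha_i\}$; it equals $\langle\rho,\beta\rangle$ evaluated on the highest root of the (possibly reducible) subsystem $\Phi_I$.

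**Key steps.** First I would tabulate, for each irreducible $\Phi$ of rank $n$ and each $j\in\{1,\dots,n\}$, the value $\htt_j\beta_0$ (and $\htt_j\beta_0^s$ in the non-simply-laced cases), reading off the marks of the highest (short) root from the standard tables in \cite{H1}, \S12.2. Second, for each $i$ I would compute $\max\{\langle\rho,\beta\rangle\mid\beta\in\Phi_I^+\}$ where $I=\Delta\setminus\{\alpha_i\}$: since $\Phi_I$ decomposes into irreducible pieces according to how removing $\alpha_i$ disconnects the Dynkin diagram, this maximum is realized on the highest root of the largest piece, and $\langle\rho,\beta_0^{\mathrm{piece}}\rangle$ equals the sum of the coefficients of that piece's highest root (i.e.\ $\htt\beta_0^{\mathrm{piece}}$ in the piece, which is one less than its Coxeter number). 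Third, I would run the inequality: for most $(i,j)$ the right-hand side grows linearly in $n$ while the left-hand side, being essentially $\langle\varpi_j,\beta_0^\vee\rangle$, is bounded by a small constant depending on $j$; this eliminates all but finitely many $(\Phi,i,j)$ and caps the rank $n$ in each family ($A_n$ with $n\leq 3$, $B_n,C_n$ with $n\leq 4$, $D_n$ with $n\leq 5$, and only $E_6,E_7,E_8,F_4,G_2$ among the exceptionals). Fourth, for the finitely many surviving candidates I would verify that each genuinely arises — that is, exhibit $w\in{}^IW^J$ with $w\varpi_j\in\Lambda_I^+$ — and conversely discard any candidate passing the crude inequality but failing the existence of such a $w$; a convenient sufficiency check is that $(\Phi,i,j)$ is a basic system iff $(\Phi,j,i)$ is (Lemma \ref{blem2}), which roughly halves the verification, and one can also use that ${}^IW^J\neq\emptyset$ always, so the real content is the compatibility of $w\varpi_j$ with positivity on $I$.

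**Main obstacle.** The crude rank bound from Lemma \ref{blem3} will overshoot: it is a \emph{necessary} condition, not sufficient, so a handful of borderline pairs $(\Phi,i,j)$ (especially in types $B_4,C_4,D_5,E_7,E_8,F_4$) will pass the inequality and must be examined individually — one must decide by hand, or by a short Weyl-group computation, whether $\varpi_j$ has a conjugate under ${}^IW^J$ landing in $\Lambda_I^+$. This is where the bulk of the work lies: it is essentially a finite but delicate enumeration of minimal-length double coset representatives and their action on fundamental weights. The simply-laced exceptionals $E_6,E_7,E_8$ are the worst, since the highest root has large marks (the mark at the branch/central node is $2$ or $3$), making the left side of (\ref{bl3eq1}) big enough to admit several $j$; pinning down exactly which $(i,j)$ work requires tracking the coefficients of $w^{-1}\beta$ for $\beta$ near the highest root. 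I expect to organize this final case analysis type by type, handling $A_n,B_n,C_n,D_n$ uniformly via explicit coordinates $\lambda=(\lambda_1,\dots,\lambda_n)$ and the exceptional types by direct reference to their root data, and to record the surviving list — which is exactly the seven-item list in the statement.
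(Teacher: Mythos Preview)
Your proposal is correct and follows essentially the same approach as the paper: compute the vectors $a=(a_j)$ and $b=(b_i)$ (and their short-root analogues in the non-simply-laced cases), apply both inequalities $a_j\geq b_i$ and $a_i\geq b_j$ from Lemma~\ref{blem3} together with the duality of Lemma~\ref{blem2}, and then go type by type to cap the rank and pin down the surviving $(i,j)$. The paper even encounters exactly the borderline elimination you anticipate---for instance, $(C_3,3,3)$ passes the crude inequalities and must be discarded by checking directly that no $w\varpi_3$ lands in $\Lambda_I^+$---and defers the existence verification for the surviving list to the next subsection, just as you plan.
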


\begin{remark}
Some of the above basic systems are isomorphic (e.g., $(A_2, 1, 1)$ and $(A_2, 2, 2)$). For symmetry and convenience, we keep all of them in our argument.

In the next subsection, we will show that each triple described in Theorem \ref{bthm1} is indeed a basic system. In other words, it contains at least a basic weight.
\end{remark}

We use the standard realization of $\Phi$ (see \cite{H1} \S 12.1), that is, $\Phi$ is a subset of a real vector space with orthonormal basis $e_i$. Denote $\lambda_i=\langle\lambda, e_i\rangle$ for a given $\lambda\in\frh^*$ and write
\begin{equation}\label{realization}
\lambda=(\lambda_1, \lambda_2, \cdots, \lambda_k)=\lambda_1e_1+\lambda_2e_2+\cdots+\lambda_ke_k,
\end{equation}
where $k=n$ or $n+1$ depending on $\Phi$.

From now on in this section, set $a_j=\frac{\langle\alpha_j, \alpha_j\rangle}{2}\htt_j\beta_0$ (resp. $a_j^s=\frac{\langle\alpha_j, \alpha_j\rangle}{2}\htt_j\beta_0^s$) and $b_i=\max\{0, \langle\rho, \beta\rangle\mid\beta\in\Phi_I^+\}$ (resp. $b_i^s=\max\{0, \langle\rho, \beta\rangle\mid\beta\in\Phi_I^+\ \mbox{is short}\}$), where $I=\Delta\backslash\{\alpha_i\}$. Set $a=(a_1, a_2, \cdots, a_n)$ and $b=(b_1, b_2, \cdots, b_n)$ (resp. $a^s=(a_1^s, a_2^s, \cdots, a_n^s)$ and $b^s=(b_1^s, b_2^s, \cdots, b_n^s)$). Lemma \ref{blem3} yields $a_j\geq b_i$ (resp. $a_j^s\geq b_i^s$) when $(\Phi, i, j)$ is a basic system. With Lemma \ref{blem2}, we also have $a_i\geq b_j$ (resp. $a_i^s\geq b_j^s$).

\begin{proof}
(1) $A_n (n\geq1)$. The highest root $\beta_0$ is $e_1-e_{n+1}$. One has $\htt_j(e_1-e_{n+1})=1$ and $\langle\alpha_j, \alpha_j\rangle=2$ for any $j\in\{1, \cdots, n\}$. So $a=(1, \cdots, 1)$. On the other hand, since $\Phi_I\simeq A_{i-1}\times A_{n-i}$,
\[
b_i=\max\{\langle\rho, e_1-e_i\rangle, \langle\rho, e_{i+1}-e_{n+1}\rangle\}=\max\{i-1, n-i\}.
\]
With $a_j\geq b_i$, we obtain $1\geq\max\{i-1, n-i\}$. It follows that $i\leq2$ and $n\leq3$. There is nothing to prove for $n=1, 2$ since the list exhausts all the possible cases. If $n=3$, we must have $i=2$. This forces $j=2$ since $a_i\geq b_j$.

(2) $B_n (n\geq2)$. With $\beta_0=e_1+e_2$ and $\beta_0^s=e_1$, one obtains $a=(1, 2, \cdots, 2, 1)$ and $a^s=(1, \cdots, 1, 1/2)$. On the other hand, if $1\leq i\leq n-2$, then $\Phi_I\simeq A_{i-1}\times B_{n-i}$ and $e_1-e_i$, $e_{i+1}+e_{i+2}$ are the corresponding highest roots, while $e_{i+1}$ is the highest short root. It follows that
\[
b_i=\max\{\langle\rho, e_1-e_i\rangle, \langle\rho, e_{i+1}+e_{i+2}\rangle\}=\max\{i-1, 2(n-i-1)\}
\]
and $b_i^s=\langle\rho, e_{i+1}\rangle=n-i-1/2$. Moreover,
\[
b_{n-1}=\max\{\langle\rho, e_1-e_{n-1}\rangle, \langle\rho, e_{n}\rangle\}=\max\{n-2, 1/2\}
\]
and $b_{n-1}^s=\langle\rho, e_{n}\rangle=1/2$, while $b_{n}=n-1$ and $b_n^s=0$.

With $a_j\geq b_i$ and $a_j^s\geq b_i^s$, one has $2\geq b_i$ and $1\geq b_i^s$. If $n\geq 5$, then $i\neq n-1, n$. It follows that $2\geq \max\{i-1, 2(n-i-1)\}$ for some $1\leq i\leq n-2$. This forces $n=5$ and $i=3$. However, it yields $1\geq b_3^s=3/2$, a contradiction. If $n=4$, we get $i=3$ since $b_1, b_4>2$ and $b_2^s>1$. By symmetry, we also get $j=3$. If $n=3$, then $(B_3, 3, 3)$ is not a basic system in view of $a_3=1<b_3=2$. We obtain $i\neq1$ since $b_1^s=3/2>1$. By symmetry, this means $j\neq 1$. There is nothing to prove for $n=2$.

(3) $C_n (n\geq2)$. The argument is similar to that of $B_n$. With $\beta_0=2e_1$, one has $a=(2, \cdots, 2)$. On the other hand,
\[
b_i=\max\{\langle\rho, e_1-e_i\rangle, \langle\rho, 2e_{i+1}\rangle\}=\max\{i-1, 2(n-i)\}.
\]
for $1\leq i\leq n-1$ and $b_n=\langle\rho, e_1-e_n\rangle=n-1$. If $n\geq 4$, then $2=a_j\geq b_i$ implies $i\neq n$. So $2\geq \max\{i-1, 2n-2i\}$ for $1\leq i\leq n-1$. This forces $n=4$ and $i=3$. One has $j=3$ by symmetry. Now consider the case $n=3$. Since $b_1=4$, we get $i\neq 1$ and $j\neq1$. If $(C_3, 3, 3)$ is a basic system, there exists basic weight $\lambda=w\varpi_3=w(1, 1, 1)$ for some $w\in {}^IW^J$. Thus $|\lambda_1|=|\lambda_2|=|\lambda_3|=1$. With $I=\{e_1-e_2, e_2-e_3\}$, one has $\lambda_1-\lambda_2, \lambda_2-\lambda_3\in\bbZ^{>0}$. This forces $\lambda_1=1>\lambda_2=-1>\lambda_3$, a contradiction. There is nothing to prove for $n=2$.

(4) $D_n (n\geq4)$. With $\beta_0=e_1+e_2$, we get $a=(1, 2, 2, \dots, 2, 1, 1)$. On the other hand, if $1\leq i\leq n-3$,
\[
b_i=\max\{\langle\rho, e_1-e_{i}\rangle, \langle\rho, e_{i+1}+e_{i+2}\rangle\}=\max\{i-1, 2n-2i-3\}
\]
Moreover,
\[
b_{n-2}=\max\{\langle\rho, e_1-e_{n-2}\rangle, \langle\rho, e_{n-1}-e_{n}\rangle, \langle\rho, e_{n-1}+e_{n}\rangle\}=n-3
\]
and $b_{n-1}=b_n=n-1$. It follows from $2\geq a_j\geq \max\{i-1, 2n-2i-3\}$ for $i\leq n-3$ that $n<6$. If $n=5$, we must have $i=3$ in view of $2\geq b_i$. By symmetry, we obtain $j=3$. Now consider $n=4$. First $i=2$ since $b_1=b_3=b_4=3>2$. Then $j=2$ by symmetry.

(5) $E_n (n=6, 7, 8)$. For $E_6$, the highest root is $\frac{1}{2}(e_1+e_2+e_3+e_4+e_5-e_6-e_7+e_8)=\alpha_1+2\alpha_2+2\alpha_3+3\alpha_4+2\alpha_5+\alpha_6$. So $a=(1, 2, 2, 3, 2, 1)$. Note that $\Phi_1\simeq D_5$, $\Phi_2\simeq A_5$, $\Phi_3\simeq A_1\times A_4$, $\Phi_4\simeq A_2\times A_1\times A_2$, $\Phi_5\simeq A_4\times A_1$ and $\Phi_6\simeq D_5$. Since $\langle\rho, \beta\rangle=\htt\beta$ for $\beta\in E_n$, we get $b=(7, 5, 4, 2, 4, 7)$. The only pair $(i, j)$ satisfying $a_j\geq b_i$ and $a_i\geq b_j$ is $(4, 4)$. For $E_7$, the highest root is $e_8-e_7=2\alpha_1+2\alpha_2+3\alpha_3+4\alpha_4+3\alpha_5+2\alpha_6+\alpha_7$. So $a=(2, 2, 3, 4, 3, 2, 1)$. On the other hand, we can get $b=(9, 6, 5, 3, 4, 7, 11)$. The pairs $(i, j)$ satisfying $a_j\geq b_i$ and $a_i\geq b_j$ are $(4, 4)$, $(4, 5)$ and $(5, 4)$.  For $E_8$, the highest root is $e_8+e_7=2\alpha_1+3\alpha_2+4\alpha_3+6\alpha_4+5\alpha_5+4\alpha_6+3\alpha_7+2\alpha_8$. So $a=(2, 3, 4, 6, 5, 4, 3, 2)$. In this case, $b=(11, 7, 6, 4, 4, 7, 11, 17)$. The pairs $(i, j)$ satisfying $a_j\geq b_i$ and $a_i\geq b_j$ are $(3, 4)$, $(4, 3)$, $(4, 4)$, $(4, 5)$, $(5, 4)$ and $(5, 5)$.

(5) $F_4$. With $\beta_0=e_1+e_2=2\alpha_1+3\alpha_2+4\alpha_3+2\alpha_4$ and $\beta_0^s=e_1=\alpha_1+2\alpha_2+3\alpha_3+2\alpha_4$, one obtains $a=(2, 3, 2, 1)$ and $a^s=(1, 2, 3/2, 1)$. Moreover, since $\rho=\frac{11}{2}e_1+\frac{5}{2}e_2+\frac{3}{2}e_3+\frac{1}{2}e_4$, we get $b_1=\langle\rho, e_1-e_2\rangle=3$, $b_4=\langle\rho, e_2+e_3\rangle=4$,
\[
b_2=\max\{\langle\rho, e_2-e_3\rangle, \langle\rho, \frac{1}{2}(e_1-e_2-e_3+e_4)\rangle\}=1,
\]
and
\[
b_3=\max\{\langle\rho, e_2-e_4\rangle, \langle\rho, \frac{1}{2}(e_1-e_2-e_3-e_4)\rangle\}=2.
\]
So $b=(3, 1, 2, 4)$. Similarly we can get $b^s=(5/2, 1, 1/2, 5/2)$. The pairs $(i, j)$ satisfying $a_j\geq b_i$, $a_i\geq b_j$, $a_j^s\geq b_i^s$ and $a_i^s\geq b_j^s$ are $(2, 2)$, $(2, 3)$, $(3, 2)$ and $(3, 3)$.

(5) $G_2$. There is nothing to prove.
\end{proof}

\subsection{Basic weights} Now we give all the basic weights in a case-by-case fashion. As in the previous subsection, we still use the form (\ref{realization}) for a weight $\lambda$. Let $(\Phi, i, j)$ be a basic system. With Lemma \ref{blem1}, it suffices to consider basic weights $w\varpi_j$ for $w\in {}^IW^J$.

\subsubsection{$A_n$}
\begin{theorem}\label{bwthm1}
Using the above notation, any basic weight of type $A$ must be one of the following cases $($up to a positive integer$)$.
$(1)$ $(A_1, 1, 1)$. $(\frac{1}{2}, -\frac{1}{2})$, $(-\frac{1}{2}, \frac{1}{2})$; $(2)$ $(A_2, 1, 1)$. $(-\frac{1}{3}, \frac{2}{3}, -\frac{1}{3})$; $(3)$ $(A_2, 1, 2)$. $(\frac{1}{3}, \frac{1}{3}, -\frac{2}{3})$; $(4)$ $(A_2, 2, 1)$. $(\frac{2}{3}, -\frac{1}{3}, -\frac{1}{3})$; $(5)$ $(A_2, 2, 2)$. $(\frac{1}{3}, -\frac{2}{3}, \frac{1}{3})$; $(6)$ $(A_3, 2, 2)$. $(\frac{1}{2}, -\frac{1}{2}, \frac{1}{2}, -\frac{1}{2})$;
\end{theorem}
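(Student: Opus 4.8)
The plan is to turn Lemma~\ref{blem1} into a short orbit computation inside the Weyl group orbit of a fundamental weight. By Lemma~\ref{blem1}, the basic weights of a basic system $(\Phi,i,j)$ are exactly the weights $kw\varpi_j$ with $k\in\bbZ^{>0}$ and $w\in{}^IW^J$, so up to a positive integer it suffices to describe the set $\{w\varpi_j\mid w\in{}^IW^J\}$. Since $\varpi_j$ is an integral dominant weight with $\Phi_{\varpi_j}=\Phi_J$ for $J=\Delta\backslash\{\alpha_j\}$, the parametrization of $\Lambda_I^+$ recalled in \S2.3, together with Lemma~\ref{blem1} and Definition~\ref{basdef}, identifies this set with $W\varpi_j\cap\Lambda_I^+$. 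Hence, up to a positive integer, the basic weights of $(\Phi,i,j)$ are precisely the elements of the $W$-orbit of $\varpi_j$ that lie in $\Lambda_I^+$.

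In type $A_n$ I would then use the standard realization (\cite{H1}, \S12.1): $W=S_{n+1}$ acts on $e_1,\dots,e_{n+1}$ by permuting coordinates, and inside $\bbC\Phi$ the fundamental weight $\varpi_j$ is the tuple whose first $j$ coordinates equal $\tfrac{n+1-j}{n+1}$ and whose last $n+1-j$ coordinates equal $-\tfrac{j}{n+1}$; its $W$-orbit is the set of all rearrangements of this tuple. With $I=\Delta\backslash\{\alpha_i\}$ we have $\Phi_I\simeq A_{i-1}\times A_{n-i}$, so $\mu=(\mu_1,\dots,\mu_{n+1})$ lies in $\Lambda_I^+$ if and only if $\mu_1>\cdots>\mu_i$ and $\mu_{i+1}>\cdots>\mu_{n+1}$. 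As the two values occurring in $\varpi_j$ differ by exactly $1$, within each of the two coordinate blocks $\{1,\dots,i\}$ and $\{i+1,\dots,n+1\}$ each value may occur at most once, which pins down the admissible rearrangement (essentially uniquely).

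It then remains to run the six systems $(A_1,1,1)$, $(A_2,1,1)$, $(A_2,1,2)$, $(A_2,2,1)$, $(A_2,2,2)$, $(A_3,2,2)$, each a one-line verification. For $(A_1,1,1)$ we have $I=\emptyset$, so both rearrangements $(\tfrac12,-\tfrac12)$ and $(-\tfrac12,\tfrac12)$ of $\varpi_1$ qualify. For each of the four $A_2$ systems one of the two coordinate blocks has size $1$ (imposing no condition) and the other has size $2$ (forcing the order of its two entries), so exactly one rearrangement survives; substituting $\varpi_1=(\tfrac23,-\tfrac13,-\tfrac13)$ or $\varpi_2=(\tfrac13,\tfrac13,-\tfrac23)$ according to $j$ yields the listed weight in each case. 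For $(A_3,2,2)$ we have $\varpi_2=(\tfrac12,\tfrac12,-\tfrac12,-\tfrac12)$ and $I=\{\alpha_1,\alpha_3\}$, so both coordinate blocks $\{1,2\}$ and $\{3,4\}$ must be strictly decreasing, forcing $(\mu_1,\mu_2)=(\mu_3,\mu_4)=(\tfrac12,-\tfrac12)$ and hence the unique weight $(\tfrac12,-\tfrac12,\tfrac12,-\tfrac12)$. Comparing with the statement completes the argument.

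I do not expect a genuine obstacle here: the content is a short finite computation. What needs care is only consistent bookkeeping of conventions — taking the representative of $\varpi_j$ in $\bbC\Phi$ rather than in $\frh^*$, reading the block decomposition of $\Phi_I$ off the index $i$, and respecting the strict inequalities built into the definition of $\Lambda_I^+$. As a byproduct, the nonemptiness of each of the six lists reconfirms that every triple in Theorem~\ref{bthm1}(1) really is a basic system.
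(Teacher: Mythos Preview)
Your proposal is correct and follows essentially the same approach as the paper: both reduce to determining which rearrangements of the coordinates of $\varpi_j$ satisfy the strict-inequality constraints imposed by $\Lambda_I^+$. The only cosmetic difference is that the paper computes ${}^IW^J$ explicitly (as a small list of Weyl group elements) for the $A_1$ and $A_2$ cases and switches to your constraint-based argument for $(A_3,2,2)$, whereas you use the constraint description $W\varpi_j\cap\Lambda_I^+$ uniformly throughout.
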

\begin{proof}
Let $\lambda=w\varpi_j$ be a basic weight. Then $\lambda$ is integral and $\lambda\in\Lambda_I^+$.

(1) $(A_1, 1, 1)$. In this case, $I=J=\emptyset$. So ${}^IW^J=W=\{1, s_{\alpha_1}\}$. Therefore $\lambda=\varpi_1$ or $s_{\alpha_1}\varpi_1$, where $\varpi_1=(\frac{1}{2}, -\frac{1}{2})$;

$(2)$ $(A_2, 1, 1)$. Since $I=\Delta\backslash\{\alpha_1\}=\{\alpha_2\}=J$, we get ${}^IW^J=\{s_{\alpha_1}\}$ and $\lambda=s_{\alpha_1}\varpi_1=(-\frac{1}{3}, \frac{2}{3}, -\frac{1}{3})$, where $\varpi_1=(\frac{2}{3}, -\frac{1}{3}, -\frac{1}{3})$;

$(3)$ $(A_2, 1, 2)$.
Since $I=\{\alpha_2\}$ and $J=\{\alpha_1\}$, one gets ${}^IW^J=\{1\}$ and $\lambda=\varpi_2=(\frac{1}{3}, \frac{1}{3}, -\frac{2}{3})$;

$(4)$ $(A_2, 2, 1)$. ${}^IW^J=\{1\}$ and $\lambda=\varpi_1=(\frac{2}{3}, -\frac{1}{3}, -\frac{1}{3})$;

$(5)$ $(A_2, 2, 2)$. ${}^IW^J=\{s_{\alpha_2}\}$ and $\lambda=s_{\alpha_2}\varpi_2=(\frac{1}{3}, -\frac{2}{3}, \frac{1}{3})$;

$(6)$ $(A_3, 2, 2)$. Since $I=J=\{e_1-e_2, e_3-e_4\}$ and $\lambda\in\Lambda_I^+$, we get $\lambda=w\varpi_2=w(\frac{1}{2}, \frac{1}{2}, -\frac{1}{2}, -\frac{1}{2})$ and $\lambda_1-\lambda_2, \lambda_3-\lambda_4\in\bbZ^{>0}$. This forces $\lambda_1=\lambda_3=\frac{1}{2}$ and $\lambda_2=\lambda_4=-\frac{1}{2}$, that is, $\lambda=(\frac{1}{2}, -\frac{1}{2}, \frac{1}{2}, -\frac{1}{2})$ and $w=s_{\alpha_2}$.
\end{proof}

\subsubsection{$B_n$}

\begin{theorem}\label{bwthm2}
Any basic weight of type $B$ must be one of the following cases $($up to a positive integer$)$.
$(1)$ $(B_2, 1, 1)$. $(0, 1)$; $(2)$ $(B_2, 1, 2)$. $(\frac{1}{2}, \frac{1}{2})$, $(-\frac{1}{2}, \frac{1}{2})$; $(3)$ $(B_2, 2, 1)$. $(1, 0)$, $(0, -1)$; $(4)$ $(B_2, 2, 2)$. $(\frac{1}{2}, -\frac{1}{2})$; $(5)$ $(B_3, 2, 2)$. $(1, 0, 1)$, $(0, -1, 1)$; $(6)$ $(B_3, 2, 3)$. $(\frac{1}{2}, -\frac{1}{2}, \frac{1}{2})$; $(7)$ $(B_3, 3, 2)$. $(1, 0, -1)$; $(8)$ $(B_4, 3, 3)$. $(1, 0, -1, 1)$.
\end{theorem}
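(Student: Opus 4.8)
The plan is to turn the statement into a finite enumeration. By Lemma~\ref{blem1} the basic weights of a basic system $(\Phi,i,j)$ are exactly the weights $kw\varpi_j$ with $k\in\bbZ^{>0}$ and $w\in{}^IW^J$; and, as recalled in \S2.3, the assignment $w\mapsto w\varpi_j$ is a bijection from ${}^IW^J$ onto $\Lambda_I^+\cap W\varpi_j$. Hence it suffices to list the set $\Lambda_I^+\cap W\varpi_j$ for each of the eight basic systems of type $B$ in Theorem~\ref{bthm1}(2), namely $(B_2,1,1)$, $(B_2,1,2)$, $(B_2,2,1)$, $(B_2,2,2)$, $(B_3,2,2)$, $(B_3,2,3)$, $(B_3,3,2)$ and $(B_4,3,3)$. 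Every such weight is genuinely basic, since for $\lambda=w\varpi_j$ one has $\Phi_\lambda=w\Phi_J\cong\Phi_J$, which has rank $n-1$, and $\Phi_I$ has rank $n-1$ as well.

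The next step is to fix coordinates. In the standard realization of $B_n$ one has $\Delta=\{e_1-e_2,\dots,e_{n-1}-e_n,e_n\}$, so $I=\Delta\setminus\{\alpha_i\}$ splits $\{1,\dots,n\}$ into the two blocks $\{1,\dots,i\}$ and $\{i+1,\dots,n\}$, and the short simple root $\alpha_n=e_n$ belongs to $I$ exactly when $i<n$. Writing $\lambda=(\lambda_1,\dots,\lambda_n)$, any weight of $W\varpi_j$ is automatically $\Phi$-integral, and the condition $\lambda\in\Lambda_I^+$ unwinds to: $\lambda_1>\lambda_2>\dots>\lambda_i$, $\lambda_{i+1}>\lambda_{i+2}>\dots>\lambda_n$, and $\lambda_n>0$ when $i<n$. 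On the orbit side, $\varpi_j=e_1+\dots+e_j$ for $j<n$ and $\varpi_n=\tfrac12(e_1+\dots+e_n)$, and since $W(B_n)$ acts by signed permutations of coordinates, $W\varpi_j$ (for $j<n$) is the set of vectors with $j$ coordinates equal to $\pm1$ and the rest equal to $0$, while $W\varpi_n$ consists of the $2^n$ vectors $\tfrac12(\pm1,\dots,\pm1)$.

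With this dictionary the argument is purely mechanical: in each of the eight cases one intersects the (small) orbit $W\varpi_j$ with the block-monotonicity inequalities above and reads off the survivors. For example, for $(B_4,3,3)$ we have $\varpi_3=(1,1,1,0)$ and the constraints $\lambda_1>\lambda_2>\lambda_3$ and $\lambda_4>0$; the only signed permutation of $(1,1,1,0)$ satisfying both is $(1,0,-1,1)$, exactly the listed weight. When $j$ is not maximal, the constraint $\lambda_n>0$ forces the single $0$-coordinate into a prescribed block, which is why only one or two weights survive; when $j=n$, one simply imposes the inequalities on the sign pattern. Carrying this out for all eight systems yields precisely the list in the theorem.

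I do not expect a genuine obstacle here; the work is entirely bookkeeping. The two places that require care are (i) the reduction of the first paragraph — in particular making sure that describing $\Lambda_I^+\cap W\varpi_j$ really does capture all basic weights up to a positive integer — and (ii) the faithful translation of $\Lambda_I^+$ into inequalities, where one must respect the block decomposition of $I=\Delta\setminus\{\alpha_i\}$ and not drop the $e_n\in I$ condition; an off-by-one error at that point would corrupt several of the eight cases at once.
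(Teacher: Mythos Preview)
Your proposal is correct and is essentially the paper's own approach. For cases (5)--(8) the paper does exactly what you describe: e.g.\ for $(B_3,2,2)$ it writes ``$\lambda=w(1,1,0)$ and $\lambda_1-\lambda_2,\,2\lambda_3\in\bbZ^{>0}$, thus $\lambda=(1,0,1)$ or $(0,-1,1)$''; for the four $B_2$ cases the paper instead lists ${}^IW^J$ explicitly and applies each element to $\varpi_j$, but this is the same enumeration read in the other direction.
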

\begin{proof}
Let $\lambda=w\varpi_j$ be a basic weight with $w\in{}^IW^J$.

$(1)$ $(B_2, 1, 1)$. In this case, $I=J=\Delta\backslash\{\alpha_1\}=\{\alpha_2\}$, we get ${}^IW^J=\{s_{\alpha_1}\}$ and $\lambda=s_{\alpha_1}\varpi_1=(0, 1)$, where $\varpi_1=(1, 0)$;

$(2)$ $(B_2, 1, 2)$. With $I=\{\alpha_2\}$ and $J=\{\alpha_1\}$, we have ${}^IW^J=\{1, s_{\alpha_1}s_{\alpha_2}\}$ and $\lambda=\varpi_2=(\frac{1}{2}, \frac{1}{2})$ or $\lambda=s_{\alpha_1}s_{\alpha_2}\varpi_2=(-\frac{1}{2}, \frac{1}{2})$;

$(3)$ $(B_2, 2, 1)$.  With $I=\{\alpha_1\}$ and $J=\{\alpha_2\}$, we have ${}^IW^J=\{1,s_{\alpha_2}s_{\alpha_1}\}$ and $\lambda=\varpi_1=(1, 0)$ or $\lambda=s_{\alpha_2}s_{\alpha_1}\varpi_2=(0, -1)$;

$(4)$ $(B_2, 2, 2)$.  With $I=\{\alpha_1\}$ and $J=\{\alpha_1\}$, we have ${}^IW^J=\{s_{\alpha_2}\}$ and $\lambda=s_{\alpha_2}\varpi_2=(\frac{1}{2}, -\frac{1}{2})$;

$(5)$ $(B_3, 2, 2)$. With $I=\{e_1-e_2, e_3\}=J$, we get $\lambda=w\varpi_2=w(1, 1, 0)$ and $\lambda_1-\lambda_2, 2\lambda_3\in\bbZ^{>0}$. Thus $\lambda=(1, 0, 1)$ or $(0, -1, 1)$.

$(6)$ $(B_3, 2, 3)$. With $I=\{e_1-e_2, e_3\}$ and $J=\{e_1-e_2, e_2-e_3\}$. One has $\lambda=w\varpi_3=w(\frac{1}{2}, \frac{1}{2}, \frac{1}{2})$ and $\lambda_1-\lambda_2, 2\lambda_3\in\bbZ^{>0}$. We must have $\lambda=(\frac{1}{2}, -\frac{1}{2}, \frac{1}{2})$.

$(7)$ $(B_3, 3, 2)$. In this case, $I=\{e_1-e_2, e_2-e_3\}$ and $J=\{e_1-e_2, e_3\}$. We get $\lambda=w(1, 1, 0)$ and $\lambda_1-\lambda_2, \lambda_2-\lambda_3\in\bbZ^{>0}$. This forces $\lambda=(1, 0, -1)$.

$(8)$ $(B_4, 3, 3)$. Now $I=\{e_1-e_2, e_2-e_3, e_4\}=J$. One obtains $\lambda=w(1, 1, 1, 0)$ and $\lambda_1-\lambda_2, \lambda_2-\lambda_3, 2\lambda_4\in\bbZ^{>0}$. This yields $\lambda=(1, 0, -1, 1)$.
\end{proof}

\subsubsection{$C_n$}

\begin{theorem}\label{bwthm3}
Any basic weight of type $C$ must be one of the following cases $($up to a positive integer$)$.
$(1)$ $(C_2, 1, 1)$. $(0, 1)$; $(2)$ $(C_2, 1, 2)$. $(1, 1)$, $(-1, 1)$; $(3)$ $(C_2, 2, 1)$. $(1, 0)$, $(0, -1)$; $(4)$ $(C_2, 2, 2)$. $(1, -1)$; $(5)$ $(C_3, 2, 2)$. $(1, 0, 1)$, $(0, -1, 1)$; $(6)$ $(C_3, 2, 3)$. $(1, -1, 1)$; $(7)$ $(C_3, 3, 2)$. $(1, 0, -1)$; $(8)$ $(C_4, 3, 3)$. $(1, 0, -1, 1)$.
\end{theorem}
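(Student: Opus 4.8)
The plan is to run the same case-by-case argument used for type $B$ in the proof of Theorem~\ref{bwthm2}, which here is even a little simpler. In the standard realization of $C_n$ the fundamental weights are $\varpi_j=e_1+\cdots+e_j=(1,\ldots,1,0,\ldots,0)$ (with $j$ ones), and $W(C_n)$ acts on $\frh^*$ by all signed permutations of the coordinates, so the orbit $W\varpi_j$ is exactly the set of tuples with $j$ nonzero entries, each $\pm1$, and the remaining entries $0$. By Lemma~\ref{blem1} it suffices, up to a positive integer, to produce the weights $w\varpi_j$ for $w\in{}^IW^J$, where $I=\Delta\backslash\{\alpha_i\}$ and $J=\Delta\backslash\{\alpha_j\}$. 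Since $\langle\varpi_j,\alpha_k^\vee\rangle=\delta_{jk}$ forces $\Phi_{\varpi_j}=\Phi_J$, the uniqueness of the ${}^IW^J$-decomposition of weights in $\Lambda_I^+$ (\S2.3) shows that $\{w\varpi_j\mid w\in{}^IW^J\}$ is precisely the set of weights in the orbit $W\varpi_j$ lying in $\Lambda_I^+$. Hence for each of the eight basic systems $(C_n,i,j)$ listed in Theorem~\ref{bthm1}(3), I would simply enumerate the signed permutations of $\varpi_j$ and keep those $\lambda$ with $\langle\lambda,\alpha^\vee\rangle\in\bbZ^{>0}$ for all $\alpha\in I$, then read off the corresponding $w\in{}^IW^J$.

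For the enumeration the relevant coroots are: for $1\le k\le n-1$, $\alpha_k=e_k-e_{k+1}$ with $\alpha_k^\vee=e_k-e_{k+1}$, so the condition $\langle\lambda,\alpha_k^\vee\rangle\in\bbZ^{>0}$ reads $\lambda_k>\lambda_{k+1}$; while $\alpha_n=2e_n$ has $\alpha_n^\vee=e_n$, so it reads $\lambda_n>0$. Thus, for $(C_2,1,1)$ one imposes $\lambda_2>0$ on the orbit of $\varpi_1=(1,0)$, leaving $(0,1)$; for $(C_2,1,2)$ one imposes $\lambda_2>0$ on the orbit of $\varpi_2=(1,1)$, leaving $(1,1)$ and $(-1,1)$; for $(C_2,2,1)$ and $(C_2,2,2)$ one imposes $\lambda_1>\lambda_2$ on the orbits of $(1,0)$ and $(1,1)$, giving $\{(1,0),(0,-1)\}$ and $\{(1,-1)\}$. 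In rank $3$, for $I=\{\alpha_1,\alpha_3\}$ (systems $(C_3,2,2)$ and $(C_3,2,3)$) the conditions are $\lambda_1>\lambda_2$ and $\lambda_3>0$, applied to the signed permutations of $(1,1,0)$ and of $(1,1,1)$, yielding $\{(1,0,1),(0,-1,1)\}$ and $\{(1,-1,1)\}$; for $I=\{\alpha_1,\alpha_2\}$ (system $(C_3,3,2)$) the condition is $\lambda_1>\lambda_2>\lambda_3$ on the orbit of $(1,1,0)$, forcing the three values to be $\{1,0,-1\}$ in decreasing order, i.e.\ $(1,0,-1)$; and for $(C_4,3,3)$ with $I=\{\alpha_1,\alpha_2,\alpha_4\}$ the conditions $\lambda_1>\lambda_2>\lambda_3$ and $\lambda_4>0$ on the orbit of $(1,1,1,0)$ force $\lambda_4=1$ and then $(\lambda_1,\lambda_2,\lambda_3)=(1,0,-1)$, i.e.\ $(1,0,-1,1)$. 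In each case the surviving list is the one in the statement.

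Since every step is a finite elementary check, there is no genuine obstacle; the only points demanding care are bookkeeping ones: using strict positivity rather than mere nonnegativity in the definition of $\Lambda_I^+$, keeping the long simple root $\alpha_n=2e_n$ distinct from the short roots $e_k-e_{k+1}$ when forming coroots, and enumerating all sign patterns of $\varpi_j$ without omission. A convenient built-in consistency check comes from Lemma~\ref{blem2} together with ${}^IW^J=({}^JW^I)^{-1}$ (Lemma~\ref{bcor1}): the systems $(C_n,i,j)$ and $(C_n,j,i)$ must have the same number of basic weights, which matches the lists above (two each for $(C_2,1,2)$ and $(C_2,2,1)$, one each for $(C_3,2,3)$ and $(C_3,3,2)$, and so on), even though the duality relates the representatives $w$ and $w^{-1}$ rather than the weights themselves.
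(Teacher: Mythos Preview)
Your proof is correct and follows essentially the same approach as the paper, which simply says ``The argument is similar to that of Theorem~\ref{bwthm2}''; you have carried out precisely that argument, enumerating the $W$-orbit of $\varpi_j$ and imposing the $\Lambda_I^+$ conditions case by case, exactly as the paper does for type $B$.
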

\begin{proof}
The argument is similar to that of Theorem \ref{bwthm2}.
\end{proof}

\subsubsection{$D_n$}

\begin{theorem}\label{bwthm4}
Any basic weight of type $D$ must be one of the following cases $($up to a positive integer$)$.
$(1)$ $(D_4, 2, 2)$. $(1, 0, 1, 0)$, $(0, -1, 1, 0)$; $(2)$ $(D_5, 3, 3)$. $(1, 0, -1, 1, 0)$.
\end{theorem}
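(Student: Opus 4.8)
The plan is to argue exactly as in the type $B$ and type $C$ cases (Theorems \ref{bwthm2} and \ref{bwthm3}). By Theorem \ref{bthm1} the only basic systems of type $D$ are $(D_4,2,2)$ and $(D_5,3,3)$, so it suffices to handle these two. For each, Lemma \ref{blem1} reduces the task to enumerating the weights $w\varpi_j$ with $w\in{}^IW^J$; since any such weight lies in $\Lambda_I^+$ and in the Weyl orbit $W\varpi_j$, and conversely any element of $\Lambda_I^+\cap W\varpi_j$ is of the form $w\varpi_j$ for a unique $w\in{}^IW^J$ (see \S2.3), I will instead describe $W\varpi_j$ explicitly in the standard realization (\ref{realization}) and impose the defining inequalities of $\Lambda_I^+$.

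First I would record the data for $D_n$ in the standard realization: the simple roots are $\alpha_i=e_i-e_{i+1}$ for $1\le i\le n-1$ and $\alpha_n=e_{n-1}+e_n$; for $k\le n-2$ one has $\varpi_k=e_1+\cdots+e_k$; and $W$ consists of the signed permutations of $e_1,\dots,e_n$ with an even number of sign changes. The one point that needs a moment of care is the structure of the orbits $W\varpi_2$ (for $D_4$) and $W\varpi_3$ (for $D_5$): because $\varpi_2$ and $\varpi_3$ have zero coordinates, the even-sign-change restriction imposes no constraint on the orbit, so $W\varpi_2$ is precisely the set of $\lambda$ with two coordinates equal to $\pm1$ and the rest $0$, and $W\varpi_3$ the set of $\lambda$ with three coordinates equal to $\pm1$ and the rest $0$.

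It then remains to solve the two systems of inequalities. For $(D_4,2,2)$ we have $I=J=\Delta\setminus\{\alpha_2\}=\{e_1-e_2,\ e_3-e_4,\ e_3+e_4\}$, so a basic weight $\lambda=(\lambda_1,\lambda_2,\lambda_3,\lambda_4)\in W\varpi_2$ must satisfy $\lambda_1-\lambda_2,\ \lambda_3-\lambda_4,\ \lambda_3+\lambda_4\in\bbZ^{>0}$; the last two conditions force $\lambda_3=1$, $\lambda_4=0$, and then the requirement that exactly one of $\lambda_1,\lambda_2$ be nonzero together with $\lambda_1-\lambda_2>0$ leaves $(\lambda_1,\lambda_2)=(1,0)$ or $(0,-1)$, i.e.\ $\lambda=(1,0,1,0)$ or $\lambda=(0,-1,1,0)$. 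For $(D_5,3,3)$ we have $I=J=\Delta\setminus\{\alpha_3\}=\{e_1-e_2,\ e_2-e_3,\ e_4-e_5,\ e_4+e_5\}$ and $\varpi_3=(1,1,1,0,0)$, so $\lambda$ must satisfy $\lambda_1-\lambda_2,\ \lambda_2-\lambda_3,\ \lambda_4-\lambda_5,\ \lambda_4+\lambda_5\in\bbZ^{>0}$; again $\lambda_4=1$, $\lambda_5=0$, and $\lambda_1>\lambda_2>\lambda_3$ with two of these three coordinates nonzero forces $(\lambda_1,\lambda_2,\lambda_3)=(1,0,-1)$, i.e.\ $\lambda=(1,0,-1,1,0)$. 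Up to the positive integer multiple permitted by Lemma \ref{blem1}, these are all the basic weights, which is the assertion.

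I do not expect a genuine obstacle: the whole argument is combinatorial and short. The only thing to be careful about is the explicit description of the orbits $W\varpi_j$ inside $D_n$ (the interplay of the type-$D$ parity condition with zero coordinates), and, as a bookkeeping matter, confirming that each weight produced actually occurs, which is immediate from $\lambda\in\Lambda_I^+\cap W\varpi_j$ together with the parametrization of $\Lambda_I^+$ by ${}^IW^J$ recalled in \S2.3.
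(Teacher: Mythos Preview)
Your proposal is correct and follows essentially the same approach as the paper: list the two basic systems from Theorem \ref{bthm1}, write $\lambda=w\varpi_j\in W\varpi_j$, and solve the inequalities defining $\Lambda_I^+$. The only difference is that you spell out the parity argument for the $D_n$ Weyl orbit (that zero coordinates absorb the even-sign-change constraint), which the paper leaves implicit.
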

\begin{proof}
Let $\lambda=w\varpi_j$ be a basic weight with $w\in{}^IW^J$.
$(1)$ $(D_4, 2, 2)$. In this case, $I=\{e_1-e_2, e_3-e_4, e_3+e_4\}=J$. So $\lambda=w(1, 1, 0, 0)$ and $\lambda_1-\lambda_2$, $\lambda_3-\lambda_4, \lambda_3+\lambda_4\in\bbZ^{>0}$. This forces $\lambda=(1, 0, 1, 0)$ or $(0, -1, 1, 0)$.

$(2)$ $(D_5, 3, 3)$. With $I=\{e_1-e_2, e_2-e_3, e_4\pm e_5\}=J$, we get $\lambda=w(1, 1, 1, 0, 0)$ and $\lambda_1-\lambda_2, \lambda_2-\lambda_3, \lambda_4\pm\lambda_5\in\bbZ^{>0}$. Hence $\lambda=(1, 0, -1, 1, 0)$.
\end{proof}

%\subsubsection{$E_n$}

\subsubsection{$E_6, E_7, E_8$}. The cases of type $E$ are much more complicated. We have to run computer programs to calculate all the possible basic weights. The algorithm is summarized as follows (take $E_8$ as an example).

Suppose the basic system is $(E_8, i, j)$. Assume that $\lambda=w\varpi_j=\sum_{k=1}^8x_k\varpi_k$ with $x_k\in\bbZ$. In particular, $x_k\in \bbZ^{>0}$ when $k\neq i$ (since $\lambda\in\Lambda_I^+$). Changing basis,
\[
\begin{aligned}
\lambda=&\frac{x_2-x_3}{2}e_1+\frac{x_2+x_3}{2}e_2+(\frac{x_2+x_3}{2}+x_4)e_3+(\frac{x_2+x_3}{2}+x_4+x_5)e_4\\
&+(\frac{x_2+x_3}{2}+x_4+x_5+x_6)e_5+(\frac{x_2+x_3}{2}+x_4+x_5+x_6+x_7)e_6\\
&+(\frac{x_2+x_3}{2}+x_4+x_5+x_6+x_7+x_8)e_7\\
&+(2x_1+\frac{5x_2+7x_3}{2}+5x_4+4x_5+3x_6+2x_7+x_8)e_8.
\end{aligned}
\]
Note that $\langle\lambda, \lambda\rangle=\langle w\varpi_j, w\varpi_j\rangle=\langle \varpi_j, \varpi_j\rangle$. Thus the first step is to find all the integer solutions $x_k$ (with $x_k>0$ for $k\neq i$) of the equation
\[
\begin{aligned}
\langle \varpi_j, \varpi_j\rangle=&(\frac{x_2-x_3}{2})^2+(\frac{x_2+x_3}{2})^2+(\frac{x_2+x_3}{2}+x_4)^2+(\frac{x_2+x_3}{2}+x_4+x_5)^2\\
&+(\frac{x_2+x_3}{2}+x_4+x_5+x_6)^2+(\frac{x_2+x_3}{2}+x_4+x_5+x_6+x_7)^2\\
&+(\frac{x_2+x_3}{2}+x_4+x_5+x_6+x_7+x_8)^2\\
&+(2x_1+\frac{5x_2+7x_3}{2}+5x_4+4x_5+3x_6+2x_7+x_8)^2.
\end{aligned}
\]

For each solution $(x_1, \cdots, x_8)$ from the first step, write $\mu=\lambda=\sum_{k=1}^8x_k\varpi_k$. If $\langle\lambda, \alpha\rangle<0$ for any $\alpha\in \Delta$, then $\htt (s_\alpha\mu)-\htt_\mu=-\langle\mu, \alpha^\vee\rangle>0$. Replace $\mu$ by $s_\alpha\mu$. Raising $\htt\mu$ stepwise in this fashion, we arrive at a dominant weight $\mu$. If $\mu=\varpi_j$, then $\lambda=\sum_{k=1}^8x_k\varpi_k$ is a basic weight of $(\Phi, i, j)$. Otherwise we discard this solution.

Similar algorithms give all the basic weights of type $E$, which we list in the following tables (up to a positive integer). For any given basic system of type $E$, a number $n(\lambda)$ is assigned to each basic weight $\lambda$ of the system such that $n(\lambda)<n(\mu)$ whenever $\lambda>\mu$, where $\mu$ is another basic weight.

\begin{theorem}\label{bwthm5}
Any basic weight of type $E$ must be one of the weights $($up to a positive integer$)$ listed in Table 1-10.
\end{theorem}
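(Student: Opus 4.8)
The plan is to turn the classification into a finite search governed by two rigid constraints. Fix an admissible triple $(E_n,i,j)$ from Theorem \ref{bthm1} (so $n\in\{6,7,8\}$) and set $I=\Delta\backslash\{\alpha_i\}$, $J=\Delta\backslash\{\alpha_j\}$. By Lemma \ref{blem1} it suffices, up to a positive integer, to list the weights $w\varpi_j$ with $w\in{}^IW^J$. Since $\varpi_j$ is dominant with $\Phi_{\varpi_j}=\Phi_J$, the parametrization of \S2.3 identifies this list with the set of integral weights $\lambda\in W\varpi_j\cap\Lambda_I^+$. Writing $\lambda=\sum_{k=1}^n x_k\varpi_k$, the $W$-invariance of the weight lattice forces $x_k\in\bbZ$, and $\lambda\in\Lambda_I^+$ forces $x_k=\langle\lambda,\alpha_k^\vee\rangle\in\bbZ^{>0}$ for every $k\neq i$.

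The first and decisive constraint is the norm: because $\lambda=w\varpi_j$ one has $\langle\lambda,\lambda\rangle=\langle\varpi_j,\varpi_j\rangle$. Passing to the orthonormal coordinates $e_1,e_2,\ldots$ via the change of basis displayed above for $E_8$ (and the analogous expressions for $E_6$ and $E_7$), this becomes a single equation setting a positive definite integral quadratic form in $x_1,\ldots,x_n$ equal to a fixed constant. Together with the positivity conditions $x_k>0$ for $k\neq i$, it has only finitely many solutions, which can be enumerated by a direct computer search over a bounded box.

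Each solution is merely a candidate — an integral weight of the correct norm lying in $\Lambda_I^+$ — and need not belong to $W\varpi_j$. To decide orbit membership I would apply the standard raising procedure: while $\langle\mu,\alpha^\vee\rangle<0$ for some $\alpha\in\Delta$, replace $\mu$ by $s_\alpha\mu$; this strictly increases $\htt\mu$, hence terminates at the dominant representative of $W\mu$. Since distinct dominant weights lie in distinct $W$-orbits and $\varpi_j$ is itself dominant, the candidate $\lambda$ is a basic weight exactly when this dominant representative equals $\varpi_j$, and is discarded otherwise. Carrying this out for every admissible $(E_n,i,j)$ and ordering the survivors compatibly with $\lambda>\mu$ yields Tables 1--10.

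The one genuine difficulty is that the argument is an exhaustive machine computation rather than a conceptual one, so its correctness rests on the search being implemented and executed faithfully; the size of the solution set of the norm equation, and the need to correctly realize the dominant-representative test, are the places where care is required. I would guard against error by cross-checking the number of basic weights of each system against a direct enumeration of the minimal coset representatives ${}^IW^J$, and by verifying a posteriori that every weight listed in the tables does lie in $\Lambda_I^+$ and has dominant $W$-representative $\varpi_j$.
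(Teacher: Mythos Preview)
Your proposal is correct and follows essentially the same approach as the paper: both reduce the problem via Lemma \ref{blem1} to finding all $\lambda=\sum_k x_k\varpi_k\in\Lambda_I^+\cap W\varpi_j$, use the norm equation $\langle\lambda,\lambda\rangle=\langle\varpi_j,\varpi_j\rangle$ in orthonormal coordinates together with the positivity constraints $x_k>0$ for $k\neq i$ to get a finite search, and then test orbit membership by the height-raising procedure to the dominant representative. Your additional cross-check against a direct enumeration of ${}^IW^J$ is a sensible safeguard not explicitly mentioned in the paper.
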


\renewcommand\arraystretch{1.3}

\begin{table}[H]
\begin{tabular}{|l|c|l|c|}
\hline
$1$ & $(0, 1, -1, 0, 1, -1, -1, 1)$ & $3$ & $(0, 1, -2, -1, 0, 0, 0, 0)$ \\
\hline
$2$ & $(\frac{1}{2}, \frac{3}{2}, -\frac{3}{2}, -\frac{1}{2}, \frac{1}{2}, -\frac{1}{2}, -\frac{1}{2}, \frac{1}{2})$ & &\\
\hline
\end{tabular}
\bigskip
\caption{Basic weights of $(E_6, 4, 4)$}
\end{table}

\begin{table}[H]
\begin{tabular}{|l|c|l|c|}
\hline
$1$ & $(\frac{1}{2}, \frac{3}{2}, -\frac{3}{2}, -\frac{1}{2}, \frac{1}{2}, \frac{3}{2}, -\frac{3}{2}, \frac{3}{2})$ & $4$ & $(\frac{1}{2}, \frac{3}{2}, -\frac{5}{2}, -\frac{3}{2}, -\frac{1}{2}, \frac{1}{2}, -\frac{1}{2}, \frac{1}{2})$ \\
\hline
$2$ & $(0, 1, -2, -1, 0, 2, -1, 1)$ & $5$ & $(0, 1, -3, -1, 0, 1, 0, 0)$ \\
\hline
$3$ & $(0, 2, -2, -1, 0, 1, -1, 1)$ & $6$ & $(\frac{1}{2}, \frac{3}{2}, -\frac{5}{2}, -\frac{3}{2}, -\frac{1}{2}, \frac{1}{2}, \frac{1}{2}, -\frac{1}{2})$ \\
\hline
\end{tabular}
\bigskip
\caption{Basic weights of $(E_7, 4, 4)$}
\end{table}

\begin{table}[H]
\begin{tabular}{|l|c|}
\hline
$1$ & $(0, 1, -2, -1, 0, 1, -\frac{1}{2}, \frac{1}{2})$ \\
\hline
\end{tabular}
\bigskip
\caption{Basic weights of $(E_7, 4, 5)$}
\end{table}

\begin{table}[H]
\begin{tabular}{|l|c|}
\hline
$1$ & $(0, 1, 2, -2, -1, 0, -1, 1)$ \\
\hline
\end{tabular}
\bigskip
\caption{Basic weights of $(E_7, 5, 4)$}
\end{table}

\begin{table}[H]
\begin{tabular}{|l|c|}
\hline
$1$ & $(\frac{7}{2}, -\frac{5}{2}, -\frac{3}{2}, -\frac{1}{2}, \frac{1}{2}, \frac{3}{2}, \frac{5}{2}, \frac{1}{2})$ \\
\hline
\end{tabular}
\bigskip
\caption{Basic weights of $(E_8, 3, 4)$}
\end{table}

\begin{table}[H]
\begin{tabular}{|l|c|}
\hline
$1$ & $(\frac{1}{2}, \frac{3}{2}, -\frac{5}{2}, -\frac{3}{2}, -\frac{1}{2}, \frac{1}{2}, \frac{3}{2}, \frac{1}{2})$ \\
\hline
\end{tabular}
\bigskip
\caption{Basic weights of $(E_8, 4, 3)$}
\end{table}

\begin{table}[H]
\begin{tabular}{|l|c|l|c|}
\hline
$1$ & $(0, 2, -2, -1, 0, 1, 2, 4)$ & $25$ & $(\frac{1}{2}, \frac{3}{2}, -\frac{7}{2}, -\frac{5}{2}, -\frac{1}{2}, \frac{3}{2}, \frac{5}{2}, \frac{1}{2})$ \\
\hline
$2$ & $(\frac{1}{2}, \frac{3}{2}, -\frac{5}{2}, -\frac{3}{2}, -\frac{1}{2}, \frac{1}{2}, \frac{5}{2}, \frac{7}{2})$ & $26$ & $(0, 2, -4, -2, -1, 0, 2, 1)$\\
\hline
$3$ & $(0, 1, -3, -1, 0, 1, 3, 3)$ & $27$ & $(\frac{3}{2}, \frac{5}{2}, -\frac{7}{2}, -\frac{5}{2}, -\frac{1}{2}, \frac{1}{2}, \frac{3}{2}, \frac{1}{2})$\\
\hline
$4$ & $(\frac{1}{2}, \frac{5}{2}, -\frac{5}{2}, -\frac{3}{2}, -\frac{1}{2}, \frac{1}{2}, \frac{3}{2}, \frac{7}{2})$ & $28$ & $(\frac{1}{2}, \frac{3}{2}, -\frac{7}{2}, -\frac{5}{2}, -\frac{3}{2}, \frac{1}{2}, \frac{5}{2}, \frac{1}{2})$\\
\hline
$5$ & $(\frac{1}{2}, \frac{3}{2}, -\frac{5}{2}, -\frac{3}{2}, -\frac{1}{2}, \frac{1}{2}, \frac{7}{2}, \frac{5}{2})$ & $29$ & $(1, 2, -4, -2, 0, 1, 2, 0)$\\
\hline
$6$ & $(\frac{1}{2}, \frac{7}{2}, -\frac{5}{2}, -\frac{3}{2}, -\frac{1}{2}, \frac{1}{2}, \frac{3}{2}, \frac{5}{2})$ & $30$ & $(\frac{1}{2}, \frac{5}{2}, -\frac{7}{2}, -\frac{5}{2}, -\frac{3}{2}, \frac{1}{2}, \frac{3}{2}, \frac{1}{2})$\\
\hline
$7$ & $(\frac{1}{2}, \frac{3}{2}, -\frac{7}{2}, -\frac{3}{2}, -\frac{1}{2}, \frac{1}{2}, \frac{5}{2}, \frac{5}{2})$ & $31$ & $(-\frac{1}{2}, \frac{3}{2}, -\frac{7}{2}, -\frac{5}{2}, -\frac{3}{2}, -\frac{1}{2}, \frac{5}{2}, \frac{1}{2})$\\
\hline
$8$ & $(\frac{1}{2}, \frac{5}{2}, -\frac{7}{2}, -\frac{3}{2}, -\frac{1}{2}, \frac{1}{2}, \frac{3}{2}, \frac{5}{2})$ & $32$ & $(0, 2, -4, -2, -1, 1, 2, 0)$\\
\hline
$9$ & $(0, 2, -4, -1, 0, 1, 2, 2)$ & $33$ & $(-\frac{1}{2}, \frac{5}{2}, -\frac{7}{2}, -\frac{5}{2}, -\frac{3}{2}, -\frac{1}{2}, \frac{3}{2}, \frac{1}{2})$\\
\hline
$10$ & $(\frac{1}{2}, \frac{3}{2}, -\frac{7}{2}, -\frac{5}{2}, -\frac{1}{2}, \frac{1}{2}, \frac{3}{2}, \frac{5}{2})$ & $34$ & $(1, 2, -4, -2, -1, 0, 2, 0)$\\
\hline
$11$ & $(0, 1, -4, -2, 0, 1, 2, 2)$ & $35$ & $(-1, 2, -4, -2, -1, 0, 2, 0)$\\
\hline
$12$ & $(\frac{1}{2}, \frac{5}{2}, -\frac{7}{2}, -\frac{3}{2}, -\frac{1}{2}, \frac{1}{2}, \frac{5}{2}, \frac{3}{2})$ & $36$ & $(-\frac{3}{2}, \frac{5}{2}, -\frac{7}{2}, -\frac{5}{2}, -\frac{3}{2}, -\frac{1}{2}, \frac{1}{2}, \frac{1}{2})$\\
\hline
$13$ & $(0, 1, -4, -2, -1, 0, 2, 2)$ & $37$ & $(-\frac{1}{2}, \frac{3}{2}, -\frac{7}{2}, -\frac{5}{2}, -\frac{3}{2}, \frac{1}{2}, \frac{5}{2}, -\frac{1}{2})$\\
\hline
$14$ & $(-\frac{1}{2}, \frac{3}{2}, -\frac{7}{2}, -\frac{5}{2}, -\frac{3}{2}, -\frac{1}{2}, \frac{1}{2}, \frac{5}{2})$ & $38$ & $(\frac{1}{2}, \frac{3}{2}, -\frac{7}{2}, -\frac{5}{2}, -\frac{3}{2}, -\frac{1}{2}, \frac{5}{2}, -\frac{1}{2})$\\
\hline
$15$ & $(\frac{1}{2}, \frac{3}{2}, -\frac{7}{2}, -\frac{5}{2}, -\frac{1}{2}, \frac{1}{2}, \frac{5}{2}, \frac{3}{2})$ & $39$ & $(-\frac{1}{2}, \frac{5}{2}, -\frac{7}{2}, -\frac{5}{2}, -\frac{3}{2}, \frac{1}{2}, \frac{3}{2}, -\frac{1}{2})$\\
\hline
$16$ & $(0, 2, -4, -2, -1, 0, 1, 2)$ & $40$ & $(\frac{1}{2}, \frac{5}{2}, -\frac{7}{2}, -\frac{5}{2}, -\frac{3}{2}, -\frac{1}{2}, \frac{3}{2}, -\frac{1}{2})$\\
\hline
$17$ & $(1, 2, -3, -2, -1, 1, 3, 1)$ & $41$ & $(\frac{3}{2}, \frac{5}{2}, -\frac{7}{2}, -\frac{5}{2}, -\frac{3}{2}, -\frac{1}{2}, \frac{1}{2}, -\frac{1}{2})$\\
\hline
$18$ & $(\frac{1}{2}, \frac{5}{2}, -\frac{7}{2}, -\frac{5}{2}, -\frac{1}{2}, \frac{1}{2}, \frac{3}{2}, \frac{3}{2})$ & $42$ & $(0, 2, -4, -2, -1, 0, 2, -1)$\\
\hline
$19$ & $(-\frac{1}{2}, \frac{3}{2}, -\frac{9}{2}, -\frac{3}{2}, -\frac{1}{2}, \frac{1}{2}, \frac{3}{2}, \frac{3}{2})$ & $43$ & $(\frac{1}{2}, \frac{3}{2}, -\frac{9}{2}, -\frac{3}{2}, -\frac{1}{2}, \frac{1}{2}, \frac{3}{2}, -\frac{3}{2})$\\
\hline
$20$ & $(1, 3, -3, -2, -1, 1, 2, 1)$ & $44$ & $(\frac{1}{2}, \frac{5}{2}, -\frac{7}{2}, -\frac{5}{2}, -\frac{3}{2}, -\frac{1}{2}, \frac{1}{2}, -\frac{3}{2})$\\
\hline
$21$ & $(0, 2, -4, -2, 0, 1, 2, 1)$ & $45$ & $(0, 1, -4, -2, -1, 0, 2, -2)$\\
\hline
$22$ & $(\frac{5}{2}, \frac{7}{2}, -\frac{5}{2}, -\frac{3}{2}, -\frac{1}{2}, \frac{1}{2}, \frac{3}{2}, \frac{1}{2})$ & $46$ & $(0, 2, -4, -2, -1, 0, 1, -2)$\\
\hline
$23$ & $(\frac{3}{2}, \frac{5}{2}, -\frac{7}{2}, -\frac{3}{2}, -\frac{1}{2}, \frac{1}{2}, \frac{5}{2}, \frac{1}{2})$ & $47$ & $(\frac{1}{2}, \frac{3}{2}, -\frac{7}{2}, -\frac{5}{2}, -\frac{3}{2}, -\frac{1}{2}, \frac{1}{2}, -\frac{5}{2})$\\
\hline
$24$ & $(-\frac{1}{2}, \frac{5}{2}, -\frac{7}{2}, -\frac{5}{2}, -\frac{3}{2}, -\frac{1}{2}, \frac{1}{2}, \frac{3}{2})$ &  & \\
\hline
\end{tabular}
\bigskip
\caption{Basic weights of $(E_8, 4, 4)$}
\end{table}

\begin{table}[H]
\begin{tabular}{|l|c|l|c|}
\hline
$1$ & $(0, 1, 2, -2, -1, 0, 1, 3)$ & $5$ & $(0, 1, 2, -3, -2, -1, 0, 1)$ \\
\hline
$2$ & $(0, 1, 2, -3, -1, 0, 1, 2)$ & $6$ & $(0, 1, 2, -3, -2, -1, 1, 0)$\\
\hline
$3$ & $(\frac{1}{2}, \frac{3}{2}, \frac{5}{2}, -\frac{5}{2}, -\frac{3}{2}, -\frac{1}{2}, \frac{1}{2}, \frac{3}{2})$ & $7$ & $(0, 1, 2, -3, -2, -1, 0, -1)$\\
\hline
$4$ & $(0, 1, 2, -3, -2, 0, 1, 1)$ &  & \\
\hline
\end{tabular}
\bigskip
\caption{Basic weights of $(E_8, 5, 5)$}
\end{table}

\begin{table}[H]
\begin{tabular}{|l|c|l|c|}
\hline
$1$ & $(0, 1, -2, -1, 0, 1, 2, 3)$ & $10$ & $(0, 2, -3, -2, -1, 0, 1, 1)$ \\
\hline
$2$ & $(\frac{1}{2}, \frac{3}{2}, -\frac{5}{2}, -\frac{3}{2}, -\frac{1}{2}, \frac{1}{2}, \frac{3}{2}, \frac{5}{2})$ & $11$ & $(-\frac{1}{2}, \frac{3}{2}, -\frac{7}{2}, -\frac{3}{2}, -\frac{1}{2}, \frac{1}{2}, \frac{3}{2}, \frac{1}{2})$\\
\hline
$3$ & $(0, 1, -3, -1, 0, 1, 2, 2)$ & $12$ & $(0, 1, -3, -2, -1, 1, 2, 0)$\\
\hline
$4$ & $(\frac{1}{2}, \frac{3}{2}, -\frac{5}{2}, -\frac{3}{2}, -\frac{1}{2}, \frac{1}{2}, \frac{5}{2}, \frac{3}{2})$ & $13$ & $(1, 2, -3, -2, -1, 0, 1, 0)$\\
\hline
$5$ & $(\frac{1}{2}, \frac{5}{2}, -\frac{5}{2}, -\frac{3}{2}, -\frac{1}{2}, \frac{1}{2}, \frac{3}{2}, \frac{3}{2})$ & $14$ & $(-1, 2, -3, -2, -1, 0, 1, 0)$\\
\hline
$6$ & $(0, 1, -3, -2, -1, 0, 1, 2)$ & $15$ & $(\frac{1}{2}, \frac{3}{2}, -\frac{7}{2}, -\frac{3}{2}, -\frac{1}{2}, \frac{1}{2}, \frac{3}{2}, -\frac{1}{2})$\\
\hline
$7$ & $(0, 1, -3, -2, 0, 1, 2, 1)$ & $16$ & $(0, 1, -3, -2, -1, 0, 2, -1)$\\
\hline
$8$ & $(0, 1, -3, -2, -1, 0, 2, 1)$ & $17$ & $(0, 2, -3, -2, -1, 0, 1, -1)$\\
\hline
$9$ & $(\frac{3}{2}, \frac{5}{2}, -\frac{5}{2}, -\frac{3}{2}, -\frac{1}{2}, \frac{1}{2}, \frac{3}{2}, \frac{1}{2})$ & $18$ & $(0, 1, -3, -2, -1, 0, 1, -2)$\\
\hline
\end{tabular}
\bigskip
\caption{Basic weights of $(E_8, 4, 5)$}
\end{table}

\begin{table}[H]
\begin{tabular}{|l|c|l|c|}
\hline
$1$ & $(0, 1, 2, -2, -1, 0, 2, 4)$ & $10$ & $(\frac{1}{2}, \frac{3}{2}, \frac{5}{2}, -\frac{7}{2}, -\frac{5}{2}, -\frac{1}{2}, \frac{1}{2}, \frac{3}{2})$ \\
\hline
$2$ & $(\frac{1}{2}, \frac{3}{2}, \frac{5}{2}, -\frac{5}{2}, -\frac{3}{2}, -\frac{1}{2}, \frac{1}{2}, \frac{7}{2})$ & $11$ & $(1, 2, 3, -3, -2, -1, 1, 1)$\\
\hline
$3$ & $(0, 1, 3, -3, -1, 0, 1, 3)$ & $12$ & $(\frac{1}{2}, \frac{3}{2}, \frac{5}{2}, -\frac{7}{2}, -\frac{5}{2}, -\frac{1}{2}, \frac{3}{2}, \frac{1}{2})$\\
\hline
$4$ & $(\frac{1}{2}, \frac{3}{2}, \frac{7}{2}, -\frac{5}{2}, -\frac{3}{2}, -\frac{1}{2}, \frac{1}{2}, \frac{5}{2})$ & $13$ & $(\frac{1}{2}, \frac{3}{2}, \frac{5}{2}, -\frac{7}{2}, -\frac{5}{2}, -\frac{3}{2}, \frac{1}{2}, \frac{1}{2})$\\
\hline
$5$ & $(\frac{1}{2}, \frac{3}{2}, \frac{5}{2}, -\frac{7}{2}, -\frac{3}{2}, -\frac{1}{2}, \frac{1}{2}, \frac{5}{2})$ & $14$ & $(0, 1, 2, -4, -2, -1, 2, 0)$\\
\hline
$6$ & $(0, 1, 2, -4, -1, 0, 2, 2)$ & $15$ & $(-\frac{1}{2}, \frac{3}{2}, \frac{5}{2}, -\frac{7}{2}, -\frac{5}{2}, -\frac{3}{2}, -\frac{1}{2}, \frac{1}{2})$\\
\hline
$7$ & $(0, 1, 2, -4, -2, 0, 1, 2)$ & $16$ & $(-\frac{1}{2}, \frac{3}{2}, \frac{5}{2}, -\frac{7}{2}, -\frac{5}{2}, -\frac{3}{2}, \frac{1}{2}, -\frac{1}{2})$\\
\hline
$8$ & $(0, 1, 2, -4, -2, -1, 0, 2)$ & $17$ & $(\frac{1}{2}, \frac{3}{2}, \frac{5}{2}, -\frac{7}{2}, -\frac{5}{2}, -\frac{3}{2}, -\frac{1}{2}, -\frac{1}{2})$\\
\hline
$9$ & $(0, 1, 2, -4, -2, 0, 2, 1)$ & $18$ & $(0, 1, 2, -4, -2, -1, 0, -2)$\\
\hline
\end{tabular}
\bigskip
\caption{Basic weights of $(E_8, 5, 4)$}
\end{table}

\subsubsection{$F_4$}
\begin{theorem}\label{bwthm6}
Any basic weight of type $F_4$ must be one of the following cases $($up to a positive integer$)$.
\begin{itemize}
\item [(1)] $(F_4, 2, 2)$. $(2, 0, -1, 1)$, $(1, 0, -2, 1)$, $(0, -1, -2, 1)$;

\item [(2)] $(F_4, 2, 3)$. $(\frac{3}{2}, \frac{1}{2}, -\frac{1}{2}, \frac{1}{2})$, $(1, 0, -1, 1)$, $(\frac{1}{2}, \frac{1}{2}, -\frac{3}{2}, \frac{1}{2})$, $(\frac{1}{2}, -\frac{1}{2}, -\frac{3}{2}, \frac{1}{2})$, $(-\frac{1}{2}, -\frac{1}{2}, -\frac{3}{2}, \frac{1}{2})$;

\item [(3)] $(F_4, 3, 2)$. $(2, 1, 0, -1)$, $(1, 1, 0, -2)$, $(1, 0, -1, -2)$, $(0, 1, -1, -2)$, $(-1, 0, -1, -2)$;

\item [(4)] $(F_4, 3, 3)$. $(1, 1, 0, -1)$, $(\frac{1}{2}, \frac{1}{2}, -\frac{1}{2}, -\frac{3}{2})$, $(-\frac{1}{2}, \frac{1}{2}, -\frac{1}{2}, -\frac{3}{2})$.
\end{itemize}
\end{theorem}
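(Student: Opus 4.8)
The plan is to pass, via Lemma~\ref{blem1}, from ``basic weights'' to the finite set $W\varpi_j\cap\Lambda_I^+$; the four triples in the statement are precisely the type $F_4$ entries of Theorem~\ref{bthm1}, and by Lemma~\ref{blem1} the basic weights of $(F_4,i,j)$ are the positive integral multiples of the weights $w\varpi_j$ with $w\in{}^IW^J$. Since $\varpi_j$ is dominant one has $\{\,w\varpi_j\mid w\in{}^IW^J\,\}=W\varpi_j\cap\Lambda_I^+$: every $\mu$ in the right-hand side is $W$-conjugate to $\varpi_j$, hence $\Phi_\mu$ has rank $\rank\Phi-1$ and $\mu$ is a basic weight, while the reverse inclusion is immediate from the parametrisation in \S2.3. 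Thus it suffices to intersect each orbit $W\varpi_j$ with the cone $\Lambda_I^+$ and read off the weights. In the realization of \S5.1, with $\alpha_1=e_2-e_3$, $\alpha_2=e_3-e_4$, $\alpha_3=e_4$, $\alpha_4=\tfrac12(e_1-e_2-e_3-e_4)$, a direct computation gives $\varpi_2=(2,1,1,0)$ and $\varpi_3=(\tfrac32,\tfrac12,\tfrac12,\tfrac12)$.

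First I would describe the two orbits explicitly. By Lemma~\ref{weyllem1} the stabilizer of $\varpi_j$ in $W$ is $W(\Phi_{\varpi_j})=W_J$, and for both $j=2$ and $j=3$ the subsystem $\Phi_J$ has type $A_1\times A_2$ (read off from the $F_4$ Dynkin diagram), so $|W_J|=12$ and $|W\varpi_j|=|W(F_4)|/|W_J|=1152/12=96$. On the other hand $W\varpi_j$ is contained in the set of vectors of the $F_4$ root lattice (integral vectors, together with vectors all of whose coordinates are half-odd-integers) of squared length $\langle\varpi_j,\varpi_j\rangle$, which equals $6$ for $j=2$ and $3$ for $j=3$. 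Counting the ways to write $6$ (resp.\ $3$) as a sum of four squares shows that this set is exactly the $96$ vectors obtained from $(2,1,1,0)$ by coordinate permutations and sign changes (resp.\ the $64$ vectors obtained in the same way from $(\tfrac32,\tfrac12,\tfrac12,\tfrac12)$ together with the $32$ obtained from $(1,1,1,0)$, again $96$ in all). Comparing cardinalities, $W\varpi_2$ (resp.\ $W\varpi_3$) is precisely that set.

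It then remains to intersect each orbit with $\Lambda_I^+$. For $i=2$ one has $I=\{e_2-e_3,\;e_4,\;\tfrac12(e_1-e_2-e_3-e_4)\}$, so, writing $\lambda=(\lambda_1,\lambda_2,\lambda_3,\lambda_4)$, membership $\lambda\in\Lambda_I^+$ means $\lambda_2>\lambda_3$, $\lambda_4>0$ and $\lambda_1>\lambda_2+\lambda_3+\lambda_4$; for $i=3$ one has $I=\{e_2-e_3,\;e_3-e_4,\;\tfrac12(e_1-e_2-e_3-e_4)\}$, and the conditions become $\lambda_2>\lambda_3>\lambda_4$ and $\lambda_1>\lambda_2+\lambda_3+\lambda_4$. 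Running these inequalities against the explicit lists of signed permutations obtained above is a finite, elementary check: in each case the strict inequality $\lambda_1>\lambda_2+\lambda_3+\lambda_4$ forces $\lambda_1$ to be a positive coordinate of largest absolute value and eliminates all but a handful of sign/position patterns, and one reads off exactly the weights in items (1)--(4). (As a consistency check each listed weight satisfies the relevant inequalities, e.g.\ for $(0,-1,-2,1)$ in $(F_4,2,2)$: $\lambda_2-\lambda_3=1>0$, $\lambda_4=1>0$, $\lambda_1-\lambda_2-\lambda_3-\lambda_4=2>0$.)

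The only genuine labour is the bookkeeping in this last step; it is routine but must be done carefully so that no solution is missed or double-counted, in particular for the orbit $W\varpi_3$, where both the integral and the half-integral shells contribute weights. I do not anticipate any conceptual obstacle.
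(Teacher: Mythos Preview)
Your argument is correct and in fact more explicit than the paper's own proof, which simply verifies that the listed weights lie in $W\varpi_j\cap\Lambda_I^+$ and then appeals to Table~2 of \cite{P1} (or to an omitted case-by-case use of Lemma~\ref{blem3}) for exhaustiveness. Your route is genuinely different: you identify each orbit $W\varpi_j$ \emph{explicitly} as the full set of root-lattice vectors of the appropriate norm, by matching the orbit size $|W|/|W_J|=96$ against an elementary count of lattice points, and then intersect with the three linear inequalities defining $\Lambda_I^+$. This is self-contained and avoids the external reference; it also makes the finiteness and completeness of the list transparent. The paper's approach, by contrast, outsources the enumeration and hence the verification that nothing is missing.

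Two small remarks. First, your heuristic that ``$\lambda_1>\lambda_2+\lambda_3+\lambda_4$ forces $\lambda_1$ to be a positive coordinate of largest absolute value'' is not literally true (e.g.\ $(0,-1,-2,1)$ in case $(F_4,2,2)$), so phrase it as a guide rather than a claim; the actual case analysis must still run through all sign/position patterns. Second, the equality $\{w\varpi_j\mid w\in{}^IW^J\}=W\varpi_j\cap\Lambda_I^+$ that you use is exactly the content of Lemma~\ref{blem1} together with the parametrisation in \S2.3, so you may simply invoke Lemma~\ref{blem1} rather than re-arguing it. With those cosmetic adjustments the proof stands.
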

\begin{proof}
It can be verified that all the weights in the theorem are basic weights contained in $W\varpi_j$ for the corresponding basic systems $(F_4, i, j)$. In view of Table 2 in \cite{P1}, these weights exhaust all the possibilities. We can also prove this theorem in a case-by-case fashion (only a few cases) using Lemma \ref{blem3} (details are omitted).
\end{proof}

\subsubsection{$G_2$}

\begin{theorem}\label{bwthm7}
Any basic weight of type $G_2$ must be one of the following cases $($up to a positive integer$)$.
$(1)$ $(G_2, 1, 1)$. $(-1, 1, 0)$, $(-1, 0, 1)$; $(2)$ $(G_2, 1, 2)$. $(-1, 2, -1)$, $(-1, -1, 2)$, $(-2, 1, 1)$; $(3)$ $(G_2, 2, 1)$. $(1, 0, -1)$, $(1, -1, 0)$, $(0, -1, 1)$; $(4)$ $(G_2, 2, 2)$. $(1, -2, 1)$, $(2, -1, -1)$.
\end{theorem}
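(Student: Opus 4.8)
The plan is to treat the four basic systems of type $G_2$ -- which by Theorem~\ref{bthm1}(7) are $(G_2,1,1)$, $(G_2,1,2)$, $(G_2,2,1)$ and $(G_2,2,2)$ -- one at a time, exactly as in the proofs of Theorems~\ref{bwthm1}--\ref{bwthm4}. For $(G_2,i,j)$ put $I=\Delta\setminus\{\alpha_i\}$ and $J=\Delta\setminus\{\alpha_j\}$. By Lemma~\ref{blem1} the basic weights of this system are, up to a positive integer, exactly the weights $w\varpi_j$ with $w\in{}^IW^J$; since $\varpi_j$ is dominant with $\Phi_{\varpi_j}=\Phi_J$, this set coincides with $W\varpi_j\cap\Lambda_I^+$. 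So the whole problem reduces to deciding, for $j=1$ and $j=2$, which members of the $W$-orbit of $\varpi_j$ lie in $\Lambda_I^+$.

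The next step is to fix the standard realization of $G_2$ (\cite{H1}, \S12.1) inside the hyperplane $\{x\in\bbR^3\mid x_1+x_2+x_3=0\}$ and to express weights in the coordinates of \eqref{realization}: the six short roots are the vectors $e_p-e_q$ with $p\neq q$, the six long roots are the vectors $\pm(2e_p-e_q-e_r)$, the simple roots are $\alpha_1$ short and $\alpha_2$ long, and the fundamental weights are $\varpi_1$ equal to the highest short root and $\varpi_2$ equal to the highest root. In particular $W\varpi_1$ is the set of all six short roots and $W\varpi_2$ is the set of all six long roots, so in each of the four cases only six candidate weights have to be tested.

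For each of the four systems the set $I$ is a single simple root $\{\alpha\}$, so membership of a candidate $\lambda\in W\varpi_j$ in $\Lambda_I^+$ is the single numerical condition $\langle\lambda,\alpha^\vee\rangle\in\bbZ^{>0}$. Running this test over the six roots of $W\varpi_j$ picks out, in the systems $(G_2,1,1)$ and $(G_2,2,2)$, the two roots making the sharp ($30^\circ$) angle with the relevant simple root -- giving two basic weights each -- and, in the mixed systems $(G_2,1,2)$ and $(G_2,2,1)$, the three roots of the appropriate length that pair positively with the relevant simple coroot -- giving three basic weights each. Transcribing the selected roots into coordinates reproduces precisely the lists in the statement; and, because by Lemma~\ref{blem1} these are \emph{all} the basic weights, no others occur. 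As an independent check one may instead enumerate the double-coset representatives ${}^IW^J$ directly inside the dihedral group $W(G_2)$ of order $12$, using the defining conditions $\ell(s_\alpha w)=\ell(w)+1$ for $\alpha\in I$ together with $\ell(ws_\alpha)=\ell(w)+1$ and $ws_\alpha\in{}^IW$ for $\alpha\in J$, and then apply each resulting $w$ to $\varpi_j$; Lemmas~\ref{bcor1} and \ref{blem2} guarantee that the answers for $(G_2,i,j)$ and $(G_2,j,i)$ match.

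This argument is elementary and short, and there is no serious obstacle; the only points that demand care are bookkeeping ones. One must choose the simple roots $\alpha_1,\alpha_2$ compatibly with the (three-coordinate, rank-two) realization so that the computed weights come out in the normalization of the statement, and one must make sure the enumeration of $W\varpi_j\cap\Lambda_I^+$ is complete -- equivalently, that no element of ${}^IW^J$ has been missed. It is also worth noting, as a sanity check on the count, that $\varpi_j$ itself is a basic weight exactly in the two mixed systems (where $\langle\varpi_j,\alpha^\vee\rangle=1$ for the unique $\alpha\in I$), whereas in $(G_2,1,1)$ and $(G_2,2,2)$ one has $\langle\varpi_j,\alpha^\vee\rangle=0$, so $1\notin{}^IW^J$ there and every basic weight appears as a nontrivial $W$-translate of $\varpi_j$.
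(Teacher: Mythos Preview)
Your proposal is correct and follows essentially the same case-by-case strategy as the paper: both compute, for each $(G_2,i,j)$, the set $\{w\varpi_j\mid w\in{}^IW^J\}=W\varpi_j\cap\Lambda_I^+$. The paper enumerates ${}^IW^J$ explicitly as products of simple reflections and applies each element to $\varpi_j$, whereas you exploit the pleasant accident that $W\varpi_1$ and $W\varpi_2$ are exactly the six short and six long roots and then test the single inequality $\langle\lambda,\alpha^\vee\rangle>0$ for the unique $\alpha\in I$; this is a mild bookkeeping shortcut rather than a different method, and you even note the paper's route as an independent check.
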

\begin{proof}
Recall that $\alpha_1=e_1-e_2$ and $\alpha_2=-2e_1+e_2+e_3$ for $G_2$ (see \S 12.1, \cite{H1}).

$(1)$ $(G_2, 1, 1)$. In this case, $I=J=\{\alpha_2\}$, we have ${}^IW^J=\{s_{\alpha_1}, s_{\alpha_1}s_{\alpha_2}s_{\alpha_1}\}$ and $\lambda=s_{\alpha_1}\varpi_2=(-1, 0, 1)$ or $s_{\alpha_1}s_{\alpha_2}s_{\alpha_1}\varpi_2=(-1, 1, 0)$, where $\varpi_2=(0, -1, 1)$;

$(2)$ $(G_2, 1, 2)$. With $I=\{\alpha_2\}$ and $J=\{\alpha_1\}$, we have ${}^IW^J=\{1, s_{\alpha_1}s_{\alpha_2}, s_{\alpha_1}s_{\alpha_2}s_{\alpha_1}s_{\alpha_2}\}$ and $\lambda=\varpi_1=(-1, -1, 2)$ or $s_{\alpha_1}s_{\alpha_2}\varpi_1=(-2, 1, 1)$ or $s_{\alpha_1}s_{\alpha_2}s_{\alpha_1}s_{\alpha_2}\varpi_1=(-1, 2, -1)$;

$(3)$ $(G_2, 2, 1)$. With $I=\{\alpha_1\}$ and $J=\{\alpha_2\}$, we have ${}^IW^J=\{1, s_{\alpha_2}s_{\alpha_1}, s_{\alpha_2}s_{\alpha_1}s_{\alpha_2}s_{\alpha_1}\}$ and $\lambda=\varpi_2=(0, -1, 1)$ or $s_{\alpha_2}s_{\alpha_1}\varpi_2=(1, -1, 0)$ or $s_{\alpha_2}s_{\alpha_1}s_{\alpha_2}s_{\alpha_1}\varpi_2=(1, 0, -1)$;

$(4)$ $(G_2, 2, 2)$. With $I=J=\{\alpha_1\}$, we get ${}^IW^J=\{s_{\alpha_2}, s_{\alpha_2}s_{\alpha_1}s_{\alpha_2}\}$ and $\lambda=s_{\alpha_2}\varpi_1=(1, -2, 1)$ or $s_{\alpha_2}s_{\alpha_1}s_{\alpha_2}\varpi_1=(2, -1, -1)$.
\end{proof}

%
%%%%%%%%%%%%%%%%%%%%%%%%%%%%%%%%%%%%%%%%%%%%%%%%%%%%%%%%%%%%%%%%%%%%
%
\section{Jantzen coefficients of basic generalized Verma modules}
%
%%%%%%%%%%%%%%%%%%%%%%%%%%%%%%%%%%%%%%%%%%%%%%%%%%%%%%%%%%%%%%%%%%%%
%

\subsection{Nonzero Jantzen coefficients}In this subsection, we give all the nonzero Jantzen coefficients of basic generalized Verma modules. A basic weight $\lambda$ is called \emph{standard} if $\lambda\in W\varpi_j$ for some $1\leq j\leq n$. In view of Lemma \ref{sjlem2} and Lemma \ref{blem1}, it suffices to consider Jantzen coefficients associated with standard basic weights. Let $\lambda^1, \cdots, \lambda^l$ be all the standard basic weights of a basic system (we adopt the ordering in \S5.2, with $i<j$ whenever $\lambda^i>\lambda^j$). Write $c_{i, j}=c(\lambda^i, \lambda^j)$.

\begin{theorem}\label{nzj}
The Jantzen coefficients of a basic system $(\Phi, i, j)$ are vanished unless $(\Phi, i, j)=(A_1, 1, 1)$, $(B_3, 2, 2)$ and $(C_3, 2, 2)$, $(E_7, 4, 4)$, $(E_8, 4, 5)$, $(E_8, 5, 4)$ and $(E_8, 4, 4)$. In these exceptional cases, all the nonzero Jantzen coefficients are given in Table \ref{njtb1}-\ref{njtb5}.
\end{theorem}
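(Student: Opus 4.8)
The plan is to prove the theorem by a finite, completely explicit computation organized along the classification of basic systems in Theorem~\ref{bthm1} and the lists of standard basic weights in \S5.2. First I would reduce to standard basic weights: by Lemma~\ref{sjlem2}(2) one has $c(k\lambda,k\mu)=c(\lambda,\mu)$ for $k\in\bbZ^{>0}$, and by Lemma~\ref{blem1} every basic weight of a basic system $(\Phi,i,j)$ is a positive multiple of some $w\varpi_j$ with $w\in{}^IW^J$; hence it suffices to compute $c(\lambda^a,\lambda^b)$ for the finitely many standard basic weights $\lambda^1,\ldots,\lambda^l$ tabulated in Theorems~\ref{bwthm1}--\ref{bwthm7}. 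The computation is self-contained because of the following closure fact: if $\lambda$ is a standard basic weight and $\beta\in\Psi_\lambda^{++}$, then, choosing $w\in W_I$ with $\mu=ws_\beta\lambda\in\Lambda_I^+$ (Proposition~\ref{jansprop1}(3)), we have $\Phi_\mu=ws_\beta\Phi_\lambda$, which still has rank $\rank\Phi-1$, so $\mu$ is again a standard basic weight of $(\Phi,i,j)$ and $\mu<\lambda$ by the BGG Theorem. Thus every weight on the right-hand side of (\ref{sjeq1}) is one of $\lambda^1,\ldots,\lambda^l$, and by Lemma~\ref{sjlem1} the integer $c(\lambda^a,\lambda^b)$ equals $\sum_\beta(-1)^{\ell(w_\beta)}$, the sum taken over those $\beta\in\Psi_{\lambda^a}^{++}$ with $w_\beta s_\beta\lambda^a=\lambda^b$.

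An immediate corollary of the closure fact disposes of most of the list. If \S5.2 records a \emph{single} standard basic weight $\lambda$ for $(\Phi,i,j)$, then any $\beta\in\Psi_\lambda^{++}$ would produce a standard basic weight $ws_\beta\lambda=\lambda$ with $ws_\beta\lambda<\lambda$, which is absurd; hence $\Psi_\lambda^{++}=\emptyset$, $M_I(\lambda)$ is simple by Corollary~\ref{janscor1}, and all Jantzen coefficients of $(\Phi,i,j)$ vanish by Remark~\ref{sjrmk1}. Checking \S5.2, the basic systems \emph{not} settled this way are exactly $(A_1,1,1)$; $(B_2,1,2)$, $(B_2,2,1)$, $(B_3,2,2)$; $(C_2,1,2)$, $(C_2,2,1)$, $(C_3,2,2)$; $(D_4,2,2)$; $(E_6,4,4)$, $(E_7,4,4)$, $(E_8,4,4)$, $(E_8,5,5)$, $(E_8,4,5)$, $(E_8,5,4)$; the four systems $(F_4,i,j)$ with $i,j\in\{2,3\}$; and the four systems $(G_2,i,j)$ with $i,j\in\{1,2\}$. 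Each has only a handful of standard basic weights, except for the type $E_7$, $E_8$ cases.

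For the remaining systems I would first cut down the work with the invariance lemmas: by the dual invariance Lemma~\ref{invlem5} (applied with $\Phi_\lambda=\Phi_J$, $\Phi_\mu=\Phi_I$ and Lemma~\ref{bcor1}), the Jantzen coefficients of $(\Phi,j,i)$ are obtained from those of $(\Phi,i,j)$ up to an explicit sign, so the two vanish simultaneously; this pairs $(B_2,1,2)\leftrightarrow(B_2,2,1)$, $(C_2,1,2)\leftrightarrow(C_2,2,1)$, $(F_4,2,3)\leftrightarrow(F_4,3,2)$, $(G_2,1,2)\leftrightarrow(G_2,2,1)$ and $(E_8,4,5)\leftrightarrow(E_8,5,4)$, and the conjugate invariance Lemma~\ref{invlem6} together with diagram automorphisms collapses a few more. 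For the small systems that remain ($(A_1,1,1)$; the rank $\le4$ classical ones; $F_4$; $G_2$; $E_6$) the computation is short: for each $\lambda^a$ the roots in $\Psi_{\lambda^a}^{++}$ are few (a positive root $\beta\notin\Phi_I$ with $\langle\lambda^a,\beta^\vee\rangle\in\bbZ^{>0}$ and $s_\beta\lambda^a$ being $\Phi_I$-regular), and for each such $\beta$ one computes $s_\beta\lambda^a$, reduces it into $\Lambda_I^+$ by repeatedly applying simple reflections from $I$ that raise the height, reads off the resulting $\lambda^b$ and the sign $(-1)^{\ell(w_\beta)}$, and sums. The answer is $0$ in every such system except $(A_1,1,1)$, $(B_3,2,2)$, $(C_3,2,2)$, where one obtains precisely the entries of Tables~\ref{njtb1}--\ref{njtb5}.

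Finally, the systems $(E_7,4,4)$, $(E_8,4,4)$, $(E_8,5,5)$, $(E_8,4,5)$ (the last of which then yields $(E_8,5,4)$ via Lemma~\ref{invlem5}) have, respectively, $6$, $47$, $7$, $18$ standard basic weights, which is too many for a comfortable hand computation; exactly as in the determination of the type $E$ basic weights in \S5.2, I would run the same computer routine: for each ordered pair $(\lambda^a,\lambda^b)$ enumerate the roots $\beta\in\Phi^+\setminus\Phi_I$ with $\langle\lambda^a,\beta^\vee\rangle\in\bbZ^{>0}$, drop those for which $s_\beta\lambda^a$ fails to be $\Phi_I$-regular, $W_I$-reduce $s_\beta\lambda^a$ into $\Lambda_I^+$ by height raising, test whether the outcome is $\lambda^b$, and accumulate $(-1)^{\ell(w_\beta)}$. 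The output is identically $0$ for $(E_8,5,5)$ and gives the tabulated values of Tables~\ref{njtb1}--\ref{njtb5} for $(E_7,4,4)$, $(E_8,4,5)$ and $(E_8,4,4)$. The only genuine obstacle is the sheer volume of the $(E_8,4,4)$ case and getting the $W_I$-reduction step right; as sanity checks one verifies that the answers respect the dual pairing of Lemma~\ref{invlem5}, the conjugacy relations of Lemma~\ref{invlem6}, and that they yield the bound $|c(\lambda,\mu)|\le2$ of Theorem~\ref{njthm1}.
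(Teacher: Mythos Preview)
Your proposal is correct and follows essentially the same approach as the paper: a direct, finite case-by-case computation of the Jantzen coefficients via Lemma~\ref{sjlem1}, relying on computer calculation for the large type~$E$ systems. The paper itself gives almost no argument beyond the tables and the sentence ``For type~$E$, we have to run computer programs to calculate the corresponding Jantzen coefficients''; your roadmap---the reduction to standard basic weights, the closure observation disposing of all single-weight systems, and the use of Lemmas~\ref{invlem5}--\ref{invlem6} to halve the remaining work---is a more explicit and better-organized version of the same computation rather than a genuinely different route.
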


For type $E$, we have to run computer programs to calculate the corresponding Jantzen coefficients. If all the Jantzen coefficients of a system $(\Phi, \Phi_I, \Phi_J)$ are vanished, then the subcategory associated with the system is semisimple).

\begin{table}[htbp]
\begin{tabular}{|c|c|}
\hline
$i$ & $\{j\mid c_{i, j}=1\}$ \\
\hline
$1$ & $2$ \\
\hline
\end{tabular}
\bigskip
\caption{Nonzero Jantzen coefficients of $(A_1, 1, 1)$, $(B_3, 2, 2)$ and $(C_3, 2, 2)$}
\label{njtb1}
\end{table}

\renewcommand\arraystretch{1.3}
\begin{table}[htbp]
\begin{tabular}{|c|c|c|c|}
\hline
$i$ & $\{j\mid c_{i, j}=1\}$ & $\{j\mid c_{i, j}=-1\}$ & $\{j\mid c_{i, j}=2\}$ \\
\hline
$1$ & $3$ & $5$ & $6$ \\
\hline
$2$ & $3$ & $6$ & $5$ \\
\hline
$3$ &  &  & $4$ \\
\hline
$4$ & $5, 6$ &  & \\
\hline
\end{tabular}
\bigskip
\caption{Nonzero Jantzen coefficients of $(E_7, 4, 4)$}
\label{njtb2}
\end{table}

\renewcommand\arraystretch{1.3}
\begin{table}[htbp]
\begin{tabular}{|c|c|c|c|c|c|}
\hline
$i$ & $\{j\mid c_{i, j}=1\}$ & $\{j\mid c_{i, j}=-1\}$ & $i$ & $\{j\mid c_{i, j}=1\}$ & $\{j\mid c_{i, j}=-1\}$\\
\hline
$1$ & $2, 3, 10, 11, 14, 16$ & $4, 6, 12, 13, 18$ & $10$ & $11, 12, 15, 18$ & $14, 17$\\
\hline
$2$ & $3, 4, 8, 9, 13$ & $5, 10, 14, 15$ & $11$ & $13, 17$ & $15, 16$\\
\hline
$3$ & $5, 6, 12, 15, 18$ & $8, 9, 11, 16$ & $12$ & $13, 14$ & $15$\\
\hline
$4$ & $6, 9, 10, 16$ & $7, 8, 12, 17$ & $13$ & $15, 16$ & $18$\\
\hline
$5$ & $7, 9, 11, 13, 18$ & $10, 12, 17$ & $14$ & $16$ & $17$\\
\hline
$6$ & $7, 8, 11, 14, 17$ & $9, 13, 18$ & $15$ & $17$ & $18$\\
\hline
$7$ & $9, 10, 16$ & $11, 14, 15, 18$ & $16$ & $17, 18$ & \\
\hline
$8$ & $10, 13, 14, 18$ & $11, 12, 16$ & $17$ & $18$ & \\
\hline
$9$ & $12, 14, 15, 17$ & $13, 16$ &  &  & \\
\hline
\end{tabular}
\bigskip
\caption{Nonzero Jantzen coefficients of $(E_8, 5, 4)$}
\label{njtb3}
\end{table}

\renewcommand\arraystretch{1.3}
\begin{table}[htbp]
\begin{tabular}{|c|c|c|c|c|c|}
\hline
$i$ & $\{j\mid c_{i, j}=1\}$ & $\{j\mid c_{i, j}=-1\}$ & $i$ & $\{j\mid c_{i, j}=1\}$ & $\{j\mid c_{i, j}=-1\}$\\
\hline
$1$ & $2, 4, 10, 12, 14, 18$ & $3, 6, 11, 13, 16$ & $10$ & $11, 12, 14, 15, 17, 18$ & \\
\hline
$2$ & $3, 4, 5, 8, 10, 14$ & $9, 13, 15$ & $11$ & $13, 15, 16, 17$ & \\
\hline
$3$ & $5, 6, 8, 11, 16$ & $9, 12, 15, 18$ & $12$ & $13, 14, 15$ & \\
\hline
$4$ & $6, 7, 8, 9, 10, 12$ & $16, 17$ & $13$ & $15, 16$ & $18$\\
\hline
$5$ & $7, 10, 11, 17, 18$ & $9, 12, 13$ & $14$ & $16, 17$ & \\
\hline
$6$ & $7, 8, 9, 11, 13$ & $14, 17, 18$ & $15$ & $17, 18$ & \\
\hline
$7$ & $9, 10, 11, 15, 18$ & $14, 16$ & $16$ & $17$ & $18$\\
\hline
$8$ & $10, 11, 12, 13, 14, 16$ & $18$ & $17$ & $18$ & \\
\hline
$9$ & $12, 13, 15$ & $14, 16, 17$ &  &  & \\
\hline
\end{tabular}
\bigskip
\caption{Nonzero Jantzen coefficients of $(E_8, 4, 5)$}
\label{njtb4}
\end{table}

\begin{landscape}
\renewcommand\arraystretch{1.3}
\begin{table}[htbp]\footnotesize
\begin{tabular}{|c|c|c|c|c|c|}
\hline
$i$ & $\{j\mid c_{i, j}=1\}$ & $\{j\mid c_{i, j}=-1\}$ & $i$ & $\{j\mid c_{i, j}=1\}$ & $\{j\mid c_{i, j}=-1\}$\\
\hline
$1$ & $12, 15, 16, 22, 32, 33, 46$ & $6, 7, 14, 20, 21, 26, 41, 42$ & $25$ & $27, 28, 29, 32, 33, 46$ & $31, 38, 39$\\
\hline
$2$ & $3, 4, 5, 7, 10, 24, 25, 28, 33, 39, 47$ & $6, 9, 11, 27, 32, 38, 40$ & $26$ & $28, 30, 31, 32, 33, 34, 35, 42, 44, 45$ & $43, 47$\\
\hline
$3$ & $6, 7, 14, 17, 26, 36$ & $12, 19, 22, 34$ & $27$ & $29, 30, 34, 36, 37, 41$ & $33, 38, 39, 47$\\
\hline
$4$ & $6, 7, 14, 17, 26, 36$ & $12, 19, 22, 34$ & $28$ & $30, 31, 32, 34, 36, 37, 38, 39$ & $42, 47$\\
\hline
$5$ & $6, 7, 15, 16, 22, 31, 32, 33$ & $9, 10, 26, 29, 38, 39$ & $29$ & $32, 34, 39, 44, 45$ & $35, 37, 40$\\
\hline
$6$ & $8, 11, 13, 14, 20, 22, 24, 35, 41$ & $9, 10, 16, 23, 36, 39, 47$ & $30$ & $32, 33, 34, 38, 39$ & $36$\\
\hline
$7$ & $8, 9, 10, 11, 12, 13, 15, 24, 26, 35, 38$ & $21, 23, 34, 42, 47$ & $31$ & $33, 35, 37, 38, 40$ & $36, 44, 45$\\
\hline
$8$ & $9, 10, 12, 15, 16, 19, 31$ & $14, 17, 29, 46$ & $32$ & $34, 35, 37, 40, 42, 43, 47$ & $38$\\
\hline
$9$ & $11, 12, 13, 18, 19, 21, 28, 42$ & $15, 20, 24, 25, 32, 43, 46$ & $33$ & $35, 36, 37, 39, 40, 43, 47$ & $41$\\
\hline
$10$ & $11, 13, 14, 16, 17, 18, 21, 33, 46$ & $20, 25, 24, 27, 41, 43$ & $34$ & $38, 40, 42, 47$ & $44, 45$\\
\hline
$11$ & $15, 16, 19, 20, 21, 31, 34$ & $17, 27, 28, 29, 36$ & $35$ & $38, 39, 41, 42$ & \\
\hline
$12$ & $15, 17, 18, 20, 21, 23, 26, 34, 37$ & $24, 35, 42, 44, 45$ & $36$ & $39, 41, 44, 45$ & $40, 47$\\
\hline
$13$ & $15, 16, 19, 26, 36, 41, 42$ & $17, 22, 34$ & $37$ & $38, 39, 41, 42$ & $46$\\
\hline
$14$ & $16, 20, 21, 24, 35, 44, 45$ & $18, 19, 22, 23, 36, 37, 41$ & $38$ & $40, 42$ & $41, 43, 46$\\
\hline
$15$ & $17, 18, 21, 22, 24, 25, 26, 27, 28, 43, 47$ & $38, 46$ & $39$ & $40, 42, 46$ & $41, 43$\\
\hline
$16$ & $18, 19, 22, 24, 25, 26, 39, 43, 46, 47$ & $20, 27, 28$ & $40$ & $41, 42$ & \\
\hline
$17$ & $20, 23, 25, 28, 29, 30$ & $22, 37, 40, 43$ & $41$ & $43, 44, 45$ & $46, 47$\\
\hline
$18$ & $20, 21, 26, 27, 28, 29$ & $22, 31$ & $42$ & $43, 44, 45, 46$ & $47$\\
\hline
$19$ & $21, 26, 27, 37, 40, 43$ & $23, 25, 30, 31$ & $43$ & $46$ & \\
\hline
$20$ & $22, 23, 25, 27, 30, 31, 32$ & $33, 37, 39, 46$ & $44$ & $46$ & \\
\hline
$21$ & $23, 25, 26, 29, 30, 32, 38, 46$ & $28, 33, 37$ & $45$ & $46$ & \\
\hline
$22$ & $27, 32, 33, 41, 43, 47$ & $29, 30, 35, 36, 44, 45$ & i & $\{j\mid c_{i, j}=2\}$ & \\
\hline
$23$ & $27, 28, 29, 34$ & $31, 36, 41, 42$ & $1$ & $2$ & \\
\hline
$24$ & $32, 33, 36, 41, 42, 46$ & $34, 38, 39$ & $46$ & $47$ & \\
\hline
\end{tabular}
\bigskip
\caption{Nonzero Jantzen coefficents of $(E_8, 4, 4)$}
\label{njtb5}
\end{table}
\end{landscape}

\begin{theorem}\label{njthm1}
Jantzen coefficients $|c(\lambda, \mu)|\leq1$ except the cases when $\Phi=E_7, E_8$. In these exceptional cases, $|c(\lambda, \mu)|\leq 2$.
\end{theorem}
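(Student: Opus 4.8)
The statement should fall out of Theorem~\ref{nzj} once the reduction machinery of Sections~3 and 4 is applied, and I would argue as follows. If $c(\lambda,\mu)=0$ there is nothing to prove, and by Definition~\ref{sjdef1} we may assume $\lambda>\mu$, the case $\lambda<\mu$ being symmetric. By Lemma~\ref{sjlem1} there is a root $\beta\in\Psi_\lambda^{++}$ with $\sgn(s_\beta\lambda,\mu)\neq 0$, so that $\mu=ws_\beta\lambda$ for some $w\in W_I$. Then Lemma~\ref{invjlem} applies and gives, writing $\lambda':=\lambda|_{\Phi(\beta)}$,
\[
|c(\lambda,\mu)|=\bigl|c\bigl(\lambda',\mu',\Phi_I\cap\Phi(\beta),\Phi(\beta)\bigr)\bigr|,
\]
where $\mu'=w's_\beta\lambda'$ for some $w'\in W(\Phi_I\cap\Phi(\beta))$ and $\Phi(\beta)$ is the irreducible subsystem produced by the reduction process (see (\ref{redseq1})). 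By Lemma~\ref{redslem1} and Lemma~\ref{reslem1}(3), $M(\lambda',\Phi_I\cap\Phi(\beta),\Phi(\beta))$ is a basic generalized Verma module, so $\lambda'$ is a basic weight of the basic system $(\Phi(\beta),\Phi_I\cap\Phi(\beta),\Phi_\lambda\cap\Phi(\beta))$.

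Next I would bring this triple to one of the basic systems classified in Theorem~\ref{bthm1}. As in the discussion of Section~5.1, after choosing a simple system of $\Phi(\beta)$ and, if necessary, applying conjugate invariance (Lemma~\ref{invlem6}), we may assume that $\Phi_I\cap\Phi(\beta)$ and $\Phi_\lambda\cap\Phi(\beta)$ are the standard maximal parabolic subsystems attached to $\Delta(\Phi(\beta))\setminus\{\alpha_i\}$ and $\Delta(\Phi(\beta))\setminus\{\alpha_j\}$, so that $(\Phi(\beta),i,j)$ is one of the basic systems of Theorem~\ref{bthm1}. By Lemma~\ref{blem1} we may write $\lambda'=kv\varpi_j$ with $k\in\bbZ^{>0}$ and $v\in{}^IW^J$; since $\mu'=w's_\beta\lambda'$ and reflections and Weyl-group elements act linearly, $\mu'=k\mu''$ with $\mu''=w's_\beta(v\varpi_j)$, and Lemma~\ref{sjlem2}(2) gives $|c(\lambda',\mu')|=|c(v\varpi_j,\mu'')|$. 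Here $\mu''$ again lies in $\Lambda^+(\Phi_I\cap\Phi(\beta),\Phi(\beta))$ and in $W(\Phi(\beta))\varpi_j$, so $v\varpi_j$ and $\mu''$ are standard basic weights of $(\Phi(\beta),i,j)$ and $|c(\lambda,\mu)|=|c(v\varpi_j,\mu'')|$.

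Finally, Theorem~\ref{nzj} lists all nonzero Jantzen coefficients among standard basic weights: they are precisely the entries of Tables~\ref{njtb1}--\ref{njtb5}, and inspection shows that each such entry has absolute value at most $2$, the value $2$ occurring only in the basic systems $(E_7,4,4)$ and $(E_8,4,4)$. Hence $|c(\lambda,\mu)|=|c(v\varpi_j,\mu'')|\leq 2$, and if $|c(\lambda,\mu)|=2$ then $\Phi(\beta)$ is of type $E_7$ or $E_8$. Since $\Phi(\beta)$ is a subsystem of $\Phi$ and neither $E_7$ nor $E_8$ embeds as a subsystem of an irreducible root system of classical type or of type $F_4$ or $G_2$, some irreducible component of $\Phi$ must then be of type $E_7$ or $E_8$; this is the assertion of the theorem.

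I do not expect a genuine obstacle here: the substantive input is Theorem~\ref{nzj}, whose verification for types $E_6,E_7,E_8$ rests on the computer computation of Section~6, while the remainder is bookkeeping along the reduction of Sections~3 and 4. The one step that needs care is the normalization in the second paragraph, namely verifying that the corank-one subsystem $\Phi_I\cap\Phi(\beta)$ is $W(\Phi(\beta))$-conjugate to a standard maximal parabolic of $\Phi(\beta)$, so that $(\Phi(\beta),\Phi_I\cap\Phi(\beta),\Phi_\lambda\cap\Phi(\beta))$ is isomorphic to a basic system from the list; this is exactly what the analysis preceding Theorem~\ref{bthm1} provides.
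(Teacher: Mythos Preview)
Your proof is correct and follows essentially the same approach as the paper's: find $\beta\in\Psi_\lambda^{++}$ with $\sgn(s_\beta\lambda,\mu)\neq 0$ via Lemma~\ref{sjlem1}, apply Lemma~\ref{invjlem} to reduce to a basic generalized Verma module, and read off the bound from Tables~\ref{njtb1}--\ref{njtb5}. The paper compresses this into two sentences; you have simply made explicit the normalization to a standard basic system (via Lemma~\ref{invlem6} and Lemma~\ref{sjlem2}(2)) and the observation that $\Phi(\beta)\simeq E_7$ or $E_8$ forces an irreducible component of $\Phi$ to be $E_7$ or $E_8$, both of which the paper leaves to the reader.
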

\begin{proof}
If $c(\lambda, \mu)\neq0$ for $\lambda>\mu$, Lemma \ref{sjlem1} yields $\beta\in\Psi_\lambda^{++}$ with $\sgn(s_\beta\lambda, \mu)\neq0$ for some $\mu\in\Lambda_I^+$. The theorem then follows from Lemma \ref{invjlem} and Table \ref{njtb1}-\ref{njtb5}.
\end{proof}

\subsection{Posets of the basic systems} For any nonzero Jantzen coefficient $c(\lambda^i, \lambda^j)$, if there exists no sequence $i=i_0<i_1<\cdots<i_k=j$ such that $c(\lambda^{i_{t-1}}, \lambda^{i_{t}})\neq0$ for $1\leq t\leq k$ and $k>1$, we say $\lambda^i$ and $\lambda^j$ are \emph{adjacent}. Connect $i$ and $j$ when $\lambda^i$ and $\lambda^j$ are adjacent. This gives us the posets in Figure \ref{pbfig1}-\ref{pbfig4}.

\begin{figure}[htbp]
\setlength{\unitlength}{1.1mm}
\begin{center}
\begin{picture}(0,20)
\put(0,5){\circle{5}}
\put(0,15){\circle{5}}

\put(0,7.5){\line(0,1){5}}

\put(-0.8,4){$2$}
\put(-0.8,14){$1$}
\end{picture}
\end{center}
\caption{Poset for $(A_1, 1, 1)$, $(B_3, 2, 2)$ and $(C_3, 2, 2)$}
\label{pbfig1}
\end{figure}
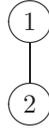

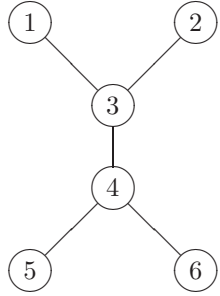
\begin{figure}[htbp]
\setlength{\unitlength}{1.1mm}
\begin{center}
\begin{picture}(0,40)

\put(-10,5){\circle{5}} \put(10,5){\circle{5}}
\put(0,15){\circle{5}}
\put(0,25){\circle{5}}
\put(-10,35){\circle{5}} \put(10,35){\circle{5}}

\put(-8.15,6.85){\line(1,1){6.3}} \put(8.15,6.85){\line(-1,1){6.3}}
\put(0,17.5){\line(0,1){5}}
\put(1.85,26.85){\line(1,1){6.3}} \put(-1.85,26.85){\line(-1,1){6.3}}

\put(-10.8, 4){$5$}\put(9.2, 4){$6$}
\put(-0.8,14){$4$}
\put(-0.8,24){$3$}
\put(-10.8, 34){$1$}\put(9.2, 34){$2$}

\end{picture}
\end{center}
\caption{Poset for $(E_7, 4, 4)$}
\label{pbfig2}
\end{figure}

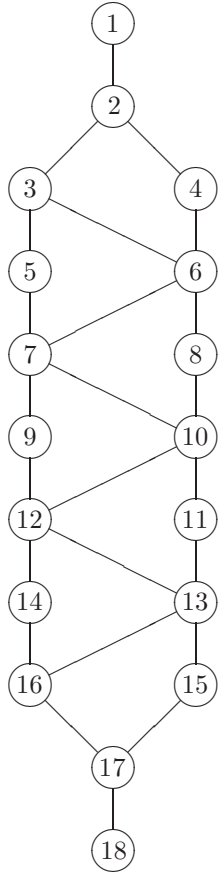
\begin{figure}[htbp]
\setlength{\unitlength}{1.1mm}
\begin{center}
\begin{picture}(0,110)

\put(0,5){\circle{5}}
\put(0,15){\circle{5}}
\put(-10,25){\circle{5}} \put(10,25){\circle{5}}
\put(-10,35){\circle{5}} \put(10,35){\circle{5}}
\put(-10,45){\circle{5}} \put(10,45){\circle{5}}
\put(-10,55){\circle{5}} \put(10,55){\circle{5}}
\put(-10,65){\circle{5}} \put(10,65){\circle{5}}
\put(-10,75){\circle{5}} \put(10,75){\circle{5}}
\put(-10,85){\circle{5}} \put(10,85){\circle{5}}
\put(0,95){\circle{5}}
\put(0,105){\circle{5}}

\put(0,7.5){\line(0,1){5}}
\put(1.85,16.85){\line(1,1){6.3}} \put(-1.85,16.85){\line(-1,1){6.3}}
\put(-10,27.5){\line(0,1){5}} \put(-7.7,26.15){\line(2,1){15.4}}\put(10,27.5){\line(0,1){5}}
\put(-10,37.5){\line(0,1){5}}\put(7.7,36.15){\line(-2,1){15.4}} \put(10,37.5){\line(0,1){5}}
\put(-10,47.5){\line(0,1){5}} \put(-7.7,46.15){\line(2,1){15.4}}\put(10,47.5){\line(0,1){5}}
\put(-10,57.5){\line(0,1){5}}\put(7.7,56.15){\line(-2,1){15.4}} \put(10,57.5){\line(0,1){5}}
\put(-10,67.5){\line(0,1){5}} \put(-7.7,66.15){\line(2,1){15.4}}\put(10,67.5){\line(0,1){5}}
\put(-10,77.5){\line(0,1){5}}\put(7.7,76.15){\line(-2,1){15.4}} \put(10,77.5){\line(0,1){5}}
\put(-8.15,86.85){\line(1,1){6.3}} \put(8.15,86.85){\line(-1,1){6.3}}
\put(0,97.5){\line(0,1){5}}

\put(-1.5,4.0){$18$}
\put(-1.7,14){$17$}
\put(-11.7,24){$16$}\put(8.3,24){$15$}
\put(-11.7,34){$14$}\put(8.3,34){$13$}
\put(-11.7,44){$12$}\put(8.3,44){$11$}
\put(-10.8,54){$9$}\put(8.3,54){$10$}
\put(-10.8,64){$7$}\put(9.2,64){$8$}
\put(-10.8,74){$5$}\put(9.2,74){$6$}
\put(-10.8,84){$3$}\put(9.2,84){$4$}
\put(-0.6,94.0){$2$}
\put(-0.8,104){$1$}

\end{picture}
\end{center}
\caption{Poset for $(E_8, 4, 5)$ and $(E_8, 5, 4)$}
\label{pbfig3}
\end{figure}

\begin{figure}[htbp]
\setlength{\unitlength}{1.1mm}
\begin{center}
\begin{picture}(0,210)

\put(0,5){\circle{5}}
\put(0,15){\circle{5}}
\put(-20,25){\circle{5}} \put(0,25){\circle{5}} \put(20,25){\circle{5}}
\put(-10,35){\circle{5}} \put(10,35){\circle{5}}
\put(0,45){\circle{5}}
\put(-10,55){\circle{5}} \put(10,55){\circle{5}}
\put(-30,65){\circle{5}}\put(-10,65){\circle{5}} \put(10,65){\circle{5}}\put(30,65){\circle{5}}
\put(-10,75){\circle{5}} \put(10,75){\circle{5}}
\put(-20,85){\circle{5}} \put(0,85){\circle{5}} \put(20,85){\circle{5}}
\put(-10,95){\circle{5}} \put(10,95){\circle{5}}
\put(-30,105){\circle{5}}\put(-10,105){\circle{5}} \put(10,105){\circle{5}}\put(30,105){\circle{5}}\put(60,105){\circle{5}}
\put(-10,115){\circle{5}} \put(10,115){\circle{5}}
\put(-20,125){\circle{5}} \put(0,125){\circle{5}} \put(20,125){\circle{5}}
\put(-10,135){\circle{5}} \put(10,135){\circle{5}}
\put(-30,145){\circle{5}}\put(-10,145){\circle{5}} \put(10,145){\circle{5}}\put(30,145){\circle{5}}
\put(-10,155){\circle{5}} \put(10,155){\circle{5}}
\put(0,165){\circle{5}}
\put(-10,175){\circle{5}} \put(10,175){\circle{5}}
\put(-20,185){\circle{5}} \put(0,185){\circle{5}} \put(20,185){\circle{5}}
\put(0,195){\circle{5}}
\put(0,205){\circle{5}}

\put(0,7.5){\line(0,1){5}}
\put(-2.3,16.15){\line(-2,1){15.4}} \put(0,17.5){\line(0,1){5}} \put(2.3,16.15){\line(2,1){15.4}}
\put(-18.15,26.85){\line(1,1){6.3}} \put(-17.54,25.82){\line(3,1){25.1}}
\put(1.85,26.85){\line(1,1){6.3}} \put(-1.85,26.85){\line(-1,1){6.3}}
\put(17.54,25.82){\line(-3,1){25.1}} \put(18.15,26.85){\line(-1,1){6.3}}
\put(-8.15,36.85){\line(1,1){6.3}} \put(8.15,36.85){\line(-1,1){6.3}}
\put(1.85,46.85){\line(1,1){6.3}} \put(-1.85,46.85){\line(-1,1){6.3}}
\put(-12.3,56.15){\line(-2,1){15.4}} \put(-10,57.5){\line(0,1){5}} \put(-7.7,56.15){\line(2,1){15.4}}
\put(7.7,56.15){\line(-2,1){15.4}} \put(10,57.5){\line(0,1){5}} \put(12.3,56.15){\line(2,1){15.4}}
\put(-27.7,66.15){\line(2,1){15.4}} \put(-10,67.5){\line(0,1){5}} \put(-7.7,66.15){\line(2,1){15.4}}
\put(7.7,66.15){\line(-2,1){15.4}} \put(10,67.5){\line(0,1){5}} \put(27.7,66.15){\line(-2,1){15.4}}
\put(-11.85,76.85){\line(-1,1){6.3}} \put(-8.15,76.85){\line(1,1){6.3}}
\put(8.15,76.85){\line(-1,1){6.3}} \put(11.85,76.85){\line(1,1){6.3}}
\put(-18.15,86.85){\line(1,1){6.3}}
\put(1.85,86.85){\line(1,1){6.3}} \put(-1.85,86.85){\line(-1,1){6.3}}
\put(18.15,86.85){\line(-1,1){6.3}}
\put(-12.3,96.15){\line(-2,1){15.4}} \put(-10,97.5){\line(0,1){5}} \put(-7.7,96.15){\line(2,1){15.4}}
\put(7.7,96.15){\line(-2,1){15.4}} \put(10,97.5){\line(0,1){5}} \put(12.3,96.15){\line(2,1){15.4}}
\put(-27.7,106.15){\line(2,1){15.4}} \put(-10,107.5){\line(0,1){5}} \put(-7.7,106.15){\line(2,1){15.4}}
\put(7.7,106.15){\line(-2,1){15.4}} \put(10,107.5){\line(0,1){5}} \put(27.7,106.15){\line(-2,1){15.4}}
\put(-11.85,116.85){\line(-1,1){6.3}} \put(-8.15,116.85){\line(1,1){6.3}}
\put(8.15,116.85){\line(-1,1){6.3}} \put(11.85,116.85){\line(1,1){6.3}}
\put(-18.15,126.85){\line(1,1){6.3}}
\put(1.85,126.85){\line(1,1){6.3}} \put(-1.85,126.85){\line(-1,1){6.3}}
\put(18.15,126.85){\line(-1,1){6.3}}
\put(-12.3,136.15){\line(-2,1){15.4}} \put(-10,137.5){\line(0,1){5}} \put(-7.7,136.15){\line(2,1){15.4}}
\put(7.7,136.15){\line(-2,1){15.4}} \put(10,137.5){\line(0,1){5}} \put(12.3,136.15){\line(2,1){15.4}}
\put(-27.7,146.15){\line(2,1){15.4}} \put(-10,147.5){\line(0,1){5}} \put(-7.7,146.15){\line(2,1){15.4}}
\put(7.7,146.15){\line(-2,1){15.4}} \put(10,147.5){\line(0,1){5}} \put(27.7,146.15){\line(-2,1){15.4}}
\put(-8.15,156.85){\line(1,1){6.3}} \put(8.15,156.85){\line(-1,1){6.3}}
\put(1.85,166.85){\line(1,1){6.3}} \put(-1.85,166.85){\line(-1,1){6.3}}
\put(-11.85,176.85){\line(-1,1){6.3}} \put(-8.15,176.85){\line(1,1){6.3}}\put(-7.54,175.82){\line(3,1){25.1}}
\put(8.15,176.85){\line(-1,1){6.3}} \put(11.85,176.85){\line(1,1){6.3}}\put(7.54,175.82){\line(-3,1){25.1}}
\put(17.7,186.15){\line(-2,1){15.4}} \put(0,187.5){\line(0,1){5}} \put(-17.7,186.15){\line(2,1){15.4}}
\put(0,197.5){\line(0,1){5}}

\qbezier(12.7,75)(29,79)(58,103.5)
\qbezier(-7.3,75)(29,79)(58,103.5)
\qbezier(12.7,135)(29,131)(58,106.5)
\qbezier(-7.3,135)(29,131)(58,106.5)

\put(-1.5,4.0){$47$}
\put(-1.7,14){$46$}
\put(-21.7,24){$43$} \put(-1.7,24){$44$} \put(18.3,24){$45$}
\put(-11.7,34){$41$}\put(8.3,34){$42$}
\put(-1.7,44){$40$}
\put(-11.7,54){$38$}\put(8.3,54){$39$}
\put(-31.7,64){$34$}\put(-11.7,64){$35$}\put(8.3,64){$37$}\put(28.3,64){$36$}
\put(-11.7,74){$32$}\put(8.3,74){$33$}
\put(-21.7,84){$29$} \put(-1.7,84){$30$} \put(18.3,84){$31$}
\put(-11.7,94){$27$}\put(8.3,94){$28$}
\put(-31.7,104){$22$}\put(-11.7,104){$23$}\put(8.3,104){$25$}\put(28.3,104){$26$}\put(58.3,104){$24$}
\put(-11.7,114){$20$}\put(8.3,114){$21$}
\put(-21.7,124){$17$} \put(-1.7,124){$18$} \put(18.3,124){$19$}
\put(-11.7,134){$15$}\put(8.3,134){$16$}
\put(-31.7,144){$12$}\put(-11.7,144){$11$}\put(8.3,144){$13$}\put(28.3,144){$14$}
\put(-10.8,154){$9$}\put(8.3,154){$10$}
\put(-0.8,164){$8$}
\put(-10.8,174){$6$}\put(9.2,174){$7$}
\put(-20.8,184){$3$} \put(-0.8,184){$4$} \put(19.2,184){$5$}
\put(-0.8,194){$2$}\put(-0.8,204){$1$}

\end{picture}
\end{center}
\caption{Poset for $(E_8, 4, 4)$}
\label{pbfig4}
\end{figure}

\begin{remark}\label{sbrmk1}
It will be proved in \cite{XZ} that the above posets are actually the $\Ext^1$-posets (see \cite{BH}) of the corresponding basic systems.
\end{remark}

\subsection{Simplicity of basic generalized Verma modules} By Jantzen's simplicity criterion, one gets the following result from the data of Jantzen coefficients.

\begin{theorem}\label{sbthm1}
Let $\lambda$ be a basic weight of a basic system $(\Phi, i, j)$. Then $M_I(\lambda)$ is not simple if and only if one of the following conditions are satisfied $(k\in\bbZ^{>0})$:
\begin{itemize}
\item [(1)] $(A_1, 1, 1)$, with $\lambda=k(\frac{1}{2}, -\frac{1}{2})$;

\item [(2)] $(B_3, 2, 2)$, with $\lambda=k(1, 0, 1)$;

\item [(3)] $(C_3, 2, 2)$, with $\lambda=k(1, 0, 1)$;

\item [(4)] $(E_7, 4, 4)$, with $\lambda\neq k(0, 1, -3, -1, 0, 1, 0, 0)$, $k(\frac{1}{2}, \frac{3}{2}, -\frac{5}{2}, -\frac{3}{2}, -\frac{1}{2}, \frac{1}{2}, \frac{1}{2}, -\frac{1}{2})$;

\item [(5)] $(E_8, 4, 4)$, with $\lambda\neq k(\frac{1}{2}, \frac{3}{2}, -\frac{7}{2}, -\frac{5}{2}, -\frac{3}{2}, -\frac{1}{2}, \frac{1}{2}, -\frac{5}{2})$;

\item [(6)] $(E_8, 4, 5)$, with $\lambda\neq k(0, 1, -3, -2, -1, 0, 1, -2)$;

\item [(7)] $(E_8, 5, 4)$, with $\lambda\neq k(0, 1, 2, -4, -2, -1, 0, -2)$.
\end{itemize}
\end{theorem}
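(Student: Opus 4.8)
The plan is to derive Theorem~\ref{sbthm1} as a direct consequence of the computation of Jantzen coefficients in Theorem~\ref{nzj}, once the question has been reduced to the finitely many standard basic weights enumerated in \S5.2.

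First I would recall the form of the simplicity criterion we need: combining Theorem~\ref{jansthm1} with~(\ref{sjeq1}) and the linear independence of $\{[M_I(\mu)]\mid\mu\in\Lambda_I^+\}$ (see Remark~\ref{sjrmk1}), a module $M_I(\lambda)$ with $\lambda\in\Lambda_I^+$ is simple if and only if $c(\lambda,\mu)=0$ for every $\mu\in\Lambda_I^+$ with $\mu<\lambda$. Next, since $\lambda$ is a basic weight of $(\Phi,i,j)$, Lemma~\ref{blem1} writes $\lambda=kw\varpi_j$ with $k\in\bbZ^{>0}$ and $w\in{}^IW^J$; by Lemma~\ref{sjlem2} the nonzero Jantzen coefficients of $M_I(kw\varpi_j)$ are obtained from those of $M_I(w\varpi_j)$ by scaling, so $M_I(kw\varpi_j)$ is simple if and only if $M_I(w\varpi_j)$ is (as already observed at the start of \S6.1). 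Hence it suffices to decide simplicity for the standard basic weights $\lambda^1>\cdots>\lambda^l$ of each basic system --- precisely the weights appearing in Theorems~\ref{bwthm1}--\ref{bwthm7} and in Tables~1--10, ordered as in \S6.1.

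Then I would invoke Theorem~\ref{nzj}. For every basic system other than the seven exceptional ones listed there, all Jantzen coefficients vanish, whence by the criterion above \emph{every} basic generalized Verma module of such a system is simple; this explains why only those seven systems occur in the statement. For each of the seven exceptional systems, Tables~\ref{njtb1}--\ref{njtb5} list all nonzero $c_{i,j}=c(\lambda^i,\lambda^j)$, and by the ordering convention a nonzero $c_{i,j}$ has $i<j$, i.e.\ $\lambda^j<\lambda^i$; moreover any $\mu<\lambda^i$ in $\Lambda_I^+$ with $c(\lambda^i,\mu)\neq0$ lies in $W\varpi_j$ and is therefore itself a standard basic weight, so it occurs among $\lambda^{i+1},\dots,\lambda^l$. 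Consequently $M_I(\lambda^i)$ fails to be simple exactly when the $i$-th row of the relevant table is nonempty, equivalently when $\lambda^i$ is not a minimal vertex of the corresponding poset in Figures~\ref{pbfig1}--\ref{pbfig4}. Reading off the tables, the only empty rows are: row $2$ for $(A_1,1,1)$, $(B_3,2,2)$ and $(C_3,2,2)$; rows $5$ and $6$ for $(E_7,4,4)$; row $47$ for $(E_8,4,4)$; and row $18$ for both $(E_8,4,5)$ and $(E_8,5,4)$. Translating these indices back through the weight tables of \S5.2 produces exactly the exceptional weights excluded in conditions (1)--(7), and restoring arbitrary positive multiples (again via Lemma~\ref{sjlem2}) completes the equivalence.

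I do not expect a genuine obstacle here: the substantive work --- in particular the computer-assisted evaluation of the coefficients for type $E$ --- is already packaged inside Theorem~\ref{nzj}. The only points demanding care are the routine reduction from arbitrary basic weights to standard ones (Lemmas~\ref{blem1} and~\ref{sjlem2}, together with the observation that the relevant lower weights are themselves standard basic weights) and the purely clerical task of matching each empty row of Tables~\ref{njtb1}--\ref{njtb5} with the correct explicit weight in the classification of \S5.2; the latter is where a transcription error would be easiest to make.
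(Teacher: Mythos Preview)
Your proposal is correct and follows exactly the paper's approach: the paper simply states ``By Jantzen's simplicity criterion, one gets the following result from the data of Jantzen coefficients,'' and you have spelled out precisely this deduction, including the reduction to standard basic weights via Lemmas~\ref{blem1} and~\ref{sjlem2} and the reading of empty rows in Tables~\ref{njtb1}--\ref{njtb5}. There is nothing further to add.
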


\begin{remark}\label{sbrmk2}
If $\lambda\in\Lambda_I^+$ and $\beta\in\Psi_\lambda^{++}$, then $s_\beta\lambda$ is $\Phi_I$-regular. There exists $w\in W_I$ so that $ws_\beta\lambda\in\Lambda_I^+$. The category $\caO_\lambda^{\frp_I}$ has at least two highest weights $\lambda-\rho$ and $ws_\beta\lambda-\rho$. If $\Phi$ is of classical type, the category $\caO_{\varpi_j}^{\frp_I}$ of $(\Phi, i, j)$ is semisimple unless $(\Phi, i, j)=(A_1, 1, 1)$, $(B_3, 2, 2)$ or $(C_3, 2, 2)$. In view of Theorem \ref{bwthm1}-\ref{bwthm4}, we give all the basic weights (up to a positive integer) with $\Psi_\lambda^{++}\neq\emptyset$ in Table \ref{sbtb1}.
\begin{table}[htbp]\footnotesize
\begin{tabular}{|c|c|c|c|c|}
\hline
basic system & basic weight $\lambda$ & \centering $\Psi_\lambda^{++}$ & \centering $\sum_{\beta\in\Psi_\lambda^{++}}\theta(s_\beta\lambda)$ & simple  \\
\hline
$(A_1, 1, 1)$ & $(\frac{1}{2}, -\frac{1}{2})$ & $e_1-e_2$ & $\neq0$ & No \\
\hline
$(B_2, 1, 2)$ & $(\frac{1}{2}, \frac{1}{2})$ & $e_1$, $e_1+e_2$ & 0 & Yes \\
\hline
$(B_2, 2, 1)$ & $(1, 0)$ & $e_1$, $e_1+e_2$ & 0 & Yes \\
\hline
$(B_3, 2, 2)$ & $(1, 0, 1)$ & $e_1$, $e_1+e_2$, $e_1+e_3$ & $\neq0$ & No \\
\hline
$(C_2, 1, 2)$ & $(1, 1)$ & $2e_1$, $e_1+e_2$ & 0 & Yes \\
\hline
$(C_2, 2, 1)$ & $(1, 0)$ & $2e_1$, $e_1+e_2$ & 0 & Yes \\
\hline
$(C_3, 2, 2)$ & $(1, 0, 1)$ & $2e_1$, $e_1+e_2$, $e_1+e_3$ & $\neq0$ & No \\
\hline
$(D_4, 2, 2)$ & $(1, 0, 1, 0)$ & $e_1+e_2$, $e_1+e_3$ & 0 & Yes \\
\hline
\end{tabular}
\bigskip
\caption{Classical basic systems with nonempty $\Psi_\lambda^{++}$}
\label{sbtb1}
\end{table}
\end{remark}

When $\lambda\in\Lambda_I^+$ and $\beta\in\Psi_\lambda^+$ are fixed, denote by $(\Phi(\beta), i_\beta, j_\beta)$ the basic system corresponding to the basic generalized Verma module $M(\lambda|_{\Phi(\beta)}, \Phi_I\cap \Phi(\beta), \Phi(\beta))$.

\begin{cor}\label{sbcor1}
Let $\Phi$ be a classical root system. Choose $\lambda\in\Lambda_I^+$ and $\beta\in\Psi_\lambda^{+}$.
\begin{itemize}
  \item [(1)] If $(\Phi(\beta), i_\beta, j_\beta)\simeq(B_2, 1, 2)$, $(B_2, 2, 1)$, $(C_2, 1, 2)$, $(C_2, 2, 1)$ or $(D_4, 2, 2)$, then $M(\lambda|_{\Phi(\beta)}, \Phi_I\cap \Phi(\beta), \Phi(\beta))$ is always simple and $|\Psi_\lambda^{++}\cap\Phi(\beta)|=2$.
  \item [(2)] If $(\Phi(\beta), i_\beta, j_\beta)\simeq(A_1, 1, 1)$, $(B_3, 2, 2)$ or $(C_3, 2, 2)$, then $M(\lambda|_{\Phi(\beta)}, \Phi_I\cap \Phi(\beta), \Phi(\beta))$ is not simple and $|\Psi_\lambda^{++}\cap\Phi(\beta)|=1, 3$.
  \item [(3)] In all other cases, $M(\lambda|_{\Phi(\beta)}, \Phi_I\cap \Phi(\beta), \Phi(\beta))$ is simple and $|\Psi_\lambda^{++}\cap\Phi(\beta)|=0$. In particular, $\beta\not\in\Psi_\lambda^{++}$.
\end{itemize}
\end{cor}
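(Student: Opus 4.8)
The plan is to push the question through the reduction process of section~3 onto the finite list of classical basic systems and basic weights classified in section~5, and then to read the three statements off the simplicity data of Theorem~\ref{sbthm1} together with the $\Psi^{++}$-computation recorded in Remark~\ref{sbrmk2} and Table~\ref{sbtb1}. To set this up I would first note that, by Lemma~\ref{redslem1}, $M(\lambda|_{\Phi(\beta)},\Phi_I\cap\Phi(\beta),\Phi(\beta))$ is a basic generalized Verma module, and that $\Phi(\beta)$, being an irreducible subsystem of the classical root system $\Phi$, is again of type $A$, $B$, $C$ or $D$, so that no exceptional basic system occurs. Hence $(\Phi(\beta),i_\beta,j_\beta)$ is one of the classical triples of Theorem~\ref{bthm1} and, writing $\lambda'=\lambda|_{\Phi(\beta)}$, the weight $\lambda'$ is, up to a positive integer, one of the finitely many explicit basic weights listed in Theorems~\ref{bwthm1}--\ref{bwthm4}.

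Next I would translate the quantities of the statement into intrinsic data of $\Phi(\beta)$. Since $\beta\in\Psi_\lambda^+\cap\Phi(\beta)$, Lemma~\ref{redslem5}(1) gives $\beta\in\Psi^+(\lambda',\Phi_I\cap\Phi(\beta),\Phi(\beta))$, so this set is nonempty; this already discards those basic weights on the list whose $\Psi^+$ is empty (for instance the weight $(0,-1)$ of $(B_2,2,1)$), which is how the reduction gets pinned to the correct basic weight in the borderline systems. Iterating Lemma~\ref{redslem5} along the reduction chain of section~3, together with Lemma~\ref{redslem4} for the singular step and the analogous elementary fact for the $\Phi_{\beta,0}$-step, yields
\[
\Psi_\lambda^{++}\cap\Phi(\beta)=\Psi^{++}(\lambda',\Phi_I\cap\Phi(\beta),\Phi(\beta)),
\]
while the singular reduction (Lemma~\ref{redllem4}), or equivalently Corollary~\ref{janscor1}, tells us that $M(\lambda',\Phi_I\cap\Phi(\beta),\Phi(\beta))$ is simple precisely when $\sum_{\gamma\in\Psi^{++}(\lambda',\dots)}\theta(s_\gamma\lambda')=0$.

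It then remains to carry out the finite verification. For each admissible pair $(\Phi(\beta),\lambda')$ I would compute $\Psi^{++}(\lambda',\dots)$ directly from the defining condition $\langle s_\gamma\lambda',\alpha\rangle\neq0$ for all $\alpha\in\Phi_I\cap\Phi(\beta)$, which is exactly the computation summarized in Table~\ref{sbtb1}. The outcome is that $\Psi^{++}(\lambda',\dots)=\emptyset$ for every classical basic system and basic weight except the eight of Table~\ref{sbtb1}, namely $(A_1,1,1)$, $(B_2,1,2)$, $(B_2,2,1)$, $(B_3,2,2)$, $(C_2,1,2)$, $(C_2,2,1)$, $(C_3,2,2)$, $(D_4,2,2)$, with $|\Psi^{++}|=2$ for the five systems of case~(1) and $|\Psi^{++}|\in\{1,3\}$ for the three systems of case~(2). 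Combining this with Theorem~\ref{sbthm1}, by which, among classical basic systems, the module fails to be simple exactly for $(A_1,1,1)$, $(B_3,2,2)$ and $(C_3,2,2)$, gives (1), (2) and (3); the last clause of (3) is automatic, since $\beta\in\Phi(\beta)$ forces $\beta\notin\Psi_\lambda^{++}$ as soon as $\Psi_\lambda^{++}\cap\Phi(\beta)=\emptyset$.

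The main obstacle is the bookkeeping rather than any single deduction: one must check that Table~\ref{sbtb1} is an exhaustive list of the classical basic weights with nonempty $\Psi^{++}$, and, in the borderline systems such as $(B_2,2,1)$ and $(B_3,2,2)$ where distinct basic weights behave differently, that the reduction actually produces the basic weight asserted in the statement --- which is exactly where the nonemptiness $\beta\in\Psi^+(\lambda',\dots)$ supplied by Lemma~\ref{redslem5} is used.
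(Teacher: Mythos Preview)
Your proposal is correct and follows essentially the same route as the paper's proof: reduce to the intrinsic data of $\Phi(\beta)$ via Lemma~\ref{redslem5}, obtaining $\Psi_\lambda^{++}\cap\Phi(\beta)=\Psi^{++}(\lambda|_{\Phi(\beta)},\Phi_I\cap\Phi(\beta),\Phi(\beta))$, and then read off the trichotomy from Remark~\ref{sbrmk2} and Table~\ref{sbtb1}. The paper's proof is a two-line citation of exactly these ingredients; you have simply unpacked the iteration of Lemma~\ref{redslem5} along the reduction chain and made explicit the role of $\beta\in\Psi^+(\lambda',\dots)$ in selecting the correct basic weight within a given basic system, which the paper leaves implicit.
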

\begin{proof}
It follows from Lemma \ref{redslem5} that $\Psi_\lambda^{++}\cap\Phi(\beta)=\Psi^{++}(\lambda|_{\Phi(\beta)}, \Phi_I\cap\Phi(\beta), \Phi(\beta))$. Then the corollary follows from Remark \ref{sbrmk2} and Table \ref{sbtb1}.
\end{proof}

\begin{remark}\label{sbrmk3}
If $\Phi$ is of classical type, the above results show that $\Psi_\lambda^{++}=0$ serves well as a simplicity criterion if we can explicitly rules out the exceptional cases (that is, when $(\Phi(\beta), i_\beta, j_\beta)\simeq(B_2, 1, 2)$, $(B_2, 2, 1)$, $(C_2, 1, 2)$, $(C_2, 2, 1)$ or $(D_4, 2, 2)$). This will be achieved in the next section.
\end{remark}

%
%%%%%%%%%%%%%%%%%%%%%%%%%%%%%%%%%%%%%%%%%%%%%%%%%%%%%%%%%%%%%%%%%%%%
%
\section{Jantzen coefficients for classical Lie algebras}
%
%%%%%%%%%%%%%%%%%%%%%%%%%%%%%%%%%%%%%%%%%%%%%%%%%%%%%%%%%%%%%%%%%%%%
%

In this section, we will determine all the Jantzen coefficients and give a refinement of Jantzen's simplicity criteria for classical Lie algebras.

\subsection{Jantzen coefficients for classical types}

Let $\Phi$ be one of the classical root systems $B_n$, $C_n$ and $D_n$ with simple roots $\Delta=\{\alpha_1, \cdots, \alpha_n\}$. Here $\alpha_i=e_i-e_{i+1}$ for $1\leq i\leq n-1$ and $\alpha_n=e_n$, $2e_n$ and $e_{n-1}+e_n$ respectively. We say the subset $I\subset\Delta$ is \emph{not standard} if $\Phi=D_n$ ($n\geq 4$), $\alpha_{n-1}=e_{n-1}-e_n\not\in I$ and $\alpha_{n}=e_{n-1}+e_n\in I$, otherwise we say $I$ is \emph{standard}. The following isomorphism of $\frh^*$ sends a nonstandard $I\subset\Delta$ for $\Phi=D_n$ to a standard one:
\begin{equation}\label{simceq1}
\vf:\frh^*\ra \frh^*\ \mbox{with}\ \vf(\lambda)=s_{e_n}\lambda.
\end{equation}

Write $\Delta\backslash I=\{\alpha_{q_1}, \dots, \alpha_{q_{m-1}}\}$, where $1\leq q_1<\cdots< q_{m-1}\leq n$. So $m=n+1-|I|$. Set $q_0=0$ and $q_m=n+1$. If $I$ is standard or $s<m-1$, set $I_s=\{\alpha_i\in I\ |\ q_{s-1}<i<q_s\}$. If $I$ is not standard, then $q_{m-1}=n-1$. Put $I_{m-1}=\{\alpha_i\in I\ |\ q_{m-2}<i<q_{m-1}\}\cup\{\alpha_n\}$ and $I_m=\emptyset$. Thus we obtain
\begin{equation*}
\Phi_I=\bigsqcup_{s=1}^m \Phi_{I_s}.
\end{equation*}
Put $n_s=|I_s|+1$ for $1\leq s<m$ and $n_m=|I_m|$. If $\Phi=D_n$, then $n_s$ is invariant under the map $I\ra\vf(I)$. If $I$ is standard, then $n_s=q_s-q_{s-1}$ for $1\leq s<m$ and $n_m=n-q_{m-1}$. %Any vector $\lambda\in\frh^*$ is divided into $m$ segments $\lambda^s$ for $1\leq s\leq m$, where $\lambda^s$ is an $n_s$-dimensional subvector, that is,
%\begin{equation*}
%\lambda=(\lambda^1, \lambda^2, \cdots, \lambda^m).
%\end{equation*}
%In particular, if $I$ is standard, then $\lambda^s=(\lambda_{q_{s-1}+1}, \cdots, \lambda_{q_s})$ for $1\leq s<m$ and $\lambda^m=(\lambda_{q_{m-1}+1}, \cdots, \lambda_n)$. (note that $\lambda^m$ contains no entry when $n_m=0$). If $I$ is not standard, only two segments are different, that is, $\lambda^{m-1}=(\lambda_{q_{m-2}+1}, \cdots, \lambda_{n-1}, \lambda_n)$ and $\lambda^m$ contains no entry.

Recall that for integral weight $\lambda\in\Lambda_I^+$, one has $\lambda=w\overline\lambda$ with $w\in{}^IW^J$. In addition, $\Phi_{\overline\lambda}=\Phi_J=w^{-1}\Phi_\lambda$. Similarly, write $\Delta\backslash J=\{\alpha_{\overline q_1}, \dots, \alpha_{\overline q_{\overline m-1}}\}$ for $1\leq \overline q_1<\cdots< \overline q_{\overline m-1}\leq n$ and $\overline m=n+1-|J|$. Put $\overline q_0=0$ and $\overline q_{\overline m}=n+1$. If $J$ is standard or $s<\overline m-1$, let $J_s=\{\alpha_i\in J\ |\ \overline q_{s-1}<i< \overline q_s\}$. If $J$ is not standard, let $J_{\overline m-1}=\{\alpha_i\in I\ |\ \overline q_{\overline m-2}<i<n-1\}\cup\{\alpha_n\}$ and $J_m=\emptyset$. So
\begin{equation*}
\Phi_J=\bigsqcup_{s=1}^{\overline m} \Phi_{J_s}.
\end{equation*}
Set $\overline n_s=|J_s|+1$ for $1\leq s<\overline m$ and $\overline n_{\overline m}=|J_{\overline m}|$. Put $a_s=\overline\lambda_{\overline q_s}$ and for $1\leq s< \overline m$ and $a_{\overline m}=0$. Let
\begin{equation}\label{simceq2}
\caA=\{a_1, a_2, \cdots, a_{\overline m}\}.
\end{equation}
Then $a_1>\cdots>a_{\overline m-1}\geq a_{\overline m}=0$ (this is true even when $J$ is not standard).

%In fact, if $J$ is not standard, then $\Phi=D_n$ and $\overline q_{\overline m-1}=n-1$. In this case,
%\[
%a_1>\cdots>a_{\overline m-2}=\overline\lambda_{\overline q_{\overline m-2}}>\overline\lambda_{\overline q_{\overline m-2}+1}=\cdots=\overline\lambda_{n-1}=-\overline\lambda_n=a_{\overline m-1}>0=a_{\overline m}.
%\]

\begin{remark}\label{simcrmk1}
One might expect $\overline n_{\overline m}=|\{1\leq i\leq n\mid \lambda_i=0\}|$. This does hold in most cases except when $a_{\overline m-1}=0=a_{\overline m}$. In this case, $\Phi=D_n$, $\overline q_{\overline m-2}=n-1$ and $\overline q_{\overline m-1}=n$. For example, let $\Phi=D_4$ and $I=J=\emptyset$. Choose $\lambda=(4, 3, 2, 0)$. Then
\[
\overline n_{\overline m}=0\neq 1=|\{1\leq i\leq n\mid \lambda_i=0\}|.
\]
In particular, the categories $\caO^\frb_\lambda\simeq\caO^\frb_\mu$ for $\mu=(4, 3, 2, 1)$. In this situation, we say $\lambda$ has a \emph{nonessential} $0$-entry.
\end{remark}

Here are some useful facts.

\begin{lemma}\label{simclem1}
Use the above notation.
\begin{itemize}
  \item [(1)] All the root systems $\Phi_{I_s}$ $($resp. $\Phi_{J_s}$$)$ are irreducible except the case when $\Phi=D_n$ and $q_{m-1}=n-2$ $($resp. $\overline q_{\overline m-1}=n-2$$)$. In this case, $\Phi_{I_{m}}$ $($resp. $\Phi_{J_{\overline m}}$$)$ is isomorphic to $A_1\times A_1$.
  \item [(2)] $\overline n_s=|\{1\leq i\leq n\mid \lambda_i=|a_s|\}|$ for $1\leq s\leq \overline m$ except when $a_{\overline m-1}=a_{\overline m}=0$ and $s=\overline m$. In this case, $\Phi=D_n$, $\overline n_{\overline m-1}=1$ and $\overline n_{\overline m}=0$.
\end{itemize}

\end{lemma}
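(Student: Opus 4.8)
The plan is to read both parts off the Dynkin diagram of $\Phi$ and the explicit simple roots $\alpha_i=e_i-e_{i+1}$ ($1\le i\le n-1$) together with $\alpha_n$ ($=e_n$, $2e_n$, or $e_{n-1}+e_n$ according to type). For (1), $\Phi_{I_s}$ is the subsystem generated by the $s$-th block of simple roots, that is, by a piece of the Dynkin diagram of $\Phi$ left after deleting the nodes $\alpha_{q_1},\dots,\alpha_{q_{m-1}}$, with the sole bookkeeping twist that in the non-standard case the prong $\alpha_n$ has been appended to $I_{m-1}$. If $\Phi=B_n$ or $C_n$ the diagram is a path, so each block is again a path, hence irreducible of type $A$, $B$, or $C$, and there is nothing to check. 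If $\Phi=D_n$ the diagram is a path with a fork, $\alpha_{n-2}$ being joined to both $\alpha_{n-1}$ and $\alpha_n$; the two prongs $\alpha_{n-1},\alpha_n$ get separated from each other exactly when $\alpha_{n-2}$ is among the deleted nodes while $\alpha_{n-1},\alpha_n$ are not, equivalently when $q_{m-1}=n-2$, in which case $I$ is standard, $I_m=\{\alpha_{n-1},\alpha_n\}$, and $\Phi_{I_m}\simeq A_1\times A_1$; in all other cases the block under consideration is a path, a path still attached to the fork, or a path with the single prong $\alpha_n$ merged in, and is connected, hence irreducible. Replacing $q_s,m$ by $\overline q_s,\overline m$ gives the statement for $\Phi_{J_s}$.

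For (2) I would, after possibly applying $\vf$ to reduce to standard $J$ (which preserves $\overline n_s$, $|a_s|$, and the multiset $\{|\lambda_i|\}$), use that $\alpha_i\in J=\Delta\cap\Phi_{\overline\lambda}$ if and only if $\overline\lambda_i=\overline\lambda_{i+1}$ for $i<n$. Thus $\{1,\dots,n\}$ splits into maximal runs of consecutive indices on which $\overline\lambda$ is constant, the $s$-th run being $\{i:\overline q_{s-1}<i\le \overline q_s\}$ with value $a_s=\overline\lambda_{\overline q_s}$; dominance of $\overline\lambda$ together with $\alpha_{\overline q_s}\notin J$ forces $a_1>\cdots>a_{\overline m-1}\ge a_{\overline m}=0$. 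For $1\le s<\overline m$ the $s$-th run has length $\overline q_s-\overline q_{s-1}=|J_s|+1=\overline n_s$, and since $\lambda=w\overline\lambda$ with $w$ acting by signed permutations of coordinates (even sign changes in type $D$), the value $|a_s|$ occurs exactly $\overline n_s$ times among $|\lambda_1|,\dots,|\lambda_n|$. The delicate point is the tail block $s=\overline m$ (and, in type $D$, $s=\overline m-1$), where the precise form of $\alpha_n$ enters. For $B_n$ and $C_n$, $\alpha_n\in J$ if and only if $\overline\lambda_n=0$, so either $\overline\lambda$ has no zero coordinate and $\overline n_{\overline m}=|J_{\overline m}|=0$ with nothing to count, or the zero entries form $J_{\overline m}$ and $\overline n_{\overline m}=|J_{\overline m}|$ counts exactly them, so the formula holds for every $s$ and no exception occurs. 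For $D_n$, $\alpha_n=e_{n-1}+e_n$, and the count can fail only when $\overline\lambda_{n-1}>0$ while $\overline\lambda_n=0$: then $\alpha_{n-1}$ and $\alpha_n$ both lie outside $J$, forcing $\overline q_{\overline m-2}=n-1$, $\overline q_{\overline m-1}=n$, $J_{\overline m}=\emptyset$, and $a_{\overline m-1}=a_{\overline m}=0$, so that $\overline n_{\overline m}=0$ whereas $\{i:\lambda_i=0\}=\{n\}$ has one element (a nonessential $0$-entry, cf. Remark \ref{simcrmk1}), while $J_{\overline m-1}=\emptyset$ gives $\overline n_{\overline m-1}=1$, which does match. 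One then checks this is the only way a block count can fail, which is precisely the exception in the statement.

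The main obstacle is organizational, not conceptual: keeping the type-$D_n$ conventions straight --- standard versus non-standard $I$ and $J$, absorption of the prong $\alpha_n$ into the neighbouring block, the fork at $\alpha_{n-2}$, and the nonessential zero entry --- so that every borderline configuration (length-one runs, empty tail blocks, a single negative or zero final coordinate) is accounted for and matches the two exceptions; the verification itself is then a short case analysis.
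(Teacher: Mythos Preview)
The paper states this lemma without proof, presenting it simply as a collection of ``useful facts'' preceding Theorem~\ref{rcthm1}; there is therefore no argument to compare against. Your verification is correct and is exactly the kind of elementary Dynkin-diagram and coordinate bookkeeping the authors presumably had in mind: part~(1) is the observation that deleting nodes from a path (types $B$, $C$) always leaves connected pieces, while in type $D$ the fork at $\alpha_{n-2}$ produces the lone disconnected block $\{\alpha_{n-1},\alpha_n\}\simeq A_1\times A_1$ precisely when $q_{m-1}=n-2$; part~(2) is the observation that the blocks $J_s$ record the runs of equal coordinates in the dominant weight $\overline\lambda$, and since $W$ acts by signed permutations these runs become the level sets of $|\lambda_i|$, with the single type-$D$ mismatch arising from the nonessential zero entry described in Remark~\ref{simcrmk1}. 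One small remark: in part~(2) you tacitly read the condition as $|\lambda_i|=a_s$ rather than the paper's literal $\lambda_i=|a_s|$; your reading is the one that makes the statement true and is clearly what is intended, so this is not a gap in your argument but worth flagging.
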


Now we can state the following result about Jantzen coefficients.

\begin{theorem}\label{rcthm1}
Let $\Phi$ be a classical root system and $\lambda, \mu\in\Lambda_I^+$. Suppose $\mu=ws_\beta\lambda$ for some $w\in W_I$ and $\beta\in\Psi_\lambda^{++}$. Then $c(\lambda, \mu)=0$ if and only if one of the following conditions is satisfied.
\begin{itemize}
  \item [(\rmnum{1})] $\Phi=B_n$ $($resp. $C_n)$, $\beta=e_i$ $($resp. $2e_i)$ or $e_i+e_j$ for $q_{s-1}<i\leq q_s\leq q_{m-1}<j\leq n$ and $1\leq s<m$. Moreover, $\lambda_i=\lambda_j\in\frac{1}{2}\bbZ^{>0}$ $($resp. $\bbZ^{>0}$$)$ and $\lambda_k\neq 0$, $-\lambda_i$ for $q_{s-1}<k\leq q_s$.
  \item [(\rmnum{2})] $\Phi=B_n$ $($resp. $C_n)$, $\beta=e_i$ $($resp. $2e_i)$ or $e_i+e_j$ for $q_{s-1}<i<j\leq q_s$ and $1\leq s<m$. Moreover, $\lambda_i\in\bbZ^{>0}$, $\lambda_j=0$, $\lambda_k\neq-\lambda_i$ for $q_{s-1}<k\leq q_s$ and $\lambda_l\neq\lambda_i$ for $q_{m-1}<l\leq n$.
  \item [(\rmnum{3})] $\Phi=D_n$, $\beta=e_i+e_j$ or $e_i+e_k$ for $q_{s-1}<i<j\leq q_s\leq q_{m-1}<k<n$ and $1\leq s<m$. Moreover, $\lambda_i=\lambda_k\in\bbZ^{>0}$, $\lambda_j=\lambda_n=0$ and $\lambda_l\neq-\lambda_i$ for $q_{s-1}<l\leq q_s$.
\end{itemize}
\end{theorem}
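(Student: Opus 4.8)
The plan is to reduce the statement, via the results of \S3--\S6, to the combinatorics of the reduction process for classical root systems, and then to identify the basic system at which that process terminates. Since $\beta\in\Psi_\lambda^{++}$, Lemma \ref{invjlem} gives $|c(\lambda,\mu)|=|c(\lambda|_{\Phi(\beta)},\mu',\Phi_I\cap\Phi(\beta),\Phi(\beta))|$ for a suitable $\mu'=w's_\beta(\lambda|_{\Phi(\beta)})$, so $c(\lambda,\mu)=0$ exactly when all Jantzen coefficients of the basic system $(\Phi(\beta),i_\beta,j_\beta)$ between $\lambda|_{\Phi(\beta)}$ and $\mu'$ vanish. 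Since $\beta$ survives every step of the reduction chain it lies in $\Psi^{++}(\lambda|_{\Phi(\beta)},\Phi_I\cap\Phi(\beta),\Phi(\beta))$, so by Corollary \ref{sbcor1} the classical basic system $(\Phi(\beta),i_\beta,j_\beta)$ is either one of $(A_1,1,1)$, $(B_3,2,2)$, $(C_3,2,2)$ or one of $(B_2,1,2)$, $(B_2,2,1)$, $(C_2,1,2)$, $(C_2,2,1)$, $(D_4,2,2)$. In the first family, Table \ref{njtb1} shows the only nonzero Jantzen coefficient is $c_{1,2}=1$ between the two standard basic weights $\lambda^1>\lambda^2$, and by Theorems \ref{bwthm1}--\ref{bwthm3} these are the only basic weights of the system, so $\{\lambda|_{\Phi(\beta)},\mu'\}=\{\lambda^1,\lambda^2\}$ up to a positive scalar and $c(\lambda,\mu)\neq0$; in the second family the system is semisimple by Theorem \ref{nzj} and $c(\lambda,\mu)=0$. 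Thus it suffices to prove that $(\Phi(\beta),i_\beta,j_\beta)$ lies in the second family if and only if one of (\rmnum{1})--(\rmnum{3}) holds.

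To prove this I would run the chain $\Phi_{[\lambda]}=\Phi_0(\beta)\supset\Phi_1(\beta)\supset\cdots\supset\Phi(\beta)$ explicitly in the standard realization. Write $\lambda=(\lambda_1,\dots,\lambda_n)$, split the coordinates into the segments determined by $q_0=0<q_1<\cdots<q_{m-1}<q_m=n+1$ (on which $\lambda$ is strictly decreasing, with the appropriate $B/C/D$ endpoint inequalities), and observe that the integral reduction to $\Phi_{[\lambda]}$ and the passage to the irreducible component $\Phi_{\beta,0}$ keep us inside a classical root system. I would then treat each admissible shape of $\beta$ in turn, paying attention to the fact that only $\beta=e_i$, $2e_i$ or $e_i+e_j$ can produce one of the five systems of the second family (the shape $e_i-e_j$, which is absent from (\rmnum{1})--(\rmnum{3}), will be checked to lead to a basic system of the first family). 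The parabolic step $\Phi_{\beta,1}=(\bbQ\Phi_I+\bbQ\beta)\cap\Phi$ collapses onto the $I$-segments meeting the support of $\beta$ together with $\beta$'s support, while the singular step $\Phi_{\beta,2}=(\bbQ\Phi_\lambda+\bbQ\beta)\cap\Phi$ collapses onto the coordinates $k$ with $\lambda_k$ matching, up to sign, an entry forced to vanish or to pair with another --- and the zero and sign-paired entries are exactly what make $\Phi_\lambda$ nontrivial. The hypothesis $\beta\in\Psi_\lambda^{++}$ says $s_\beta\lambda$ is $\Phi_I$-regular, which in coordinates is precisely the requirement ``$\lambda_k\neq0,\ -\lambda_i$ for $q_{s-1}<k\leq q_s$'' of (\rmnum{1})--(\rmnum{3}). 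Iterating the parabolic/singular alternation, I expect the chain to stabilise at a $B_2$- or $C_2$-type system exactly when $\beta$ connects the segment of $i$ to a tail coordinate $j>q_{m-1}$ with $\lambda_j=\lambda_i$ (case (\rmnum{1})), or to a vanishing coordinate $j$ inside that segment with the extra non-coincidence $\lambda_l\neq\lambda_i$ for $l>q_{m-1}$ barring an escape to $B_3/C_3$ (case (\rmnum{2})); and --- because of the nonessential-zero phenomenon of Remark \ref{simcrmk1} and the normalisation of a nonstandard $I$ by the map $\vf$ of (\ref{simceq1}) --- to stabilise at $D_4$ exactly in the configuration of (\rmnum{3}). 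In every other configuration the chain stabilises at a classical basic system that is not among the five; since $\beta\in\Psi_\lambda^{++}$ rules out the empty-$\Psi^{++}$ alternative of Corollary \ref{sbcor1}, Table \ref{sbtb1} forces it to be one of $(A_1,1,1)$, $(B_3,2,2)$, $(C_3,2,2)$, whence $c(\lambda,\mu)\neq0$.

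The main obstacle will be this last case-check: showing that whenever none of (\rmnum{1})--(\rmnum{3}) holds the reduction cannot end in $(B_2,1,2)$, $(B_2,2,1)$, $(C_2,1,2)$, $(C_2,2,1)$ or $(D_4,2,2)$. This is a finite but delicate bookkeeping over the positions of $i$ and $j$ relative to the segment endpoints $q_s$, the root-length type of $\beta$, the half-integer shift separating type $B$ (where $\lambda_i=\lambda_j\in\frac{1}{2}\bbZ^{>0}$ in (\rmnum{1})) from type $C$ (where $\lambda_i=\lambda_j\in\bbZ^{>0}$), and the placement of the zero and sign-paired entries of $\lambda$ that govern $\Phi_\lambda$ and hence the singular step. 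The type $D$ part is the subtlest, because a nonstandard $I$ and a nonessential zero entry both reshape $\Phi_\lambda$ and thereby change the rank to which the singular step descends. The parallelism between $B_n$ and $C_n$, the duality $(\Phi,i,j)\leftrightarrow(\Phi,j,i)$ of Lemma \ref{blem2}, and the invariance of the whole setup under permuting coordinates within a segment should reduce the number of cases substantially; once the terminal basic system is identified, the conclusion follows at once from Corollary \ref{sbcor1} and Table \ref{njtb1}.
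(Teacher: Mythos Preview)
Your proposal is correct and follows essentially the same route as the paper: reduce via Lemma~\ref{invjlem} to the basic system $(\Phi(\beta),i_\beta,j_\beta)$, use Corollary~\ref{sbcor1} and Theorem~\ref{nzj} to split into the two families, and then run the reduction chain explicitly in coordinates to decide which family $\beta$ lands in. The paper organises the explicit step by first proving structural lemmas for $(\Phi_{\beta,1})_{\beta,0}$ and $(\Phi_{\beta,2})_{\beta,0}$ (Lemmas~\ref{rclem1}--\ref{rclem6}) and then tabulating all cases where $\Phi(\beta)$ is not of type~$A$ (Tables~\ref{simctab1}--\ref{simctab3}), treating $B_n$, $D_n$, $C_n$ in that order; your outline anticipates exactly this bookkeeping, including the type-$D$ subtleties you flag.
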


The theorem will be proved in a case-by-case fashion (Lemma \ref{rctlem1}, Lemma \ref{rctlem2}, Lemma \ref{rctlem3} and Lemma \ref{rctlem4}) in the next four subsections. The main idea is to find $\Phi(\beta)$ for each $\beta\in\Psi_{\lambda}^{++}$.

\subsection{The case of type $A$ and the reduction process}

The case of type $A$ is relatively easy since we always has $\Phi(\beta)\simeq A_1$ for $\beta\in\Psi_\lambda^{++}$ in this case.

\begin{lemma}\label{rclem0}
Let $\lambda\in\Lambda_I^+$ and $\mu=ws_\beta\lambda\in\Lambda_I^+$ with $w\in W_I$ and $\beta\in\Psi_\lambda^{++}$. If $\Phi(\beta)$ is of type $A$, then $|c(\lambda, \mu)|=1$.
\end{lemma}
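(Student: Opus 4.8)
The plan is to run the reduction process of Section~3 on $\beta$ and then read off the answer from the classification of basic generalized Verma modules. First I would invoke Lemma~\ref{redslem1} and Definition~\ref{basdef}: $\Phi(\beta)$ is irreducible and $M(\lambda|_{\Phi(\beta)},\Phi_I\cap\Phi(\beta),\Phi(\beta))$ is a basic generalized Verma module, with basic system $(\Phi(\beta),i_\beta,j_\beta)$. Since $\beta\in\Psi_\lambda^{++}$, Lemma~\ref{invjlem} applies and produces $\lambda'=\lambda|_{\Phi(\beta)}$ together with $\mu'=w's_\beta\lambda'$ for some $w'\in W(\Phi_I\cap\Phi(\beta))$ such that
\[
|c(\lambda,\mu)|=|c(\lambda',\mu',\Phi_I\cap\Phi(\beta),\Phi(\beta))|.
\]
So it suffices to show the right-hand side equals $1$.

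Next I would pin down the basic system. Because $\Phi$ is classical and $\beta\in\Psi_\lambda^{+}$, Corollary~\ref{sbcor1} is available. Its case~(3) says that if $(\Phi(\beta),i_\beta,j_\beta)$ is none of the eight exceptional systems occurring in parts~(1)--(2) of that corollary, then $\beta\notin\Psi_\lambda^{++}$, contrary to hypothesis; hence $(\Phi(\beta),i_\beta,j_\beta)$ is one of those eight. Among them the only one whose root system is of type $A$ is $(A_1,1,1)$, so the standing assumption that $\Phi(\beta)$ is of type $A$ forces $(\Phi(\beta),i_\beta,j_\beta)\simeq(A_1,1,1)$.

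Finally I would evaluate $c(\lambda',\mu')$ for this system. By Corollary~\ref{sbcor1}(2) we have $|\Psi_\lambda^{++}\cap\Phi(\beta)|=1$, i.e. $\Psi^{++}(\lambda',\Phi_I\cap\Phi(\beta),\Phi(\beta))=\{\beta\}$ is a single root, so
\[
\sum_{\gamma\in\Psi^{++}(\lambda',\Phi_I\cap\Phi(\beta),\Phi(\beta))}\theta(s_\gamma\lambda')=\theta(s_\beta\lambda')=\pm[M(\mu')]
\]
is a single basis element of the relevant Grothendieck group, whence $c(\lambda',\mu')=\pm1$. (Equivalently, one may quote Theorem~\ref{nzj} and Table~\ref{njtb1}, where the only nonzero Jantzen coefficient of $(A_1,1,1)$ is $c_{1,2}=1$, after using Lemma~\ref{sjlem2}(2) to rescale $\lambda'$ to a standard basic weight.) Combining with the displayed identity gives $|c(\lambda,\mu)|=1$.

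The argument is essentially bookkeeping layered on machinery already in place; the one step that genuinely needs care is upgrading ``$\beta\in\Psi_\lambda^{+}$ with $\Phi(\beta)$ of type~$A$'' to ``$(\Phi(\beta),i_\beta,j_\beta)\simeq(A_1,1,1)$'', since this is precisely where the stronger hypothesis $\beta\in\Psi_\lambda^{++}$ is used, and it is also what excludes the cancellation that could otherwise drop $|c(\lambda',\mu')|$ to~$0$ (which would indeed happen for $\Phi(\beta)\simeq A_k$, $k\ge2$, where the basic systems of type $A$ are all semisimple).
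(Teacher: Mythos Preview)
Your proposal is correct and follows essentially the same route as the paper: reduce via Lemma~\ref{invjlem} to the basic system $(\Phi(\beta),i_\beta,j_\beta)$, identify it as $(A_1,1,1)$, and read off the coefficient. The only difference is in the identification step: the paper observes directly from Theorem~\ref{bwthm1} that $(A_1,1,1)$ is the unique type-$A$ basic system admitting two distinct basic weights in the same $W(\Phi(\beta))$-orbit (namely $\lambda'$ and $\mu'$), whereas you route through Corollary~\ref{sbcor1}. Your path is valid in context but imports the hypothesis that $\Phi$ itself is classical (needed for Corollary~\ref{sbcor1}); the paper's argument via Theorem~\ref{bwthm1} uses only that $\Phi(\beta)$ is of type~$A$, so it is marginally cleaner and more self-contained.
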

\begin{proof}
With $\beta\in\Psi_\lambda^{++}$, we can find $w'\in W(\Phi_I\cap\Phi(\beta))$ such that $\mu':=w's_\beta\lambda'\in\Lambda^+(\Phi_I\cap\Phi(\beta), \Phi(\beta))$, where $\lambda'=\lambda|_{\Phi(\beta)}$. Thus the basic system $(\Phi(\beta), i_\beta, j_\beta)$ contains two different basic weights $\lambda', \mu'$ with $\mu'=W(\Phi(\beta))\lambda'$. In view of Theorem \ref{bwthm1}, this forces $(\Phi(\beta), i_\beta, j_\beta)\simeq(A_1, 1, 1)$ and $c(\lambda', \mu', \Phi_I\cap\Phi(\beta), \Phi(\beta))=1$. Then Lemma \ref{invjlem} implies that $|c(\lambda, \mu)|=1$.
\end{proof}

We get the following result immediately.

\begin{lemma}\label{rctlem1}
Theorem \ref{rcthm1} holds for $\Phi=A_n$.
\end{lemma}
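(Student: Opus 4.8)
The plan is to reduce everything to Lemma \ref{rclem0}. First I would observe that conditions (\rmnum{1})--(\rmnum{3}) of Theorem \ref{rcthm1} all presuppose that $\Phi$ is of type $B_n$, $C_n$, or $D_n$, so none of them can hold when $\Phi = A_n$. Consequently, proving the theorem in the type $A$ case amounts to showing that the other side of the biconditional also fails throughout, that is, $c(\lambda,\mu) \neq 0$ whenever $\mu = ws_\beta\lambda$ with $w \in W_I$ and $\beta \in \Psi_\lambda^{++}$.

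Next I would run the reduction process of \S3.3 for such a $\beta$. By Lemma \ref{redslem1}, the resulting system $\Phi(\beta)$ is irreducible and is a subsystem of $\Phi_{[\lambda]} \subset \Phi = A_n$. Since every subsystem of a type $A$ root system decomposes as a product of irreducible systems each of type $A$ (immediate from the standard realization of $A_n$ inside $\bigoplus_i \bbR e_i$), the irreducible system $\Phi(\beta)$ is itself of type $A$. Then Lemma \ref{rclem0} applies directly and gives $|c(\lambda,\mu)| = 1$, so in particular $c(\lambda,\mu) \neq 0$.

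Combining the two observations, for $\Phi = A_n$ both sides of the equivalence in Theorem \ref{rcthm1} are always false, hence the equivalence holds vacuously. There is no real obstacle in this case; the only point needing a word of justification is that $\Phi(\beta)$ remains of type $A$ under the reduction, and this is automatic because subsystems of $A_n$ are products of type $A$ systems.
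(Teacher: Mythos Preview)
Your proposal is correct and follows exactly the approach the paper intends: the paper simply states that Lemma \ref{rctlem1} follows ``immediately'' from Lemma \ref{rclem0}, and you have spelled out precisely why, namely that $\Phi(\beta)\subset A_n$ is irreducible of type $A$, so $|c(\lambda,\mu)|=1$, while conditions (\rmnum{1})--(\rmnum{3}) are vacuous for $\Phi=A_n$. The only additional observation you supply beyond the paper's one-line proof is the (correct and standard) remark that every irreducible subsystem of $A_n$ is again of type $A$.
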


We need more notations and several lemmas to investigate the reduction process for the other classical types.  First consider the parabolic reduction. Choose standard $I\subset\Delta$. For $\beta=\pm(e_i\pm e_j)$ ($i<j$), there exist $s, t\in\{1, \cdots,  m\}$ such that $q_{s-1}<i\leq q_s$ and $q_{t-1}<j\leq q_t$. Set $c_1^I(\beta)=s$. Put $c_2^I(\beta)=t$ when $t>s$ and $c_2^I(\beta)=m$ when $t=s$. For $\beta=e_i$ or $2e_i$ with $q_{s-1}<i\leq q_s$, set $c_1^I(\beta)=s$ and $c_2^I(\beta)=m$. If $I$ is not standard, set $c_i^I(\beta)=c_i^{\vf(I)}(\vf(\beta))$ for $i=1, 2$.

\begin{lemma}\label{rclem1}
Suppose $\Phi=B_n$ $($resp. $C_n$$)$. Choose $I\subset\Delta$. Fix $\beta\in\Phi\backslash\Phi_I$. Then
\begin{equation}\label{rcl1eq1}
(\Phi_{\beta, 1})_{\beta, 0}=(\bbQ\beta+\bbQ \Phi_{I_s}+\bbQ\Phi_{I_t})\cap\Phi,
\end{equation}
where $s=c_1^I(\beta)$ and $t=c_2^I(\beta)$. In particular, $(\Phi_{\beta, 1})_{\beta, 0}$ is of type $B$ $($resp. $C$$)$ when $t=m$ and $|I_s\cup I_m|\geq1$, otherwise it is of type $A$.
\end{lemma}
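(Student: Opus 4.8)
The plan is to turn this into a direct computation with coordinate subspaces, using that for $\Phi=B_n$ (resp. $C_n$) the roots are exactly the $\pm e_i\pm e_j$ ($i\neq j$) together with the $\pm e_i$ (resp. $\pm 2e_i$), and that each $\bbQ\Phi_{I_r}$ is an explicit coordinate block. First I would fix notation: for $1\leq r<m$ set $E_r=\bbQ e_{q_{r-1}+1}\oplus\cdots\oplus\bbQ e_{q_r}$ and $E_m=\bbQ e_{q_{m-1}+1}\oplus\cdots\oplus\bbQ e_n$, so $\frh^*=\bigoplus_{r=1}^m E_r$, and write $V_r=\bbQ\Phi_{I_r}$. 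Since $\Phi_{I_r}$ has type $A_{n_r-1}$ for $r<m$, one has $V_r=\{x\in E_r\mid \sum_k x_k=0\}$; since $\Phi_{I_m}$ has type $B$ or $C$ (or is empty), $V_m=E_m$ if $I_m\neq\emptyset$ and $V_m=0$ otherwise. As $\beta\notin\Phi_I$ and any root supported only inside $E_m$ lies in $\Phi_{I_m}\subset\Phi_I$, the root $\beta$ is supported on $E_s\cup E_t$ with $s=c_1^I(\beta)<m$ and $t=c_2^I(\beta)$. Put $U=\bbQ\beta+V_s+V_t$ and $U''=\bigoplus_{r\neq s,t}V_r$; then $\bbQ\Phi_I+\bbQ\beta=U\oplus U''$ and $U,U''$ have disjoint coordinate supports, so $U\perp U''$.

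The heart of the proof is to show $(\Phi_{\beta,1})_{\beta,0}=U\cap\Phi$, which is \eqref{rcl1eq1}. One inclusion is clear since $\beta\in U\cap\Phi\subseteq\Phi_{\beta,1}$. For the reverse, I would take $\gamma\in\Phi_{\beta,1}=(U\oplus U'')\cap\Phi$ and project onto the blocks $E_r$: if $\gamma$ has a coordinate in some $E_r$ with $r\neq s,t$, its $E_r$-component is a nonzero element of $V_r$, hence has coordinate sum $0$, which (checking the possible shapes $\pm e_k,\pm e_k\pm e_l$) forces it to be $\pm(e_k-e_l)$ with $k,l\in E_r$ and forces $\gamma$ to be supported entirely in $E_r$, i.e. $\gamma\in\Phi_{I_r}$. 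Hence $\Phi_{\beta,1}=(U\cap\Phi)\sqcup\bigsqcup_{r\neq s,t}\Phi_{I_r}$, an orthogonal decomposition because $U\perp U''$; thus $U\cap\Phi$ is a union of irreducible components of $\Phi_{\beta,1}$, and once it is known to be irreducible it is the single component through $\beta$, namely $(\Phi_{\beta,1})_{\beta,0}$.

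It then remains to identify $U\cap\Phi$ and its type by a short case analysis on the shape of $\beta$. If $\beta=\pm e_i\pm e_j$ with $i,j$ in distinct blocks $s<t<m$, then $U$ is supported on $E_s\cup E_t$, and — after the sign change $e_l\mapsto -e_l$ ($l\in E_t$) in the case $\beta=e_i+e_j$ — $U$ is the coordinate-sum-zero hyperplane of $E_s\oplus E_t$, whence $U\cap\Phi$ is of type $A_{n_s+n_t-1}$. In the remaining cases $t=c_2^I(\beta)=m$ — namely $\beta=e_i$ or $2e_i$, or $\beta=e_i+e_j$ with $i,j$ in one block, or $\beta=\pm e_i\pm e_j$ with a coordinate in $E_m$ — adjoining $\bbQ\beta$ to $V_s$ (absorbing $e_j\in E_m$ in the last case) fills up $E_s$, so $U=E_s\oplus V_m$ and $U\cap\Phi$ is the set of all roots of $\Phi$ supported on those $n_s+n_m=|I_s\cup I_m|+1$ coordinates, a system of type $B_{n_s+n_m}$ (resp. $C_{n_s+n_m}$); this is genuinely of type $B$ (resp. $C$) exactly when $|I_s\cup I_m|\geq 1$, and collapses to $B_1=C_1=A_1$ when $I_s=I_m=\emptyset$. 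Putting the two cases together yields the ``in particular'' assertion.

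I expect the main obstacle to be the second step, i.e. verifying that the orthogonal splitting $\Phi_{\beta,1}=(U\cap\Phi)\sqcup\bigsqcup_{r\neq s,t}\Phi_{I_r}$ really does exhaust $\Phi_{\beta,1}$ — equivalently, that no root of $\Phi_{\beta,1}$ has coordinates both inside and outside $E_s\cup E_t$. This hinges on the precise coordinate descriptions of the $V_r$ (coordinate-sum-zero for $r<m$, full for $r=m$) and of the $B_n$/$C_n$ roots, and on tracking the boundary cases $I_m=\emptyset$ and $|I_s\cup I_m|=0$. The rest, including the sign change that exhibits the $A$-type system when $\beta=e_i+e_j$ straddles two blocks, is routine bookkeeping.
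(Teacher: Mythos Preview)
Your approach is sound and genuinely different from the paper's. The paper argues case by case on the shape of $\beta$: in each case it names the unique positive root $\gamma$ with $I\cup\{\gamma\}$ a simple system for $\Phi_{\beta,1}$ (invoking Remark~\ref{redrmk1}), and then reads off the Dynkin type of the irreducible component through $\beta$ from that explicit simple system. Your orthogonal decomposition $\Phi_{\beta,1}=(U\cap\Phi)\sqcup\bigsqcup_{r\neq s,t}\Phi_{I_r}$ is more structural and sidesteps finding $\gamma$ explicitly; the price is a little linear algebra to set up the splitting, after which the same short case analysis identifies the type of $U\cap\Phi$. Both yield \eqref{rcl1eq1}.

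There is one step you should patch, precisely the one you flag. Your argument in the second paragraph says that if $\gamma\in\Phi_{\beta,1}$ has a coordinate in some $E_r$ with $r\neq s,t$, then its $E_r$-component lies in $V_r$ and ``hence has coordinate sum $0$.'' This is valid for $r<m$, but when $t<m$ the index $r=m$ occurs among the $r\neq s,t$, and $V_m=E_m$ carries no sum constraint. The fix is short: suppose $t<m$ and $\gamma$ has a coordinate in $E_m$ and another, say $\pm e_j$, in $E_s$ (the case $E_t$ is symmetric). Writing $\gamma=u+u''$ with $u\in U$, $u''\in U''$, the $E_t$-component of $u$ must vanish, i.e.\ $c\beta_t+v_t=0$ for some $c\in\bbQ$, $v_t\in V_t$; taking coordinate sums on $E_t$ (where $\beta_t=\pm e_q$ has sum $\pm1$ and $v_t$ has sum $0$) forces $c=0$. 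Then the $E_s$-component of $u$ is $v_s\in V_s$, of coordinate sum $0$, contradicting that it equals $\pm e_j$. So any $\gamma$ touching $E_m$ is supported entirely in $E_m$, hence lies in $\Phi_{I_m}$, and your decomposition goes through. With this in place your proof is complete.
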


\begin{proof}
It suffices to consider the case $\Phi=B_n$. The proof for $\Phi=C_n$ is similar. Denote $\Phi'=(\Phi_{\beta, 1})_{\beta, 0}$. For any $\beta\in\Phi^+\backslash\Phi_I$, there exists exactly one positive root $\gamma\in\Phi_{\beta, 1}$ such that $I\cup\{\gamma\}$ is a basis for $\Phi_{\beta, 1}$ (see Remark \ref{redrmk1}). And $\Phi'$ is the irreducible component of $\Phi_{\beta, 1}$ containing $\beta$.

First consider the case $\beta=\pm(e_i-e_j)$ with $1\leq i<j\leq n$. Since $\beta\not\in\Phi_I$, we must have $q_{s-1}<i\leq q_s\leq q_{t-1}<j\leq q_t$ for $s=c_1^I(\beta)<t=c_2^I(\beta)$. It is evident that $\{e_{q_s}-e_{q_{t-1}+1}\}\cup I$ is a basis of $\Phi_{\beta, 1}$, that is, $\gamma=e_{q_s}-e_{q_{t-1}+1}$. In view of Lemma \ref{simclem1}, the irreducible component $\Phi'$ of $\Phi_{\beta, 1}$ containing $\beta$ has a basis $\{\gamma\}\cup I_s\cup I_t$. So (\ref{rcl1eq1}) follows. If $t<m$, the simple roots of $\Phi'$ are
\[
\{e_{q_{s-1}+1}-e_{q_{s-1}+2}, \cdots, e_{q_s-1}-e_{q_s}, e_{q_s}-e_{q_{t-1}+1}, e_{q_{t-1}+1}-e_{q_{t-1}+2}, \cdots,  e_{q_t-1}-e_{q_t}\}.
\]
So $\Phi'\simeq A_{n_s+n_t-1}$. If $t=m$, then $q_{m-1}<j\leq n$ and $n_m=n-q_{m-1}\geq1$. The simple roots of $\Phi'$ are
\[
\{e_{q_{s-1}+1}-e_{q_{s-1}+2}, \cdots, e_{q_s}-e_{q_{m-1}+1}, e_{q_{m-1}+1}-e_{q_{m-1}+2}, \cdots,  e_{n-1}-e_{n}, e_n\}.
\]
Therefore $\Phi'\simeq B_{n_s+n_m}$ (keeping in mind that $n_s+n_m\geq 2$).

Then assume that $\beta=\pm(e_i+e_j)$ with $q_{s-1}<i\leq q_s\leq q_{t-1}<j\leq q_t$. We get $\gamma=e_{q_s}+e_{q_{t}}$ for $t<m$ and $\gamma=e_{q_s}-e_{q_{m-1}+1}$ for $t=m$. In both cases (\ref{rcl1eq1}) holds. If $t<m$, the set of simple roots of $\Phi'$ is
\[
\{e_{q_{s-1}+1}-e_{q_{s-1}+2}, \cdots, e_{q_s-1}-e_{q_s}, e_{q_s}+e_{q_t},  e_{q_t-1}-e_{q_t}, \cdots, e_{q_{t-1}+1}-e_{q_{t-1}+2}\}.
\]
So $\Phi'\simeq A_{n_s+n_t-1}$. If $t=m$, then $\Phi'\simeq B_{n_s+n_m}$ ($n_s+n_m\geq 2$).

Next assume that $\beta=\pm(e_i+e_j)$ with $q_{s-1}<i<j\leq q_s$. So $n_s=q_s-q_{s-1}\geq2$ and $s<t=c_2^I(\beta)=m$. Since $e_i-e_j\in\Phi_{I_s}\subset\Phi_I\subset\Phi_{\beta, 1}$, we obtain $e_i, e_j\in\Phi_{\beta, 1}$. It follows that $e_k\in\Phi_{\beta, 1}$ for $q_{s-1}<k\leq q_s$. If  $q_{m-1}<n$, then $e_l\in\Phi_I\subset\Phi_{\beta, 1}$ for $q_{m-1}<l\leq n$. We obtain $\gamma=e_{q_s}-e_{q_{m-1}+1}$ and $\Phi'\simeq B_{n_s+n_m}$. If $q_{m-1}=n$, then $\gamma=e_{q_s}$ and $\Phi'\simeq B_{n_s}$. The equation (\ref{rcl1eq1}) holds in all these cases.

At last, assume that $\beta=\pm e_i$ for $q_{s-1}<i\leq q_s$. Then $s<t=c_2^I(\beta)=m$. We get $\gamma=e_{q_s}-e_{q_{m-1}+1}$ and $\Phi'\simeq B_{n_s+n_m}$ for $n_m\geq1$. Similarly, one obtains $\gamma=e_{q_s}$ for $n_m=0$, while $\Phi'\simeq B_{n_s}$ for $n_s\geq2$ and $\Phi'\simeq A_{1}$ for $n_s=1$. Moreover, the equation (\ref{rcl1eq1}) follows in either case.

In view of $n_s+n_m=|I_s|+|I_m|+1$, the second statement is an easy consequence of the above proof.
\end{proof}

\begin{lemma}\label{rclem2}
Suppose $\Phi=D_n$. Choose $I\subset\Delta$. Fix $\beta\in\Phi\backslash\Phi_I$. Then
\begin{equation}\label{rcl2eq1}
(\Phi_{\beta,1})_{\beta, 0}=\{\pm\beta\}\ \mbox{or}\ (\bbQ\beta+\bbQ \Phi_{I_s}+\bbQ\Phi_{I_t})\cap\Phi,
\end{equation}
where $s=c_1^I(\beta)$ and $t=c_2^I(\beta)$. In particular, $(\Phi_{\beta,1})_{\beta, 0}$ is of type $D$ when $t=m$ and $|I_s\cup I_m|\geq3$, otherwise it is of type $A$.
\end{lemma}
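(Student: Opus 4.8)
The plan is to imitate the proof of Lemma \ref{rclem1}, running through the possible shapes of $\beta$ in $D_n$, but with two features special to this type: the last block $\Phi_{I_m}$ is itself a type $D$ system (or $A_1\times A_1$, or empty) rather than type $A$, so linking $\beta$ to it fills up a whole coordinate subspace; and the outcome can degenerate to the rank-one system $\{\pm\beta\}$ when the blocks involved are too small, which is why that alternative must appear in (\ref{rcl2eq1}). I would first reduce to the case where $I$ is standard: if $I$ is not standard, apply the isomorphism $\vf=s_{e_n}$ of (\ref{simceq1}); since $\vf$ permutes $\Phi$, commutes with the operations $\Phi'\mapsto(\Phi')_{\beta,0}$ and $\Phi'\mapsto\Phi'_{\beta,1}$ after replacing $\beta$ by $\vf(\beta)$, carries each $\Phi_{I_s}$ to $\Phi_{\vf(I)_s}$, preserves root-system type, and satisfies $c_i^I(\beta)=c_i^{\vf(I)}(\vf(\beta))$ by definition, the statement for $(I,\beta)$ follows from that for $(\vf(I),\vf(\beta))$. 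Assuming $I$ standard, I would use Remark \ref{redrmk1}: there is a unique $\gamma\in\Phi_{\beta,1}^+$ with $I\cup\{\gamma\}$ a simple system of $\Phi_{\beta,1}$, and $(\Phi_{\beta,1})_{\beta,0}$ is the subsystem spanned by the connected component of the Dynkin diagram of $I\cup\{\gamma\}$ containing $\beta$; so the task is to exhibit $\gamma$ and read off that component.

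Next I would split into cases on $\beta$ (there are no short roots, so only $\beta=\pm(e_i-e_j)$ and $\beta=\pm(e_i+e_j)$ occur). For $\beta=\pm(e_i-e_j)$ with $i<j$: since $\beta\notin\Phi_I$, the indices $i,j$ lie in distinct blocks $s=c_1^I(\beta)<t=c_2^I(\beta)$. If $t<m$, take $\gamma=e_{q_s}-e_{q_{t-1}+1}$; its component links the two type-$A$ blocks $I_s,I_t$, giving $\Phi'=(\bbQ\beta+\bbQ\Phi_{I_s}+\bbQ\Phi_{I_t})\cap\Phi\simeq A_{n_s+n_t-1}$. If $t=m$, then $j$ lies in the last block, whose $\bbQ$-span is the full coordinate subspace of that block, so $\bbQ\beta+\bbQ\Phi_{I_s}+\bbQ\Phi_{I_m}$ is the full span on the union of the two blocks and $\Phi'=(\bbQ\beta+\bbQ\Phi_{I_s}+\bbQ\Phi_{I_m})\cap\Phi$ is the type $D_{n_s+n_m}$ system there. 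For $\beta=\pm(e_i+e_j)$ with $i<j$ there are three subcases. If $i,j$ lie in distinct blocks $s<t$ with $t<m$, take $\gamma=e_{q_s}+e_{q_t}$ and get $\Phi'=(\bbQ\beta+\bbQ\Phi_{I_s}+\bbQ\Phi_{I_t})\cap\Phi\simeq A_{n_s+n_t-1}$, which is $\{\pm\beta\}$ exactly when $n_s=n_t=1$. If they lie in distinct blocks with $t=m$, the same full-span argument as above yields the type $D_{n_s+n_m}$ system. If $i,j$ lie in the same block $s$ (so $n_s\geq2$ and $t=c_2^I(\beta)=m$), then $e_i-e_j\in\Phi_{I_s}\subset\Phi_{\beta,1}$ forces $e_k\in\Phi_{\beta,1}$ for every index $k$ of block $s$, hence also for the last block; one finds $\gamma=e_{q_s-1}+e_{q_s}$ when $n_m=0$ and $\gamma=e_{q_s}-e_{q_{m-1}+1}$ when $n_m\geq1$, giving $\Phi'=(\bbQ\beta+\bbQ\Phi_{I_s}+\bbQ\Phi_{I_m})\cap\Phi$, the type $D_{n_s+n_m}$ system on the union of blocks — which collapses to $\{\pm\beta\}$ precisely when $n_s=2,n_m=0$.

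Finally I would assemble the conclusion uniformly: the $\bbQ$-span of a type-$A$ block is its zero-sum hyperplane while the span of a type-$D$ block (including $D_2\simeq A_1\times A_1$) is the full coordinate subspace of that block, so in every case $\Phi'$ is exactly the subsystem $(\bbQ\beta+\bbQ\Phi_{I_s}+\bbQ\Phi_{I_t})\cap\Phi$ or $\{\pm\beta\}$ as claimed; and using the low-rank coincidences $D_2\simeq A_1\times A_1$ and $D_3\simeq A_3$, the system $D_{n_s+n_m}$ is genuine type $D$ iff $n_s+n_m\geq4$, i.e. iff $|I_s\cup I_m|=n_s+n_m-1\geq3$, and is type $A$ (or $\{\pm\beta\}$) otherwise, which is the ``in particular'' assertion. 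The main obstacle I anticipate is the bookkeeping around the last block $\Phi_{I_m}$: unlike in types $B$ and $C$ it may be empty, a genuine $D$-system of rank $\geq3$, or the reducible $A_1\times A_1$ when $q_{m-1}=n-2$, and the exact value of $\gamma$, as well as whether $\Phi'$ is genuinely type $D$, type $A$, or just $\{\pm\beta\}$, all hinge on these small-rank coincidences; pinning the threshold at $|I_s\cup I_m|\geq3$ and cleanly isolating the $\{\pm\beta\}$ outcomes is where the care is needed, while the reduction via $\vf$ and the identification of $\gamma$ from Remark \ref{redrmk1} are routine.
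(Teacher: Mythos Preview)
Your approach is essentially the same as the paper's: reduce to standard $I$ via $\vf$, use Remark \ref{redrmk1} to identify the extra simple root $\gamma$ of $\Phi_{\beta,1}$, and run through the possible shapes of $\beta$ to read off the connected component and its type, invoking the low-rank coincidences $D_2\simeq A_1\times A_1$ and $D_3\simeq A_3$ at the end. Two minor remarks: writing ``$e_k\in\Phi_{\beta,1}$'' in type $D$ is abuse of notation since $e_k$ is not a root (you mean $e_k\in\bbQ\Phi_{\beta,1}$), and for standard $I$ one always has $n_m=0$ or $n_m\geq2$ (never $n_m=1$), so your threshold ``$n_m\geq1$'' could be sharpened---but neither affects correctness, and the paper commits the same notational abuse.
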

\begin{proof}
With (\ref{simceq1}), we only need to consider the case when $I$ is standard. Set $\Phi'=(\Phi_{\beta,1})_{\beta, 0}$. The simple system corresponding to $\Phi_{\beta, 1}^+:=\Phi_{\beta, 1}\cap\Phi^+$ is $I\cup\{\gamma\}$.

First assume that $\beta=\pm(e_i\pm e_j)$ with $q_{s-1}<i\leq q_s\leq q_{t-1}<j\leq q_t$. If $t<m$, we can follow the proof in Lemma \ref{rclem1} and get $\Phi'\simeq A_{n_s+n_t-1}$. If $t=m$, then $q_{m-1}<j\leq n$. We must have $q_{m-1}\leq n-2$ since $q_{m-1}\neq n-1$ for standard $I$. The set of simple roots in $\Phi'$ is $\{\gamma\}\cup I_s\cup I_m$, which is
\[
\{e_{q_{s-1}+1}-e_{q_{s-1}+2}, \cdots, e_{q_s}-e_{q_{m-1}+1}, e_{q_{m-1}+1}-e_{q_{m-1}+2}, \cdots,  e_{n-1}-e_{n}, e_{n-1}+e_{n}\}.
\]
Thus (\ref{rcl2eq1}) follows. In this case, $\Phi'\simeq D_{n_s+n_m}$ when $n_s+n_m\geq4$ and $\Phi'\simeq A_{3}$ when $n_s+n_m=3$ (that is, $n_s=1$ and $n_m=n-q_{m-1}=2$).

Now assume that $\beta=\pm(e_i+e_j)$ with $q_{s-1}<i<j\leq q_s$. Then $s<t=c_2^I(\beta)=m$ and $e_k\in\Phi_{\beta, 1}$ for $q_{s-1}<k\leq q_s$. If $q_{m-1}\leq n-2$, we get $e_l\pm e_{n}\in\Phi_I$ and thus $e_l, e_n\in\bbQ\Phi_I$ for $q_{m-1}<l<n$. So $e_k\pm e_l\in\Phi_{\beta, 1}$ and $\gamma=e_{q_s}-e_{q_{m-1}+1}$. Then $\Phi'\simeq D_{n_s+n_m}$ (keeping in mind that $n_s+n_m\geq 2+2=4$). If $q_{m-1}=n$, then $I_m=\emptyset$ and $\gamma=e_{q_s-1}+e_{q_s}$. The set of simple roots of $\Phi'$ is
\[
\{e_{q_{s-1}+1}-e_{q_{s-1}+2}, \cdots, e_{q_s-1}-e_{q_s}, e_{q_s-1}+e_{q_s}\}
\]
when $n_s\geq3$ and is $\{e_{q_s-1}+e_{q_s}\}$ when $n_s=2$ (in this case one has $\Phi_{\{\gamma\}\cup I_s\cup I_m}=\{\pm(e_{q_s-1}\pm e_{q_s})\}\simeq A_1\times A_1$ is not irreducible, the irreducible subsystem of $\Phi_{\beta, 1}$ containing $\beta=e_{q_s-1}+e_{q_s}$ is $\{\pm\beta\}$). Therefore, $\Phi'\simeq D_{n_s}$ when $n_s\geq 4$ and $\Phi'\simeq A_{3}$ when $n_s=3$ and $\Phi'=\{\pm\beta\}\simeq A_1$ when $n_s=2$. Then (\ref{rcl2eq1}) holds in all these cases.

In view of $n_s+n_m=|I_s|+|I_m|+1$, the second statement follows from the above proof.
\end{proof}

Now consider the singular reduction. Choose $\lambda\in\frh^*$. For $\beta=\pm(e_i\pm e_j)$ ($i<j$), set $d_1^\lambda(\beta)=\max\{|\lambda_i|, |\lambda_j|\}$. Put $d_2^\lambda(\beta)=\min\{|\lambda_i|, |\lambda_j|\}$ if $|\lambda_i|\neq|\lambda_j|$ and $d_2^\lambda(\beta)=0$ if $|\lambda_i|=|\lambda_j|$. For $\beta=\pm e_i$ or $\pm2e_i$. Set $d_1^\lambda(\beta)=|\lambda_i|$ and $d_2^\lambda(\beta)=0$. Therefore we always have $d_2^\lambda(\beta)=0$ for $\beta\in \Phi_\lambda$. For $a\in\bbR$, define the following subsets of $\Phi_\lambda$:
\[
\Phi_\lambda(a):=\{\gamma\in \Phi_\lambda\ |\ d_1^\lambda(\gamma)=a\}.
\]
Then $\Phi_\lambda(a)$ is a subsystem of $\Phi_\lambda$. Set $\Phi_\lambda^+(a)=\Phi_\lambda(a)\cap\Phi^+$. One has
\begin{equation*}
\Phi_\lambda=\bigsqcup_{a\in\bbR} \Phi_\lambda(a).
\end{equation*}

\begin{lemma}\label{rclem3}
Suppose that $\Phi=B_n$ $($resp. $C_n$$)$. Choose $I\subset\Delta$. Fix an integral weight $\lambda\in\Lambda_I^+$ and $\beta\in\Psi_\lambda^+$. Then
\[
(\Phi_{\beta, 2})_{\beta, 0}=(\bbQ\beta+\bbQ \Phi_\lambda(a)+\bbQ \Phi_\lambda(b))\cap\Phi,
\]
where $a=d_1^\lambda(\beta)$ and $b=d_2^\lambda(\beta)$. In particular, $(\Phi_{\beta, 2})_{\beta, 0}$ is of type $B$ $($resp. $C$$)$ when $b=0$ and $\rank(\Phi_\lambda(a)\cup \Phi_\lambda(0))\geq1$, otherwise it is of type $A$.
\end{lemma}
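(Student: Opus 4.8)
The plan is to follow the strategy of Lemma~\ref{rclem1}, with the block decomposition $\Phi_I=\bigsqcup_s\Phi_{I_s}$ replaced by the decomposition $\Phi_\lambda=\bigsqcup_c\Phi_\lambda(c)$. It suffices to treat $\Phi=B_n$, the case $C_n$ being identical after the usual replacement of $e_i$ by $2e_i$. First I would record the coordinate structure of the blocks; write $\bbQ^{S}:=\sum_{k\in S}\bbQ e_k$ for $S\subseteq\{1,\dots,n\}$. For $c>0$, putting $S_c=\{k\mid |\lambda_k|=c\}$, the roots of $\Phi$ lying in $\Phi_\lambda$ and supported on $S_c$ are exactly the $e_k-e_l$ with $\lambda_k=\lambda_l$ and the $e_k+e_l$ with $\lambda_k=-\lambda_l$; hence $\Phi_\lambda(c)\simeq A_{|S_c|-1}$, and its span $\bbQ\Phi_\lambda(c)$ is a hyperplane of $\bbQ^{S_c}$ containing no coordinate vector $e_k$. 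For $c=0$, putting $S_0=\{k\mid\lambda_k=0\}$, the block $\Phi_\lambda(0)$ is the full subsystem of $\Phi$ supported on $S_0$, of type $B_{|S_0|}$, and $\bbQ\Phi_\lambda(0)=\bbQ^{S_0}$. The sets $S_c$ are pairwise disjoint, so the blocks are pairwise orthogonal; and from the definitions of $a=d_1^\lambda(\beta)$ and $b=d_2^\lambda(\beta)$ one checks $0\le b<a$, that $\beta$ is supported on $S_a\cup S_b$, and that the $\bbQ^{S_a}$-component of $\beta$ is a nonzero vector not lying in the hyperplane $\bbQ\Phi_\lambda(a)$.

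The core step is to prove
\[
\Phi_{\beta,2}=\Phi'\ \sqcup\ \bigsqcup_{c\neq a,b}\Phi_\lambda(c),\qquad\text{where }\ \Phi'=(\bbQ\beta+\bbQ\Phi_\lambda(a)+\bbQ\Phi_\lambda(b))\cap\Phi.
\]
The inclusion $\supseteq$ is immediate, since $\bbQ\Phi_\lambda(c)\subseteq\bbQ\Phi_\lambda$ for every $c$ and $\bbQ\beta\subseteq\bbQ\Phi_\lambda+\bbQ\beta$, while the summands on the right are mutually orthogonal because $\Phi'\subseteq\bbQ^{S_a}\oplus\bbQ^{S_b}$. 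For the inclusion $\subseteq$, I would take a root $\delta=v+t\beta\in\Phi_{\beta,2}$ with $v\in\bbQ\Phi_\lambda=\bigoplus_c\bbQ\Phi_\lambda(c)$ and $t\in\bbQ$, and exploit that $\delta$ has support of size at most $2$ while $\bbQ\Phi_\lambda(c)$ misses every $e_k$ for $c>0$: a short case analysis on which blocks meet the support of $\delta$ (in the spirit of the proof of Lemma~\ref{rclem1}) forces $\delta$ either to lie in a single block $\bbQ\Phi_\lambda(c)$ with $c\neq a,b$, in which case $\delta\in\bbQ\Phi_\lambda(c)\cap\Phi=\Phi_\lambda(c)$, or to lie in $\bbQ\beta+\bbQ\Phi_\lambda(a)+\bbQ\Phi_\lambda(b)$, in which case $\delta\in\Phi'$. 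Granting that $\Phi'$ is irreducible, the displayed decomposition exhibits $\Phi'$ as the orthogonal summand of $\Phi_{\beta,2}$ that contains $\beta$, so $(\Phi_{\beta,2})_{\beta,0}=\Phi'$, which is the first assertion of the lemma.

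Finally I would identify $\Phi'$ explicitly, which simultaneously establishes its irreducibility and yields the last assertion. If $b>0$, then $\bbQ\Phi_\lambda(a)$ and $\bbQ\Phi_\lambda(b)$ are hyperplanes of $\bbQ^{S_a}$ and $\bbQ^{S_b}$ and $\beta$ bridges the two blocks; after choosing the signs in the type-$A$ realizations of $\Phi_\lambda(a)$ and $\Phi_\lambda(b)$ on $S_a$ and $S_b$ so that $\beta$ becomes a difference of two of the chosen unit vectors, one sees that $\bbQ\Phi'$ is a single hyperplane of $\bbQ^{S_a}\oplus\bbQ^{S_b}$ and $\Phi'\simeq A_{|S_a|+|S_b|-1}$, of type $A$. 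If $b=0$, then since $\bbQ\Phi_\lambda(0)=\bbQ^{S_0}$ and $\beta$ contributes a vector of $\bbQ^{S_a}$ outside $\bbQ\Phi_\lambda(a)$, the subspace $\bbQ\Phi'$ equals $\bbQ^{S_a}\oplus\bbQ^{S_0}$, so $\Phi'$ is the full subsystem of $\Phi$ on $S_a\cup S_0$, namely $B_{|S_a|+|S_0|}$ (resp. $C_{|S_a|+|S_0|}$); this is genuinely of type $B$ (resp. $C$) exactly when $|S_a|+|S_0|\ge2$, i.e. when $\rank\Phi_\lambda(a)+\rank\Phi_\lambda(0)=(|S_a|-1)+|S_0|\ge1$, and it is $\{\pm\beta\}\simeq A_1$ otherwise. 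I expect the main obstacle to be this middle step: the case analysis establishing the displayed decomposition, and above all the sign bookkeeping (tracking whether $\lambda_k=+c$ or $-c$) required to pin down $\Phi'$ in the $b>0$ case and to dispose of the degenerate subcases (an empty or one-element block, or a $\beta$ supported on a single block). It is elementary but fiddly.
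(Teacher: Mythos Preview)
Your proposal is correct, but it takes a different route from the paper. The paper does not redo the coordinate analysis of Lemma~\ref{rclem1} for the singular subsystem $\Phi_\lambda$; instead it \emph{reduces} to Lemma~\ref{rclem1} via conjugation. Writing $\lambda=w\overline\lambda$ with $w\in{}^IW^J$ and $\overline\lambda$ dominant, one has $\Phi_\lambda=w\Phi_J$ with $J\subset\Delta$ a standard subset, and the paper checks that $w\Phi_{J_s}=\Phi_\lambda(a_s)$ for each block. Then $w^{-1}\Phi_{\beta,2}=(\bbQ w^{-1}\beta+\bbQ\Phi_J)\cap\Phi$ is exactly of the form treated in Lemma~\ref{rclem1}, with $(I,\beta)$ replaced by $(J,w^{-1}\beta)$; applying that lemma and conjugating back by $w$ gives both the displayed formula and the type identification in one stroke. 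Your approach instead parallels the proof of Lemma~\ref{rclem1} directly on the (non-standard) blocks $\Phi_\lambda(c)$, which works but forces you to repeat the support/sign case analysis you flag as ``fiddly.'' The paper's trick buys you exactly that: the bookkeeping has already been done once for standard parabolic blocks, and conjugation transports it for free.
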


\begin{proof}
It suffices to consider the case $\Phi=B_n$. Denote $\Phi'=(\Phi_{\beta, 2})_{\beta, 0}$. Note that $\lambda=w\overline\lambda$ for $w\in{}^IW^J$. Since $\overline\lambda$ is dominant, one has $\overline\lambda_1\geq\cdots\geq\overline\lambda_n\geq0$.
With $\Phi_{J_s}\subset \Phi_{\overline\lambda}$, we obtain (recall \ref{simceq2})
\[
\overline\lambda_{\overline q_{s-1}+1}=\cdots=\overline\lambda_{\overline q_{s}}=a_s
\]
for $1\leq s<\overline m$ and $\overline\lambda_{\overline q_{\overline m-1}+1}=\cdots=\overline\lambda_{n}=0$. Then $a_1>\cdots>a_{\overline m}=0$. It can be easily checked that for $\gamma\in\Phi$,
\begin{equation}\label{simcleq41}
d_1^{\overline\lambda}(\gamma)=a_s\ \mbox{and}\ d_2^{\overline\lambda}(\gamma)=a_t
\end{equation}
where $s=c_1^J(\gamma)$ and $t=c_2^J(\gamma)$. In particular,
\begin{equation}\label{simcleq42}
\Phi_{J_s}=\{\gamma\in\Phi_J\ |\ d_1^{\overline\lambda}(\gamma)=a_s\}.
\end{equation}
Note that for each $w\in W$, there exists a permutation $\sigma$ on $\{1, \cdots, n\}$ such that $we_i=\pm e_{\sigma(i)}$. Thus
\[
|\overline\lambda_i|=|\langle\overline\lambda, e_i\rangle|=|\langle\lambda, we_i\rangle|=|\lambda_{\sigma(i)}|.
\]
One has $d_i^{\overline\lambda}(\gamma)=d_i^\lambda(w\gamma)$ for any $\gamma\in\Phi$ and $i=1, 2$. Then (\ref{simcleq42}) yields
\begin{equation}\label{simcleq43}
w\Phi_{J_s}=\{w\gamma\in w\Phi_J\ |\ d_1^{\overline\lambda}(\gamma)=a_s\}=\{w\gamma\in \Phi_\lambda\ |\ d_1^\lambda(w\gamma)=a_s\}=\Phi_\lambda(a_s).
\end{equation}
On the other hand, $w^{-1}\Phi'$ is the irreducible component containing $w^{-1}\beta$ of
\[
w^{-1}\Phi_{\beta, 2}=w^{-1}(\bbQ \beta+\bbQ \Phi_\lambda)\cap\Phi=(\bbQ w^{-1}\beta+\bbQ \Phi_J)\cap\Phi.
\]
It follows from the argument in Lemma \ref{rclem1} that
\[
w^{-1}\Phi'=(\bbQ w^{-1}\beta+\bbQ \Phi_{J_s}+\bbQ \Phi_{J_t})\cap\Phi
\]
for $s=c_1^J(w^{-1}\beta)$ and $t=c_2^J(w^{-1}\beta)$. In view of (\ref{simcleq43}), we get
\[
\Phi'=(\bbQ \beta+\bbQ w\Phi_{J_s}+\bbQ w\Phi_{J_t})\cap\Phi=(\bbQ \beta+\bbQ \Phi_\lambda(a_s)+\bbQ \Phi_\lambda(a_t))\cap\Phi.
\]
With $\gamma$ replaced by $w^{-1}\beta$ in (\ref{simcleq41}), one has $a_s=d_1^{\overline\lambda}(w^{-1}\beta)=d_1^\lambda(\beta)=a$ and $a_t=d_2^{\overline\lambda}(w^{-1}\beta)=d_2^\lambda(\beta)=b$. Keeping in mind Lemma \ref{rclem1}, $\Phi'$ is of type $B$ if and only if $t=c_2^J(w^{-1}\beta)=\overline m$ (that is, $b=a_{\overline m}=0$) and $|J_s\cup J_{\overline m}|\geq1$ (that is, $\rank(\Phi_\lambda(a)\cup \Phi_\lambda(0))\geq1$).

\end{proof}

\begin{lemma}\label{rclem4}
Suppose that $\Phi=D_n$. Choose $I\subset\Delta$. Fix integral weight $\lambda\in\Lambda_I^+$ and $\beta\in\Psi_\lambda^+$. Then
\begin{equation}\label{simcl5eq1}
(\Phi_{\beta, 2})_{\beta, 0}=\{\pm\beta\}\ \mbox{or}\ (\bbQ\beta+\bbQ \Phi_\lambda(a)+\bbQ \Phi_\lambda(b))\cap\Phi,
\end{equation}
where $a=d_1^\lambda(\beta)$ and $b=d_2^\lambda(\beta)$. In particular, $(\Phi_{\beta, 2})_{\beta, 0}$ is of type $D$ when $b=0$ and $\rank (\Phi_\lambda(a)\cup \Phi_\lambda(0))\geq3$, otherwise it is of type $A$.
\end{lemma}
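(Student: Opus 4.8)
The statement for $D_n$ is the exact analogue of Lemma \ref{rclem3}, so the plan is to mirror that proof, reducing to the parabolic computation of Lemma \ref{rclem2} via conjugation by $w$, where $\lambda = w\overline\lambda$ with $w\in{}^IW^J$. First I would set $\Phi'=(\Phi_{\beta,2})_{\beta,0}$ and record, exactly as in Lemma \ref{rclem3}, that $\overline\lambda$ dominant forces $\overline\lambda_{\overline q_{s-1}+1}=\cdots=\overline\lambda_{\overline q_s}=a_s$ for $1\le s<\overline m$, $\overline\lambda_{\overline q_{\overline m-1}+1}=\cdots=\overline\lambda_n=0$, and $a_1>\cdots>a_{\overline m}=0$. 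Then for $\gamma\in\Phi$ one has $d_1^{\overline\lambda}(\gamma)=a_s$, $d_2^{\overline\lambda}(\gamma)=a_t$ with $s=c_1^J(\gamma)$, $t=c_2^J(\gamma)$, hence $\Phi_{J_s}=\{\gamma\in\Phi_J\mid d_1^{\overline\lambda}(\gamma)=a_s\}$. Writing $we_i=\pm e_{\sigma(i)}$ for the permutation $\sigma$ attached to $w$, we get $|\overline\lambda_i|=|\lambda_{\sigma(i)}|$, so $d_i^{\overline\lambda}(\gamma)=d_i^{\lambda}(w\gamma)$ and therefore $w\Phi_{J_s}=\Phi_\lambda(a_s)$.

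Next, since $w^{-1}\Phi'$ is the irreducible component of $w^{-1}\Phi_{\beta,2}=(\bbQ w^{-1}\beta+\bbQ\Phi_J)\cap\Phi = (w^{-1}\beta)_{\ast,1}$-type object containing $w^{-1}\beta$, I would invoke Lemma \ref{rclem2} (the parabolic reduction in type $D$) to conclude
\[
w^{-1}\Phi' = \{\pm w^{-1}\beta\}\quad\text{or}\quad(\bbQ w^{-1}\beta+\bbQ\Phi_{J_s}+\bbQ\Phi_{J_t})\cap\Phi,
\]
with $s=c_1^J(w^{-1}\beta)$, $t=c_2^J(w^{-1}\beta)$. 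Applying $w$ and using $w\Phi_{J_s}=\Phi_\lambda(a_s)$ gives
\[
\Phi' = \{\pm\beta\}\quad\text{or}\quad(\bbQ\beta+\bbQ\Phi_\lambda(a_s)+\bbQ\Phi_\lambda(a_t))\cap\Phi.
\]
Finally, taking $\gamma=w^{-1}\beta$ in the identity $d_i^{\overline\lambda}(\gamma)=d_i^\lambda(w\gamma)$ yields $a_s=d_1^{\overline\lambda}(w^{-1}\beta)=d_1^\lambda(\beta)=a$ and $a_t=d_2^{\overline\lambda}(w^{-1}\beta)=d_2^\lambda(\beta)=b$, which is (\ref{simcl5eq1}). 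For the "in particular" clause, the type of $\Phi'$ is the type of $w^{-1}\Phi'$, which by the second assertion of Lemma \ref{rclem2} is $D$ precisely when $t=\overline m$ and $|J_s\cup J_{\overline m}|\ge 3$; translating, $t=\overline m$ means $a_t=a_{\overline m}=0$, i.e. $b=0$, and $|J_s\cup J_{\overline m}|\ge 3$ means $\rank(\Phi_\lambda(a)\cup\Phi_\lambda(0))\ge 3$, and otherwise it is of type $A$.

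I expect no genuine obstacle here: the argument is purely bookkeeping, transporting the already-established type-$D$ parabolic reduction (Lemma \ref{rclem2}) along the conjugation $w$, exactly as Lemma \ref{rclem3} transports Lemma \ref{rclem1}. The only points needing a little care are (i) the possibility $w^{-1}\Phi'=\{\pm w^{-1}\beta\}$ (the $A_1$ case coming from $n_s=2$, $I_m=\emptyset$ in Lemma \ref{rclem2}), which must be carried through to the final formula, hence the "$\{\pm\beta\}$ or" in (\ref{simcl5eq1}); and (ii) matching the edge convention when $J$ is not standard — but as in the earlier lemmas this is absorbed by the map $\vf$ of (\ref{simceq1}), so one may assume $J$ standard at the outset. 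One should also note that $\Phi_\lambda(a_s)$ is genuinely a subsystem of $\Phi_\lambda$ (stated before Lemma \ref{rclem3}), so the right-hand side of (\ref{simcl5eq1}) is well defined.
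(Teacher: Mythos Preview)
Your proposal is correct and follows essentially the same route as the paper: conjugate by $w$ to transport the singular reduction $(\Phi_{\beta,2})_{\beta,0}$ to the parabolic side, invoke Lemma~\ref{rclem2}, and conjugate back, exactly as Lemma~\ref{rclem3} transports Lemma~\ref{rclem1}. The paper's own proof is in fact just the sketch ``imitate closely the argument in Lemma~\ref{rclem3} and Lemma~\ref{rclem2}'', which you have faithfully expanded; the only cosmetic difference is that the paper asserts the key identity $w\Phi_{J_s}=\Phi_\lambda(a_s)$ holds \emph{whether or not $J$ is standard}, whereas you reduce to the standard case via $\vf$.
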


\begin{proof}
Assume that $\lambda=w\overline\lambda$ for $w\in{}^IW^J$. The dominant weight $\overline\lambda$ satisfies $\overline\lambda_1\geq\cdots\geq\overline\lambda_{n-1}\geq\pm\overline\lambda_n$. By imitating closely the argument in Lemma \ref{rclem3} and Lemma \ref{rclem2}, we can eventually get (whether or not $J$ is standard)
\begin{equation}\label{simcl5eq2}
w\Phi_{J_s}=\{w\gamma\in \Phi_\lambda\ |\ d_1^\lambda(w\gamma)=a_s\}=\Phi_\lambda(a_s)
\end{equation}
and
\[
(\Phi_{\beta, 2})_{\beta, 0}=\{\pm\beta\}\ \mbox{or}\ (\bbQ\beta+\bbQ \Phi_\lambda(a)+\bbQ \Phi_\lambda(b))\cap\Phi
\]
for $a=d_1^\lambda(\beta)$ and $b=d_2^\lambda(\beta)$.

The second statement also follows from Lemma \ref{rclem2}.
\end{proof}

Let $\Phi=B_n$, $C_n$ or $D_n$. Fix $\lambda\in\frh^*$. For $z\in\bbC$, denote
\[
K_{(z)}:=\{1\leq i\leq n\mid\lambda_i-z\in\bbZ\}.
\]
Then $\{1, 2, \cdots, n\}=\bigsqcup_{0\leq \Ree(z)<1} K_{(z)}$. If $z\not\in\frac{1}{2}\bbZ$, set
\[
\Phi_{(z)}=\{\pm(e_i-e_j), \pm(e_j+e_k), \pm(e_k-e_l)\mid i, j\in K_{(z)}, k, l\in K_{(1-z)}, i\neq j\ \mbox{and}\ k\neq l\}.
\]
Then $\Phi_{(z)}$ is a subsystem of type $A$. If $\Phi=B_n$ and $z\in\frac{1}{2}\bbZ$, put
\[
\Phi_{(z)}=\{\pm(e_i\pm e_j), \pm e_k\mid i, j, k\in K_{(z)}\ \mbox{and}\ i<j\}.
\]
If $\Phi=C_n$ and $z\in\bbZ$, write
\[
\Phi_{(z)}=\{\pm(e_i\pm e_j), \pm 2e_k\mid i, j, k\in K_{(z)}\ \mbox{and}\ i<j\}.
\]
If $\Phi=C_n$ and $z\in\frac{1}{2}+\bbZ$ or $\Phi=D_n$ and $z\in\frac{1}{2}\bbZ$, write
\[
\Phi_{(z)}=\{\pm(e_i\pm e_j)\mid i, j\in K_{(z)}\ \mbox{and}\ i<j\}.
\]

The following result is an easy consequence.

\begin{lemma}\label{rclem5}
Let $\Phi=B_n, C_n$ or $D_n$ and $\lambda\in\frh^*$. Then $\Phi_{[\lambda]}=\bigsqcup_{0\leq\Ree(z)\leq 1/2}\Phi_{(z)}$.
\end{lemma}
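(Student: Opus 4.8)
The plan is a direct verification. Recall that $\Phi_{[\lambda]}=\{\alpha\in\Phi\mid\langle\lambda,\alpha^\vee\rangle\in\bbZ\}$, and observe that for every root $\alpha$ of $B_n$, $C_n$ or $D_n$ the number $\langle\lambda,\alpha^\vee\rangle$ depends only on the residues $\lambda_i\bmod\bbZ$. Since $i\in K_{(z)}$ precisely when $\lambda_i$ lies in the residue class $z$, and the sets $K_{(z)}$ (over $0\le\Ree(z)<1$) partition $\{1,\dots,n\}$, it suffices to read off, for each root type, which pair of blocks $K_{(z)},K_{(1-z)}$ it can belong to. First I would dispose of the roots $\pm(e_i\pm e_j)$, which occur in all three types: for $\alpha=e_i-e_j$ one has $\langle\lambda,\alpha^\vee\rangle=\lambda_i-\lambda_j$, integral iff $i$ and $j$ lie in a common block $K_{(z)}$; for $\alpha=e_i+e_j$ one has $\langle\lambda,\alpha^\vee\rangle=\lambda_i+\lambda_j$, integral iff $i\in K_{(z)}$ and $j\in K_{(1-z)}$ for some $z$, i.e.\ iff the two residues are negatives of one another.

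Next I would handle the type-specific roots. For $\Phi=B_n$ the extra roots are $\pm e_k$, with coroot $2e_k$, so $\langle\lambda,e_k^\vee\rangle=2\lambda_k\in\bbZ$ iff $\lambda_k\in\tfrac12\bbZ$, i.e.\ iff $k\in K_{(0)}\cup K_{(1/2)}$; for $\Phi=C_n$ the extra roots are $\pm2e_k$, with coroot $e_k$, so $\langle\lambda,(2e_k)^\vee\rangle=\lambda_k\in\bbZ$ iff $k\in K_{(0)}$; for $\Phi=D_n$ there are no extra roots. Comparing these computations with the definition of $\Phi_{(z)}$ shows that they match exactly: when $z\notin\tfrac12\bbZ$ the block $K_{(z)}$ is genuinely paired with a distinct block $K_{(1-z)}$ and $\Phi_{(z)}$ consists only of the roots $e_i\pm e_j$ joining these two blocks; when $z\in\{0,\tfrac12\}$ the block is self-paired, which is exactly why $\Phi_{(z)}$ additionally carries the short roots $\pm e_k$ in type $B$ and, for $z=0$, the long roots $\pm2e_k$ in type $C$. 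Thus each root of $\Phi_{[\lambda]}$ lies in $\Phi_{(z)}$ for the representative $z$ of its residue pair with $\Ree(z)\le\tfrac12$, and conversely each such $\Phi_{(z)}$ is contained in $\Phi_{[\lambda]}$ by running the same computation backwards; disjointness follows from disjointness of the $K_{(z)}$, since the support of a root in $\Phi_{(z)}$ determines the unordered pair $\{z,1-z\}$ and hence its chosen representative in the strip $0\le\Ree(z)\le\tfrac12$. The type assertions in the definition of $\Phi_{(z)}$ (type $A$ for $z\notin\tfrac12\bbZ$, etc.) are then immediate from the standard realizations of the classical root systems, identifying $f_a=e_{i_a}$ on $K_{(z)}$ and $f_b=-e_{j_b}$ on $K_{(1-z)}$.

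There is essentially no obstacle here — which is why the lemma is billed as an easy consequence. The one point demanding care is the bookkeeping around the involution $z\mapsto1-z$: one must verify that its two real fixed residues $0$ and $\tfrac12$ are precisely the ones receiving the short/long roots, as done above, and one should note that on the boundary lines $\Ree(z)\in\{0,\tfrac12\}$ a non-real $z$ and its conjugate $1-z$ produce the \emph{same} set $\Phi_{(z)}$, so the disjoint union is understood with these identified (for real $\lambda$ the range $0\le\Ree(z)\le\tfrac12$ already selects exactly one representative from each pair). Everything else is the mechanical evaluation of $\langle\lambda,\alpha^\vee\rangle$ on the finitely many root shapes.
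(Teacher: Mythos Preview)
Your proposal is correct and is precisely the direct verification the paper has in mind: the paper offers no proof beyond the phrase ``an easy consequence,'' and your case-by-case evaluation of $\langle\lambda,\alpha^\vee\rangle$ for each root shape, together with the residue-class bookkeeping via $K_{(z)}$ and the involution $z\mapsto1-z$, is exactly that consequence spelled out. Your remark about the boundary identification on $\Ree(z)\in\{0,\tfrac12\}$ for non-real $z$ is a genuine clarification the paper glosses over.
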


Keeping in mind that $\Phi_I\subset\Phi_{[\lambda]}$ for $\lambda\in\Lambda_I^+$, we have the following result.

\begin{lemma}\label{rclem6}
Let $\Phi=B_n, C_n$ or $D_n$ and $I$ be standard. Choose $\lambda\in\Lambda_I^+$. Fix $z\in\bbC$.
\begin{itemize}
  \item [(1)] Choose $1\leq s<m$. Then $q_{s}\in K_{(z)}$ if and only if $i\in K_{(z)}$ for $q_{s-1}<i\leq q_s$.
  \item [(2)] Suppose $q_{m-1}<n$. Then $n\in K_{(z)}$ if and only if $i\in K_{(z)}$ for $q_{m-1}<i\leq n$.
\end{itemize}
\end{lemma}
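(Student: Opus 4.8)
The plan is to deduce both statements from a single elementary observation about integrality of coordinate differences. For $1\le i<n$ the simple root $\alpha_i=e_i-e_{i+1}$ of $\Phi$ satisfies $\langle\lambda,\alpha_i^\vee\rangle=\lambda_i-\lambda_{i+1}$, so whenever $\alpha_i\in I$ the condition $\lambda\in\Lambda_I^+$ gives $\lambda_i-\lambda_{i+1}\in\bbZ^{>0}$; in particular $\lambda_i$ and $\lambda_{i+1}$ differ by an integer. Since membership of an index $i$ in $K_{(z)}=\{i\mid\lambda_i-z\in\bbZ\}$ depends only on $\lambda_i$ modulo $\bbZ$, it follows that along any block of consecutive indices all of whose internal difference roots lie in $I$, the property ``$i\in K_{(z)}$'' is the same for every index of the block. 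Both parts of the lemma are instances of this principle, the only work being to check that the relevant difference roots do lie in $I$.

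For (1), I would fix $1\le s<m$ and examine the indices $q_{s-1}<i\le q_s$. For $q_{s-1}<i<q_s$ the index $i$ lies strictly between the consecutive members $q_{s-1}$ and $q_s$ of $\{q_0,q_1,\dots,q_m\}$; hence $i\notin\{q_1,\dots,q_{m-1}\}$ and also $i\le q_s-1\le n-1$, so $\alpha_i=e_i-e_{i+1}\in I_s\subset I$. Telescoping the relations $\lambda_i-\lambda_{i+1}\in\bbZ$ over $q_{s-1}<i<q_s$ yields $\lambda_i-\lambda_{q_s}\in\bbZ$ for every $i$ with $q_{s-1}<i\le q_s$. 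Consequently, for fixed $z$, one has $\lambda_i-z\in\bbZ$ for some such $i$ if and only if $\lambda_{q_s}-z\in\bbZ$, if and only if $\lambda_i-z\in\bbZ$ for all of them; this is exactly the asserted equivalence $q_s\in K_{(z)}\Leftrightarrow i\in K_{(z)}$ for $q_{s-1}<i\le q_s$.

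For (2), I would assume $q_{m-1}<n$ and consider the indices $q_{m-1}<i\le n$. In types $B_n$ and $C_n$, the roots $\alpha_i=e_i-e_{i+1}$ with $q_{m-1}<i\le n-1$ all lie in $I$, because their indices exceed $q_{m-1}=\max\{q_1,\dots,q_{m-1}\}$ and hence are not among the removed simple roots; telescoping as before gives $\lambda_i-\lambda_n\in\bbZ$ for $q_{m-1}<i\le n$, and the conclusion follows as in (1). In type $D_n$ one must first note that standardness of $I$ together with $q_{m-1}<n$ forces $q_{m-1}\le n-2$: if instead $q_{m-1}=n-1$, then $\alpha_{n-1}=\alpha_{q_{m-1}}\notin I$ while $\alpha_n\in I$ (as $\alpha_n$ is not among the removed simple roots), contradicting the definition of a standard $I$. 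Granting $q_{m-1}\le n-2$, the roots $\alpha_i=e_i-e_{i+1}$ for $q_{m-1}<i\le n-1$ — including $\alpha_{n-1}=e_{n-1}-e_n$ — all lie in $I$, and the same telescoping argument gives $\lambda_i-\lambda_n\in\bbZ$ for $q_{m-1}<i\le n$.

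I do not expect a genuine obstacle here: the entire content is the telescoping of the integrality relations $\lambda_i-\lambda_{i+1}\in\bbZ$ together with the definition of $K_{(z)}$. The only point requiring care is the bookkeeping of which simple roots actually belong to $I$, in particular the $D_n$ endpoint in part (2), where the standardness hypothesis is precisely what rules out $q_{m-1}=n-1$ and lets $\alpha_{n-1}$ be treated on the same footing as the other short-type simple roots.
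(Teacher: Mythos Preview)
Your proof is correct and is exactly the argument the paper has in mind: the paper does not write out a proof but merely remarks ``keeping in mind that $\Phi_I\subset\Phi_{[\lambda]}$ for $\lambda\in\Lambda_I^+$,'' which is precisely the integrality of $\lambda_i-\lambda_{i+1}$ along each $I$-block that you telescope. Your handling of the $D_n$ endpoint in part (2), using standardness to exclude $q_{m-1}=n-1$, is the only nontrivial bookkeeping and is done correctly.
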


\subsection{The case of type $B$} Let $\lambda\in\Lambda_I^+$. Lemma \ref{rclem0} shows that $c(\lambda, \mu)$ is nonzero whenever $\Phi(\beta)$ is of type $A$, where $\mu=ws_\beta\lambda\in\Lambda_I^+$ for $w\in W_I$ and $\beta\in\Psi_\lambda^{++}$. It suffices to consider $\beta\in\Psi_\lambda^{++}$ such that $\Phi(\beta)$ is not of type $A$. Start with $\beta\in\Psi_\lambda^{+}\supset\Psi_\lambda^{++}$. Lemma \ref{rclem5} shows that $\Phi_1(\beta)=(\Phi_{[\lambda]})_{\beta, 0}=\Phi_{(z)}$ for some $z\in\bbC$ with $0\leq\Ree(z)\leq1/2$. If $\Phi(\beta)\subset\Phi_{(z)}$ is not of type $A$, we get $z=0$ or $1/2$ and
\[
\Phi_1(\beta)=\Phi_{(z)}=\{\pm(e_i\pm e_j), \pm e_k\mid i, j, k\in K_{(z)}\ \mbox{and}\ i<j\}
\]
is of type $B$. Lemma \ref{rclem1} and Lemma \ref{rclem6} imply
\[
\Phi_3(\beta)=(\bbQ\beta+\bbQ \Phi_{I_{s}}+\bbQ\Phi_{I_{t}})\cap\Phi_1(\beta)
\]
for $s=c_1^I(\beta)<t=c_2^I(\beta)$. Since $\Phi(\beta)\subset\Phi_3(\beta)$ is not of type $A$, we obtain $t=m$ and $I_s\cup I_m\neq\emptyset$. In view of Lemma \ref{rclem6}, one has
\[
\Phi_3(\beta)=\left\{\pm(e_i\pm e_j), \pm e_k\ |\ i, j, k\in L\ \mbox{and}\ i<j\right\}.
\]
Here the set $L=\{q_{s-1}+1, \cdots, q_s, q_{m-1}+1, \cdots, n\}$ when $n\in K_{(z)}$ and $L=\{q_{s-1}+1, \cdots, q_s\}$ when $n\not\in K_{(z)}$ (the case $L=\{q_{m-1}+1, \cdots, n\}$ is not possible, otherwise $\beta\in\Phi_3(\beta)\subset\Phi_I$, a contradiction). Lemma \ref{rclem3} yields
\[
\Phi_5(\beta)=(\bbQ\beta+\bbQ (\Phi_\lambda(a)\cap\Phi_3(\beta))+\bbQ (\Phi_\lambda(b)\cap\Phi_3(\beta)))\cap\Phi_3(\beta),
\]
where $a=d_1^\lambda(\beta)>b=d_2^\lambda(\beta)$. Moreover, $\Phi_5(\beta)$ is of type $B$ only when $b=0$ and $(\Phi_\lambda(a)\cap\Phi_3(\beta))\cup (\Phi_\lambda(0)\cap\Phi_3(\beta))\neq\emptyset$. On the other hand, $\lambda\in\Lambda_I^+$ ($z\in\frac{1}{2}\bbZ$) implies
\begin{equation}\label{beq1}
\lambda_{q_{s-1}+1}>\lambda_{q_{s-1}+2}>\cdots>\lambda_{q_{s}},\ \lambda_{q_{m-1}+1}>\cdots>\lambda_{n}>0.
\end{equation}
Therefore $\Phi_\lambda^+(0)\cap\Phi_3(\beta)=\emptyset$ or $\{e_i\}$ for some $q_{s-1}< i\leq q_s$ and $\Phi_\lambda^+(a)\cap\Phi_3(\beta)$ must be one of the following cases: (1) $\emptyset$; (2) $\{e_j+e_k\}$ for some $q_{s-1}<j<k\leq q_{s}$; (3) $\{e_j-e_l\}$ for $q_{s-1}<j\leq q_{s}\leq q_{m-1}<l$; (4) $\{e_k+e_l\}$ for $q_{s-1}<k\leq q_{s}\leq q_{m-1}<l$; (5) $\{e_j+e_k, e_j-e_l, e_k+e_l\}$ for $q_{s-1}<j<k\leq q_{s}\leq q_{m-1}<l$.

For example, if $\Phi_\lambda^+(0)\cap\Phi_3(\beta)=\{e_i\}$ for $q_{s-1}<i\leq q_{s}$ and $\Phi_\lambda^+(a)\cap\Phi_3(\beta)=\{e_j+e_k\}$ for $q_{s-1}<j<k\leq q_{s}$, then $\lambda_j=-\lambda_k=a$ and $\lambda_i=0$. With $\beta\in\Psi_\lambda^+$, $d_1^\lambda(\beta)=a$ and $d_2^\lambda(\beta)=0$, we must have $\beta=e_j$ or $e_j+e_i$. So
\[
\Phi_5(\beta)=\{\pm(e_i\pm e_j), \pm(e_i\pm e_k), \pm(e_j\pm e_k), \pm e_i, \pm e_j, \pm e_k\}
\]
with $\Phi_I\cap\Phi_5(\beta)=\{\pm(e_i-e_j), \pm(e_i-e_k), \pm(e_j-e_k)\}$ and $\Phi_\lambda\cap\Phi_5(\beta)=\{\pm e_i, \pm(e_j+e_k)\}$. The corresponding basic system is $(B_3, 3, 2)$ and $\Phi(\beta)=\Phi_5(\beta)$. It can be verified that $\Psi_\lambda^+\cap\Phi(\beta)=\{e_j, e_j+e_i\}$. Note that $\langle s_{e_j}\lambda, e_j-e_k\rangle=0=\langle s_{e_j+e_i}\lambda, e_i-e_k\rangle$. Both $s_{e_i}\lambda$ and $s_{e_i+e_l}\lambda$ are $\Phi_I$-singular. It forces $\Psi_\lambda^{++}\cap\Phi(\beta)=\emptyset$. This is case (7) in Table \ref{simctab1}, while all the possible cases are listed in the table (the last column shows whether or not the generalized Verma module $M(\lambda|_{\Phi(\beta)}, \Phi_I\cap\Phi(\beta), \Phi(\beta))$ is simple). To summarize:

\begin{lemma}\label{alem1}
Let $\Phi=B_n$, $\lambda\in\Lambda_I^+$ and $\beta\in\Psi_\lambda^+$. Suppose that $\Phi(\beta)$ is not of type $A$. Then $\beta$ must satisfy one of the conditions listed in Table \ref{simctab1} (where $q_{s-1}<i, j, k\leq q_s$ and $q_{m-1}<l\leq n$).
\end{lemma}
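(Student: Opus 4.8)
The plan is to run the reduction process $\Phi_{[\lambda]}=\Phi_0(\beta)\supset\Phi_1(\beta)\supset\cdots$ for $\beta\in\Psi_\lambda^+$ explicitly, using the type-$B$ incarnations of the three reduction steps (Lemma \ref{rclem1} for the parabolic step, Lemma \ref{rclem3} for the singular step, and Lemmas \ref{rclem5}, \ref{rclem6} to describe $\Phi_{[\lambda]}$ and how a standard $I$ sits inside it), and to record at each stage the constraints that the hypothesis ``$\Phi(\beta)$ is not of type $A$'' imposes on $\beta$. Since Lemma \ref{rclem0} already disposes of the type-$A$ terminal systems, after this analysis the remaining $\beta$ are exactly those meeting one of the conditions of Table \ref{simctab1}, and the simplicity entry in the last column is read off from Theorem \ref{sbthm1}.

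First, by Lemma \ref{rclem5}, $\Phi_1(\beta)=(\Phi_{[\lambda]})_{\beta,0}$ is one component $\Phi_{(z)}$ with $0\le\Ree(z)\le 1/2$; as $\Phi_{(z)}$ is of type $A$ unless $z\in\{0,1/2\}$ and $\Phi(\beta)\subseteq\Phi_1(\beta)$ cannot be of type $A$, we get $z\in\{0,1/2\}$, so $\Phi_1(\beta)$ is of type $B$ on the index set $K_{(z)}$ and $a:=d_1^\lambda(\beta)\in\tfrac12\bbZ^{>0}$. Next the parabolic step followed by passage to the irreducible component gives, via Lemmas \ref{rclem1} and \ref{rclem6}, $\Phi_3(\beta)=(\bbQ\beta+\bbQ\Phi_{I_s}+\bbQ\Phi_{I_t})\cap\Phi_1(\beta)$ with $s=c_1^I(\beta)<t=c_2^I(\beta)$; non-type-$A$-ness forces $t=m$ and $I_s\cup I_m\ne\emptyset$, which fixes the segment indices $q_{s-1}<i,j\le q_s$ and $q_{m-1}<l\le n$ of the table and exhibits $\Phi_3(\beta)$ as the $B$-subsystem on $L=\{q_{s-1}+1,\dots,q_s\}$ or $\{q_{s-1}+1,\dots,q_s,q_{m-1}+1,\dots,n\}$ according as $n\notin K_{(z)}$ or $n\in K_{(z)}$. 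Finally the singular step followed by irreducible component gives, via Lemma \ref{rclem3}, $\Phi_5(\beta)=(\bbQ\beta+\bbQ(\Phi_\lambda(a)\cap\Phi_3(\beta))+\bbQ(\Phi_\lambda(b)\cap\Phi_3(\beta)))\cap\Phi_3(\beta)$ with $a>b=d_2^\lambda(\beta)$, and non-type-$A$-ness forces $b=0$ and $(\Phi_\lambda(a)\cup\Phi_\lambda(0))\cap\Phi_3(\beta)\ne\emptyset$.

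At this point the strict inequalities (\ref{beq1}), valid because $\lambda\in\Lambda_I^+$ and $z\in\tfrac12\bbZ$, collapse the analysis to finitely many configurations: $\Phi_\lambda^+(0)\cap\Phi_3(\beta)$ is $\emptyset$ or $\{e_i\}$, and $\Phi_\lambda^+(a)\cap\Phi_3(\beta)$ is one of the five short lists displayed before the lemma. The proof then runs through each admissible pair of these two sets: in each case $\beta$ is pinned down by $d_1^\lambda(\beta)=a$, $d_2^\lambda(\beta)=0$ and $\beta\in\Psi_\lambda^+$; one computes $\Phi_5(\beta)$ together with $\Phi_I\cap\Phi_5(\beta)$ and $\Phi_\lambda\cap\Phi_5(\beta)$, checks the rank conditions $\rank(\Phi_I\cap\Phi_5(\beta))=\rank(\Phi_\lambda\cap\Phi_5(\beta))=\rank\Phi_5(\beta)-1$ so that $\Phi(\beta)=\Phi_5(\beta)$ is already basic, and identifies the resulting basic system among those of Theorem \ref{bthm1}. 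I expect the main obstacle to be precisely this final bookkeeping: ensuring that no admissible configuration of $\bigl(\Phi_\lambda^+(0)\cap\Phi_3(\beta),\ \Phi_\lambda^+(a)\cap\Phi_3(\beta)\bigr)$ is omitted, handling correctly a possible nonessential $0$-entry of $\lambda|_{\Phi(\beta)}$ (Remark \ref{simcrmk1}), and verifying in the borderline configurations (those in which $\Phi_5(\beta)$ would look like a $B_2$- or $D_4$-type system) that the terminal system is genuinely reached and that no earlier step of the reduction shrinks it further.
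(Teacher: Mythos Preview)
Your proposal is correct and follows essentially the same route as the paper: the argument preceding Lemma~\ref{alem1} in \S7.3 is exactly the three-stage reduction you describe (integral via Lemma~\ref{rclem5}, parabolic via Lemmas~\ref{rclem1} and~\ref{rclem6}, singular via Lemma~\ref{rclem3}), followed by the case analysis based on the possible shapes of $\Phi_\lambda^+(0)\cap\Phi_3(\beta)$ and $\Phi_\lambda^+(a)\cap\Phi_3(\beta)$ coming from (\ref{beq1}), with one case worked in detail and the rest recorded in Table~\ref{simctab1}. Two minor remarks: your cautions about a $D_4$-type borderline and a nonessential $0$-entry are unnecessary here, since those phenomena arise only in type $D$ (see \S7.4 and Remark~\ref{simcrmk1}), not in type $B$.
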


\begin{table}[htbp]\footnotesize
\begin{tabular}{|c|c|p{2.5cm}|p{1.9cm}|p{1.5cm}|c|c|}
\hline
 & $\Phi_\lambda^+(0)\cap\Phi_3(\beta)$ & \centering $\Phi_\lambda^+(a)\cap\Phi_3(\beta)$ & \centering $\Psi_\lambda^+\cap\Phi(\beta)$ & \centering $\Psi_\lambda^{++}\cap\Phi(\beta)$ & basic sys. & simple \\
\hline
1 & \multirow{4}{*}{$\emptyset$} & \centering$e_j+e_k$ $(j<k)$ & \centering$e_j$ & \centering  $\emptyset$ & $(B_2, 2, 2)$ & Yes\\
\cline{1-1} \cline{3-7}
2 & & \centering$e_j-e_l$ &\centering $e_j$, $e_j+e_l$ & \centering  $e_j$, $e_j+e_l$ & $(B_2, 1, 2)$ & Yes\\
\cline{1-1} \cline{3-7}
3 & & \centering$e_k+e_l$ & \centering$\emptyset$ & \centering  $\emptyset$ & $-$ & $-$ \\
\cline{1-1} \cline{3-7}
4 & & \centering$e_j+e_k$, $e_j-e_l$, $e_k+e_l$ $(j<k)$ & \centering$e_j$, $e_j+e_l$ & \centering $\emptyset$ & $(B_3, 2, 3)$ & Yes\\
\hline
5 & \multirow{6}{*}{\centering $e_i$} & \centering\multirow{2}{*}{$\emptyset$} & \centering$e_j$, $e_j+e_i$, $(j<i)$ & \centering  $e_j$, $e_j+e_i$ & $(B_2, 2, 1)$ & Yes\\
\cline{1-1} \cline{4-7}
6 & &  & \centering$e_i+e_l$ & \centering  $\emptyset$ & $(B_2, 1, 1)$ & Yes\\
\cline{1-1} \cline{3-7}
7 & & \centering$e_j+e_k$ $(j<i<k)$ & \centering$e_j$, $e_j+e_i$ & \centering  $\emptyset$ & $(B_3, 3, 2)$ & Yes\\
\cline{1-1} \cline{3-7}
8 & & \centering$e_j-e_l$ $(j<i)$& \centering$e_j$, $e_j+e_i$, $e_j+e_l$, $e_i+e_l$ &  \centering $e_j$, $e_j+e_i$, $e_j+e_l$ & $(B_3, 2, 2)$ & No\\
\cline{1-1} \cline{3-7}
9 & & \centering$e_k+e_l$ $(i<k)$ & \centering$e_i+e_l$ &  \centering$\emptyset$ & $(B_3, 2, 2)$ & Yes\\
\cline{1-1} \cline{3-7}
10 & & \centering$e_j+e_k$, $e_j-e_l$, $e_k+e_l$ $(j<i<k)$ & \centering$e_j$, $e_j+e_i$, $e_j+e_l$, $e_i+e_l$ & \centering$\emptyset$  & $(B_4, 3, 3)$ & Yes\\
\hline
\end{tabular}
\bigskip
\caption{reduction of type $B$}
\label{simctab1}
\end{table}

\begin{lemma}\label{rctlem2}
Theorem \ref{rcthm1} holds for $\Phi=B_n$.
\end{lemma}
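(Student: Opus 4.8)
The plan is to feed the reduction process of the previous subsections into the known list of Jantzen coefficients of basic generalized Verma modules. By Lemma \ref{sjlem1}, whether $c(\lambda,\mu)$ vanishes is decided by the roots $\beta\in\Psi_\lambda^{++}$ with $\mu=ws_\beta\lambda$ ($w\in W_I$), and by Lemma \ref{invjlem} we have $|c(\lambda,\mu)|=|c(\lambda|_{\Phi(\beta)},\mu',\Phi_I\cap\Phi(\beta),\Phi(\beta))|$, so everything reduces to the basic generalized Verma module attached to $\Phi(\beta)$. If $\Phi(\beta)$ is of type $A$, Lemma \ref{rclem0} already gives $|c(\lambda,\mu)|=1\neq0$, so throughout we may assume $\Phi(\beta)$ is not of type $A$, and then Lemma \ref{alem1} (Table \ref{simctab1}) lists the finitely many possibilities for $\beta$.

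First I would discard the rows of Table \ref{simctab1} with $\Psi_\lambda^{++}\cap\Phi(\beta)=\emptyset$: since $\beta\in\Psi_\lambda^{++}$ and, by Lemma \ref{redslem5}, $\Psi_\lambda^{++}\cap\Phi(\beta)=\Psi^{++}(\lambda|_{\Phi(\beta)},\Phi_I\cap\Phi(\beta),\Phi(\beta))$, such rows cannot occur. This leaves exactly rows $2$, $5$ and $8$. For rows $2$ and $5$ the attached basic system is $(B_2,1,2)$, resp.\ $(B_2,2,1)$, which is semisimple by Theorem \ref{nzj} (equivalently by Remark \ref{sbrmk2} and Table \ref{sbtb1}); hence all its Jantzen coefficients vanish and $c(\lambda,\mu)=0$. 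For row $8$ the attached basic system is $(B_3,2,2)$, which is not semisimple, and here I would verify $c(\lambda,\mu)\neq0$: using Lemma \ref{sjlem2}(2) we may normalise $\lambda|_{\Phi(\beta)}$ to the standard basic weight $\lambda^1=(1,0,1)$ of $(B_3,2,2)$ (Theorem \ref{bwthm2}); since $\mu'\in W(\Phi(\beta))\lambda^1$ with $\mu'<\lambda^1$ and $\mu'\neq\lambda^1$, and the only other basic weight of that system is $\lambda^2=(0,-1,1)$, we get $\mu'=\lambda^2$ for every $\beta\in\Psi_\lambda^{++}\cap\Phi(\beta)=\{e_j,e_j+e_i,e_j+e_l\}$, so $|c(\lambda,\mu)|=|c(\lambda^1,\lambda^2)|=1$ by Table \ref{njtb1}. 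Thus $c(\lambda,\mu)=0$ forces $\beta$ to fall in row $2$ or row $5$, and conversely those rows give $c(\lambda,\mu)=0$.

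It then remains to match the combinatorial data defining rows $2$ and $5$ with conditions (i) and (ii) of Theorem \ref{rcthm1}. Here I would invoke \eqref{beq1}: when $\Phi_1(\beta)=\Phi_{(z)}$ is of type $B$ one has $z\in\tfrac12\bbZ$, so the coordinates $\lambda_k$ are strictly decreasing inside each $I$-segment and $\lambda_n>0$ in the last segment; consequently $\Phi_\lambda^+(0)\cap\Phi_3(\beta)=\emptyset$ is equivalent to $\lambda_k\neq0$ for $q_{s-1}<k\le q_s$, while the precise shape of $\Phi_\lambda^+(a)\cap\Phi_3(\beta)$ (namely $\{e_j-e_l\}$ in row $2$ and $\emptyset$ in row $5$) is equivalent to the inequalities $\lambda_k\neq-\lambda_i$ (and, in row $5$, $\lambda_l\neq\lambda_i$ for $q_{m-1}<l\le n$). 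Matching $\beta\in\{e_j,e_j+e_l\}$ of row $2$ (where $q_{s-1}<j\le q_s\le q_{m-1}<l\le n$, $e_j-e_l\in\Phi_\lambda$, hence $\lambda_j=\lambda_l=a\in\tfrac12\bbZ^{>0}$) against $\beta=e_i$ or $e_i+e_j$ of Theorem \ref{rcthm1}(i), and $\beta\in\{e_j,e_j+e_i\}$ of row $5$ (where $q_{s-1}<j<i\le q_s$, $\lambda_i=0$, $\lambda_j\in\bbZ^{>0}$) against Theorem \ref{rcthm1}(ii), then completes the proof; the analogous $C_n$ and $D_n$ versions will be handled the same way in the following subsections.

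The step I expect to be the main obstacle is this last, purely bookkeeping, translation: one must check that rows $2$ and $5$ of Table \ref{simctab1}, read off from the chain $\Phi_{[\lambda]}\supset\Phi_1(\beta)\supset\Phi_3(\beta)\supset\Phi_5(\beta)=\Phi(\beta)$, correspond \emph{exactly} — with no extra and no missing cases — to the coordinate conditions (i) and (ii), and that in row $8$ the normalisation above genuinely lands on the single nonzero coefficient of $(B_3,2,2)$. Everything else is a direct appeal to the lemmas already established.
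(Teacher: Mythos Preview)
Your proposal is correct and follows essentially the same approach as the paper: reduce via Lemma~\ref{invjlem} to the basic system $(\Phi(\beta),i_\beta,j_\beta)$, eliminate the rows of Table~\ref{simctab1} with empty $\Psi_\lambda^{++}\cap\Phi(\beta)$, observe that rows $2$ and $5$ land in the semisimple systems $(B_2,1,2)$ and $(B_2,2,1)$ while row $8$ lands in $(B_3,2,2)$ with nonzero coefficient, and then translate rows $2$ and $5$ into the coordinate conditions (\rmnum{1}) and (\rmnum{2}). Your write-up is in fact slightly more explicit than the paper's on the bookkeeping step (and you correctly identify row $8$ rather than row $7$ for $(B_3,2,2)$).
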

\begin{proof}
Choose $w'\in W(\Phi_I\cap\Phi(\beta))$ such that $\mu'=w's_\beta\lambda'\in\Lambda^+(\Phi_I\cap\Phi(\beta), \Phi(\beta))$, where $\lambda'=\lambda|_{\Phi(\beta)}$. Note that $\lambda', \mu'$ are two different basic weights of the basic system $(\Phi(\beta), i_\beta, j_\beta)$.

If $c(\lambda, \mu)=0$, then $c(\lambda', \mu', \Phi_I\cap\Phi(\beta), \Phi(\beta))=0$ in view of Lemma \ref{invjlem}. Lemma \ref{rclem0} and Lemma \ref{alem1} show that $\Phi(\beta)$ is of type $B$. Moreover, $(\Phi(\beta), i_\beta, j_\beta)\simeq(B_2, 1, 2)$, $(B_2, 2, 1)$ or $(B_3, 2, 2)$ (corresponding to cases (2) (5) and (7) respectively in Table \ref{simctab1}) since $\beta\in\Psi_\lambda^{++}\cap\Phi(\beta)$. If $(\Phi(\beta), i_\beta, j_\beta)\simeq(B_3, 2, 2)$, Theorem \ref{nzj} implies that $c(\lambda', \mu', \Phi_I\cap\Phi(\beta), \Phi(\beta))=1$, a contradiction. If $(\Phi(\beta), i_\beta, j_\beta)\simeq(B_2, 1, 2)$, we obtain $\lambda_j=\lambda_l=a$ in view of Lemma \ref{alem1}, where $q_{s-1}<j\leq q_s$ and $q_{m-1}<l\leq n$. Moreover, $\beta\in\{e_j, e_j+e_l\}\subset\Psi_\lambda^{++}$ and $\lambda_u\neq 0, -a$ for $q_{s-1}<u\leq q_s$. It follows from $2a=\langle\lambda, e_j^\vee\rangle\in\bbZ$ that $a\in\frac{1}{2}\bbZ$. If $(\Phi(\beta), i_\beta, j_\beta)\simeq(B_2, 2, 1)$, the argument is similar. Conversely, if (\rmnum{1}) or (\rmnum{2}) holds, The argument before Lemma \ref{alem1} yields $\Phi(\beta)=\Phi_5(\beta)\simeq B_2$ and $(\Phi(\beta), i_\beta, j_\beta)\simeq (B_2, 1, 2)$ or $(B_2, 2, 1)$. Therefore Theorem \ref{nzj} and Lemma \ref{invjlem} yield $c(\lambda', \mu', \Phi_I\cap\Phi(\beta), \Phi(\beta))=c(\lambda, \mu)=0$.
\end{proof}

\subsection{The case of type $D$} We consider $D_n$ before $C_n$ because we need the result of the former case in the proof of the latter one.

Let $\lambda\in\Lambda_I^+$. Choose $\beta\in\Psi_\lambda^{++}$. Suppose that $\Phi(\beta)$ is not of type $A$. Theorem \ref{bwthm4} shows that $(\Phi(\beta), i_\beta, j_\beta)=(D_4, 2, 2)$ or $(D_5, 3, 3)$. In these cases, $n_m=|I_m|\geq2$ and $q_{m-1}=n-n_m\leq n-2$. Thus $I$ is standard. Lemma \ref{rclem5} shows that
\[
\Phi_1(\beta)=\Phi_{(z)}=\{\pm(e_i\pm e_j)\mid i, j\in K_{(z)}\ \mbox{and}\ i<j\}.
\]
for some $z\in\frac{1}{2}\bbZ$. Denote $s=c_1^I(\beta)$. Lemma \ref{rclem2} and Lemma \ref{rclem6} give
\[
\Phi_3(\beta)=\left\{\pm(e_i\pm e_j)\ |\ i, j\in L\ \mbox{and}\ i<j\right\}.
\]
Here the set $L=\{q_{s-1}+1, \cdots, q_s, q_{m-1}+1, \cdots, n\}$ when $n\in K_{(z)}$ and $L=\{q_{s-1}+1, \cdots, q_s\}$ when $n\not\in K_{(z)}$. Moreover, $|L|\geq4$. Then Lemma \ref{rclem4} yields
\[
\Phi_5(\beta)=(\bbQ\beta+\bbQ (\Phi_\lambda(a)\cap\Phi_3(\beta))+\bbQ (\Phi_\lambda(0)\cap\Phi_3(\beta)))\cap\Phi_3(\beta),
\]
where $a=d_1^\lambda(\beta)>0$ and
\begin{equation}\label{simceq6}
\rank(\Phi_\lambda(a)\cap\Phi_3(\beta))+\rank(\Phi_\lambda(0)\cap\Phi_3(\beta))\geq3.
\end{equation}
Since $\lambda\in\Lambda_I^+$, we have
\begin{equation}\label{beq2}
\lambda_{q_{s-1}+1}>\lambda_{q_{s-1}+2}>\cdots>\lambda_{q_{s}},\ \lambda_{q_{m-1}+1}>\cdots>\lambda_{n-1}>|\lambda_{n}|.
\end{equation}
It can be easily checked that $\Phi_\lambda^+(0)\cap\Phi_3(\beta)=\emptyset$ or $\{e_i\pm e_n\}$ for some $q_{s-1}< i\leq q_s$. Moreover, $\Phi_\lambda^+(a)\cap\Phi_3(\beta)$ must be one of the following cases: (1) $\emptyset$; (2) $\{e_j+e_k\}$ for some $q_{s-1}<j<k\leq q_{s}$; (3) $\{e_j-e_l\}$ for $q_{s-1}<j\leq q_{s}\leq q_{m-1}<l$; (4) $\{e_k+e_l\}$ for $q_{s-1}<k\leq q_{s}\leq q_{m-1}<l$; (5) $\{e_j+e_k, e_j-e_l, e_k+e_l\}$ for $q_{s-1}<j<k\leq q_{s}\leq q_{m-1}<l$. Therefore $\rank(\Phi_\lambda(a)\cap\Phi_3(\beta))\leq 2$ for $a\in\bbZ^{>0}$. We must have $\rank(\Phi_\lambda(0)\cap\Phi_3(\beta))\geq1$ by (\ref{simceq6}). So $\Phi_\lambda^+(0)\cap\Phi_3(\beta)=\{e_i\pm e_n\}$, that is, $\lambda_i=\lambda_n=0$ for some $q_{s-1}< i\leq q_s$. This forces $n\in K_{(z)}$.

For example, if $\Phi_\lambda^+(a)\cap\Phi_3(\beta)=\{e_j+e_k\}$ for $q_{s-1}<j<k\leq q_{s}$, then $\lambda_j=-\lambda_k=a$ and $\lambda_i=\lambda_n=0$. Since $d_1^\lambda(\beta)=a$ and $d_2^\lambda(\beta)=0$, we must have $\beta=e_j+e_i$ or $e_j+e_n$ (keeping in mind that $\beta\in\Psi_\lambda^+$). So
\[
\Phi_5(\beta)=\{\pm(e_i\pm e_j), \pm(e_i\pm e_k), \pm(e_i\pm e_n), \pm(e_j\pm e_k), \pm(e_j\pm e_n), \pm(e_k\pm e_n)\}
\]
with $\Phi_I\cap\Phi_5(\beta)=\{\pm(e_i-e_j), \pm(e_i-e_k), \pm(e_j-e_k)\}$ and $\Phi_\lambda\cap\Phi_5(\beta)=\{\pm(e_j+e_k), \pm (e_i\pm e_n)\}$. In particular, if $\beta=e_j+e_i$, then
\[
\Phi_7(\beta)=(\bbQ\beta+\bbQ(\Phi_I\cap\Phi_5(\beta))\cap\Phi_5(\beta)=\{\pm(e_i\pm e_j), \pm(e_i\pm e_k), \pm(e_j\pm e_k)\}
\]
and $\Phi_\lambda\cap\Phi_7(\beta)=\{\pm(e_j+e_k)\}$. It follows that
\[
\Phi_9(\beta)=(\bbQ\beta+\bbQ(\Phi_\lambda\cap\Phi_7(\beta))\cap\Phi_7(\beta)=\{\pm(e_i+e_j), \pm(e_i-e_k), \pm(e_j+e_k)\}
\]
with $\Phi_I\cap\Phi_9(\beta)=\{\pm(e_i-e_k)\}$ and $\Phi_\lambda\cap\Phi_9(\beta)=\{\pm(e_j+e_k)\}$. The corresponding basic system is $(A_2, 1, 2)$ and $\Phi(\beta)=\Phi_9(\beta)\simeq A_2$ is not of type $D$, a contradiction. We arrive at a similar contradiction when $\beta=e_j+e_n$. We list all the other possible cases in Table \ref{simctab2} (the last column shows whether or not $M(\lambda|_{\Phi(\beta)}, \Phi_I\cap\Phi(\beta), \Phi(\beta))$ is simple).

\begin{lemma}\label{alem2}
Let $\Phi=D_n$, $\lambda\in\Lambda_I^+$ and $\beta\in\Psi_\lambda^+$. Suppose that $\Phi(\beta)$ is not of type $A$. Then $\beta$ must satisfy one of the conditions listed in Table \ref{simctab2} (where $q_{s-1}<i, j, k\leq q_s$ and $q_{m-1}<l<n$).
\end{lemma}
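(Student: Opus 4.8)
The plan is to mimic, for $\Phi=D_n$, the case analysis carried out for type $B$ in the paragraphs preceding Lemma \ref{alem1}: run the reduction process $\Phi_{[\lambda]}=\Phi_0(\beta)\supset\Phi_1(\beta)\supset\Phi_3(\beta)\supset\Phi_5(\beta)\supset\cdots$ of (\ref{redseq1}) step by step, and at each step record exactly which choices keep the surviving subsystem out of type $A$. First I would invoke Theorem \ref{bwthm4}: if $\Phi(\beta)$ is not of type $A$, then $\Phi(\beta)$ is irreducible of type $D$ sitting inside $D_n$, so the corresponding basic system must be $(D_4,2,2)$ or $(D_5,3,3)$. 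In either case $n_m=|I_m|\geq2$, hence $q_{m-1}=n-n_m\leq n-2$; in particular $q_{m-1}\neq n-1$, so $I$ is standard and I may use the decomposition $\Phi_I=\bigsqcup_s\Phi_{I_s}$ together with Lemmas \ref{rclem5} and \ref{rclem6} without worrying about the map $\varphi$.

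Next I would compute the first terms of the chain. By Lemma \ref{rclem5}, $\Phi_1(\beta)=(\Phi_{[\lambda]})_{\beta,0}=\Phi_{(z)}$ for a unique $z$ with $0\leq\Ree(z)\leq1/2$; since $\Phi_{(z)}$ is of type $A$ whenever $z\notin\frac{1}{2}\bbZ$, the hypothesis forces $z\in\frac{1}{2}\bbZ$, so $\Phi_1(\beta)$ is of type $D$. Putting $s=c_1^I(\beta)$, $t=c_2^I(\beta)$, Lemma \ref{rclem2} and Lemma \ref{rclem6} give $\Phi_3(\beta)=(\bbQ\beta+\bbQ\Phi_{I_s}+\bbQ\Phi_{I_t})\cap\Phi_1(\beta)$; for this to avoid type $A$ one needs $t=m$ and $|I_s\cup I_m|\geq3$, and then $\Phi_3(\beta)=\{\pm(e_a\pm e_b)\mid a,b\in L,\ a<b\}$ with $L=\{q_{s-1}+1,\dots,q_s,q_{m-1}+1,\dots,n\}$ when $n\in K_{(z)}$ and $L=\{q_{s-1}+1,\dots,q_s\}$ otherwise (the third possibility $L\subset\{q_{m-1}+1,\dots,n\}$ is excluded by $\beta\notin\Phi_I$). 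Then Lemma \ref{rclem4} yields $\Phi_5(\beta)=(\bbQ\beta+\bbQ(\Phi_\lambda(a)\cap\Phi_3(\beta))+\bbQ(\Phi_\lambda(b)\cap\Phi_3(\beta)))\cap\Phi_3(\beta)$ with $a=d_1^\lambda(\beta)>b=d_2^\lambda(\beta)$, and for $\Phi_5(\beta)$ to be of type $D$ one needs $b=0$ and the rank inequality (\ref{simceq6}).

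From here the argument becomes a finite enumeration. Using $\lambda\in\Lambda_I^+$ and $z\in\frac{1}{2}\bbZ$ one gets the strict chains (\ref{beq2}), which force $\Phi_\lambda^+(0)\cap\Phi_3(\beta)\in\{\emptyset,\{e_i\pm e_n\}\}$ for a single index $q_{s-1}<i\leq q_s$ and restrict $\Phi_\lambda^+(a)\cap\Phi_3(\beta)$ to the five shapes listed before Lemma \ref{alem2}; since then $\rank(\Phi_\lambda(a)\cap\Phi_3(\beta))\leq2$, inequality (\ref{simceq6}) forces $\rank(\Phi_\lambda(0)\cap\Phi_3(\beta))\geq1$, i.e. $\lambda_i=\lambda_n=0$ and $n\in K_{(z)}$. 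For each admissible combination $(\Phi_\lambda^+(0)\cap\Phi_3(\beta),\,\Phi_\lambda^+(a)\cap\Phi_3(\beta))$ and each $\beta$ compatible with $d_1^\lambda(\beta)=a$, $d_2^\lambda(\beta)=0$ and $\beta\in\Psi_\lambda^+$, I would continue the chain through $\Phi_7(\beta)$ (a parabolic step) and $\Phi_9(\beta)$ (a singular step), and either reach a stationary $\Phi(\beta)$ of type $D$ — which must then be $\simeq D_4$ or $\simeq D_5$, matching a row of Table \ref{simctab2} — or see it collapse to type $A$, which is excluded by hypothesis. This produces precisely the list in Table \ref{simctab2}; alongside each row I would also record, exactly as in Table \ref{simctab1}, the sets $\Psi_\lambda^+\cap\Phi(\beta)$ and $\Psi_\lambda^{++}\cap\Phi(\beta)$, the basic system, and the simplicity of $M(\lambda|_{\Phi(\beta)},\Phi_I\cap\Phi(\beta),\Phi(\beta))$, using Theorem \ref{bwthm4} and Corollary \ref{sbcor1}.

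The main obstacle I expect is the bookkeeping in the last step rather than any conceptual difficulty: one must handle the $\pm e_n$ sign ambiguity in $D_n$ with care (a weight with a nonessential $0$-entry, cf. Remark \ref{simcrmk1}), detect when an intermediate subsystem degenerates to $A_1\times A_1$ — the non-irreducible case flagged in Lemmas \ref{rclem2} and \ref{rclem4} — and check for every branch that the chain is genuinely stationary at the claimed index rather than contracting one step later. All the conceptual content is already packaged in Lemmas \ref{rclem2}, \ref{rclem4}, \ref{rclem5}, \ref{rclem6} and the classifications Theorems \ref{bthm1} and \ref{bwthm4}; the proof is the disciplined application of these to the finitely many branches.
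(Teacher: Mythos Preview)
Your proposal is correct and follows essentially the same approach as the paper: the paper's proof is the case analysis in the paragraphs immediately preceding Lemma \ref{alem2}, which invokes Theorem \ref{bwthm4} to force $(\Phi(\beta),i_\beta,j_\beta)\in\{(D_4,2,2),(D_5,3,3)\}$ and hence $I$ standard, then runs the chain $\Phi_1(\beta)\supset\Phi_3(\beta)\supset\Phi_5(\beta)$ via Lemmas \ref{rclem5}, \ref{rclem2}, \ref{rclem6}, \ref{rclem4} exactly as you describe, and finishes with the finite enumeration of $\Phi_\lambda^+(0)\cap\Phi_3(\beta)$ and $\Phi_\lambda^+(a)\cap\Phi_3(\beta)$ under the constraints (\ref{beq2}) and (\ref{simceq6}). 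The paper works out one illustrative branch (the case $\Phi_\lambda^+(a)\cap\Phi_3(\beta)=\{e_j+e_k\}$, which collapses to type $A$ at $\Phi_9(\beta)$ and is therefore excluded) and relegates the remaining branches to Table \ref{simctab2}, just as you propose.
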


\begin{table}[htbp]\footnotesize
\begin{tabular}{|c|c|p{2.5cm}|p{2.0cm}|p{1.5cm}|c|c|}
\hline
 & $\Phi_\lambda^+(0)\cap\Phi_3(\beta)$ & \centering $\Phi_\lambda^+(a)\cap\Phi_3(\beta)$ & \centering $\Psi_\lambda^+\cap\Phi(\beta)$ & \centering $\Psi_\lambda^{++}\cap\Phi(\beta)$ & basic sys. & simple \\
\hline
1 & \multirow{3}{*}{\centering $e_i\pm e_n$} & \centering$e_j-e_l$ $(j<i)$ &\centering $e_j+e_i$, $e_j+e_l$, $e_j+e_n$, $e_i+e_l$ &\centering $e_j+e_i$, $e_j+e_l$ & $(D_4, 2, 2)$ & Yes\\
\cline{1-1} \cline{3-7}
2 & & \centering$e_k+e_l$ $(i<k)$ & \centering $e_i+e_l$ & \centering $\emptyset$ & $(D_4, 2, 2)$ & Yes\\
\cline{1-1} \cline{3-7}
3 & & \centering$e_j+e_k$, $e_j-e_l$, $e_k+e_l$ $(j<i<k)$ & \centering$e_j+e_i$, $e_j+e_l$, $e_j+e_n$, $e_i+e_l$ & \centering $\emptyset$  & $(D_5, 3, 3)$ & Yes\\
\hline
\end{tabular}
\bigskip
\caption{reduction of type $D$}
\label{simctab2}
\end{table}

\smallskip

\begin{lemma}\label{rctlem3}
Theorem \ref{rcthm1} holds for $\Phi=D_n$.
\end{lemma}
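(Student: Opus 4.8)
The plan is to imitate the proof of Lemma \ref{rctlem2}, using the type-$D$ bookkeeping of Lemma \ref{alem2} and Table \ref{simctab2} in place of its type-$B$ analogue. As there, set $\lambda'=\lambda|_{\Phi(\beta)}$ and pick $w'\in W(\Phi_I\cap\Phi(\beta))$ with $\mu':=w's_\beta\lambda'\in\Lambda^+(\Phi_I\cap\Phi(\beta),\Phi(\beta))$, so that $\lambda'$ and $\mu'$ are two distinct basic weights of the basic system $(\Phi(\beta),i_\beta,j_\beta)$; Lemma \ref{invjlem} then reduces the problem to deciding when $c(\lambda',\mu',\Phi_I\cap\Phi(\beta),\Phi(\beta))=0$.

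For the implication ``$c(\lambda,\mu)=0\Rightarrow$ (\rmnum{3})'', Lemma \ref{rclem0} forces $\Phi(\beta)$ not to be of type $A$ (otherwise $|c(\lambda,\mu)|=1$), so Lemma \ref{alem2} places $\beta$ in one of the three rows of Table \ref{simctab2}. Since $\beta\in\Psi_\lambda^{++}$ and $\beta\in\Phi(\beta)$, Lemma \ref{redslem5} gives $\beta\in\Psi_\lambda^{++}\cap\Phi(\beta)$, which is empty in rows $2$ and $3$; hence we are in row $1$, with $(\Phi(\beta),i_\beta,j_\beta)\simeq(D_4,2,2)$. Reading off that row --- $\Phi_\lambda^+(0)\cap\Phi_3(\beta)=\{e_i\pm e_n\}$ and $\Phi_\lambda^+(a)\cap\Phi_3(\beta)=\{e_j-e_l\}$ with $q_{s-1}<j<i\le q_s\le q_{m-1}<l<n$, and $\beta\in\{e_j+e_i,\,e_j+e_l\}$ --- and transcribing into the entries of $\lambda$ yields $\lambda_i=\lambda_n=0$, $\lambda_j=\lambda_l=a$ with $a=\langle\lambda,\beta^\vee\rangle\in\bbZ^{>0}$, and $\lambda_p\neq-a$ for $q_{s-1}<p\le q_s$ (this last inequality being exactly what forces $\Phi_\lambda(a)\cap\Phi_3(\beta)$ to contain no further root); after the relabelling $(j,i,l)\mapsto(i,j,k)$ this is precisely condition (\rmnum{3}) of Theorem \ref{rcthm1}.

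For the converse, assume (\rmnum{3}) holds and run the reduction recalled in the discussion preceding Lemma \ref{alem2}. The relevant coordinate class is $z=0$, since $\lambda_i=\lambda_j=\lambda_n=0$ and $\lambda_k\in\bbZ$, whence $\Phi_1(\beta)=\Phi_{(0)}$ is of type $D$; then $c_1^I(\beta)=s$, $c_2^I(\beta)=m$, $n\in K_{(0)}$, and Lemmas \ref{rclem2} and \ref{rclem6} give $\Phi_3(\beta)=\{\pm(e_p\pm e_q)\mid p,q\in\{q_{s-1}+1,\dots,q_s,q_{m-1}+1,\dots,n\}\}$; finally $\Phi_\lambda^+(0)\cap\Phi_3(\beta)=\{e_j\pm e_n\}$ and, thanks to $\lambda_p\neq-\lambda_i$ on segment $s$, $\Phi_\lambda^+(a)\cap\Phi_3(\beta)=\{e_i-e_k\}$, so Lemma \ref{rclem4} gives $\Phi_5(\beta)=\{\pm(e_p\pm e_q)\mid p,q\in\{i,j,k,n\}\}\simeq D_4$, which is already stationary and satisfies $\rank(\Phi_I\cap\Phi_5(\beta))=\rank(\Phi_\lambda\cap\Phi_5(\beta))=3$. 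Thus $\Phi(\beta)\simeq D_4$ with $(\Phi(\beta),i_\beta,j_\beta)\simeq(D_4,2,2)$, and since $(D_4,2,2)$ is not among the exceptional systems of Theorem \ref{nzj}, all of its Jantzen coefficients vanish; hence $c(\lambda',\mu',\Phi_I\cap\Phi(\beta),\Phi(\beta))=0$, and $c(\lambda,\mu)=0$ follows from Lemma \ref{invjlem}.

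The only genuine obstacle is the index bookkeeping in the second paragraph: one must keep track of the fact that in row $1$ of Table \ref{simctab2} the indices $i,j,k$ all lie in a single segment of $I$, on which $\lambda$ is strictly decreasing --- so the value $0$ and the value $a$ there are each attained exactly once --- and check that, under the relabelling, the inequality $\lambda_l\neq-\lambda_i$ of (\rmnum{3}) is exactly the obstruction to extra roots in $\Phi_\lambda(a)\cap\Phi_3(\beta)$, while the two admissible shapes $\beta=e_i+e_j$ and $\beta=e_i+e_k$ of (\rmnum{3}) match the two elements of $\Psi_\lambda^{++}\cap\Phi(\beta)$ in that row. Everything else is a verbatim adaptation of the type-$B$ argument.
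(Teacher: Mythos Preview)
Your proof is correct and follows essentially the same approach as the paper's: both argue forward by combining Lemma~\ref{rclem0} with Lemma~\ref{alem2}/Table~\ref{simctab2} to force row~1 (the $(D_4,2,2)$ case) and then read off condition~(\rmnum{3}), and both argue the converse by running the reduction to land in $(D_4,2,2)$ and then invoking Theorem~\ref{nzj} and Lemma~\ref{invjlem}. Your write-up is in fact more explicit in the converse direction, spelling out $\Phi_1(\beta)$, $\Phi_3(\beta)$, $\Phi_5(\beta)$ where the paper just points to ``the reasoning before Lemma~\ref{alem2}''. One small slip: in the converse you write ``$\lambda_i=\lambda_j=\lambda_n=0$'', but under (\rmnum{3}) it is $\lambda_j=\lambda_n=0$ while $\lambda_i=\lambda_k\in\bbZ^{>0}$; the conclusion $z=0$ is unaffected since all four coordinates are integers.
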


\begin{proof}
Choose $w'\in W(\Phi_I\cap\Phi(\beta))$ with $\mu'=w's_\beta\lambda'\in\Lambda^+(\Phi_I\cap\Phi(\beta), \Phi(\beta))$, where $\lambda'=\lambda|_{\Phi(\beta)}$.

If $c(\lambda, \mu)=0$, then $c(\lambda', \mu', \Phi_I\cap\Phi(\beta), \Phi(\beta))=0$ keeping in mind Lemma \ref{invjlem}. Lemma \ref{rclem0} and Lemma \ref{alem2} imply that $\Phi(\beta)$ is of type $D$. With $\beta\in\Psi_\lambda^{++}\cap\Phi(\beta)$, we have $(\Phi(\beta), i_\beta, j_\beta)\simeq(D_4, 2, 2)$ (corresponding to case (1) in Table \ref{simctab1}). This forces $\lambda_i=\lambda_n=0$ and $\lambda_j=\lambda_l=a\in\bbZ^{>0}$. Moreover, $\lambda_u\neq-a$ for $q_{s-1}<u\leq q_s$. Conversely, if (\rmnum{3}) holds, the reasoning before Lemma \ref{alem2} gives $(\Phi(\beta), i_\beta, j_\beta)\simeq(D_4, 2, 2)$. It follows from Theorem \ref{nzj} and Lemma \ref{invjlem} that $c(\lambda, \mu)=0$.
\end{proof}

\subsection{The case of type $C$} Let $\lambda\in\Lambda_I^+$ and $\beta\in\Psi_\lambda^+$. Lemma \ref{rclem5} implies that $\Phi_1(\beta)=\Phi_{(z)}$ is of type $A$, $C$ or $D$. We claim that $\Phi(\beta)$ is not of type $D$. Otherwise
\[
\Phi_1(\beta)=\Phi_{(\frac{1}{2})}=\{\pm(e_i\pm e_j)\mid i, j\in K_{(\frac{1}{2})}\ \mbox{and}\ i<j\}.
\]
Obviously, $\lambda_u\neq0$ for all $u\in K_{(\frac{1}{2})}$. On the other hand, Lemma \ref{alem2} shows that at least two of $\lambda_u$'s are $0$ ($\lambda_i=\lambda_n=0$ in Table \ref{simctab2}), a contradiction. Now the argument is similar to the case of type $B$. To summarize:

\begin{lemma}\label{alem3}
Let $\Phi=C_n$, $\lambda\in\Lambda_I^+$ and $\beta\in\Psi_\lambda^{+}$. Suppose that $\Phi(\beta)$ is not of type $A$. Then $\beta$ must satisfy one of the conditions listed in Table \ref{simctab3} (where $q_{s-1}<i, j, k\leq q_s$ and $q_{m-1}<l\leq n$).
\end{lemma}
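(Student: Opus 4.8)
The plan is to run, step for step, the reduction already carried out for type $B$ in the discussion preceding Lemma \ref{alem1}, replacing each structural lemma by its type $C$ counterpart and redoing the case analysis with the long root $2e_i$ playing the role that $e_i$ plays in type $B$; note that for $\Phi=C_n$ every $I\subset\Delta$ is standard, so the twist $\vf$ of (\ref{simceq1}) never enters.

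First I fix $\beta\in\Psi_\lambda^+$ with $\Phi(\beta)$ not of type $A$. By Lemma \ref{rclem5}, $\Phi_1(\beta)=(\Phi_{[\lambda]})_{\beta,0}=\Phi_{(z)}$ for some $z$ with $0\le\Ree(z)\le\frac{1}{2}$. If $z\notin\frac{1}{2}\bbZ$ then $\Phi_{(z)}$, hence $\Phi(\beta)$, is of type $A$; the case $z\in\frac{1}{2}+\bbZ$ gives $\Phi_{(z)}$ of type $D$, already excluded in the paragraph before the lemma; so $z\in\bbZ$ and
\[
\Phi_1(\beta)=\Phi_{(z)}=\{\pm(e_i\pm e_j),\ \pm 2e_k\mid i,j,k\in K_{(z)},\ i<j\}
\]
is of type $C$. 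Setting $s=c_1^I(\beta)$ and $t=c_2^I(\beta)$, Lemmas \ref{rclem1} and \ref{rclem6} give $\Phi_3(\beta)=(\bbQ\beta+\bbQ\Phi_{I_s}+\bbQ\Phi_{I_t})\cap\Phi_1(\beta)$; since $\Phi(\beta)\subseteq\Phi_3(\beta)$ is still not of type $A$, this forces $t=m$ and $I_s\cup I_m\neq\emptyset$, so that $\Phi_3(\beta)$ is the type $C$ subsystem on the index set $L=\{q_{s-1}+1,\dots,q_s,\,q_{m-1}+1,\dots,n\}$ when $n\in K_{(z)}$ and $L=\{q_{s-1}+1,\dots,q_s\}$ when $n\notin K_{(z)}$ (the remaining possibility $L=\{q_{m-1}+1,\dots,n\}$ cannot occur, for then $\beta\in\Phi_I$).

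Next, with $a=d_1^\lambda(\beta)>b=d_2^\lambda(\beta)$, Lemma \ref{rclem3} yields
\[
\Phi_5(\beta)=\bigl(\bbQ\beta+\bbQ(\Phi_\lambda(a)\cap\Phi_3(\beta))+\bbQ(\Phi_\lambda(b)\cap\Phi_3(\beta))\bigr)\cap\Phi_3(\beta),
\]
which is of type $C$ only when $b=0$ and $(\Phi_\lambda(a)\cup\Phi_\lambda(0))\cap\Phi_3(\beta)\neq\emptyset$. The strict inequalities forced by $\lambda\in\Lambda_I^+$, namely $\lambda_{q_{s-1}+1}>\dots>\lambda_{q_s}$ and (when $q_{m-1}<n$) $\lambda_{q_{m-1}+1}>\dots>\lambda_n>0$, then cut $\Phi_\lambda^+(0)\cap\Phi_3(\beta)$ down to $\emptyset$ or $\{2e_i\}$ for a single $q_{s-1}<i\le q_s$ with $\lambda_i=0$, and $\Phi_\lambda^+(a)\cap\Phi_3(\beta)$ down to one of the five shapes $\emptyset$, $\{e_j+e_k\}$, $\{e_j-e_l\}$, $\{e_k+e_l\}$, $\{e_j+e_k,e_j-e_l,e_k+e_l\}$ (indices as in the statement) that already occurred for type $B$. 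For each admissible pair I will continue the reduction, computing $\Phi_7(\beta),\Phi_9(\beta),\dots$ exactly as in the worked $(B_3,3,2)$ example, read off the stationary system $\Phi(\beta)$, match it against the type $C$ basic systems of Theorem \ref{bwthm3} (so $\Phi(\beta)\simeq C_2$, $C_3$ or $C_4$), and discard any pair with $\Phi(\beta)$ of type $A$. The surviving pairs, together with the resulting $\Psi_\lambda^+\cap\Phi(\beta)$, $\Psi_\lambda^{++}\cap\Phi(\beta)$ and simplicity data, are exactly the entries of Table \ref{simctab3}.

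The only real difficulty is the bookkeeping: one must keep straight whether $\beta$ is realized as $2e_i$ or as $e_i\pm e_j$, whether or not $n\in K_{(z)}$, and the reducible $A_1\times A_1$ subsystems that can arise when applying Lemma \ref{rclem1}, so that every branch of the case split terminates at the correct basic system. Conceptually nothing new beyond the type $B$ argument is required, precisely because the type $D$ alternative was eliminated at the outset.
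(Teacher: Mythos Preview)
Your proposal is correct and follows essentially the same approach as the paper. The paper's own argument for Lemma~\ref{alem3} is the brief paragraph preceding it: exclude the type $D$ branch of $\Phi_1(\beta)=\Phi_{(1/2)}$ (since half-integral entries are never zero, contradicting Lemma~\ref{alem2}), then declare ``the argument is similar to the case of type $B$''; you have simply spelled out that similar argument in full, with the long root $2e_i$ replacing $e_i$ and Lemmas~\ref{rclem1}, \ref{rclem3} supplying the type $C$ reductions exactly as they did for type $B$.
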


\begin{table}[htbp]\footnotesize
\begin{tabular}{|c|c|p{2.5cm}|p{1.9cm}|p{1.6cm}|c|c|}
\hline
 & $\Phi_\lambda^+(0)\cap\Phi_3(\beta)$ & \centering $\Phi_\lambda^+(a)\cap\Phi_3(\beta)$ & \centering $\Psi_\lambda^+\cap\Phi(\beta)$ & \centering $\Psi_\lambda^{++}\cap\Phi(\beta)$ & basic sys. & simple \\
\hline
1 & \multirow{4}{*}{$\emptyset$} & \centering$e_j+e_k$ $(j<k)$ & \centering$2e_j$ & \centering  $\emptyset$ & $(C_2, 2, 2)$ & Yes\\
\cline{1-1} \cline{3-7}
2 & & \centering$e_j-e_l$ &\centering $2e_j$, $e_j+e_l$ & \centering  $2e_j$, $e_j+e_l$ & $(C_2, 1, 2)$ & Yes\\
\cline{1-1} \cline{3-7}
3 & & \centering$e_k+e_l$ & \centering$\emptyset$ & \centering  $\emptyset$ & $-$ & $-$ \\
\cline{1-1} \cline{3-7}
4 & & \centering$e_j+e_k$, $e_j-e_l$, $e_k+e_l$ $(j<k)$ & \centering$2e_j$, $e_j+e_l$ & \centering $\emptyset$ & $(C_3, 2, 3)$ & Yes\\
\hline
5 & \multirow{6}{*}{\centering $2e_i$} & \centering\multirow{2}{*}{$\emptyset$} & \centering$2e_j$, $e_j+e_i$, $(j<i)$ & \centering  $2e_j$, $e_j+e_i$ & $(C_2, 2, 1)$ & Yes\\
\cline{1-1} \cline{4-7}
6 & &  & \centering$e_i+e_l$ & \centering  $\emptyset$ & $(C_2, 1, 1)$ & Yes\\
\cline{1-1} \cline{3-7}
7 & & \centering$e_j+e_k$ $(j<i<k)$ & \centering$2e_j$, $e_j+e_i$ & \centering  $\emptyset$ & $(C_3, 3, 2)$ & Yes\\
\cline{1-1} \cline{3-7}
8 & & \centering$e_j-e_l$ $(j<i)$& \centering$2e_j$, $e_j+e_i$, $e_j+e_l$, $e_i+e_l$ &  \centering $2e_j$, $e_j+e_i$, $e_j+e_l$ & $(C_3, 2, 2)$ & No\\
\cline{1-1} \cline{3-7}
9 & & \centering$e_k+e_l$ $(i<k)$ & \centering$e_i+e_l$ &  \centering$\emptyset$ & $(C_3, 2, 2)$ & Yes\\
\cline{1-1} \cline{3-7}
10 & & \centering$e_j+e_k$, $e_j-e_l$, $e_k+e_l$ $(j<i<k)$ & \centering$2e_j$, $e_j+e_i$, $e_j+e_l$, $e_i+e_l$ & \centering$\emptyset$  & $(C_4, 3, 3)$ & Yes\\
\hline
\end{tabular}
\bigskip
\caption{reduction of type $C$}
\label{simctab3}
\end{table}

Similar to the proof of Lemma \ref{rctlem2}, we obtain the following result.

\begin{lemma}\label{rctlem4}
Theorem \ref{rcthm1} holds for $\Phi=C_n$.
\end{lemma}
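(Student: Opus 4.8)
The plan is to imitate the proof of Lemma \ref{rctlem2} verbatim, replacing the type-$B$ data (Lemma \ref{alem1}, Table \ref{simctab1}) by the type-$C$ data (Lemma \ref{alem3}, Table \ref{simctab3}). First I would fix $\lambda, \mu \in \Lambda_I^+$ with $\mu = w s_\beta \lambda$ for some $w \in W_I$ and $\beta \in \Psi_\lambda^{++}$, set $\lambda' = \lambda|_{\Phi(\beta)}$, and choose $w' \in W(\Phi_I \cap \Phi(\beta))$ so that $\mu' = w' s_\beta \lambda' \in \Lambda^+(\Phi_I \cap \Phi(\beta), \Phi(\beta))$. Since $\lambda' \neq \mu'$ are two distinct basic weights of the basic system $(\Phi(\beta), i_\beta, j_\beta)$, Lemma \ref{invjlem} gives $|c(\lambda, \mu)| = |c(\lambda', \mu', \Phi_I \cap \Phi(\beta), \Phi(\beta))|$, so the whole statement reduces to understanding $c$ for basic systems. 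Note that, exactly as in the proof of Lemma \ref{alem3}, this argument consumes the already-established type-$D$ case (Lemma \ref{rctlem3}, via Lemma \ref{alem2}) in order to rule out $\Phi(\beta)$ being of type $D$.

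For the forward implication, suppose $c(\lambda,\mu) = 0$, so $c(\lambda',\mu',\Phi_I \cap \Phi(\beta),\Phi(\beta)) = 0$. By Lemma \ref{rclem0}, $\Phi(\beta)$ cannot be of type $A$, since otherwise this coefficient would be $\pm 1$. Hence Lemma \ref{alem3} applies, and $\Phi(\beta)$ is of type $C$; moreover the requirement $\beta \in \Psi_\lambda^{++} \cap \Phi(\beta) \neq \emptyset$ restricts $(\Phi(\beta), i_\beta, j_\beta)$ to the rows of Table \ref{simctab3} with nonempty $\Psi_\lambda^{++} \cap \Phi(\beta)$, namely rows $2$, $5$, $8$, i.e.\ $(C_2,1,2)$, $(C_2,2,1)$, $(C_3,2,2)$. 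The case $(C_3,2,2)$ is excluded by Theorem \ref{nzj} (Table \ref{njtb1}), which yields $c(\lambda',\mu',\dots) = 1 \neq 0$. In the case $(C_2,1,2)$, reading off row $2$ of Table \ref{simctab3} and relabelling indices to match the statement gives $\beta \in \{2e_i, e_i+e_j\}$ with $q_{s-1} < i \le q_s \le q_{m-1} < j \le n$, $\lambda_i = \lambda_j = a$ and $\lambda_k \neq 0, -a$ for $q_{s-1} < k \le q_s$; and since $2e_i \in \Psi_\lambda^+$ forces $a = \lambda_i \in \bbZ^{>0}$, this is precisely condition (\rmnum{1}). The case $(C_2,2,1)$ similarly produces condition (\rmnum{2}).

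For the converse, assume (\rmnum{1}) or (\rmnum{2}) holds. I would re-run the reduction computations preceding Lemma \ref{alem3}; under either hypothesis the chain $\Phi_{[\lambda]} = \Phi_0(\beta) \supset \Phi_1(\beta) \supset \cdots$ stabilizes at $\Phi(\beta) = \Phi_5(\beta) \simeq C_2$ with $(\Phi(\beta), i_\beta, j_\beta) \simeq (C_2,1,2)$ or $(C_2,2,1)$. Here the side conditions ``$\lambda_k \neq 0, -\lambda_i$'' (resp.\ ``$\lambda_j = 0$, $\lambda_k \neq -\lambda_i$, $\lambda_l \neq \lambda_i$'') are exactly what stop $\Phi_\lambda(a) \cap \Phi_3(\beta)$ and $\Phi_\lambda(0) \cap \Phi_3(\beta)$ from growing, hence prevent $\Phi(\beta)$ from being enlarged into a type-$A$ system. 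Since Theorem \ref{nzj} asserts that all Jantzen coefficients of the basic systems $(C_2,1,2)$ and $(C_2,2,1)$ vanish, $c(\lambda',\mu',\Phi_I \cap \Phi(\beta),\Phi(\beta)) = 0$, whence $c(\lambda,\mu) = 0$ by Lemma \ref{invjlem}.

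I expect the only real work to be the bookkeeping in matching the rows of Table \ref{simctab3} with the index constraints of (\rmnum{1})--(\rmnum{2}): one must verify that the inequalities ``$\lambda_k \neq 0$'', ``$\lambda_k \neq -\lambda_i$'', ``$\lambda_l \neq \lambda_i$'' correctly characterize when the reduction terminates at the $C_2$-stage, and that the relabelling of the indices $i,j,k,l$ between the table and the statement is consistent. This is entirely parallel to the type-$B$ case already treated in Lemma \ref{rctlem2}, so no genuinely new idea is needed.
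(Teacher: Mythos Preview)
Your proposal is correct and follows exactly the approach the paper takes: the paper's own proof simply reads ``Similar to the proof of Lemma \ref{rctlem2}, we obtain the following result,'' and you have spelled out that parallel argument accurately, including the use of Lemma \ref{alem3} and Table \ref{simctab3} in place of their type-$B$ counterparts, and the appeal to the type-$D$ analysis to rule out $\Phi(\beta)$ of type $D$.
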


\subsection{simplicity criteria for classical root systems}

In this subsection, we will give refinement of Jantzen's simplicity criteria for classical root systems. With Corollary \ref{janscor1} and Theorem \ref{rcthm1} in hand, we can first recover the following result of Jantzen.

\begin{theorem}[{\cite[Satz 4]{J}}] \label{jansthm3}
Let $\lambda\in\Lambda_I^+$. If all the irreducible component of $\Phi$ are of type $A$, then $M_I(\lambda)$ is simple if and only if $\Psi_\lambda^{++}=\emptyset$.
\end{theorem}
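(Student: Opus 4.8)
The plan is to read the theorem directly off Corollary~\ref{janscor1} together with the type-$A$ instance of the Jantzen-coefficient computation. By Corollary~\ref{janscor1}, $M_I(\lambda)$ is simple if and only if $\sum_{\beta\in\Psi_\lambda^{++}}\theta(s_\beta\lambda)=0$; by Definition~\ref{sjdef1} and Remark~\ref{sjrmk1} this sum vanishes exactly when every Jantzen coefficient $c(\lambda,\mu)$ is zero, since the $[M_I(\mu)]$ are linearly independent in $K(\caO^\frp)$. So the whole statement reduces to proving that $\Psi_\lambda^{++}=\emptyset$ if and only if $c(\lambda,\mu)=0$ for all $\mu$.

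The forward implication is immediate: if $\Psi_\lambda^{++}=\emptyset$, the sum $\sum_{\beta\in\Psi_\lambda^{++}}\theta(s_\beta\lambda)$ is empty, hence zero, so $M_I(\lambda)$ is simple by Corollary~\ref{janscor1}.

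For the converse I would argue contrapositively. Assume $\Psi_\lambda^{++}\neq\emptyset$ and fix $\beta\in\Psi_\lambda^{++}$. Then $s_\beta\lambda$ is $\Phi_I$-regular, so there is $w\in W_I$ with $\mu:=ws_\beta\lambda\in\Lambda_I^+$, and $\beta$ contributes to $c(\lambda,\mu)$ by construction. Running the reduction process of \S3.3 on $\beta$ produces the irreducible subsystem $\Phi(\beta)$ of Lemma~\ref{redslem1}; since $\Phi(\beta)$ is obtained from $\Phi_{[\lambda]}$ by iterating the operations $(\cdot)_{\beta,0}$, $(\cdot)_{\beta,1}$, $(\cdot)_{\beta,2}$, each of which passes to a subsystem, we have $\Phi(\beta)\subseteq\Phi_{\beta,0}$. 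By hypothesis $\Phi_{\beta,0}$ is of type $A$, every subsystem of a type-$A$ root system is a product of type-$A$ root systems (equivalently, reflection subgroups of a symmetric group are Young subgroups), and $\Phi(\beta)$ is irreducible, so $\Phi(\beta)$ is of type $A$. Now Lemma~\ref{rclem0} (equivalently Lemma~\ref{rctlem1}) gives $|c(\lambda,\mu)|=1\neq0$, so $M_I(\lambda)$ is not simple. This completes the argument.

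There is essentially no genuine obstacle here: the theorem functions as a consistency check on the machinery, and its real content is pushed into Lemma~\ref{rclem0}, whose proof in turn rests on Theorem~\ref{bwthm1} isolating $(A_1,1,1)$ as the unique type-$A$ basic system carrying two distinct standard basic weights, together with the invariance Lemma~\ref{invjlem}. The only point I would state with care is that the reduction process stays inside type $A$, i.e. that $\Phi(\beta)$ is irreducible of type $A$; this is where the componentwise classification of subsystems of $A_n$ is invoked.
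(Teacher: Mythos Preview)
Your proof is correct and follows essentially the same route as the paper, which simply says the result is recovered from Corollary~\ref{janscor1} and Theorem~\ref{rcthm1}. You are slightly more explicit: rather than citing the full classical Theorem~\ref{rcthm1}, you invoke directly its type-$A$ ingredient Lemma~\ref{rclem0} (which is exactly what Lemma~\ref{rctlem1} packages), and you spell out why $\Phi(\beta)$ is of type~$A$ by noting it is an irreducible subsystem of the type-$A$ component $\Phi_{\beta,0}$. This is the same argument, just unpacked one level further.
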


If $\Phi$ is not of type $A$, there are examples showing that $M_I(\lambda)$ could be simple even when $\Psi_\lambda^{++}\neq\emptyset$ (see the basic systems $(B_2, 1, 2)$, $(C_2, 1, 2)$ and $(D_4, 2, 2)$). Applying Corollary \ref{janscor1} and Theorem \ref{rcthm1} again, the simplicity criteria for the other classical types are given as follows.

\begin{theorem}\label{simcthm1}
Suppose that $\Phi=B_n$. Choose $I\subset\Delta$. Fix $\lambda\in\Lambda_I^+$. Then $M_I(\lambda)$ is simple if and only if $\Psi_\lambda^{++}$ contains only the following roots:
\begin{itemize}
\item [(1)] $e_i$, $e_i+e_j$ for $q_{s-1}<i\leq q_s\leq q_{m-1}<j\leq n$ and $1\leq s<m$. Moreover, $\lambda_i=\lambda_j\in\frac{1}{2}\bbZ^{>0}$ and $\lambda_k\neq 0$, $-\lambda_i$ for $q_{s-1}<k\leq q_s$.

\item [(2)] $e_i$, $e_i+e_j$ for $q_{s-1}<i<j\leq q_s$ and $1\leq s<m$. Moreover, $\lambda_i\in\bbZ^{>0}$, $\lambda_j=0$, $\lambda_k\neq-\lambda_i$ for $q_{s-1}<k\leq q_s$ and $\lambda_l\neq\lambda_i$ for $q_{m-1}<l\leq n$.
\end{itemize}
\end{theorem}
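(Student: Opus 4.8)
The plan is to obtain the criterion as a direct consequence of Corollary \ref{janscor1} (or Remark \ref{sjrmk1}), Lemma \ref{sjlem1}, and Theorem \ref{rcthm1}, specialized to $\Phi=B_n$ (where every $I\subset\Delta$ is automatically standard, so the map $\vf$ plays no role here).

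First I would recast simplicity in terms of a single reducing root. By Remark \ref{sjrmk1}, $M_I(\lambda)$ is simple if and only if $c(\lambda,\mu)=0$ for every $\mu\in\Lambda_I^+$. If $c(\lambda,\mu)\neq0$, then Lemma \ref{sjlem1} gives $c(\lambda,\mu)=\sum_{\beta\in\Psi_\lambda^{++}}\sgn(s_\beta\lambda,\mu)$, so $\sgn(s_\beta\lambda,\mu)\neq0$ for some $\beta\in\Psi_\lambda^{++}$, and for this $\beta$ we have $\mu=ws_\beta\lambda$ for the unique $w\in W_I$ making $ws_\beta\lambda$ dominant (unique because $s_\beta\lambda$ is $\Phi_I$-regular when $\beta\in\Psi_\lambda^{++}$). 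Conversely, a nonzero value $c(\lambda,ws_\beta\lambda)$ for some $\beta\in\Psi_\lambda^{++}$ obstructs simplicity. Hence $M_I(\lambda)$ is simple if and only if $c(\lambda,\mu_\beta)=0$ for every $\beta\in\Psi_\lambda^{++}$, where $\mu_\beta$ denotes the unique element of $W_I s_\beta\lambda\cap\Lambda_I^+$.

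Next I would invoke Theorem \ref{rcthm1}. For each $\beta\in\Psi_\lambda^{++}$ and $\mu_\beta$ as above, that theorem with $\Phi=B_n$ (so its condition (\rmnum{3}), being of type $D$, is vacuous) asserts that $c(\lambda,\mu_\beta)=0$ exactly when $\beta$ satisfies its condition (\rmnum{1}) or (\rmnum{2}). Combining with the previous paragraph, $M_I(\lambda)$ is simple if and only if every $\beta\in\Psi_\lambda^{++}$ satisfies condition (\rmnum{1}) or (\rmnum{2}) of Theorem \ref{rcthm1} for type $B_n$. Finally one checks that, after writing $\beta=e_i$ or $\beta=e_i+e_j$, condition (\rmnum{1}) of Theorem \ref{rcthm1} reads word for word as condition (1) of the present theorem, and condition (\rmnum{2}) as condition (2); restating ``every $\beta\in\Psi_\lambda^{++}$ satisfies (1) or (2)'' as ``$\Psi_\lambda^{++}$ contains only the roots listed in (1) and (2)'' completes the proof.

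The argument contains no real difficulty once Theorem \ref{rcthm1} is available; the one delicate point is the equivalence in the second paragraph, namely that a root $\beta\in\Psi_\lambda^{++}$ not of the form in (1) or (2) genuinely forces $c(\lambda,\mu_\beta)\neq0$ and cannot be annihilated by other roots sending $\lambda$ to $\mu_\beta$. This is precisely what the ``only if'' half of Theorem \ref{rcthm1} rules out: internally, $|c(\lambda,\mu_\beta)|$ equals the Jantzen coefficient of the basic generalized Verma module attached to $\Phi(\beta)$ by Lemma \ref{invjlem}, any two roots of $\Psi_\lambda^{++}$ with the same image lie in a common $\Phi(\beta)$ by Lemmas \ref{reslem3} and \ref{redslem3}, and the value of $c(\lambda,\mu_\beta)$ is then governed entirely by that basic system together with Theorem \ref{nzj}.
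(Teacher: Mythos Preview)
Your proposal is correct and follows exactly the route the paper takes: the paper derives Theorem~\ref{simcthm1} simply by combining Corollary~\ref{janscor1} with Theorem~\ref{rcthm1}, and you have spelled out precisely how that combination works, including the point that Theorem~\ref{rcthm1} is phrased so that the vanishing of $c(\lambda,\mu_\beta)$ depends only on $\mu_\beta$, which forces any two $\beta,\beta'\in\Psi_\lambda^{++}$ with $\mu_\beta=\mu_{\beta'}$ to satisfy or fail the listed conditions together.
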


\begin{theorem}\label{simcthm2}
Suppose that $\Phi=C_n$. Choose $I\subset\Delta$. Fix $\lambda\in\Lambda_I^+$. Then $M_I(\lambda)$ is simple if and only if $\Psi_\lambda^{++}$ contains only the following roots:
\begin{itemize}
\item [(1)] $2e_i$, $e_i+e_j$ for $q_{s-1}<i\leq q_s\leq q_{m-1}<j\leq n$ and $1\leq s<m$. Moreover, $\lambda_i=\lambda_j\in\bbZ^{>0}$ and $\lambda_k\neq 0$, $-\lambda_i$ for $q_{s-1}<k\leq q_s$.

\item [(2)] $2e_i$, $e_i+e_j$ for $q_{s-1}<i<j\leq q_s$ and $1\leq s<m$. Moreover, $\lambda_i\in\bbZ^{>0}$, $\lambda_j=0$, $\lambda_k\neq-\lambda_i$ for $q_{s-1}<k\leq q_s$ and $\lambda_l\neq\lambda_i$ for $q_{m-1}<l\leq n$.
\end{itemize}
\end{theorem}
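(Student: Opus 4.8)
The plan is to obtain Theorem \ref{simcthm2} as a direct consequence of Jantzen's simplicity criterion together with the vanishing criterion for Jantzen coefficients. The first step is the reduction, valid for every $\lambda\in\Lambda_I^+$, that $M_I(\lambda)$ is simple if and only if $c(\lambda,\mu)=0$ for all $\mu\in\Lambda_I^+$: this follows at once from Corollary \ref{janscor1}, the expansion $\sum_{\beta\in\Psi_\lambda^{++}}\theta(s_\beta\lambda)=\sum_{\lambda>\mu}c(\lambda,\mu)[M_I(\mu)]$ of Definition \ref{sjdef1}, and the fact that $\{[M_I(\mu)]\mid\mu\in\Lambda_I^+\}$ is a basis of $K(\caO^\frp)$ (Remark \ref{sjrmk1}). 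So it remains to translate ``$c(\lambda,\mu)=0$ for all $\mu$'' into the explicit condition on $\Psi_\lambda^{++}$, and this is exactly what Theorem \ref{rcthm1} is designed to provide.

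For the ``only if'' direction, I would assume $M_I(\lambda)$ simple and take an arbitrary $\beta\in\Psi_\lambda^{++}$. Since $\beta\in\Psi_\lambda^{++}$, the weight $s_\beta\lambda$ is $\Phi_I$-regular, so Proposition \ref{jansprop1}(3) produces $w\in W_I$ with $\mu:=ws_\beta\lambda\in\Lambda_I^+$, and one has $\lambda>\mu$ together with $\sgn(s_\beta\lambda,\mu)=(-1)^{\ell(w)}\neq0$. Simplicity forces $c(\lambda,\mu)=0$, so Theorem \ref{rcthm1}, applied to the triple $(\lambda,\mu,\beta)$, asserts that one of its conditions (i), (ii), (iii) holds; condition (iii) is a $D_n$-statement, hence vacuous here, so (i) or (ii) holds, and these are verbatim the clauses (1) and (2) of the present theorem. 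Thus every root of $\Psi_\lambda^{++}$ has one of the listed forms.

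For the ``if'' direction, I would assume every root of $\Psi_\lambda^{++}$ is of type (1) or (2) and suppose, for contradiction, that $c(\lambda,\mu)\neq0$ for some $\mu\in\Lambda_I^+$. By Lemma \ref{sjlem1} one has $c(\lambda,\mu)=\sum_{\beta\in\Psi_\lambda^{++}}\sgn(s_\beta\lambda,\mu)$, so some $\beta\in\Psi_\lambda^{++}$ satisfies $\sgn(s_\beta\lambda,\mu)\neq0$, i.e. $\mu=ws_\beta\lambda$ for some $w\in W_I$. By hypothesis this $\beta$ satisfies (1) or (2), which are conditions (i)/(ii) of Theorem \ref{rcthm1} for $\Phi=C_n$, so that theorem gives $c(\lambda,\mu)=0$, a contradiction. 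Hence all $c(\lambda,\mu)$ vanish and $M_I(\lambda)$ is simple by the first step. The proof of Theorem \ref{simcthm1} for $\Phi=B_n$ is identical, using the $B_n$-clauses of Theorem \ref{rcthm1}, while Jantzen's Theorem \ref{jansthm3} for type $A$ is the degenerate case in which none of (i)--(iii) can occur, so $c(\lambda,\mu)\neq0$ whenever $\Psi_\lambda^{++}\neq\emptyset$.

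Since the argument above is purely formal bookkeeping, the genuine obstacle lives inside Theorem \ref{rcthm1}, whose proof (Lemmas \ref{rctlem1}--\ref{rctlem4}) runs the reduction process of Section 3 to pin down $\Phi(\beta)$ for each $\beta\in\Psi_\lambda^{++}$ and reads off the basic system from the classification of Section 5, the type-$C$ case being tabulated in Table \ref{simctab3}. The one point that needs care when assembling the final list is that the roots contributing to $\Psi_\lambda^{++}$ occur in the pairs $\{2e_i,\,e_i+e_j\}$ visible in that table: whenever one member of such a pair lies in $\Psi_\lambda^{++}$ so does the other, and both yield the same $\Phi(\beta)$ by Lemma \ref{redslem3}, which is why clauses (1) and (2) are simultaneously exhaustive and consistent with the ``iff'' in Theorem \ref{rcthm1}.
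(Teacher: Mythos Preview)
Your argument is correct and is exactly the paper's approach: the paper states that Theorems \ref{simcthm1}--\ref{simcthm3} follow by ``applying Corollary \ref{janscor1} and Theorem \ref{rcthm1} again'', and you have simply spelled out that deduction in detail, including the correct observation that the work is concentrated in Theorem \ref{rcthm1} via Table \ref{simctab3}.
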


\begin{theorem}\label{simcthm3}
Suppose that $\Phi=D_n$. Choose $I\subset\Delta$. Fix $\lambda\in\Lambda_I^+$. Then $M_I(\lambda)$ is simple if and only if $\Psi_\lambda^{++}$ contains only the following roots: $e_i+e_j$, $e_i+e_k$ for $q_{s-1}<i<j\leq q_s\leq q_{m-1}<k<n$ and $1\leq s<m$. Moreover, $\lambda_i=\lambda_k\in\bbZ^{>0}$, $\lambda_j=\lambda_n=0$ and $\lambda_l\neq-\lambda_i$ for $q_{s-1}<l\leq q_s$.
\end{theorem}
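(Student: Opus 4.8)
The plan is to derive the statement as a direct corollary of Corollary~\ref{janscor1} and Theorem~\ref{rcthm1}, in complete analogy with the recovery of Jantzen's Theorem~\ref{jansthm3} for type~$A$; the substantive content has already been established in Section~7, in particular in Lemma~\ref{rctlem3}. First I would record the standard reformulation of simplicity: by Corollary~\ref{janscor1}, $M_I(\lambda)$ is simple if and only if $\sum_{\beta\in\Psi_\lambda^{++}}\theta(s_\beta\lambda)=0$, and since $\{[M_I(\mu)]\mid\mu\in\Lambda_I^+\}$ is a basis of $K(\caO^\frp)$ (Remark~\ref{sjrmk1}), this holds precisely when $c(\lambda,\mu)=0$ for all $\mu\in\Lambda_I^+$.

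For the forward implication, assume $M_I(\lambda)$ is simple and fix an arbitrary $\beta\in\Psi_\lambda^{++}$. Since $s_\beta\lambda$ is $\Phi_I$-regular and $\Phi_I$-integral, there is $w\in W_I$ with $\mu:=ws_\beta\lambda\in\Lambda_I^+$ (as noted before Lemma~\ref{invjlem}). Simplicity gives $c(\lambda,\mu)=0$, so Theorem~\ref{rcthm1} applies to $(\lambda,\mu,\beta)$; since $\Phi=D_n$ rules out conditions (i) and (ii) there, condition (iii) of Theorem~\ref{rcthm1} must hold for $\beta$, which is exactly the assertion that $\beta$ has the stated form.

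For the converse, suppose every $\beta\in\Psi_\lambda^{++}$ has the stated form and, for contradiction, that $M_I(\lambda)$ is not simple; then $c(\lambda,\mu)\neq0$ for some $\mu\in\Lambda_I^+$. By Lemma~\ref{sjlem1}, $c(\lambda,\mu)=\sum_{\beta\in\Psi_\lambda^{++}}\sgn(s_\beta\lambda,\mu)$, so some $\beta\in\Psi_\lambda^{++}$ has $\sgn(s_\beta\lambda,\mu)\neq0$, i.e.\ $\mu=ws_\beta\lambda$ for some $w\in W_I$. By hypothesis this $\beta$ satisfies condition (iii) of Theorem~\ref{rcthm1}, whence $c(\lambda,\mu)=0$, a contradiction; hence $M_I(\lambda)$ is simple.

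The only delicate point is bookkeeping: checking that condition (iii) of Theorem~\ref{rcthm1} is verbatim the condition appearing here, and that the conventions concerning standard versus nonstandard $I$ and nonessential $0$-entries (Remark~\ref{simcrmk1}) are handled consistently — but all of this has already been folded into Theorem~\ref{rcthm1}. Hence there is no real obstacle in this proof itself; the genuine difficulty is upstream, in the determination of $\Phi(\beta)$ for each $\beta\in\Psi_\lambda^+$ via the reduction lemmas of Section~7 (notably Lemmas~\ref{rclem4} and~\ref{alem2}), which is what makes Theorem~\ref{rcthm1} — and hence this criterion — available.
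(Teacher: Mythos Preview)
Your proposal is correct and mirrors exactly what the paper does: the paper introduces Theorems~\ref{simcthm1}--\ref{simcthm3} with the sentence ``Applying Corollary~\ref{janscor1} and Theorem~\ref{rcthm1} again, the simplicity criteria for the other classical types are given as follows,'' and offers no further proof. Your forward and converse implications are precisely the unpacking of that one-line deduction, so there is nothing to add.
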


%With the above theorems, we can easily verify the simplicity of generalized Verma modules for classical Lie algebras (see Example \ref{simcex1}).

\begin{cor}\label{simccor1}
Let $\Phi=D_n$ and $I\subset\Delta$. If $\lambda\in\Lambda_I^+$ contains at most one $0$-entry, then $M_I(\lambda)$ is simple if and only if $\Psi_\lambda^{++}=\emptyset$.
\end{cor}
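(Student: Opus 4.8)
The plan is to derive this as an immediate consequence of Theorem \ref{simcthm3}, our simplicity criterion for $D_n$. Recall that that theorem asserts $M_I(\lambda)$ is simple if and only if every root of $\Psi_\lambda^{++}$ is of the form $e_i+e_j$ or $e_i+e_k$ with $q_{s-1}<i<j\le q_s\le q_{m-1}<k<n$ for some $1\le s<m$, and moreover $\lambda_i=\lambda_k\in\bbZ^{>0}$, $\lambda_j=\lambda_n=0$, and $\lambda_l\neq-\lambda_i$ for $q_{s-1}<l\le q_s$. First I would isolate the following observation: any such admissible root forces \emph{two distinct} coordinates of $\lambda$ to vanish. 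Indeed $j\le q_s\le q_{m-1}<n$ gives $j\neq n$, so the conditions $\lambda_j=0$ and $\lambda_n=0$ assert the vanishing of the $j$-th and the $n$-th entries of $\lambda$, two different indices. Hence $\{1\le t\le n\mid \lambda_t=0\}$ has at least two elements whenever $\Psi_\lambda^{++}$ contains one of the roots permitted by Theorem \ref{simcthm3}.

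Now assume $\lambda$ has at most one $0$-entry, i.e. $|\{1\le t\le n\mid\lambda_t=0\}|\le 1$. By the observation, $\Psi_\lambda^{++}$ cannot contain any of the roots listed in Theorem \ref{simcthm3}. Consequently the criterion of that theorem --- that $\Psi_\lambda^{++}$ contains only such roots --- is satisfied precisely when $\Psi_\lambda^{++}=\emptyset$. This yields one implication: if $M_I(\lambda)$ is simple, then $\Psi_\lambda^{++}=\emptyset$. The converse, $\Psi_\lambda^{++}=\emptyset\Rightarrow M_I(\lambda)$ simple, is immediate from Corollary \ref{janscor1} (the sum in Jantzen's criterion is then empty), or again vacuously from Theorem \ref{simcthm3}. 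Combining the two directions gives the corollary.

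I do not expect a genuine obstacle here, since the entire content has been packaged into Theorem \ref{simcthm3}; the argument is purely a matter of reading off that criterion. The one point that deserves a sentence of care is the interpretation of ``$0$-entry'': it is to be read as a coordinate $\lambda_t$ equal to $0$ in the realization $(\ref{realization})$, so that ``at most one $0$-entry'' is literally the inequality $|\{t\mid\lambda_t=0\}|\le 1$. With that reading the argument above applies verbatim, and in particular no distinction between essential and nonessential $0$-entries (cf.\ Remark \ref{simcrmk1}) is needed, as we only count how many coordinates of $\lambda$ vanish.
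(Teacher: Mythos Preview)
Your proposal is correct and follows essentially the same approach as the paper: both use Corollary \ref{janscor1} for the easy direction and invoke Theorem \ref{simcthm3} to see that any root in $\Psi_\lambda^{++}$ for a simple $M_I(\lambda)$ would force $\lambda_j=\lambda_n=0$ with $j\neq n$, contradicting the hypothesis of at most one $0$-entry. Your explicit verification that $j\neq n$ (from $j\le q_{m-1}<n$) is a helpful detail the paper leaves implicit.
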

\begin{proof}
In view of Corollary \ref{janscor1}, $\Psi_\lambda^{++}=\emptyset$ implies $M_I(\lambda)$ is simple. Conversely, If $M_I(\lambda)$ is simple and $\Psi_\lambda^{++}\neq\emptyset$, Theorem \ref{simcthm3} shows that $\lambda$ has at least two $0$-entries, a contradiction.
\end{proof}

\begin{remark}\label{simcrmk2}
We have $\theta(s_{e_i}\lambda)+\theta(s_{e_i+e_j}\lambda)=0$ in Theorem \ref{simcthm1}; while $\theta(s_{2e_i}\lambda)+\theta(s_{e_i+e_j}\lambda)=0$ in Theorem \ref{simcthm2} and $\theta(s_{e_i+e_j}\lambda)+\theta(s_{e_i+e_k}\lambda)=0$ in Theorem \ref{simcthm3}.
\end{remark}

\end{document}